\renewcommand\a{\alpha}
\renewcommand\b{\beta}
\newcommand\g{\gamma}
\renewcommand\d{\delta}
\newcommand\la{\lambda}
\newcommand\z{\zeta}
\newcommand\e{\eta}
\renewcommand\th{\theta}
\newcommand\io{\iota}
\newcommand\m{\mu}
\newcommand\n{\nu}
\newcommand\s{\sigma}
\newcommand\f{\phi}
\newcommand\vf{\varphi}
\renewcommand\t{\tau}
\renewcommand\r{\rho}
\newcommand\w{\omega}
\newcommand\vS{\varSigma}
\newcommand\vD{\varDelta}
\newcommand\vG{\varGamma}
\newcommand\ve{\varepsilon}
\newcommand{\OO}{\mathbb O}
\newcommand\SA{\mathscr{A}}
\newcommand\SB{\mathscr{B}}
\newcommand\SC{\mathscr{C}}
\newcommand\SD{\mathscr{D}}
\newcommand\SE{\mathscr{E}}
\newcommand\SF{\mathscr{F}}
\newcommand\SG{\mathscr{G}}
\newcommand\SL{\mathscr{L}}
\newcommand\SN{\mathscr{N}} 
\newcommand\SO{\mathscr{O}}
\newcommand\SP{\mathscr{P}}
\newcommand\ST{\mathscr{T}}
\newcommand\SH{\mathscr{H}}
\newcommand\ScS{\mathscr{S}}
\newcommand\SW{\mathscr{W}}
\newcommand\Ql{\bar{\mathbf Q}_l}
\newcommand\BP{\mathbf P}
\newcommand\BC{\mathbf C}
\newcommand\BZ{\mathbf Z}
\newcommand\BH{\mathbf H}
\newcommand\Bk{\mathbf k}
\newcommand\Bxi{\boldsymbol{\xi}}
\newcommand\Fb{\mathfrak b}
\newcommand\Fg{\mathfrak g}
\newcommand\Fqq{\mathfrak q}
\newcommand\Fl{\mathfrak l}
\newcommand\Ft{\mathfrak t}
\newcommand\Fm{\mathfrak m}
\newcommand\Fn{\mathfrak n}
\newcommand\iv{^{-1}}
\newcommand\wh{\widehat}
\newcommand\wt{\widetilde}
\newcommand\wg{^{\wedge}}
\newcommand\ol{\overline}
\newcommand\hra{\hookrightarrow}
\newcommand\lra{\leftrightarrow}
\newcommand\IC{\operatorname{IC}}
\newcommand\Ker{\operatorname{Ker}}
\newcommand\Hom{\operatorname{Hom}}
\newcommand\End{\operatorname{End}}
\newcommand\ind{\operatorname{ind}}
\newcommand\Ind{\operatorname{Ind}}
\newcommand\supp{\operatorname{supp}\,}
\newcommand\Lie{\operatorname{Lie}}
\newcommand\ch{\operatorname{ch}}
\newcommand\ad{\operatorname{ad}}
\newcommand\reg{_{\operatorname{reg}}}
\newcommand\uni{_{\operatorname{uni}}}
\newcommand\nil{_{\operatorname{nil}}}
\newcommand\id{\operatorname{id}}
\newcommand\lp{\operatorname{\!\langle\!}}
\newcommand\rp{\operatorname{\!\rangle\!}}
\renewcommand\Im{\operatorname{Im}}
\newcommand\dw{\dot w}
\newcommand{\isom}{\,\raise2pt\hbox{$\underrightarrow{\sim}$}\,}
\numberwithin{equation}{section}
\newtheorem{thm}{Theorem}[section]
\newtheorem{lem}[thm]{Lemma}
\newtheorem{cor}[thm]{Corollary}
\newtheorem{prop}[thm]{Proposition}
\def \para#1{\par\medskip\textbf{#1}
              \addtocounter{thm}{1}}
\def \remark#1{\par\medskip\noindent
                \textbf{Remark #1}
                \addtocounter{thm}{1}}
\def \definition#1{\par\medskip\noindent
                \textbf{Definition #1}
                \addtocounter{thm}{1}}
\begin{document}
\setlength{\baselineskip}{4.9mm}
\setlength{\abovedisplayskip}{4.5mm}
\setlength{\belowdisplayskip}{4.5mm}
\renewcommand{\theenumi}{\roman{enumi}}
\renewcommand{\labelenumi}{\rm{(\theenumi)}}
\renewcommand{\thefootnote}{\fnsymbol{footnote}}
\renewcommand{\thefootnote}{\fnsymbol{footnote}}
\allowdisplaybreaks[2]
\parindent=20pt
\medskip

\medskip
\begin{center}
 {\bf Generalized Springer correspondence for symmetric spaces \\
      associated to orthogonal groups }
\\
\vspace{1cm}
Toshiaki Shoji and Gao Yang
\\
\title{}
\end{center}

\begin{abstract}
Let $G = GL_N(\Bk)$, where $\Bk$ is an algebraically closed field of $\ch \Bk \ne 2$, 
and $\th$ an involutive 
automorphism of $G$ such that $H = (G^{\th})^0$ is isomorphic to $SO_N(\Bk)$. 
Then $G^{\io\th} = \{ g \in G \mid \th(g) = g\iv\}$ is regarded as 
a symmetric space $G/G^{\th}$. Let $G^{\io\th}\uni$ be the set of unipotent 
elements in $G^{\io\th}$. $H$ acts on $G^{\io\th}\uni$ by the conjugation. 
As an analogue of the generalized Springer correspondence in the case of 
reductive groups, we establish in this paper  the generalized Springer correspondence
between $H$-orbits in $G^{\io\th}\uni$ and 
irreducible representations of various symmetric groups.  

\end{abstract}

\maketitle

\markboth{SHOJI - YANG}{GENERALIZED SPRINGER CORRESPONDENCE}
\pagestyle{myheadings}

\begin{center}
{\sc Introduction}
\end{center}
\par\medskip\noindent
In [L1], Lusztig established the theory of generalized Springer correspondence
for reductive groups.  Let $G$ be a connected reductive group, and $G\uni$ the set
of unipotent elements in $G$.  $G\uni$ has finitely many conjugacy classes, and
let $\SN_G$ be the set of pairs $(C, \SE)$, where $C$ is a unipotent class, and 
$\SE$ is a $G$-equivariant simple ($\Ql$-) local system on $C$. Lusztig gave a notion 
of cuspidal pairs for $\SN_G$, and showed that $\SN_G$ has a natural partition 
\begin{equation*}
\SN_G = \coprod_{c \in \ScS_G}\SN_G^{(c)}.
\end{equation*} 
Here $\ScS_G$ is the set of isomorphism classes of the triples 
$(L, C_L, \SE_L)$, where $L$ is a Levi subgroup of some parabolic subgroup of 
$G$, and $(C_L, \SE_L)$ is a cuspidal pair on $L$. 
He showed that $\SW_L = N_G(L)/L$ is a Coxeter group, and that there exists 
a natural bijection between $\SN_G^{(c)}$ and the set $\SW_L\wg$ 
of irreducible representations of $\SW_L$ over $\Ql$, namely, 

\begin{equation*}
\tag{*}
\SN_G \simeq \coprod_{(L, C_L, \SE_L)} \SW_L\wg,
\end{equation*}
where $(L,C_L,\SE_L)$ runs over the classes in  $\ScS_G$. 
(*) is a generalization of the Springer correspondence, which is a natural 
injective map  
$W\wg \to \SN_G$, where $W$ is the Weyl group of $G$. 
The correspondence $\SN_G^{(c)} \simeq \SW_L\wg$ is obtained by considering  
a certain semisimple perverse sheaf on $G$ arising from a finite Galois covering
related to $\SW_L$, and by restricting each simple component to $G\uni$. 
\par
In this paper, we consider a similar problem for symmetric spaces associated 
to orthogonal groups. 
Let $G = GL_n(\Bk)$, where $\Bk$ is an algebraically 
closed field of odd characteristic.  We consider 
an involutive automorphism $\th : G \to G$ such that its 
subgroup $G^{\th}$ of $\th$-fixed points is isomorphic to $O_N(\Bk)$.
Put $H = (G^{\th})^0$.  Let $\io : G \to G$ be the anti-automorphism 
$g \mapsto g\iv$, and consider a closed subset 
$G^{\io\th} = \{ g \in G \mid \th(g) = g\iv \}$ of $G$. As a variety, 
$G^{\io\th}$ is isomorphic to the symmetric space $G/G^{\th}$.
Let $G^{\io\th}\uni$ be the set of unipotent elements in $G^{\io\th}$.  
$G^{\io\th}$ and $G^{\io\th}\uni$ are stable under the conjugation action of 
$H$ on $G$.  It is known that 
$G^{\io\th}\uni$ has finitely many $H$-orbits ([R]).      
\par
We consider the set $\SN_G$ of pairs $(\SO, \SE)$, where $\SO$ is an $H$-orbit
in $G^{\io\th}\uni$, and $\SE$ is an $H$-equivariant simple local system on $\SO$.   
We can formulate the notion of cuspidal pairs for $\SN_G$, and so obtain 
a similar set $\ScS_G$ as before.  However some difficulty occurs for constructing 
 a semisimple 
perverse sheaf on $G^{\io\th}$ related to a finite Galois covering.  Our discussion 
is based on the choice of a $\th$-stable Borel subgroup $B$ and a $\th$-stable maximal
torus $T$ contained in $B$.  But in our case, 
$T^{\io\th}$ is not maximal $\th$-anisotropic, and the role of 
semisimple elements becomes restricted in contrast to the case of reductive groups (see 1.4).  
In order to overcome this difficulty, we 
introduce a certain subgroup $\SD$ of $U$ such that $\SD$ is a subset of $U^{\io\th}$, 
where $U$ is the unipotent radical of $B$.    
By making use of $\SD$, one can construct a finite Galois covering, and a semisimple 
perverse sheaf on $G^{\io\th}$ associated to it.  Once this is done, 
basically a similar strategy as in [L1] can be applied for the remaining
part, and we can establish 
the generalized Springer correspondence.  By making use of the restriction 
theorem analogous to [L1], we give an explicit description of the 
generalized Springer correspondence, based on the discussion used in 
Lusztig-Spaltenstein [LS] in the case of classical groups.
\par
Let $\Fg$ be the Lie algebra of $G$, and $\th : \Fg \to \Fg$ an induced automorphism .  
The set $\Fg^-\nil = \{ x \in \Fg\nil \mid \th(x) = -x\}$ 
of nilpotent elements in $\Fg^-$ is isomorphic to $G^{\io\th}\uni$, 
compatible with $H$-action. One can consider the set $\SN_{\Fg}$ of pairs $(\SO, \SE)$, 
where $\SO$ is an $H$-orbit in $\Fg^-\nil$ and $\SE$ is an $H$-equivariant simple local 
system on $\SO$.  In the case where $\Bk = \BC$, 
Chen-Vilonen-Xue [CVX1] considered a similar problem for $\SN_{\Fg}$, and determined 
the Springer correspondence, which is a part of $\SN_{\Fg}$, 
 by making use of the Fourier-Deligne  transform of 
perverse sheaves on $\Fg^-\nil$, instead of considering the restriction of 
perverse sheaves on $G$ to $G\uni$.
In [CVX2], they treated the whole of $\SN_{\Fg}$, but 
the generalized Springer correspondence is not formulated there, and 
the explicit correspondence is not determined. 
In [LY], Lusztig-Yun
studied perverse sheaves on $\Fg^-\nil$ associated to arbitrary symmetric spaces, 
and more generally, associated to $\BZ/m\BZ$-graded Lie algebras, and established 
the results closely related to the generalized Springer correspondence. In their case, 
they also use the Fourier-Deligne transforms, instead of the restriction.

\par\bigskip\bigskip\noindent
{\bf Contents}

\par\medskip\noindent
Introduction   \\
1. Preliminaries on symmetric spaces  \\
2. Cuspidal local systems  \\
3. Admissible complexes   \\
4. Sheaves on the variety of semisimple orbits \\
5. Generalized Springer correspondence \\
6. Restriction theorem  \\
7. Unipotent orbits   \\
8. Structure of the algebra $\SA_{\SE_1}$ \\
9.  Determination of the generalized Springer correspondence  \\
10.  Induction   \\
Appendix 
\par\bigskip
 
\section{Preliminaries on symmetric spaces}
\para{1.1.}
Let $G$ be a connected reductive group over an algebraically closed field 
$\Bk$ of $\ch \Bk \ne 2$, and $\th : G \to G$ be an involutive automorphism 
on $G$.   
Then $G^{\th} = \{ g \in G \mid \th(g) = g\}$ is a reductive subgroup of $G$.
We put $H = (G^{\th})^0$. Let $\io : G \to G, g \mapsto g\iv$ be 
the anti-automorphism on $G$. We consider the set 
$G^{\io\th} = \{ g \in G \mid \th(g) = g\iv\}$ of $\io\th$-fixed points in $G$. 
Then $G$ acts on $G^{\io\th}$ by $g : x \mapsto gx\th(g)\iv$. This action is called 
the twisted action of $G$. $G$ acts transitively on each 
connected component of $G^{\io\th}$ with respect to the twisted $G$-action ([R]).
Thus there are only finitely many $G$-orbits in $G^{\io\th}$, and each $G$-orbit 
is closed.  In particular, the set $\{ g\th(g)\iv \mid g \in G\}$ is a connected 
component of $G^{\io\th}$. 
\par 
It is known by [St, \S 7] that there exist a $\th$-stable Borel subgroup $B$ and 
a $\th$-stable maximal torus $T$ of $G$ such that $B \supset T$. 
We show a lemma.

\begin{lem}  
Let $(B, T), (B', T')$ be $\th$-stable pairs of Borel subgroup and maximal 
torus of $G$.  Assume that $T^{\io\th}$ is connected.  
Then there exists $g \in G^{\th}$ such that 
$gBg\iv = B', gTg\iv = T'$.   
\end{lem}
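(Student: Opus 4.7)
The plan is to factor the required conjugacy into two independent pieces: first an element $g_1 \in G^{\th}$ taking $B$ to $B'$, and then an element $u \in (B')^{\th}$ taking $g_1 T g_1\iv$ to $T'$.

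\emph{Step 1 (maximal tori inside a fixed Borel are $(B')^{\th}$-conjugate).} Suppose $T_1, T_2$ are two $\th$-stable maximal tori of $B'$, and write $U' := R_u(B')$. Since the maximal tori of $B'$ form a principal homogeneous space under $U'$, there is a unique $u \in U'$ with $T_2 = u T_1 u\iv$. Applying $\th$ to this relation and using the $\th$-stability of $T_1$ and $T_2$ gives $u T_1 u\iv = \th(u) T_1 \th(u)\iv$, hence $u\iv \th(u) \in N_{U'}(T_1)$. But $N_{B'}(T_1) = T_1$ forces $N_{U'}(T_1) = T_1 \cap U' = \{1\}$, so $u = \th(u) \in (U')^{\th} \subset (B')^{\th}$.

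\emph{Step 2 ($\th$-stable Borels are $G^{\th}$-conjugate).} In the setting of the paper ($G = GL_N$ with $G^{\th} \cong O_N$), the $\th$-stable Borels of $G$ are the stabilizers of complete isotropic flags of $\Bk^N$ for the non-degenerate symmetric form defining $\th$. Over an algebraically closed field $O_N = G^{\th}$ acts transitively on this variety of flags, providing $g_1 \in G^{\th}$ with $g_1 B g_1\iv = B'$. (For $N$ even the isotropic flag variety has two connected components swapped by a reflection in $O_N \setminus SO_N$; this is precisely why the statement allows $g \in G^{\th}$ rather than restricting to $H = (G^{\th})^0$.)

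\emph{Conclusion and main obstacle.} Setting $T'' := g_1 T g_1\iv \subset B'$, the torus $T''$ is $\th$-stable because $\th(g_1) = g_1$. Applying Step~1 to the pair $T'', T'$ produces $u \in (B')^{\th}$ with $u T'' u\iv = T'$, and then $g := u g_1 \in G^{\th}$ satisfies $g B g\iv = u B' u\iv = B'$ and $g T g\iv = T'$. The content of the lemma is concentrated in Step~2, namely the transitivity of $G^{\th}$ on $\th$-stable Borels; Step~1 is a routine unipotent computation. The main obstacle is verifying Step~2 cleanly, which here reduces to the classical transitivity of the orthogonal group on the variety of complete isotropic flags.
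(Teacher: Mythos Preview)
Your argument is correct and proves the lemma in the setting the paper ultimately uses ($G = GL_N$ with the orthogonal involution), but it follows a genuinely different route. The paper does not separate the Borel and torus steps: it picks any $x \in G$ with $xBx\iv = B'$, $xTx\iv = T'$, observes that $x\iv\th(x)$ lies in $B \cap N_G(T) = T$ and in fact in $T^{\io\th}$, then uses that $T^{\io\th}$ is a connected torus to extract a square root $t_1 \in T^{\io\th}$ and sets $g = xt_1 \in G^{\th}$. This argument is internal to the group and makes no reference to the flag variety or to Witt's theorem; it works whenever $T^{\io\th}$ is connected, so it is closer to a statement for general reductive $G$ (as intended in 1.1) than your Step~2, which is specific to $GL_N$. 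On the other hand, your decomposition has the virtue of making immediately visible why one needs $G^{\th}$ rather than $H = (G^{\th})^0$ when $N$ is even --- the two $SO_N$-orbits of complete isotropic flags --- which is exactly the point taken up in Remark~1.3 and 1.9.
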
 

\begin{proof}
We choose $x \in G$ such that $xBx\iv = B', xTx\iv = T'$.
Then $\th(x)B\th(x)\iv = xBx\iv, \th(x)T\th(x)\iv = xTx\iv$. 
Hence $x\iv \th(x) \in B \cap N_G(T)  = T$. 
In particular, $x\iv \th(x) = t \in T^{\io\th}$. 
Since $T^{\io\th}$ is a torus, 
one can find $t_1 \in T^{\io\th}$ such 
that $t = (t_1)^2$, i.e., $t = t_1 \th(t_1)\iv$.
If we put $g = xt_1$, we have $g \in G^{\th}$, and 
$B' = gBg\iv, T' = gTg\iv$.  The lemma is proved.    
\end{proof}

\remark{1.3.}
In general, $\th$-stable pairs $(B, T)$, $(B', T')$ are not necessarily $H$-conjugate, 
(see 1.9).

\para{1.4.}
We fix a $\th$-stable Borel subgroup $B$ and a $\th$-stable maximal torus 
$T$ such that $T \subset B$. 
A torus $S$ is called $\th$-anisotropic if $\th(t) = t\iv$ for any $t \in S$. 
A maximal $\th$-anisotropic tours is a $\th$-anisotropic torus which is maximal with respect to  
the inclusion relation.  It is known by [V] that a $\th$-anisotropic torus exists 
if $\th \ne \id$, and every maximal $\th$-anisotropic tori are conjugate under $H$.   
Moreover, any semisimple element in $G^{\io\th}$ is contained in some maximal $\th$-anisotropic 
torus. 
Here $(T^{\io\th})^0$ is a $\th$-anisotropic torus, but in general, it is not 
maximal $\th$-anisotropic as the following example shows.  
\par
Let $G = GL_2$ and define $\th : G \to G$ by $\th(g) = J\iv({}^tg\iv)J$ 
with $J = \begin{pmatrix}
             0 & 1 \\
             1 & 0
          \end{pmatrix}$. 
We define $x \in G$ by 
\begin{equation*}
x = \begin{pmatrix}
       1  &   -\z   \\
      1/2  &  \z/2 
    \end{pmatrix},
\end{equation*}
where $\z  \in \Bk^*$ such that $\z^2 = -1$. 
One can check that $x\iv \th(x) = J$. Let $T$ be the group of 
diagonal matrices, and $B$ the group of upper triangular matrices in $G$. 
Then $(B, T)$ is a $\th$-stable pair of Borel subgroup and maximal torus of $G$.
In this case, 
\begin{equation*}
T^{\io\th} = \biggl\{ \begin{pmatrix}
                    a   &  0  \\
                    0   &  a
                \end{pmatrix} \mid a \in \Bk^* \biggr\}  = Z(G). 
\end{equation*}
Put $T_1 = xTx\iv$.  Since $x\iv\th(x) = J \in N_G(T)$, $T_1$ is also a $\th$-stable
maximal torus. 
One can check that $T_1^{\io\th} = T_1$, and so $T_1$ is a maximal 
$\th$-anisotropic torus. 
Since $\dim T^{\io\th} = 1, \dim T_1 = 2$, $T^{\io\th}$ is not maximal.  

\para{1.5.}
Put $B_H = B \cap H$, and $T_H = T \cap H$.  Then 
as remarked in [R, Lemma 5.1], $B_H$ is a Borel subgroup of $H$ and
$T_H$ is a maximal torus of $H$ contained in $B_H$.  
Let $\Fg = \Lie G$ be the Lie algebra of $G$, and put 
$\Fb = \Lie B, \Ft = \Lie T$. The differential $d\th$ of $\th$ induces 
an involutive automorphism of $\Fg$, which we also denote by $\th$.   
Thus, $\Fb, \Ft$ are $\th$-stable. We have a decomposition 
$\Fg = \Fg^+ \oplus \Fg^-$, where 
$\Fg^{\pm} = \{ x \in \Fg \mid \th(x) = \pm x \}$. 
Here $\Fg^+ = \Lie H$, and $\Fg^-$ coincides with the tangent space of 
$G/G^{\th} \simeq G^{\io\th}$.  
Let $\vD \subset X(T)$ be the root system of $G$ with respect to $T$ 
(here $X(T)$ is the character group of $T$), and $\vD_+$ the set 
of positive roots in $\vD$ with respect to $(T, B)$. We have the root space 
decomposition 
$\Fg = \Ft \oplus \bigoplus_{\a \in \vD}\Fg_{\a}$. 
The map $\a \mapsto \a\circ\th$ on $X(T)$ induces a bijection 
on $\vD$, which we denote by $\s$. $\s : \vD \to \vD$ is compatible 
with the root space decomposition, and $\th$ gives an isomorphism  
$\th : \Fg_{\a} \to  \Fg_{\s(\a)}$.  $\s$ preserves $\vD_+$, and so produces 
an automorphism of the Dynkin diagram of $G$. Put
\begin{equation*}
\tag{1.5.1}
\vD_0 = \{ \a \in \vD_+ \mid \s(\a) = \a \}. 
\end{equation*} 
$\s$ acts freely on $\vD_1 = \vD_+ - \vD_0$, and we denote by $\ol\vD_1$ 
the set of $\s$-orbits in $\vD_1$.  For each $\b = \{ \a, \s(\a) \} \in \ol\vD_1$, 
put $\Fg_{\b} = \Fg_{a} \oplus \Fg_{\s(\a)}$. Then $\Fg_{\b}$ is $\th$-stable, and  
$\Fg_{\b}$ is decomposed as $\Fg_{\b} = \Fg_{\b}^+ \oplus \Fg_{\b}^-$ with 
$\Fg_{\b}^+ \subset \Fg^+, \Fg_{\b}^- \subset \Fg^-$. 
If $\a \in \vD_0$, then $\th$ gives a linear isomorphism $\Fg_{\a} \to \Fg_{\a}$. Since 
$\th^2 = 1$, we have $\th = \pm 1$ on $\Fg_{\a}$, i.e., $\Fg_{\a} \subset \Fg^+$ or 
$\Fg_{\a} \subset \Fg^-$.    
We denote by $\vD_0^+$ (resp. $\vD_0^-$) 
the set of $\a \in \vD_0$ such that $\Fg_{\a} \subset \Fg^+$ 
(resp. $\Fg_{\a} \subset \Fg^- $).  
Let $\Fn$ be the nilpotent radical of $\Fb$.  $\Fn$ is $\th$-stable, and is decomposed as 
$\Fn = \Fn^+ \oplus \Fn^-$, where $\Fn^{\pm} = \Fn \cap \Fg^{\pm}$. 
We have a root space decomposition 

\begin{equation*}
\tag{1.5.2}
\Fn^{\pm} = \bigoplus_{\b \in \ol\vD_1}\Fg_{\b}^{\pm} \oplus
                     \bigoplus_{\a \in \vD_0^{\pm}}\Fg_{\a}.
\end{equation*} 

\para{1.6.}
Let $U$ be the unipotent radical of $B$, which is $\th$-stable, 
and put $U_H = U^{\th} = U \cap H$. 
For each $\a \in \vD_+$, we have a one parameter 
subgroup $U_{\a} \subset U$ such that $\Lie U_{\a} = \Fg_{\a}$. 
We have an isomorphism 
\begin{equation*}
U \simeq \prod_{\a \in \vD_+} U_{\a}
\end{equation*}
for a choice of the total order on $\vD_+$. 
Here $\th :U \to U$ induces an isomorphism $\th : U_{\a} \to U_{\s(a)}$ for each $\a$, 
and according to the decomposition in (1.5.2) in the Lie algebra case, we have
\begin{equation*}
\tag{1.6.1}
U_H \simeq \prod_{\{\a, \s(\a)\} \in \ol\vD_1}(U_{\a}U_{\s(\a)})^{\th}
              \times \prod_{\a \in \vD^+_0}U_{\a}. 
\end{equation*} 
Note that $U(\vD_0) = \prod_{\a \in \vD_0}U_{\a}$ 
is a $\th$-stable closed subgroup of $U$, and its $\th$-fixed point subgroup 
coincides with $\prod_{\a \in \vD^+_0}U_{\a}$. 
\par
Concerning $U^{\io\th}$, we obtain a similar description as in (1.6.1).  But since 
$\io\th$ is not a group homomorphism, we need a special care.
One can find a filtration $U \supset U_1 \supset U_2 \supset \cdots$ 
 of $U$ by $\th$-stable normal subgroups, by making use of the commutator relations 
for $U$, such that each quotient group $U_i/U_{i+1}$ is of the form 
$U_{\a}U_{\s(\a)}$ for $\{\a, \s(\a)\} \in \ol \vD_1$ or of the form $U_{\a}$ 
for $\a \in \vD^-_0$. 
In that case, $U^{\io\th}$ has a filtration 
$U^{\io\th} \supset U_1^{\io\th} \supset U_2^{\io\th} \supset \cdots$ 
(by affine subspaces) such that   
$U^{\io\th}_i/U^{\io\th}_{i+1} \simeq (U_{\a}U_{\s(\a)})^{\io\th}$ or 
$\simeq U_{\a}^{\io\th}$ accordingly.  In particular, $U_H$ (resp. $U^{\io\th}$) 
is isomorphic to an affine space with $\dim U_H = \dim \Fn^+$ 
(resp. $\dim U^{\io\th} = \dim \Fn^-$). 

\para{1.7.}
In the remaining part of the paper, we concentrate on the symmetric spaces 
associated to orthogonal groups. 
Let $V$ be an $N$ dimensional vector space over $\Bk$, 
and let $G =  GL_N = GL(V)$ with $N \ge 2$.  
Consider an involutive automorphism  $\th : G \to G$ defined by
$\th(g) = J\iv({}^tg\iv)J$, where 
\begin{align*}
J &= \begin{pmatrix}
          1 &    0   &  0  \\ 
          0  &   0   &  1_n  \\
          0  &  1_n  &  0
         \end{pmatrix}   \quad \text{ if } N = 2n+1,  \\  \\
J &= \begin{pmatrix}
            0  &  1_n \\
            1_n  &  0     
      \end{pmatrix}  \qquad\quad \text{ if } N = 2n,  
 \end{align*}
with $1_n$ the identity matrix of degree $n$.
Let $H = (G^{\th})^0$.  Then 
$H$ is a special orthogonal group $SO_N$ with respect to the 
symmetric bilinear form $\lp u, v\rp = {}^tuJv$ on $V$ 
($u,v \in V$), under the identification $V \simeq \Bk^N$ via 
the basis $\{ e_0, e_1, \dots, e_n, f_1, \dots. f_n\}$ 
in the case where $N = 2n+1$, and the basis 
$\{ e_1, \dots, e_n, f_1, \dots, f_n\}$ in the case where $N = 2n$. 
It follows that 
\begin{align*}
\lp e_i, f_j \rp &= \lp f_j, e_i \rp = \d_{i,j} \text{ for $1 \le i, j \le n$,} \\
\lp e_i, e_j \rp &= \lp f_i, f_j\rp = 0  \text{ for $1 \le i,j \le n$, } 
\end{align*}
and $\lp e_0, e_0 \rp = 1$, $\lp e_0, e_i\rp = \lp e_0, f_i\rp = 0$ for $i \ge 1$.
\par
Let $\Fg = \Lie G$, and $\th : \Fg \to \Fg$ be the induced automorphism.
Then $\th(x) = -J\iv ({}^tx)J$ for $x \in \Fg$. 
Let $x^*$ be the adjoint of $x \in \Fg$ with respect to the bilinear form 
$\lp\ ,\ \rp$.  Then we have $x^* = J\iv ({}^tx)J$, and so 
\begin{equation*}
\Fg^{\pm} = \{ x \in \Fg \mid x^* = \mp x\}.
\end{equation*}
We have

\begin{align*}
\Fg^+ &= \{ x \in \Fg \mid \lp xv, w \rp = - \lp v, xw\rp \}, \\
\Fg^- &= \{ x \in \Fg \mid \lp xv, w \rp = \lp v,xw\rp \}.
\end{align*}
In particular, $\Fg^-$ coincides with the set of self-adjoint matrices in 
$\Fg = \Fg\Fl_N$. 
Correspondingly, $G^{\io\th}$ coincides with the set of non-degenerate 
self-adjoint matrices in $\Fg\Fl_N$.  In particular, $G^{\io\th}$ is connected, 
and by 1.1 we have 
\begin{equation*}
\tag{1.7.1}
G^{\io\th} = \{ g\th(g)\iv \mid g \in G\}.
\end{equation*} 

\para{1.8.}
We fix a $\th$-stable Borel subgroup $B$ and a $\th$-stable maximal torus $T \subset B$ 
as follows. 
First assume that $N = 2n+1$.
Let $B$ be the subgroup of $G$ consisting of the matrices of the form 
\begin{equation*}
\begin{pmatrix}
a        &  0    &  d_1  \\
{}^td_2  &  b_1  &   c  \\
0        &   0   &  b_2
\end{pmatrix},
\end{equation*}
where $a \in \Bk^*$, $d_1, d_2$ are (row) vectors in $\Bk^n$, 
and $b_1, b_2, c$ are square matrices of degree $n$ with $b_1$
upper triangular, $b_2$ lower triangular.  
Let $T$ be the set of all diagonal matrices in $G$.  Then $B$ is a Borel 
subgroup of $G$, and $T$ is a maximal torus in $G$ with $T \subset B$.
$B, T$ are both $\th$-stable.  
We have
\begin{equation*}
T^{\th} = \biggl\{\begin{pmatrix}
              \pm 1  &  0   &  0  \\
                0    &  b    &  0  \\
                0    &  0    &  b\iv
            \end{pmatrix}  \bigg| \ b \in D_n \biggr\}, 
\qquad 
T^{\io\th} = \biggl\{ \begin{pmatrix}
                       a   &   0   &   0   \\
                       0   &   b   &   0   \\
                       0   &   0   &   b
                      \end{pmatrix}  \bigg| \ a \in \Bk^*, b \in D_n \biggr\},
\end{equation*}
where $D_n$ is the group of diagonal matrices of $GL_n$. 
Moreover, we have

\begin{align*}
B^{\th} &= \biggl\{\begin{pmatrix}
                           a      &  0  &   d_1   \\
                           {}^td_2    &  b  &   c   \\
                           0      &  0  &   {}^tb\iv
                  \end{pmatrix} \in B  \ \bigg| \  a = \pm 1, 
           d_2 = -a\iv d_1{}^tb,  \ {}^tc = - ({}^td_1d_1 + b\iv c){}^tb  \biggr\},  \\
B^{\io\th} &= \biggl\{    \begin{pmatrix}
                           a       &   0   &   d  \\
                           {}^td   &   b   &   c   \\
                           0       &   0   &   {}^tb
                             \end{pmatrix}  \in B \ \bigg| \ {}^tc = c  \biggr\}.  
\end{align*}
\par
Next assume that $N = 2n$. Let $B$ be the subgroup of $G$ consisting of the 
matrices of the form
\begin{equation*}
\begin{pmatrix}
b_1  &  c  \\
0    &  b_2
\end{pmatrix},
\end{equation*} 
where $b_1, b_2, c$ are square matrices of degree $n$ with $b_1$ upper triangular, 
$b_2$ lower triangular.
Let $T$ be the set of diagonal matrices in $G$.  Then $B$ is a Borel subgroup of $G$ 
and $T$ is a maximal torus in $G$ with $T \subset B$. $B, T$ are both $\th$-stable.
We have

\begin{equation*}
T^{\th} = \biggl\{ \begin{pmatrix}
             b  &  0  \\
             0  &  b\iv
          \end{pmatrix}  \big| \ b \in D_n\biggr\},
\quad
T^{\io\th} = \biggl\{ \begin{pmatrix}
                 b  &  0  \\
                 0  &  b   
                      \end{pmatrix}  \big| \ b \in D_n \biggr\}.
\end{equation*}   
Moreover, we have 

\begin{align*}
B^{\th} &= \biggl\{ \begin{pmatrix}
                       b   &   c   \\
                       0   &   {}^tb\iv   
                   \end{pmatrix} \in B \ \big| \ {}^tc = - b\iv c \, {}^tb \biggr\}, \\
 B^{\io\th} &=  \biggl\{  \begin{pmatrix}
                             b  &   c   \\
                             0  &   {}^tb
                           \end{pmatrix} \in B \ \big|\ {}^tc = c \biggr\}.  
\end{align*}
\par
Let $U$ be the unipotent radical of $B$. Then $U$ is $\th$-stable.
Put $B_H = (B^{\th})^0, T_H = (T^{\th})^0$ and $U_H = U^{\th}$. 
Thus $B_H \supset T_H$ is a pair of a Borel subgroup and a maximal torus 
in $H$, and $U_H$ is the unipotent radical of $B_H$.

\para{1.9.} 
Let $T \subset B$ be as above. 
For $i = 1, \dots, n-1$, let $s_i$ be the permutation of the basis in $V$ 
such that $e_i \lra e_{i+1}, f_i \lra f_{i+1}$ and that it fixes all other basis.
Also for $i = 1, \dots, n$, 
let $t_i$ be the permutation $e_i \lra f_i$ which fixes all other basis.
Assume that $N$ is odd.  Then $N_H(T)$ is generated by $s_1, \dots, s_{n-1}, t_n$ and $T$.
$N_H(T)/T$ is isomorphic to the Weyl group $W_n$ of type $B_n$.  On the other hand, 
$N_{G^{\th}}(T)$ is generated by $N_H(T)$ and $-1$, so that 
$N_{G^{\th}}(T)/T \simeq W_n \times \BZ/2\BZ$. 
Hence in view of Lemma 1.2 (note that $T^{\io\th}$ is connected), 
any $\th$-stable pair $(B', T')$ is $H$-conjugate to 
$(B, T)$. 
\par
Next assume that $N$ is even. In this case, $N_H(T)$ is generated by 
$s_1, \dots, s_{n-1}$, $t_{n-1}t_n$  and $T$. $N_H(T)/T$ is isomorphic to the Weyl 
group $W_n'$ of type $D_n$.  Moreover, $N_{G^{\th}}(T)$ is generated by $N_H(T)$ and 
$t_n$, hence $N_{G^{\th}}(T)/T \simeq W_n$. 
One can check that 
$B_1 = t_n B t_n\iv$ is not contained in the set of $W '_n$-conjugates of $B$.  
It follows that the $\th$-stable pair $(B_1, T)$ is not $H$-conjugate to $(B, T)$, 
and $(B,T), (B_1, T)$ give representatives of $H$-conjugates of 
$\th$-stable pairs in $G$.

\para{1.10.} 
Let $\vD$ and $\vD_+$ be as in 1.5 with respect to the pair $(B, T)$ in 1.8.  
Let $\Ft = \Lie T$, and fix the basis of $\Ft^*$ as 
$\{ \ve_0, \ve_1,\dots, \ve_n, \ve_1',\dots, \ve_n'\}$ in the case where $N = 2n+1$, and 
$\{\ve_1, \dots, \ve_n, \ve_1',\dots, \ve_n'\}$ 
in the case where $N = 2n$.  Here $\ve_i$ (resp. $\ve_j', \ve_0$) 
is the weight vector corresponding to $e_i$ (resp. $f_j$, $e_0$).   
We write those vectors as $\wt\ve_1, \dots, \wt\ve_N$, in the order 
$\ve_1, \dots, \ve_n, \ve_0, \ve_n', \dots, \ve_1'$ if $N = 2n + 1$, and 
$\ve_1, \dots, \ve_n, \ve_n', \dots, \ve_1'$ if $N = 2n$. 
Thus $\vD_+$ can be written as
\begin{equation*}
\vD_+ = \{ \wt\ve_i - \wt\ve_j \mid 1 \le i < j \le N \}.
\end{equation*}  

It follows from the description of $B^{\io\th}$ in 1.8, we see that 
\begin{equation*}
\tag{1.10.1}
\vD_0^+ = \emptyset, \qquad \vD_0^- = \{ \wt\ve_i - \wt\ve_{N - i+1} \mid 1 \le i \le n\}.
\end{equation*}

Recall that $U(\vD_0) = \prod_{\a \in \vD_0}U_{\a}$ is a $\th$-stable subgroup of $U$.
It follows from (1.10.1), we have $U(\vD_0) = U(\vD_0)^{\io\th}$, which we denote by 
$\SD$. By using (1.10.1), $\SD$ can be written explicitly 
as follows. 
\begin{equation*}
\tag{1.10.2}
\begin{aligned}
\SD &=  \biggl\{ \begin{pmatrix}
                     1   &   0   &  0   \\
                     0   &   1_n &  c  \\
                     0   &   0   &  1_n
                  \end{pmatrix} \in U \ \bigg| \ c \text{ : diagonal} \biggr\} 
                    &\qquad   &(N = 2n+1), \\
\SD &=  \biggl\{ \begin{pmatrix}
                     1_n &  c  \\
                     0   &  1_n
                  \end{pmatrix} \in U  \ \big|\ c \text{ : diagonal} \biggr\} 
                    &\qquad   &(N = 2n). \\
\end{aligned}
\end{equation*}
Thus $\SD$ is $T^{\th}$-stable, and $\SD \simeq \Bk^n$. 

\para{1.11.}
Let $P = LU_P$ be a $\th$-stable parabolic subgroup of $G$ containing $B$, where 
$L$ is the Levi subgroup of $P$ containing $T$ and $U_P$ 
is the unipotent radical of $P$. Here we consider the special case where 
$L^{\th} \simeq (GL_1)^a \times GL_{N_0}^{\th}$ with $N_0 = N - 2a$ for 
$0 \le a \le n$ (we understand that 
$GL_1^{\th} = \{ \pm 1\}$, $GL_0^{\th} = \{ 1\}$).  
Put $\SD_P = \SD \cap U_P$.  Then $\SD_P$ is a closed subgroup of $U_P$, and 
is contained in $U_P^{\io\th}$.  We have $\SD_P \simeq \Bk^a$.  
It is easy to see that $\SD_P$ is stable under the conjugation action of $L$ on 
$U_P$. More precisely, we have
\par\medskip\noindent
(1.11.1) \ Assume that $x \in L^{\io\th}$ and $u \in \SD_P$.  Then we have 
$xu = ux$. 
\par\medskip
The proof is done by a direct computation by using (1.10.2). 
Note that in the Lie algebra case, this corresponds to the fact that
if $x, y \in \Fg^-$ such that $[x,y] \in \Fg^-$, then $[x, y] = 0$ as
$[x, y] \in \Fg^+$. 

\remark{1.12.} 
For the comparison, we briefly discuss the case of symplectic groups. 
Let $V$ be an $N = 2n$-dimensional vector space over $\Bk$, and let 
$G = GL_N = GL(V)$. The involutive automorphism $\th : G \to G$ is defined 
as in 1.7, but by replacing $J$ by $J = \begin{pmatrix}
                                          0 & 1_n  \\
                                         -1_n &  0
                                        \end{pmatrix}$. 
Then $H = G^{\th}$ is the symplectic group $Sp_N$, and we consider $G^{\io\th}$ with 
$H$-action. We choose a pair of $\th$-stable Borel subgroup $B$ and a $\th$-stable 
maximal torus $T \subset B$ as in [SS, 1]. Then the root system 
$\vD$ of $G$ and the set of positive roots with respect to the pair 
$(B,T)$ are given similarly to
1.10. In particular, the set $\vD_0 = \vD_0^+ \cup \vD_0^-$ is defined, and 
$\vD_0^{\pm}$ are determined by using the computation in [SS, 1], namely, we have
\begin{equation*}
\tag{1.12.1}
\vD_0^+ = \{ \wt\ve_i - \wt\ve_{N-i+1} \mid 1 \le i \le n\}, \qquad
\vD_0^- = \emptyset.
\end{equation*}
(1.12.1) shows that in the symplectic case, an opposite situation 
occurs compared to the orthogonal case (1.10.1).  
\par\bigskip
\section{Cuspidal local systems}

\para{2.1.}
In this section, we extend the notion of cuspidal local systems in the case 
of reductive groups given in [L1] to the 
case of symmetric spaces. 
Let $G = GL_N$, and $\th : G \to G$ be as in 1.7. 
Let $H = (G^{\th})^0$.
We denote the twisted action of $G$ on $G^{\io\th}$ given 
by $g : x \mapsto gx\th(g)\iv$ ($g \in G, x \in G^{\io\th}$) by $g\cdot x$. 
\par
Let $P = LU_P$ be a $\th$-stable parabolic subgroup of $G$ containing $B$, where
$L$ is the $\th$-stable Levi subgroup containing $T$ and $U_P$ is the unipotent radical 
of $P$.
Then $L^{\th} \simeq GL_{n_1} \times \cdots \times GL_{n_a} \times GL^{\th}_{N_0}$, where 
$N_0 + \sum_{i=1}^a 2n_i = N$. We have
\begin{equation*}
\tag{2.1.1}
L^{\io\th} \simeq GL_{n_1}\times \cdots \times GL_{n_a} \times GL_{N_0}^{\io\th},
\end{equation*}
on which $L^{\th}$ acts via the conjugation action. 
The natural projection $P \to P/U_P \simeq L$ induces a surjective map 
$\eta_P : P^{\io\th} \to L^{\io\th}$. Note that $L^{\th}$ acts on $U_P^{\io\th}$ 
by conjugation action.  Since $L^{\io\th}$ is connected by (2.1.1), 
we have an isomorphism $L^{\io\th} \simeq L/L^{\th}$ by 1.1. 
We note the following. 
\par\medskip\noindent
(2.1.2) \ The map 
$L \times U_P^{\io\th} \to P^{\io\th}, (\ell, u) \mapsto \ell\cdot u$ 
induces an isomorphism $L \times^{L^{\th}}U_P^{\io\th} \simeq P^{\io\th}$.
Moreover, the map $\eta_P$ is regarded as an affine space bundle over $L^{\io\th}$
through the maps 
\begin{equation*}
P^{\io\th} \simeq L \times^{L^{\th}}U_P^{\io\th} \to L/L^{\th} \simeq L^{\io\th}, 
\end{equation*} 
where for $g = \ell\cdot u \in P^{\io\th}$, $\eta_P(g) = \ell\th(\ell)\iv$. 
\par\medskip
In fact,  take $x \in P^{\io\th}$, and write it as $x = \ell_1u_1$ with 
$\ell_1 \in L, u_1 \in U_P$. Then we have
$\th(\ell_1) = \ell_1\iv, \th(u_1) = \ell_1u_1\iv \ell_1\iv$. 
Since $\ell_1 \in L^{\io\th}$, there exists $\ell \in L$ such that 
$\ell_1 = \ell\th(\ell)\iv$. If we put $u = \th(\ell)\iv u_1\th(\ell)$, 
one can check that $u \in U_P^{\io\th}$, and we have  
\begin{equation*}
x = \ell_1u_1 = \ell\bigl(\th(\ell)\iv u_1 \th(\ell)\bigr)\th(\ell)\iv 
              = \ell u  \th(\ell)\iv. 
\end{equation*} 
The pair $(\ell, u)$ is uniquely determined from $g$ modulo the action of 
$L^{\io\th}$, $y : (\ell, u) \mapsto (\ell y\iv, yuy\iv)$ for $y \in L^{\io\th}$. 
Hence (2.1.2) holds. 
\par\medskip
Put $P_H = (P^{\th})^0$ and $L_H = (L^{\th})^0$. 
Then $P_H$ is the parabolic subgroup of $H$ containing $B_H$. 
Let $W_H = N_H(T_H)/T_H$ be the Weyl group of $H$.  We define a Weyl subgroup 
$W_{L_H}$ of $W_H$ as the Weyl group of $L_H$.   
Let $\SO$ be an $H$-orbit in $G^{\io\th}$ and 
$\SO_L$ an $L_H$-orbit in $L^{\io\th}$.   
Let $Z_L$ be the center of $L$.  Then $Z_L^{\io\th} \simeq (GL_1)^a$. 

Consider the varieties

\begin{align*}
Z = \{(x, &gP_H, g'P_H) \in G^{\io\th} \times H/P_H \times H/P_H   \\
           &\mid g\iv xg  \in \eta_P\iv(Z_L^{\io\th}\SO_L), 
               {g'}\iv xg' \in \eta_P\iv(Z_L^{\io\th}\SO_L)\}, \\
Z' = \{(x, &gP_H, g'P_H) \in G^{\io\th} \times H/P_H \times H/P_H   \\
           &\mid g\iv xg  \in \eta_P\iv(\SO_L), 
               {g'}\iv xg' \in \eta_P\iv(\SO_L)\}. 
\end{align*}
We consider the partition $H/P_H \times H/P_H = \coprod \OO_{\w}$ into $H$-orbits, where 
$\OO_{\w}$ is an $H$-orbit containing $(P_H, \w P_H)$ for 
$\w \in W_{L_H}\backslash W_H/W_{L_H}$. 
Let 
$Z_{\w} = p\iv(\OO_{\w})$, $Z_{\w}' = {p'}\iv(\OO_{\w})$, where $p : Z \to H/P_H \times H/P_H$, 
$p' : Z' \to H/P_H \times H/P_H$ are projections onto second and third factors. 
An orbit $\OO_{\w}$ is said to be good if $P_H$ and $\w P_H\w\iv$ have a common Levi subgroup,
otherwise $\OO_{\w}$ is said to be bad. The good orbit corresponds to $\w$ such that 
$\w W_{L_H} = W_{L_H}\w$.  
\par
In order to proceed the induction process smoothly for proving the proposition below, 
we replace $G$ by groups appearing as a Levi subgroup of some parabolic subgroup 
of $G$, namely we consider 
\begin{equation*}
\tag{2.1.3}
G = G_0 \times \prod_{i = 1}^t (G_ i \times G_i),
\end{equation*}
where $G_0 = GL_{N}, G_i = GL_{N_i}$.  We consider an involutive automorphism 
$\th : G \to G$ such that $\th$ acts on $G_i \times G_i$ as a permutation of factors, 
and $\th$ acts on $G_0$ so that $G_0^{\th} \simeq O_{N}$. 
Hence 
\begin{equation*}
\tag{2.1.4}
G^{\io\th} \simeq G_0^{\io\th} \times \prod_{i = 1}^tG_i.
\end{equation*} 
Under this setting, $P = LU_P$ and the varieties $Z, Z'$ are defined similarly.
In particular, $L^{\th}$ can be written as 
$L^{\th} \simeq GL_{n_1} \times \cdots \times GL_{n_a} \times GL_{N_0}^{\th}$, where 
$GL^{\th}_{N_0}$ is a subgroup of $G^{\th}_0$. 
Put $\nu_H = \dim U^{\th}$, and $\ol \nu = \nu_{L_H}$. Let $c = \dim \SO$ and 
$\ol c = \dim \SO_L$. Put $r = \dim Z_L^{\io\th}$. 
We define $\vD_P$ by 
\begin{equation*}
\tag{2.1.5}
\vD_P =  (N - N_0)/2.  
\end{equation*}
\par
The following result is an extension of [L1, Proposition 1.2]. 
The proof is done along a similar line.  But since the Levi decomposition 
$P = LU_P$ does not hold for $P^{\io\th}$, we need a special care for it. 
The discussion based on the Levi decomposition in [L1] will be replaced 
by the discussion using (2.1.2). 

\begin{prop}   
Under the notation above, 
\begin{enumerate}
\item
For $\ol x \in \SO_L$, we have 
$\dim (\SO \cap \eta_P\iv(\ol x)) \le (c - \ol c)/2 + \vD_P/2$.
\item 
For $x \in \SO$, 
\begin{equation*}
\dim \{ gP_H \in H/P_H \mid g\iv xg \in \eta_P\iv(\SO_L)\} 
         \le (\nu_H - c/2) - (\ol\nu - \ol c/2) + \vD_P/2.
\end{equation*}
\item
Put $d = 2\nu_H - 2\ol\nu + \ol c + r$. Then $\dim Z_{\w} \le d + \vD_P$ if $\OO_{\w}$
is good, and $\dim Z_{\w} < d + \vD_P$ if $\OO_{\w}$ is bad.  Hence $\dim Z \le d + \vD_P$. 
\item 
Put $d' = 2\nu_H - 2\ol\nu  + \ol c$.  Then $\dim Z'_{\w} \le d' + \vD_P$ for any $\w$.
Hence $\dim Z' \le d' + \vD_P$.    
\end{enumerate}
\end{prop}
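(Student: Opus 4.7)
The plan is to follow the overall strategy of [L1, Proposition 1.2], proving (i) first and deducing (ii), (iii), (iv) from (i) by incidence-variety dimension counts. The only genuinely new feature compared with the reductive setting is the correction term $\vD_P/2$, which reflects the asymmetry $\dim U_P^{\io\th} - \dim U_{P_H} = \vD_P$ forced by (1.10.1) together with the form of $L^{\th}$.

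For (i), fix $\ol x \in \SO_L$. Using the affine bundle structure (2.1.2), the fibre $\eta_P\iv(\ol x)$ is (non-canonically) isomorphic to $U_P^{\io\th}$, and the stabilizer $Z_{P_H}(\ol x)$ of $\ol x$ in $P_H$, equal to $Z_{L_H}(\ol x) \cdot U_{P_H}$, acts on this fibre preserving $\SO \cap \eta_P\iv(\ol x)$. A direct count gives $\dim Z_{P_H}(\ol x) = \dim P_H - \ol c$, so the bound in (i) reduces to the centralizer estimate
\begin{equation*}
\dim\bigl(Z_H(x) \cap P_H\bigr) \ge \dim P_H - (c + \ol c)/2 - \vD_P/2
\end{equation*}
for $x \in \SO \cap \eta_P\iv(\ol x)$, together with the remark that outside a subvariety of strictly smaller dimension, $\SO \cap \eta_P\iv(\ol x)$ is swept out by $Z_{P_H}(\ol x)$-orbits. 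This is where the main difficulty lies: a naive application of Lusztig's argument yields a bound off by a multiple of $\vD_P$, and extracting the correct coefficient $1/2$ requires a root-by-root analysis of $Z_H(x) \cap U_P^{\io\th}$, balancing the contributions of the $\vD_0^-$-roots (whose $\io\th$-behaviour is peculiar, cf.\ 1.6 and 1.10) against those of the $\ol\vD_1$-roots. The commutation property (1.11.1) provides the structural input that isolates the ``toral'' part of $U_P^{\io\th}$.

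Part (ii) is then formal: introduce
\begin{equation*}
X = \{(x, gP_H) \in \SO \times H/P_H \mid g\iv xg \in \eta_P\iv(\SO_L)\}
\end{equation*}
and compute $\dim X$ two ways. Via the projection to $H/P_H$ the fibre is $\SO \cap \eta_P\iv(\SO_L)$, which by (i) combined with the fibration $\eta_P : \eta_P\iv(\SO_L) \to \SO_L$ has dimension at most $(c + \ol c)/2 + \vD_P/2$; hence $\dim X \le (\nu_H - \ol\nu) + (c + \ol c)/2 + \vD_P/2$. Comparing with the other projection $X \to \SO$, whose generic fibre is precisely the object whose dimension we wish to bound in (ii), yields (ii).

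For (iii) and (iv), fix a representative $(P_H, \w P_H)$ of $\OO_{\w}$ and consider the fibrations $Z_{\w} \to \OO_{\w}$ and $Z'_{\w} \to \OO_{\w}$. The fibre at $(P_H, \w P_H)$ is an intersection in $G^{\io\th}$ of $\eta_P\iv((Z^0_L)^{\io\th}\SO_L)$ (respectively $\eta_P\iv(\SO_L)$) with its $\w$-conjugate, and is bounded by applying (ii) to both $P$ and $P' = \w P \w\iv$. Combined with $\dim \OO_{\w} \le 2(\nu_H - \ol\nu)$, this yields $\dim Z_{\w} \le d + \vD_P$ and $\dim Z'_{\w} \le d' + \vD_P$. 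The good/bad distinction in (iii) arises from the standard mechanism: in the good case $\w W_{L_H} = W_{L_H}\w$ the two preimages of $(Z^0_L)^{\io\th}\SO_L$ intersect transversally in the $(Z^0_L)^{\io\th}$-direction, so all of $d + \vD_P$ may be attained, while in the bad case the common Levi of $P_H$ and $\w P_H \w\iv$ is strictly smaller than $L_H$, transversality fails, and the bound becomes strict. No such distinction appears in (iv), since the $(Z^0_L)^{\io\th}$-factor is absent.
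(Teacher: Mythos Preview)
Your overall plan has the direction of implication reversed relative to both the paper and [L1, Proposition~1.2]: in Lusztig's argument, and here, one proves (iii)/(iv) \emph{first} by induction on the ambient group (assuming the proposition for every proper $\theta$-stable Levi), and then deduces (ii) from (iv) and finally (i) from (ii). Your proposal carries no induction hypothesis at all, and without it the centraliser estimate you write down for (i) has no obvious proof. You acknowledge this is ``where the main difficulty lies'' and gesture at a root-by-root analysis, but that is exactly the step the inductive structure is designed to avoid; I do not see how to complete it directly. Note also that (1.11.1) applies only when $L^{\theta}\simeq (GL_1)^a\times GL_{N_0}^{\theta}$, whereas Proposition~2.2 must be proved for the general $L$ of (2.1.4), so it cannot be the structural input you need.

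The paper's actual mechanism for (iii)/(iv) is this: fix $w$ and study the fibre of $Z_{\w}$ (or $Z'_{\w}$) over $\OO_{\w}$, which is $\{x\in \eta_P^{-1}(\cdot)\mid \dot w^{-1}x\dot w\in \eta_P^{-1}(\cdot)\}$. One parametrises $x\in P^{\io\theta}\cap{}^wP^{\io\theta}$ via the decomposition $x=(zy')\cdot u=(zy)\cdot u'$ with $z\in M=L\cap{}^wL$, $y'\in U_Q=L\cap{}^wU_P$, $y\in U_{Q'}={}^wL\cap U_P$, $u\in U_P^{\io\theta}$, $u'\in{}^wU_P^{\io\theta}$. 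The $(u,u')$-fibre has dimension $\dim(U_P\cap{}^wU_P)^{\io\theta}$, which differs from the $\theta$-fixed version by a combinatorial quantity $b_w$; this accounts for the $\vD_P$ correction via the identity $\dim(U_P\cap{}^wU_P)^{\io\theta}=2\nu_H-2\bar\nu-\dim\OO_{\w}+b_w$. The remaining variety in $(y,y',z)$ is then bounded by applying the induction hypothesis (part~(i)) to the parabolics $Q=L\cap{}^wP$ and $Q'={}^wL\cap P$ of $L$ and ${}^wL$. The key numerical fact is $\vD_Q+b_w\le \vD_P$, which is verified directly from the description of $b_w$ and $\vD_Q$ as cardinalities of index sets; this is what makes the constants come out right with the factor $\tfrac12$. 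The good/bad dichotomy in (iii) arises from $\dim(Z_L^0\cap Z_{{}^wL}^0)^{\io\theta}<r$ for bad $\w$, not from a transversality argument.
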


\begin{proof}
Since the proposition 
holds in the case where $G = T \times T$ for a torus $T$, 
we may assume that the proposition holds for a proper Levi subgroup $L$ of $G$.  
\par
Consider an orbit $\OO_{\w}$. Let $w \in W_H$ be a representative of 
$\w \in W_{L_H}\backslash W_H/W_{L_H}$, and $\dw \in N_H(T_H)$ a representative of $w$.  
In order to show (iii) and (iv), it is enough
to see that
\begin{align*}
\tag{2.2.1}
\dim \{ x \in \eta_P\iv&(Z_L^{\io\th}\SO_L) \mid 
       \dw\iv x\dw \in \eta_P\iv(Z_L^{\io\th}\SO_L)\}  \\
           &\le 2\nu_H - 2\ol\nu + \ol c + r - \dim \OO_{\w} + \vD_P,  \\
\tag{2.2.2}
\dim \{ x \in \eta_P\iv&(\SO_L) \mid \dw\iv x\dw \in \eta_P\iv(\SO_L)\}  \\
           &\le 2\nu_H - 2\ol\nu + \ol c - \dim \OO_{\w} + \vD_P,
\end{align*} 
and 
that (2.2.1) is a strict inequality if $\OO_{\w}$ is bad.
Now by (2.1.2), an element 
$x \in P^{\io\th} \cap {}^wP^{\io\th}$ can be written as 
$x = \ell\cdot u = \ell'\cdot u'$ with $\ell \in L$, $\ell' \in {}^wL$,  
$u \in U^{\io\th}_P, u' \in {}^wU^{\io\th}_P$. Moreover, there exists a unique element 
$z \in L \cap {}^wL$ such that $\ell = zy', \ell' = zy$ 
with $y' \in L \cap {}^wU_P$, $y \in {}^wL \cap U_P$. 
Put 
\begin{align*}
\wt Y = \{ (&u,u',y,y', z) \in U^{\io\th}_P \times {}^wU^{\io\th}_P  
         \times ({}^wL \cap U_P) \times (L \cap {}^wU_P) 
         \times (L \cap {}^wL)   \\ 
   &\mid y'\cdot u = y\cdot u', zy'\th(zy')\iv \in Z_L^{\io\th}\SO_L, 
    \dw\iv zy\th(zy)\iv \dw \in Z_L^{\io\th}\SO_L \} 
\end{align*}
and let $Y$ be the quotient of $\wt Y$ by  $E^{\th}$, where 
$E = ({}^wL \cap U_P)\times (L \cap {}^wU_P) \times (L \cap {}^wL)$ and 
$E^{\th}$ acts on $\wt Y$ by 
\begin{equation*}
(\ell_1, \ell_2, z_1) : (u,u',y,y',z) \mapsto 
    (\ell_2u\ell_2\iv, \ell_1u'\ell_1\iv,  z_1y\ell_1\iv, z_1y'\ell_2\iv, zz_1\iv).
\end{equation*} 
Then (2.2.1) can be rewritten as 
\begin{equation*}
\tag{2.2.3}
\dim  Y \le 2\nu_H - 2\ol\nu + \ol c + r - \dim \OO_{\w} + \vD_P,  
\end{equation*}
where it is the strict inequality if $\OO_{\w}$ is bad. 
Moreover (2.2.2) can be rewritten as the inequality obtained by dropping 
$Z_L^{\io\th}$ in the definition of $\wt Y$, and by dropping $r$ from (2.2.3). 
\par
We now consider the projection $Y \to ({}^wL \cap U_P) \times (L \cap {}^wU_P) 
          \times (L \cap {}^wL)$ by $(u,u',y,y',z) \mapsto (y,y',z)$.
For fixed $y,y',z$, the fibre $Y_{y,y',z}$ can be written as
\begin{equation*}
Y_{y,y',z} = \{ (u, u') \in U^{\io\th}_P \times {}^wU^{\io\th}_P  
                  \mid y'\cdot u = y\cdot u'\}.  
\end{equation*}
We note that  
\begin{equation*}
\tag{2.2.4}
\dim Y_{y,y',z} = \dim (U_P \cap {}^wU_P)^{\io\th}.
\end{equation*} 
In fact, put 
\begin{align*}
\wt u &= ({y'}\iv yy')\iv \cdot u, \\
\wt u' &= {y'}\iv\cdot u'.
\end{align*}
Since $y \in U_P, y' \in L$, we have ${y'}\iv yy' \in U_P$.  Thus 
$\wt u \in U_P^{\io\th}$.  On the other hand, since $y' \in {}^wU_P$, 
we have $\wt u' \in {}^wU_P^{\io\th}$. Thus 
the variety $Y_{y,y',z}$ is isomorphic to the variety 
$\{ (\wt u, \wt u') \in U_P^{\io\th} \times {}^wU_P^{\io\th} \mid \wt u = \wt u'\}$. 
Hence $\dim Y_{y,y',z} = \dim (U_P \cap {}^wU_P)^{\io\th}$. (2.2.4) holds. 
\par
Since the fibres $Y_{y,y',z}$ have constant dimension, (2.2.3) can be rewritten as 
follows; 
\begin{align*}
\tag{2.2.5}
\dim D \le 2\nu_H - 2\ol\nu + \ol c + r 
          - \dim \OO_{\w} + \vD_P - \dim (U_P \cap {}^wU_P)^{\io\th},
\end{align*}  
where 
\begin{align*}
D = \{ (y,&y',z) \in ({}^wL \cap U_P) \times (L \cap {}^wU_P) \times (L \cap {}^wL)  \\
           &\mid zy'\th(zy')\iv \in Z_L^{\io\th}\SO_L, 
          \dw\iv zy\th(zy)\iv \dw \in Z_L^{\io\th}\SO_L \}/E^{\th},
\end{align*}
and the action of $E^{\th}$ is defined similarly.
\par
We now compute the difference of $\dim (U_P \cap {}^wU_P)^{\th}$ and 
$\dim (U_P \cap {}^wU_P)^{\io\th}$. According to the decomposition in (2.1.3), 
$U_P \cap {}^wU_P$ is the direct product of those subgroups corresponding 
to $G_0$ or $G_i \times G_i$ ($1 \le i \le t$) in (2.1.3).  For $G_i \times G_i$, 
its $\th$-fixed part and $\io\th$-fixed part are isomorphic. Hence we have only to
consider the part $G_0 \cap P$. We assume that $G_0 \cap L^{\io\th}$ is expressed 
as in the right hand side of (2.1.1). 
Put $n = [N/2], n_0 = [N_0/2]$. The Weyl subgroup $W_0$ of $W_H$ corresponding to $G_0$
is isomorphic to $S_n\ltimes (\BZ/2\BZ)^{n'}$, where $n' = n$ (resp $n' = n-1$) if 
$N$ is odd (resp. even),  which we identify with a subgroup of 
signed permutations of $n$ letters $\{ 1, \dots, n\}$.   For $w \in W_H$, 
let $w_0$ be the element corresponding to $W_0$ under the decomposition in (2.1.3).  
Put 
\begin{equation*}
\tag{2.2.6}
b_w = \sharp\{i \mid 1 \le i \le n-n_0, 1 \le w_0\iv(i) \le n-n_0 \}. 
\end{equation*}       
Note that the action of $\th$ on $U_P \cap {}^wU_P$ 
can be described from  the formula in (1.5.2), 
and its group version in 1.6, by using (1.10.1) and (1.10.2).   
In particular, we have

\begin{equation*}
\tag{2.2.7}
\dim (U_P \cap {}^wU_P)^{\th} = \dim (U_P \cap {}^wU_P)^{\io\th} - b_w.
\end{equation*}
\par
Since $L \cap {}^wP$ is a parabolic subgroup of $L$ with a Levi decomposition 
$L \cap {}^wP = (L \cap {}^wL)(L \cap {}^wU_P)$, and similarly for ${}^wL \cap P$, 
${}^wL \cap P = ({}^wL \cap L)({}^wL \cap U_P)$, we have 
$\dim (L \cap {}^wL) + \dim (L \cap {}^wU_P) + \dim ({}^wL \cap U_P) = \dim L$. 
Thus 
\begin{equation*}
\dim (P \cap {}^wP)^{\th} = 2\ol\nu + \dim T^{\th} + \dim (U_P \cap {}^wU_P)^{\th}.
\end{equation*}
It follows, by (2.2.7), that 
\begin{equation*}
\tag{2.2.8}
\dim (U_P \cap {}^wU_P)^{\io\th} = 2\nu_H - 2\ol\nu - \dim \OO_{\w} + b_w.
\end{equation*}
Hence (2.2.5) is equivalent to the form 
\begin{align*}
\tag{2.2.9}
\dim D \le \ol c + r + \vD_P - b_w.
\end{align*}  
Thus in order to prove (2.2.1), we have only to show (2.2.9), 
where the strict inequality holds if $\OO_{\w}$ is bad. 
\par
A similar discussion shows that, in order to prove (2.2.2), we have only to show 
\begin{equation*}
\tag{2.2.10}
\dim D ' \le \ol c + \vD_P - b_w, 
\end{equation*}
where 
\begin{align*}
D '= \{ (y,&y',z) \in ({}^wL \cap U_P) \times (L \cap {}^wU_P) \times (L \cap {}^wL)  \\
           &\mid zy'\th(zy')\iv \in \SO_L, 
          \dw\iv zy\th(zy)\iv \dw \in \SO_L \}/E^{\th}. 
\end{align*}
\par
We consider (2.2.10). 
Since ${}^wP$ and $L$ contain a common maximal torus $T$, $Q = L \cap {}^wP$ 
is a $\th$-stable parabolic subgroup of $L$ with Levi decomposition 
$Q = MU_Q$, where $M = L \cap {}^wL$ and $U_Q = L \cap {}^wU_P$. 
Hence by replacing $G, P, L$ by $L, Q, M$, one can define a map 
$\eta_Q : Q^{\io\th} \to M^{\io\th}$ as in the case of $\eta_P$.   Similarly, for a parabolic 
subgroup $Q' = {}^wL \cap P = MU_{Q'}$ of ${}^wL$ with $U_{Q'} = {}^wL \cap U_P$, 
the map $\eta_{Q'} : Q'^{\io\th} \to M^{\io\th}$ can be defined. 
Here $\eta_Q(zy'\th(zy')\iv) = z\th(z)\iv$ for $z \in M, y' \in U_Q$, and similarly, 
$\eta_{Q'}(zy\th(zy)\iv) = z\th(z)\iv$. We note that 
\par\medskip\noindent
(2.2.11) \ There exist finitely many $(M^{\th})^0$-orbits $\wh\SO_1, \dots, \wh\SO_m$ in 
$M^{\io\th}$ such that $z\th(z)\iv$ is contained in $\bigcup_i\wh\SO_i$  
if $(y,y', z) \in D '$. 
\par\medskip
In fact, by definition of $D'$, $z\th(z)\iv$ is contained in 
$\SO_LU_Q \cap M^{\io\th}$, hence its semisimple part $z_1$ is contained 
in $\SO_s$, where $\SO_s$ is a single $L_H$-orbit in $L^{\io\th}$ 
obtained from a semisimple element, say $s \in M ^{\io\th}$.
But $\SO_s \cap M^{\io\th}$ splits into finitely many $(M^{\th})^0$-orbits.   
Here note that 
$Z_M (s)$ is $\th$-stable, and 
$Z_M(s)^{\io\th}$ has only finitely many 
unipotent $Z_M^0(s)^{\th}$-orbits ([R, Proposition 7.4]). 
(2.2.11) follows from this. 
\par\medskip
Let $\pi_3 : D' \to M ^{\io\th}$ be the map defined by 
$(y,y',z) \mapsto \ol z = z\th(z)\iv$.
By (2.2.11), $\ol z \in \bigcup_i\wh \SO_i$, and  
for each $\ol z \in \wh\SO_i$, $\pi_3\iv(\ol z)$ is isomorphic to the product 
of the varieties as in (i), namely, 
$(\SO_L \cap \eta_Q\iv(\ol z)) \times (\SO_{{}^wL} \cap \eta_{Q'}\iv(\ol z))$. 
Hence by induction hypothesis, we have 
\begin{equation*}
\dim \pi_3\iv(\ol z) \le \frac{1}{2}(\ol c - \dim \wh\SO_i) 
               + \frac{1}{2}(\ol c - \dim\wh\SO_i) + \vD_Q.
\end{equation*} 
(Note that $\vD_Q = \vD_{Q'}$).   
It follows that $\dim \pi_3\iv(\ol\SO_i) \le \ol c + \vD_Q$. 
Since this is true for any $i$, we have $\dim D' \le \ol c + \vD_Q$.
In order to show (2.2.10), it is enough to see that 
\begin{equation*}
\tag{2.2.12}
\vD_Q \le \vD_P - b_w.
\end{equation*}
We can express $\vD_Q$ as 
\begin{equation*}
\vD_Q = \sharp\{ i \mid n-n_0 +1 \le i \le n, 1 \le w_0\iv(i) \le n - n_0 \}.
\end{equation*} 
By comparing this with (2.2.6), we obtain (2.2.12). 
Thus (2.2.10) is proved. 
\par
Next consider (2.2.9). 
A similar argument as in the proof of (2.2.10) shows that 
there exist finitely many orbits $\wh \SO'_1, \dots, \wh \SO'_{m'}, 
\wh\SO''_1, \dots, \wh \SO''_{m''}$ in $(M^{\io\th})^0$ such that 
$z\th(z)\iv$ is contained in the intersection of 
$Z_L^{\io\th}(\wh\SO'_1 \cup \cdots \cup \wh \SO'_{m'})$ and 
$Z_{{}^wL}^{\io\th}(\wh\SO''_1 \cup \cdots \cup \wh\SO''_{m''})$.
Since $Z_L$ and $Z_{{}^wL}$ are contained in the center of $M$, 
$z\th(z)\iv$ is contained in 
$(Z_L \cap Z_{{}^wL})^{\io\th}(\wh\SO_1\cup \cdots \cup \wh\SO_m)$
for some $(M^{\th})^0$-orbits $\wh\SO_1, \dots, \wh\SO_m$ in $M^{\io\th}$. 
Thus as in the proof of (2.2.10), we have 
$\dim D' \le \ol c + \dim (Z_L \cap Z_{{}^wL})^{\io\th} + \vD_Q$.  
Here $\dim (Z_L \cap Z_{{}^wL})^{\io\th} \le r$, and the strict inequality 
holds if $\OO_{\w}$ is bad. (2.2.9) follows from this by using (2.2.12).  
Hence we have proved (iii) and (iv), assuming the induction hypothesis. 
\par
Next we show (ii).
Put $Z'_{\SO} = \{ (x, gP_H, g'P_H) \in Z' \mid x \in \SO\}$.  
If $Z'_{\SO}$ is empty, then the variety in (ii) is also empty, and 
the inequality holds.  So
we assume that $Z'_{\SO}$ is non-empty. 
From (iv), we have $\dim Z'_{\SO} \le d' + \vD_P$. Consider the projection 
$p: Z'_{\SO} \to \SO$ to the first factor.  Then each fibre is isomoprhic to 
the product of two copies of the variety in (ii).   Thus we have 
\begin{align*}
\dim \{ gP_H \in H/P_H \mid g\iv xg \in \eta_P\iv(\SO_L)\} 
         &= (\dim Z'_{\SO} - \dim\SO)/2  \\
         &\le (d' + \vD_P - c)/2 \\
         &= \nu_H - \ol\nu + (\ol c - c + \vD_P)/2. 
\end{align*}
Hence (ii) holds. 
\par
Finally we show (i).  Consider the variety 
$R = \{ (x, gP_H) \in \SO \times H/P_H \mid x \in \eta_P\iv(\SO_L)\}$. 
By projecting to the first factor, and by using (ii), we see that 
$\dim R \le \nu_H - \ol\nu + (c + \ol c)/2 + \vD_P/2$. 
If we project to the second factor, each fibre is isomorphic to the variety 
$\SO \cap\eta_P\iv(\SO_L)$.  
Hence 
\begin{align*}
\dim (\SO \cap \eta_P\iv(\SO_L)) &\le \nu_H -\ol\nu + 
        (c + \ol c)/2  + \vD_P/2 - \dim H/P_H \\
        &= (c + \ol c)/2 + \vD_P/2.
\end{align*}
Now we consider the map $\SO \cap \eta_P\iv(\SO_L) \to \SO_L$ by $x \mapsto \eta_P(x)$. 
Then each fibre is isomorphic to the variety considered in (i). Hence the dimension 
of this variety is $\le (c + \ol c)/2 + \vD_P/2 - \ol c = (c - \ol c)/2 + \vD_P/2$. 
This proves (i).  The proposition is proved. 
\end{proof}

\para{2.3.}
We keep the setting in 2.1.
Let $G\uni$ be the set of unipotent elements in $G$, and put 
$G^{\io\th}\uni = G^{\io\th} \cap G\uni$. By [R, proposition 7.4], 
$G^{\io\th}\uni$ has finitely many $H$-orbits.  We define $L^{\io\th}\uni$
for $L$ similarly to $G$.    
Let $\SO \subset G^{\io\th}\uni, \SO_L \subset L^{\io\th}\uni$.
Take $u \in \SO, v \in \SO_L$, and fix them.
We define varieties
\begin{align*}
Y_{u,v} &= \{ gZ^0_{L_H}(v)U_P^{\th} \mid g \in H, g\iv ug \in \eta_P\iv(v) \}, \\ 
\wt Y_{u,v} &= \{ g \in H \mid g\iv ug \in \eta_P\iv(v) \}.
\end{align*} 
Note that $\eta_P\iv(v) = (vU_P)^{\io\th}$, hence $Y_{u,v}$ is well-defined.
$Z_H(u) \times Z_{L_H}(v)U_P^{\th}$ acts on $\wt Y_{u,v}$ by 
$(x,y) : g \mapsto xgy\iv$ ($x \in Z_H(u), y \in Z_{L_H}(v)U_P^{\th}, g \in \wt Y_{u,v}$).
Let $\f : \wt Y_{u,v} \to Y_{u,v} \simeq \wt Y_{u,v}/Z_{L_H}^0(v)U_P^{\th}$, 
$\vf : \wt Y_{u,v} \to Z^0_H(u)\backslash \wt Y_{u,v}$ be the quotient maps.
We define $\xi : \wt\SO =  H/Z^0_H(u) \to H/Z_H(u) \simeq \SO$ by 
$gZ_H^0(u) \mapsto gug\iv$.  Then $\xi$ is a finite Galois covering with Galois group 
$A_H(u) = Z_H(u)/Z_H^0(u)$.
We have the following commutative diagram.

 \begin{equation*}
\tag{2.3.1}
\begin{CD}
Y_{u,v} @<\f<< \wt Y_{u,v} @>\vf>> Z_H^0(u)\backslash \wt Y_{u,v} \\
    @.           @V\tau VV           @VV\ol\tau V   \\
        @.    \SO \cap \eta_P\iv(v)     @<\xi<<   \xi\iv(\SO \cap \eta_P\iv(v)),        
\end{CD}
\end{equation*}
\par\noindent
where $\tau : g \mapsto g\iv ug, \ol\tau : Z_H^0(u)g \mapsto g\iv Z_H^0(u)$.  
Note that $\ol\tau$ gives an isomorphism 
$Z_H^0(u)\backslash \wt Y_{u,v} \isom \xi\iv(\SO \cap \eta_P\iv(v))$.
Put
\begin{align*}
\tag{2.3.2}
\d &= (\dim \SO -\dim \SO_L)/2 + \vD_P/2, \\
\tag{2.3.3}
s  &= (\dim Z_H(u) - \dim Z_{L_H}(v))/2 + \vD_P/2.
\end{align*}
By (2.3.1), we have 
$\dim (\SO \cap \eta_P\iv(v)) = \dim \xi\iv(\SO \cap \eta_P\iv(v))
     = \dim Z_H^0(u)\backslash \wt Y_{u,v}$. 
Hence if we put $d_{u,v} = \dim (\SO \cap \eta_P\iv(v))$, we have
\begin{align*}
\dim \wt Y_{u,v} &= d_{u,v} + \dim Z_H(u), \\
\dim Y_{u,v}     &= d_{u,v} + \dim Z_H(u) - \dim Z_{L_H}(v) - \dim U_P^{\th}. 
\end{align*}
\par
On the other hand, by Proposition 2.2 (i), we have $d_{u,v} \le \d$. 
It follows that $\dim Y_{u,v} \le s$, and that the equality holds if and only if 
$d_{u,v} = \d$. 
Let $I(Y_{u,v})$ be the set of irreducible components of $Y_{u,v}$ of dimension $s$.
Similarly, let $I(\wt Y_{u,v})$ (resp. $I_{u,v}$) be the set of irreducible components 
of $\wt Y_{u,v}$ (resp. $\xi\iv(\SO \cap \eta_P\iv(v))$) of dimension $\d + \dim Z_H(u)$
(resp. dimension $\d$). 
By (2.3.1), we have a natural bijection $I(Y_{u,v}) \simeq I(\wt Y_{u,v}) \simeq I_{u,v}$.     
\par
Put $A_H(u) = Z_H(u)/Z_H^0(u)$. 
Since $Z_H(u)$ acts on $Y_{u,v}$ from the left, $A_H(u)$ acts on $I(Y_{u,v})$ 
as permutations of irreducible components. 
Similarly, $A_H(u)$ acts on $I(\wt Y_{u,v})$, $I_{u,v}$, and the above bijection turns out 
to be $A_H(u)$-equivariant. 

\definition{2.4.}  
Let $\SO$ be an $H$-orbit in $G^{\io\th}\uni$, and take $u \in \SO$. 
$\tau \in A_H(u)\wg$ is said to be cuspidal if $\tau$ does not appear in the 
permutation representation of $A_H(u)$ on $I(Y_{u,v})$ for any $\th$-stable 
Levi subgroup $L$ of any $\th$-stable proper parabolic subgroup $P$ of $G$, and for any 
$v \in L^{\io\th}\uni$.
An $H$-equivariant simple local system $\SE$ on $\SO$ corresponding to 
$\tau \in A_H(u)\wg$ is said to be cuspidal if $\tau$ is cuspidal.  
\par\medskip
It follows from the definition that if $\tau$ is cuspidal, then its dual representation 
$\tau^*$ is also cuspidal.

\begin{lem}  
The local system $\SE$ on $\SO$ is cuspidal if and only if for any proper 
$\th$-stable parabolic subgroup $P$ of $G$, and for any $v \in L^{\io\th}\uni$, 
we have 
\begin{equation*}
H^{2\d}_c(\SO \cap \eta_P\iv(v), \SE) = 0.
\end{equation*}
\end{lem}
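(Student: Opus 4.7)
The plan is to identify $H^{2\d}_c(\SO\cap\eta_P\iv(v),\SE)$ with an isotypic component of the permutation representation of $A_H(u)$ on $I(Y_{u,v})$, turning the cohomological condition into the representation-theoretic one used in Definition~2.4. Throughout, let $\SE = \SE_{\tau}$ for $\tau \in A_H(u)\wg$, and let $\SO_L$ denote the $L_H$-orbit of $v$, so that $\d = (\dim\SO-\dim\SO_L)/2 + \vD_P/2$ as in (2.3.2).

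First I would record the dimension input coming from Proposition~2.2(i): $\dim(\SO\cap\eta_P\iv(v)) \le \d$. Consequently, $H^{2\d}_c(\SO\cap\eta_P\iv(v),-)$ is \emph{top} degree compactly supported cohomology, and for the constant sheaf it is the free $\Ql$-module on the set of $\d$-dimensional irreducible components. If $\SO\cap\eta_P\iv(v)$ is empty or has dimension strictly less than $\d$, both sides of the claimed equivalence are vacuous at that $(P,v)$, so I may restrict attention to the case $d_{u,v} = \d$, in which the set of top components is exactly $I_{u,v}$ from~2.3.

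Next I would exploit the finite (étale) Galois cover $\xi : \wt\SO \to \SO$ with group $A_H(u)$. Pulled back to $\SO \cap \eta_P\iv(v)$, it stays finite étale with the same group, and $\xi_*\Ql$ decomposes as an $A_H(u)$-equivariant local system on $\SO$ as $\xi_*\Ql \cong \bigoplus_{\tau\in A_H(u)\wg} \tau \otimes \SE_{\tau}$ (regular representation on each fibre). Since $\xi$ is finite, pushforward commutes with $H^*_c$, so
\begin{equation*}
H^{2\d}_c(\xi\iv(\SO\cap\eta_P\iv(v)),\Ql) \;\cong\; \bigoplus_{\tau\in A_H(u)\wg} \tau \otimes H^{2\d}_c(\SO\cap\eta_P\iv(v),\SE_{\tau})
\end{equation*}
as $A_H(u)$-modules. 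The left-hand side, by the top-degree discussion above, is the permutation representation of $A_H(u)$ on the set of $\d$-dimensional components of $\xi\iv(\SO\cap\eta_P\iv(v))$, i.e.\ on $I_{u,v}$.

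Finally I would invoke the $A_H(u)$-equivariant bijection $I_{u,v} \simeq I(Y_{u,v})$ established at the end of~2.3 via the diagram (2.3.1): this identifies the left-hand side with the permutation representation of $A_H(u)$ on $I(Y_{u,v})$. Reading off the $\tau$-multiplicity gives
\begin{equation*}
\mathrm{mult}_{\tau}\bigl(\Ql[I(Y_{u,v})]\bigr) \;=\; \dim H^{2\d}_c(\SO\cap\eta_P\iv(v),\SE_{\tau}).
\end{equation*}
Hence $\tau$ fails to appear in the permutation representation on $I(Y_{u,v})$ precisely when this top cohomology vanishes. Quantifying over all proper $\th$-stable parabolic subgroups $P$ of $G$ and all $v\in L^{\io\th}\uni$ yields the claimed equivalence. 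The only delicate point is the identification $\xi_*\Ql \cong \bigoplus \tau\otimes\SE_{\tau}$ with correct $A_H(u)$-action and convention for the correspondence $\tau \leftrightarrow \SE_{\tau}$; this is routine but must be set up consistently with the convention adopted in Definition~2.4, and is the main (minor) bookkeeping obstacle.
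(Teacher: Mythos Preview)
Your proposal is correct and follows essentially the same approach as the paper: both use the finite Galois cover $\xi:\wt\SO\to\SO$ to identify $H^{2\d}_c(\SO\cap\eta_P\iv(v),\SE)$ with the $\tau$-isotypic component of $H^{2\d}_c(\xi\iv(\SO\cap\eta_P\iv(v)),\Ql)$, recognize the latter as the permutation representation on $I_{u,v}$, and invoke the $A_H(u)$-equivariant bijection $I_{u,v}\simeq I(Y_{u,v})$ from~2.3. The paper phrases the isotypic extraction as $(H^i_c\otimes\tau^*)^{A_H(u)}$ rather than via the direct-sum decomposition $\xi_*\Ql\cong\bigoplus_\tau\tau\otimes\SE_\tau$, but these are equivalent formulations of the same computation.
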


\begin{proof}
Let $\xi : \wt\SO \to \SO$ be the finite Galois covering with Galois group $A_H(u)$.
Then $\xi_*\Ql$ is a semisimple local system on $\SO$ equipped with $A_H(u)$-action, and 
we have $\SE = \Hom_{A_H(u)}(\tau, \xi_*\Ql)$. Hence
\begin{align*}
H^i_c(\SO \cap \eta_P\iv(v),\SE) &\simeq 
       (H_c^i(\SO \cap \eta_P\iv(v), \xi_*\Ql)\otimes \tau^*)^{A_H(u)}  \\
               &\simeq (H_c^i(\xi\iv(\SO \cap \eta_P\iv(v)), \Ql)\otimes \tau^*)^{A_H(u)},
\end{align*} 
where $\tau^*$ is the dual representation of $\tau$. 
This implies that the condition $\tau$ does not appear in the permutation representation 
of $I_{u,v}$ is equivalent to the condition $H^{2\d}_c(\SO \cap \eta_P\iv(v),\SE) = 0$. 
Since $I_{u,v} \simeq I(Y_{u,v})$ with $A_H(u)$-action, the lemma follows. 
\end{proof}

\para{2.6.}
More generally, we consider $G$ and an involution 
$\th : G \to G$ as in (2.1.3).  The definition of cuspidal local system $\SE$ on 
$\SO$ can be generalized to this case, and Lemma 2.5 holds. 
Let $\SE$ be a local system on $\SO$.  Then, under the isomorphism in (2.1.4), 
$\SO \simeq \SO_0 \times \SO_1 \times \cdots \times \SO_t$, where 
$\SO_0 \subset G_0^{\io\th}$ and 
$\SO_i \subset (G_i \times G_i)^{\io\th} \simeq G_i$
for $i = 1, \dots, t$.  Thus $\SE$ can be written as 
$\SE = \SE_0\boxtimes \SE_1\boxtimes \cdots \boxtimes \SE_t$, where $\SE_i$ is a local 
system on $\SO_i$ for each $i$. 
By Lemma 2.5, it is easy to see that $\SE$ is cuspidal on $\SO$ if and only if 
$\SE_i$ is cuspidal on $\SO_i$ for each $i$. 
But note that the definition of cuspidality for $\SE_i$  ($i \ge 1$) is exactly the same 
as the definition of cuspidality in the case of reductive groups in [L1, 2.4].  
It is well-known that in the case of $GL_n$, there does not exist a cuspidal local system 
unless $n = 1$.  Hence we have the following.

\begin{lem}  
Assume that $H = SO_N$. Let $L$ be a $\th$-stable Levi subgroup of 
a $\th$-stable parabolic subgroup of $G$. 
Let $\SO_L$ be an $L_H$-orbit in $L^{\io\th}\uni$.
If there exists a cuspidal local system on $\SO_L$, then 
$L_H \simeq (GL_1)^a \times SO_{N_0}$, where $a = (N - N_0)/2$.  
 
\end{lem}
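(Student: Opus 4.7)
The plan is to decompose $L$ into ``paired'' blocks and a ``middle'' orthogonal block, decompose any cuspidal local system along this product, and invoke the classical fact that $GL_n$ has no cuspidal local system on a unipotent conjugacy class unless $n=1$.

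First I would describe the structure of a $\th$-stable Levi. Since $\th$ on $G = GL_N$ is the orthogonal involution of 1.7, the action of $\s$ on the Dynkin diagram of type $A_{N-1}$ is the order-reversing involution (read off from the $\wt\ve$ ordering in 1.10), so any $\th$-stable Levi $L$ containing $T$ is determined by a $\s$-stable set of simple roots and decomposes as
\[
L \simeq \prod_{i=1}^a (GL_{n_i} \times GL_{n_i}) \times GL_{N_0},
\]
with $N_0 + 2\sum_i n_i = N$, where $\th$ exchanges the two factors in each pair (composed with inverse-transpose) and restricts to the orthogonal involution on the middle $GL_{N_0}$. Direct inspection then gives
\[
L^{\th} \simeq \prod_{i=1}^a GL_{n_i} \times GL_{N_0}^{\th},
\]
and passing to the identity component,
\[
L_H = (L^{\th})^0 \simeq \prod_{i=1}^a GL_{n_i} \times SO_{N_0}.
\]
Moreover, under $L^{\io\th} \simeq \prod_i GL_{n_i} \times GL_{N_0}^{\io\th}$, the action of $L_H$ on each paired factor is the standard conjugation action of $GL_{n_i}$ on itself.

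Next, I would decompose the orbit and the local system along this product. A unipotent $L_H$-orbit factors as
\[
\SO_L = \SO_0 \times \SO_1 \times \cdots \times \SO_a,
\]
where $\SO_0$ is an $SO_{N_0}$-orbit in $GL_{N_0}^{\io\th}\uni$ and $\SO_i$ ($i \ge 1$) is a unipotent conjugacy class of $GL_{n_i}$. Any simple $L_H$-equivariant local system on $\SO_L$ splits as $\SE = \SE_0 \boxtimes \SE_1 \boxtimes \cdots \boxtimes \SE_a$, and by the discussion in 2.6, $\SE$ is cuspidal if and only if each $\SE_i$ is cuspidal. Crucially, for $i \ge 1$ this coincides with Lusztig's notion of cuspidality for the reductive group $GL_{n_i}$, since the paired-block analysis reduces that factor to the ordinary conjugation setting.

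Finally, I would invoke the classical result of [L1] that no unipotent conjugacy class of $GL_n$ carries a cuspidal local system unless $n = 1$. This forces $n_i = 1$ for every $i \ge 1$, hence
\[
L_H \simeq (GL_1)^a \times SO_{N_0} \quad \text{with } a = (N-N_0)/2,
\]
as asserted. The only substantive ingredient beyond the setup is the quoted nonexistence of cuspidals on $GL_n$ for $n > 1$; the remainder is a matching of the block decomposition of $L$ with the factorization of cuspidal local systems afforded by 2.6, so I do not anticipate a significant obstacle.
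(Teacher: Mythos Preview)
Your proposal is correct and follows essentially the same argument as the paper: the discussion in 2.6 already records that cuspidality factors along the product decomposition of $L$ and that the paired-block factors reduce to Lusztig's cuspidality for $GL_{n_i}$, whence the classical nonexistence of cuspidals on $GL_n$ for $n>1$ forces each $n_i=1$. The lemma is stated in the paper as an immediate consequence of 2.6, and your write-up simply spells out that deduction.
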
  

\remark{2.8.} 
In the case where $H = Sp_N$, Proposition 2.2 still holds by 
putting $\vD_P = 0$. 

\par\bigskip
\section{Admissible complexes}

\para{3.1.}
We follow the setting in 1.7, and consider $H = (G^{\th})^0 \simeq SO_N$. 
Let us fix a $\th$-stable Borel subgroup 
$B$ and a $\th$-stable maximal torus $T$ contained in $B$. 
By Lemma 1.2, the pair $(B, T)$ is $G^{\th}$-conjugate to the specific choice 
of the Borel subgroup and the maximal torus given in 1.8.  
Let $U$ be the unipotent radical of $B$. 
We define a subgroup $\SD$ of $U$ by $\SD = U(\vD_0)$.  
$\SD$ is conjugate under $G^{\th}$ to the corresponding group defined in 1.10.
In particular, we have $\SD = U(\vD_0)^{\io\th}$.
\par
Let $P = LU_P$ be a $\th$-stable parabolic subgroup of $G$ containing $B$ such that 
$T \subset L$. 
Here we assume that $L^{\th} \simeq (GL_1)^a \times GL_{N_0}^{\th}$ with 
$N_0 = N -2a$ for $0 \le a \le n$ (see Lemma 2.7). 
Let $\SO_L$ be an $L_H$-orbit in $L^{\io\th}\uni$, and consider 
$\vS = Z_L^{\io\th} \times \SO_L \subset L^{\io\th}$.
Note that $Z_L^{\io\th} \simeq (GL_1)^a$.    
Let $(Z_L)^{\io\th}\reg$ be the set of $(t_1, \dots, t_a) \in (GL_1)^a$ 
such that $t_i$ are all distinct, under the above isomorphism.
Put $\vS\reg = (Z_L)^{\io\th}\reg \times \SO_L$. Then $\vS\reg$ is open dense 
in $\vS$.     
Put $\SD_P = \SD \cap U_P$.
Then $\SD_P$ is a closed subgroup of $U_P$ contained in $U_P^{\io\th}$ 
such that $\SD_P \simeq \Bk^a$. 
Moreover by 1.11, $\SD_P$ is stable under the conjugation action of $L_H$ on $U_P^{\io\th}$. 
By (1.11.1), any element in $\vS$ commutes with any element in $\SD_P$.  Hence 
\begin{equation*}
\tag{3.1.1}
\vS\SD_P \subset P^{\io\th}. 
\end{equation*}
Note that $\dim \SD_P = a = \vD_P$ (see (2.1.6)).
\par
Let $\SD_P^0$ be the subset of $\SD_P \simeq \Bk^a$ consisting of 
$\Bxi = (\xi_1, \dots, \xi_a) \in \Bk^a$ such that $\xi_i \ne 0$ for any $i$. We define
varieties 
\begin{align*}
\wt Y_{(L,\vS)} &= \{ (x, gL_H) \in G^{\io\th} \times H/L_H 
                 \mid g\iv xg \in \vS\reg\SD^0_P \}  \\  
Y_{(L, \vS)} &= \bigcup_{g \in H}g(\vS\reg\SD^0_P)g\iv.
\end{align*}
We show a lemma.

\begin{lem}  
\begin{enumerate}
\item 
$ Y_{(L,\vS)}$ is a smooth, irreducible variety with  
\begin{equation*}
\tag{3.2.1}
\dim Y_{(L,\vS)} = 2\nu_H - 2\nu_{L_H} + \dim \vS + \vD_P. 
\end{equation*}
\item
Let $P' = L'U_{P'}$ be another $\th$-stable parabolic subgroup with $\th$-stable 
Levi subgroup $L'$, 
$(L',\vS')$ a pair defined for $L'$, similarly to $(L,\vS)$.  Assume that 
$(L',\vS')$ is not $H$-conjugate to $(L,\vS)$.  Then 
we have $Y_{(L,\vS)} \cap Y_{(L', \vS')} = \emptyset$.
\end{enumerate}
\end{lem}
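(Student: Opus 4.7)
The plan for (i) is to realize $Y_{(L,\vS)}$ as the image of a natural map from a smooth irreducible variety. Put
\begin{equation*}
E := H \times^{L_H}\bigl(\vS\reg\SD_P^0\bigr),
\end{equation*}
where $L_H$ acts by $\ell\cdot(g,y) = (g\ell\iv,\ell y\ell\iv)$; this action is well defined since $L_H$ stabilizes $(Z_L^0)^{\io\th}\reg$ (trivially, being central), $\SO_L$ (as an $L_H$-orbit), and, by 1.11, the subgroup $\SD_P$, hence the open subset $\SD_P^0$. As a Zariski-locally trivial fiber bundle over $H/L_H$ with irreducible smooth fiber $\vS\reg\SD_P^0$, the variety $E$ is smooth irreducible of dimension
\begin{equation*}
\dim H - \dim L_H + \dim\vS + \vD_P = 2\nu_H - 2\nu_{L_H} + \dim\vS + \vD_P,
\end{equation*}
where the final equality uses $T_H\subset L_H$ to match the ranks. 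The morphism $\pi\colon E \to G^{\io\th}$, $[g,y]\mapsto gyg\iv$, has image exactly $Y_{(L,\vS)}$, which establishes irreducibility.

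The dimension formula and smoothness in (i) both follow once $\pi$ is shown to be an isomorphism onto its image, and injectivity of $\pi$ is where the main work lies. Suppose $y,y'\in\vS\reg\SD_P^0$ and $h\in H$ satisfy $y = hy'h\iv$; I need to deduce $h\in L_H$. Writing $y = t\,u_L\,u$ with $t\in(Z_L^0)^{\io\th}\reg$, $u_L\in\SO_L$ and $u\in\SD_P^0$, the crucial input is (1.11.1): $t$ is central in $L$ and commutes with $u\in\SD_P$, so $t$ is the semisimple part of $y$ and $u_L u$ its unipotent part; likewise for $y'$. Matching semisimple parts, $ht'h\iv = t$, and interpreting the regular locus so that $Z_G(t) = L$, one obtains $h\in N_H(L)$. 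The unipotent-part equation then places $hu'h\iv\in U_P$; the main obstacle, I expect, is to verify via the explicit root-theoretic form of $\SD_P$ in (1.10.2) that the nonvanishing of every coordinate of $u'\in\SD_P^0$ is incompatible with a nontrivial element of $N_H(L)/L_H$ transporting $\SD_P$ into $U_P$, so that $hU_Ph\iv = U_P$. Since $N_H(L)\cap N_H(P) = L_H$, this yields $h\in L_H$. Bijectivity of $\pi$ between smooth irreducible varieties of the expected dimension then upgrades to an isomorphism, completing (i).

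For (ii), suppose toward a contradiction that $x\in Y_{(L,\vS)}\cap Y_{(L',\vS')}$, so $x = gyg\iv = g'y'{g'}\iv$ with $y\in\vS\reg\SD_P^0$, $y'\in\vS'\reg\SD_{P'}^0$ and $g,g'\in H$. Setting $h = g\iv g'\in H$, one has $hy'h\iv = y$. The Jordan-decomposition argument from (i), applied across the pair, gives $ht'h\iv = t$ with $t\in(Z_L^0)^{\io\th}\reg$ and $t'\in(Z_{L'}^0)^{\io\th}\reg$; taking centralizers yields $hL'h\iv = L$, and since $h\in(G^{\th})^0$ commutes with $\th$, also $h(Z_{L'}^0)^{\io\th}h\iv = (Z_L^0)^{\io\th}$. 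The unipotent-part comparison, combined with the same $\SD$-argument as in (i), forces $hP'h\iv = P$ and $h\SO_{L'}h\iv = \SO_L$. Thus $h$ realizes an $H$-conjugacy between $(L',\vS')$ and $(L,\vS)$, contradicting the hypothesis, so $Y_{(L,\vS)}\cap Y_{(L',\vS')} = \emptyset$.
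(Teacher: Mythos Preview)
There is a genuine gap in part (i): the map $\pi\colon E = H\times^{L_H}(\vS\reg\SD_P^0)\to Y_{(L,\vS)}$ is \emph{not} an isomorphism when $a\ge 2$. In fact the paper shows in 3.6 that this very map (there called $\pi_0\colon\wt Y\to Y$) is a finite Galois covering with group $\SW_1\simeq S_a$, and this covering is the source of the $\SA_{\SE_1}$-action that drives the whole theory. Your claimed ``main obstacle'' --- that a nontrivial element of $N_H(L)/L_H$ cannot carry $u'\in\SD_P^0$ into $U_P$ --- is false: the subgroup $S_a\subset N_H(L_H)/L_H$ permutes the $a$ diagonal coordinates of $\SD_P$ (cf.\ (1.10.2)) and hence stabilises $\SD_P^0$ itself. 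So for any $w\in S_a$ with representative $\dot w$, and any $y=tu_Lu\in\vS\reg\SD_P^0$, the element $\dot w y\dot w^{-1}$ lies again in $\vS\reg\SD_P^0$, giving $[g,y]$ and $[g\dot w^{-1},\dot w y\dot w^{-1}]$ as distinct points of $E$ with the same image. Consequently the step ``bijectivity between smooth irreducible varieties \dots\ upgrades to an isomorphism'' never gets off the ground.

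The paper circumvents this by working one level down: rather than $H/L_H$, it uses the map $\zeta\colon Y_{(L,\vS)}\to H/N_H(L_H)$ sending $x$ to the Levi $Z_H(Z^0_{Z_H(x_s)})$, obtaining $Y_{(L,\vS)}\simeq H\times^{N_H(L_H)}\zeta^{-1}(L_H)$ with fibre $\zeta^{-1}(L_H)=\bigcup_{w\in N_H(L_H)/L_H}w(\vS\reg\SD_P^0)w^{-1}$. The point is that this fibre is the \emph{disjoint} union, over $w\in(\BZ/2\BZ)^{a'}$, of the smooth pieces $\vS\reg\times w\SD_P^0w^{-1}$ (the $S_a$-part acts on $\vS\reg$ rather than giving new components), from which smoothness and the dimension formula follow directly. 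Your dimension count for $E$ is correct and would still yield (3.2.1) once you know $\pi$ is finite étale, but smoothness of $Y$ then requires an étale-descent argument you have not supplied; the paper's fibration over $H/N_H(L_H)$ is the cleaner route. Your argument for (ii) via semisimple parts is essentially the paper's, and is fine.
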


\begin{proof}
For each $g \in G^{\io\th}$, let $g_s$ be the semisimple part of $g$ in $G^{\io\th}$, 
and $Z_H(g_s)$ be the stabilizer of $g_s$ in $H$.  
Put $Z_g = Z_{Z_H(g_s)}^0$, the connected center of $Z_H(g_s)$.
Then $H(g) = Z_H(Z_g)$ is a Levi subgroup of some parabolic subgroup of $H$.  
If $g \in Y_{(L,\vS)}$, $H(g)$ is a Levi subgroup conjugate to $L_H$ under $H$.
By identifying $H/N_H(L_H)$ with the set of Levi subgroups of $H$ conjugate to $L_H$, 
we define a map $\zeta : Y_{(L,\vS)} \to H/N_H(L_H)$ by $g \mapsto H(g)$.     
Then one can show that 
\begin{equation*}
\tag{3.2.2}
\zeta : Y_{(L,\vS)} \simeq H \times^{N_H(L_H)}\zeta\iv(L_H) \to H/N_H(L_H)
\end{equation*}
is a locally trivial fibration with fibre isomorphic to $\zeta\iv(L_H)$. 
We have
\begin{align*}
\zeta\iv(L_H) &= \bigcup_{w \in N_H(L_H)/L_H} w(\vS\reg\SD^0_P)w\iv.  
\end{align*}
Here $N_H(L_H)/L_H \simeq S_a \ltimes (\BZ/2\BZ)^{a'}$, where $S_a$ is the symmetric 
group of degree $a$, and $a' = a$ unless $a = n$ and $N$ is even, in which case
$a' = n-1$. 
$w \in N_H(L_H)/L_H$ leaves $\vS\reg$ stable, and $\s \in S_a$ leaves
$\SD^0_P$ stable. Hence
\begin{align*}
\tag{3.2.3}
\xi\iv(L_H) \simeq \vS\reg \times \bigcup_{w \in (\BZ/2\BZ)^{a'}}w\SD^0_Pw\iv. 
\end{align*}
Since $w\SD^0_Pw\iv$ are mutually disjoint for $w \in (\BZ/2\BZ)^{a'}$, we see that 
$\zeta\iv(L_H)$ is a disjoint union of smooth pieces $\vS\reg \times w\SD^0_Pw\iv$. 
Hence $\zeta\iv(L_H)$ is smooth, and so $Y_{(L,\vS)}$ is smooth by (3.2.2).
Since $\vS\reg\SD^0_P$ is irreducible, $Y_{(L,\vS)}$ is also irreducible.  
The dimension formula (3.2.1) follows from (3.2.2) and (3.2.3).   Thus (i) holds.
(ii) is immediate since for $g \in Y_{(L,\vS)}$, $H(g)$ determines a unique Levi 
subgroup in $H$, and once $L_H = H(g)$ is given, $g$ determines a unique $L_H$-orbit
$\SO_L$, hence determines $\vS$ uniquely.  The lemma is proved.  
\end{proof}

\remark{3.3}  
The definition of $Y_{(L,\vS)}$ given here depends 
on the special choice of $P$ as in 3.1.
The discussion in 3.1 can not be applied for arbitrary $L$. 
So, we cannot discuss the partition of $G^{\io\th}$ in terms of various $Y_{(L,\vS)}$
as given in [L1, 3.1].

\para{3.4.}  
By fixing $(L,\vS)$, put $Y = Y_{(L,\vS)}, \wt Y = \wt Y_{(L,\vS)}$. 
Recall the map $\eta_P : P^{\io\th} \to L^{\io\th}$.  
Let $\ol\vS$ be the closure of $\vS$ in $L^{\io\th}$.  
We define varieties $X, \wt X$ by
\begin{align*}
\wt X &= \{ (x, gP_H) \in G^{\io\th} \times H/P_H \mid 
      g\iv xg \in \eta_P\iv(\ol\vS)\}, \\   
   X &=  \bigcup_{g \in H}g\bigl(\eta_P\iv(\ol\vS)\bigr)g\iv
\end{align*}
and a map $\pi : \wt X \to X$ by $(x, gP_H) \mapsto x$. 
Then $\pi$ is proper and surjective. 
We have
\begin{equation*}
\tag{3.4.1}
\wt X \simeq H \times^{P_H}\eta_P\iv(\ol\vS). 
\end{equation*}

We have a lemma.

\begin{lem}  
\begin{enumerate}
\item 
$X$ is a closed irreducible subvariety of $G^{\io\th}$ such that 
the closure $\ol Y$ of $Y$ coincides with $X$. We have $\dim \wt X = \dim X$. 
\item 
The map $(x, gL_H) \mapsto (x, gP_H)$ gives an isomorphism 
$\g : \wt Y \isom \pi\iv(Y)$. 
\item
$Y$ is an open dense subset of $X$.  Hence $Y$ is an irreducible, locally 
closed smooth subvariety of $G^{\io\th}$.  
\end{enumerate}
\end{lem}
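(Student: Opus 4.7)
The plan is to proceed (i)$\to$(ii)$\to$(iii), leveraging the identification $\wt X\simeq H\times^{P_H}\eta_P\iv(\ol\vS)$ of (3.4.1) throughout.

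For (i), $X$ is closed because it is the image under the proper projection $G^{\io\th}\times H/P_H\to G^{\io\th}$ of the closed subvariety $\wt X$, properness coming from completeness of $H/P_H$. Irreducibility of $X$ reduces to that of $\eta_P\iv(\ol\vS)$, which is an affine bundle over the irreducible base $\ol\vS$ via (2.1.2), combined with the connectedness of $H$. Computing $\dim\wt X=(\nu_H-\nu_{L_H})+\dim\vS+\dim U_P^{\io\th}=2\nu_H-2\nu_{L_H}+\dim\vS+\vD_P$ using $\dim U_P^{\io\th}=(\nu_H-\nu_{L_H})+\vD_P$, and matching with $\dim Y$ from (3.2.1), forces $\dim X=\dim\wt X=\dim Y$; since $Y\subset X$ lies inside the irreducible $X$ with the same dimension, $\ol Y=X$.

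For (ii), well-definedness of $\g$ is immediate. For injectivity, suppose $(x,gL_H)$ and $(x,g'L_H)$ both map to $(x,gP_H)$; then $g'=g\ell v$ for some $\ell\in L_H$ and $v\in U_P^{\th}$. Writing $g\iv xg=zs_0u\in\vS\reg\SD_P^0$ with semisimple part $z\in(Z_L^0)^{\io\th}\reg$, the condition $(g')\iv xg'\in\vS\reg\SD_P^0$ forces its semisimple part (which equals $v\iv z v$ by centrality of $z$ in $L$) to lie in $T$. As $v\in U$ is unipotent, conjugation by $v$ does not permute eigenvalues, so $v\iv zv=z$ and $v\in Z_G(z)\cap U_P^{\th}$. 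By (1.11.1) combined with the explicit form of $\SD_P$ in (1.10.2), the $(Z_L^0)^{\io\th}$-fixed subgroup of $U_P$ is exactly $\SD_P\subset U_P^{\io\th}$; together with $U_P^{\th}\cap U_P^{\io\th}=\{1\}$ (which holds in characteristic $\ne 2$, since an element fixed by both $\th$ and $\io\th$ satisfies $u=u\iv$, forcing $u=1$ in a unipotent group), this gives $v=1$. Surjectivity is the reverse: given $(x,gP_H)\in\pi\iv(Y)$, Jordan decomposition of $y=g\iv xg\in\eta_P\iv(\ol\vS)$ combined with regularity of $x_s$ yields $\eta_P(y)\in\vS\reg$, and the same centralizer analysis produces a unique $p\in P_H$ modulo $L_H$ such that $p\iv yp\in\vS\reg\SD_P^0$.

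For (iii), density was established in (i). For openness, under the isomorphism (3.4.1) one has $\pi\iv(Y)\simeq H\times^{P_H}(Y\cap\eta_P\iv(\ol\vS))$, and the surjectivity analysis in (ii) identifies $Y\cap\eta_P\iv(\ol\vS)$ with $\eta_P\iv(\vS\reg)$, which is open in $\eta_P\iv(\ol\vS)$ since $\vS\reg$ is open in $\ol\vS$. Properness of $\pi$ then makes $X\setminus Y=\pi(\wt X\setminus\pi\iv(Y))$ closed in $X$, so $Y$ is open in $X$, and smoothness and local closedness of $Y$ follow from Lemma 3.2(i).

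The principal obstacle is the centralizer identification underlying both directions of (ii): verifying that $Z_{U_P}(z)=\SD_P$ for $z\in(Z_L^0)^{\io\th}\reg$, and that every $P_H$-orbit in $\eta_P\iv(\vS\reg)$ meets $\vS\reg\SD_P^0$ in a single $L_H$-coset. Both depend in an essential way on (1.11.1) and the explicit description (1.10.2) of $\SD$, which encode the distinctive feature of the orthogonal symmetric space (as opposed to the symplectic case highlighted in Remark 1.12).
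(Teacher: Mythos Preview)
Your arguments for (i) and for the injectivity half of (ii) are correct and essentially match the paper. The gap lies in your surjectivity claim for (ii) and its use in (iii).

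You assert that ``every $P_H$-orbit in $\eta_P^{-1}(\vS\reg)$ meets $\vS\reg\SD_P^0$'', and correspondingly that $Y\cap\eta_P^{-1}(\ol\vS)=\eta_P^{-1}(\vS\reg)$. This is false. Take $P=B$, $L=T$ (so $\vS=T^{\io\th}$), and consider $y=td$ with $t\in T^{\io\th}\reg$ and $d\in\SD_B\setminus\SD_B^0$, say with exactly one coordinate of $d$ equal to zero. Then $\eta_B(y)=t\in\vS\reg$, so $y\in\eta_B^{-1}(\vS\reg)\cap B^{\io\th}$; but $y_u=d$ has strictly smaller Jordan type than any element of $\SD_B^0$, so $y$ is not $H$-conjugate (let alone $B_H$-conjugate) to anything in $\vS\reg\SD_B^0$, and $y\notin Y$. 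The centralizer identity $Z_{U_P}(t)=\SD_P$ that you invoke shows only uniqueness of a lift when one exists; it cannot supply the missing nonzero $\SD_P$-coordinates. So your surjectivity argument in (ii) is incomplete, and your identification in (iii) fails.

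The paper handles surjectivity in (ii) by a genuinely different route. Rather than trying to adjust $g^{-1}xg$ by a $P_H$-conjugation, it first uses $H$-equivariance to reduce to $x\in\vS\reg\SD_P^0$ and then analyzes the coset $gP_H$: after arranging $g^{-1}x_sg\in(Z_L^0)^{\io\th}$, comparison of centralizers forces $g\in N_H(L_H)$, and then the Weyl-group structure $N_H(L_H)/L_H\simeq S_a\ltimes(\BZ/2\BZ)^{a'}$ together with the constraint $g^{-1}xg\in\ol\vS\, U_P$ forces $g$ into the $S_a$-part, which stabilizes $\vS\reg\SD_P^0$. For (iii) the paper does not pass through $\pi^{-1}(Y)$ at all; it writes $Y$ directly as the complement in $X$ of two explicit closed $H$-saturated subsets built from the non-regular locus of $(Z_L^0)^{\io\th}$ and from $\ol\SO_L\setminus\SO_L$.
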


\begin{proof}
(i) \ Since $\eta_P$ is an affine space bundle with fibre isomorphic to $U_P^{\io\th}$ 
by (2.1.2), we have 
$\dim \eta_P\iv(\ol\vS) = \dim \vS + \dim U_P^{\io\th}$.
Thus 
(3.4.1) shows that $\wt X$ is irreducible with 
\begin{align*}
\dim \wt X &= \dim H - \dim P_H + \pi_P\iv(\ol\vS)  \\
            &= \dim H - \dim P_H + \dim \vS + \dim U_P^{\io\th}  \\
            &= 2\nu_H - 2\nu_{L_H} + \dim \vS - \dim U_P^{\th} + \dim U_P^{\io\th}. 
\end{align*} 
By (1.10.1) and (1.10.2), we have $\dim U_P^{\io\th} = \dim U_P^{\th} + \vD_P$.  Hence 
by comparing the last equality with (3.2.1), we have
\begin{equation*}
\tag{3.5.1}
\dim \wt X = \dim Y.
\end{equation*}
Since $\wt X$ is irreducible and $\pi$ is proper, $X$ is an irreducible closed 
subset of $G^{\io\th}$. Since $Y \subset X$, we have $\ol Y \subset X$. 
We have
\begin{equation*}
\dim  Y  \le \dim  X \le \dim \wt X = \dim  Y
\end{equation*} 
by (3.5.1).  Hence $\dim  Y = \dim  X = \dim \wt X$.  Since $X$ and $\ol Y$ are 
irreducible, closed with same dimension, we have $X = \ol Y$.  This proves (i). 
\par
(ii) \ The map $(x, gL_H) \mapsto (x, gP_H)$ gives a well-defined map 
$\g : \wt Y \to \pi\iv(Y)$. 
For $u \in U_P, t \in (Z_L)\reg$, if $tu = ut$, then $u \in \SD_P$.  Thus 
for $1 \ne u \in U_P^{\th}, t \in (Z_L)^{\io\th}\reg$, 
$t\iv u\iv tu$ produces an element in $U_P - \SD_P$. The injectivity of 
$\g$ follows from this. 
We will show that $\g(\wt Y) = \pi\iv(Y)$.    
Take an element in $\pi\iv(x)$ for $x \in \vS\reg\SD^0_P$, which is 
of the form $(x, gP_H)$ with $g\iv xg \in \eta_P\iv(\ol\vS)$. 
Let $x_s$ be the semisimple part of $x$.  Then $x_s \in (Z_L)^{\io\th}\reg$, and 
$Z^0_H(x_s) = L_H$. On the other hand, since $\ol\vS = Z_L^{\io\th}\ol\SO_L$, 
$g\iv x_sg \in Z_L^{\io\th}$. In particular, $Z_H(g\iv x_sg) \supset L_H$. 
But since $Z_H(g\iv x_sg) = g\iv L_Hg$, we have $g\iv L_Hg = L_H$, namely, 
$g \in N_H(L_H)$. Here note that $N_H(L_H)/L_H \simeq S_a\ltimes (\BZ/2\BZ)^{a'}$.
Thus $g \in N_H(L_H)$ leaves $\vS\reg$ invariant. 
By our assumption $g\iv xg \in \ol \vS U_P$, $g \in N_H(L_H)$ should be contained 
in the inverse image of $S_a$ under the map $N_H(L_H) \to N_H(L_H)/L_H$. 
Such a $g$ leaves $\SD^0_P$ invariant also. Hence $(x, gL_H) \in \wt Y$, and 
we have $(x, gP_H) = \g((x, gL_H))$.  Since any element in $\pi\iv(Y)$ is $H$-conjugate 
to the element discussed above, we see that $\g$ is surjective. 
Hence $\g$ is a bijection.  The inverse morphism  $\g\iv : \pi\iv(Y) \to \wt Y$  
is constructed from the above discussion.  (ii) is proved.  
\par
(iii) \ Since $X$ is irreducible, it is enough to show that $Y$ is an open subset of $X$. 
Other properties are already shown in Lemma 3.2.  Put 
$(Z_L)^{\io\th}_1 = Z_L^{\io\th} - (Z_L)^{\io\th}\reg$ and 
$\vS_1 = (Z_L)^{\io\th}_1\ol\SO_L$. Then 
$\vS_1$ is a closed subset of $\ol\vS$, and  
$X_1 = \bigcup_{g \in H}g(\eta_P\iv(\vS_1))g\iv$ is a closed subset of $X$. 
Thus $X - X_1$ is an open subset of $X$, and in fact, it coincides with 
the subset of $X$ consisting of $x$ such that 
the semisimple part of $x$ is $H$-conjugate to an element in $(Z_L)^{\io\th}\reg$. 
Let $\vS_2 = Z_L^{\io\th}(\ol\SO_L - \SO_L)$.  Then $\vS_2$ is a closed 
subset of $\ol\vS$, and $X_2 = \bigcup_{g \in H}g(\eta_P\iv(\vS_2))g$ is a closed 
subset of $X$. Since $Y = X - (X_1 \cup X_2)$, $Y$ is open in $X$. 
(iii) is proved. 
\end{proof}

\para{3.6.}
We consider the diagram 

\begin{align*}
\begin{CD}
\vS @<\a_0<<  \wh Y @>\psi_0>>  \wt Y @>\pi_0>>  Y, 
\end{CD}
\end{align*}
where 
\begin{equation*}
\wh Y = \{ (x, g) \in G^{\io\th} \times H \mid g\iv xg \in \vS\reg\SD^0_P \},
\end{equation*}
and maps are defined by 
$\pi_0(x, gL_H) = x, \psi_0(x, g) = (x, gL_H), \a_0(x,g) = \eta_P(g\iv xg)$.
Here $H$ acts on $\wt Y$ by $h : (x, gL_H) \mapsto (hxh\iv, hgL_H)$, and 
$\pi_0$ is $H$-equivariant with respect to the conjugation action of $H$ on $Y$.
On the other hand, $H \times L_H$ acts on $\wh Y$ by 
$(h, \ell) : (x,g) \mapsto (hxh\iv, hg\ell\iv)$, and $\psi_0$ is $H$-equivariant. 
$H \times L_H$ also acts on $\vS$ by 
$(h,\ell) :y \mapsto \ell y \ell\iv$, and $\a_0$ is $H \times L_H$-equivariant.  
\par
Let $\SE_1$ be an $L_H$-equivariant simple local system on $\vS$.
Since $\vS = Z_L^{\io\th} \times \SO_L$, $\SE_1$ can be written as 
$\SE_1 = \ScS\boxtimes \SE_1^{\dag}$, where $\ScS$ (resp. $\SE_1^{\dag}$) 
is a simple local system on $Z_L^{\io\th}$ (resp. $\SO_L$).  We say 
that $(\vS, \SE_1)$ is a cuspidal pair if $\SE_1^{\dag}$ is a cuspidal local 
system on $\SO_L$.  Note that this definition is weaker than the definition of 
the cuspidal pair in [L1, 2.4].  By abuse of the notation, we also say that 
$(\SO_L, \SE_1^{\dag})$ is a cuspidal pair if $\SE_1^{\dag}$ is cuspidal on $\SO_L$.      
Now consider an arbitrary $\SE_1$.  
Then $\SE_1$ is $(H \times L_H)$-equivariant with respect to the trivial action of $H$ 
on $\vS$, and $\a_0^*\SE_1$ is an $(H \times L_H)$-equivariant local system on $\wh Y$. 
Since $\psi_0$ is a principal bundle over $\wt Y$ with respect to the free action of $L_H$, 
there exists a unique local system $\wt\SE_1$ on $\wt Y$, up to isomoprhism,  such that
\begin{equation*}
\a_0^*\SE_1 \simeq \psi_0^*\wt\SE_1.
\end{equation*}
Since $\a_0$ is smooth with connected fibre $H \times \SD_P$, 
$\wt\SE_1$ turns out to be an $H$-equivariant simple local system on $\wt Y$. 
\par
Here $\SW = N_H(L_H)/L_H \simeq S_a\ltimes (\BZ/2\BZ)^{a'}$ (see the proof of 
Lemma 3.2).  Let 
$\SW_1$ be the stabilizer of $\vS\reg\SD^0_P$ in $\SW$.  Then $\SW_1 \simeq S_a$. 
We note that $\pi_0 : \wt Y \to Y$ is a finite Galois covering with group 
$\SW _1$. 
In fact, we have an isomorphism $\wt Y \simeq H \times^{L_H}\vS\reg\SD^0_P$, and 
by (3.2.2), the map $\pi_0$ is given by the canonical map
\begin{equation*}
\pi_0 : H \times^{L_H}\vS\reg\SD^0_P \to H \times^{N_H(L_H)}\z\iv(L_H), 
\end{equation*}
where $\z\iv(L_H) = \bigcup_{w \in (\BZ/2\BZ)^{a'}}w(\vS\reg \SD^0_P)w\iv$, see (3.2.3).
Thus $\pi_0$ is a finite Galois covering with group $\SW_1$.  
\par
We consider $(\pi_0)_*\wt\SE_1$. Since $\pi_0$ is a finite Galois covering, 
$(\pi_0)_*\wt\SE_1$ is a semisimple local system on $Y$. 
Put
\begin{equation*}
\SW_{\SE_1} = \{ n \in N_H(L_H) \mid n(\vS\reg\SD^0_P)n\iv  = \vS\reg\SD^0_P, 
                    \ad (n)^*\SE_1 \simeq \SE_1 \}/L_H, 
\end{equation*}
where $\ad(n) : \vS \to \vS, y \mapsto nyn\iv$. 
Then $\SW_{\SE_1}$ is a subgroup of $\SW_1$.  In a similar way as in 
[L1, Proposition 3.5], one can show that the endomorphism algebra 
$\SA_{\SE_1} = \End ((\pi_0)_*\wt\SE_1)$ of $(\pi_0)_*\wt\SE_1$ is isomorphic to
the (twisted) group algebra $\Ql[\SW_{\SE_1}]$.  
Thus $(\pi_0)_*\wt\SE_1$ can be decomposed as 
\begin{equation*}
\tag{3.6.1}
(\pi_0)_*\wt\SE_1 \simeq \bigoplus_{\r \in \SA_{\SE_1}\wg}\r \otimes ((\pi_0)_*\wt\SE_1)_{\r},
\end{equation*}
where $((\pi_0)_*\wt\SE_1)_{\r} = \Hom_{\SA_{\SE_1}} (\r, (\pi_0)_*\wt\SE_1)$ 
is a simple local system on $Y$ corresponding to $\r \in \SA_{\SE_1}\wg$. 

\para{3.7.}
We consider the following commutative diagram

\begin{equation*}
\begin{CD}
\ol\vS @< \a <<  \wh X   @> \psi >>  \wt X   @>\pi>>  X    \\  
@AAA           @AAA           @AAA          \\
\vS  @<\a' <<    \wh X_0   @>\psi' >>   \wt X_0,   
\end{CD}
\end{equation*}
where 
\begin{align*}
\wh X &= \{ (x,g) \in G^{\io\th} \times H \mid g\iv xg \in \eta_P\iv(\ol\vS) \},  \\ 
\wh X_0 &=  \{ (x,g) \in G^{\io\th} \times H \mid g\iv xg \in \eta_P\iv(\vS) \}, \\
\wt X_0 &= \{ (x, gP_H) \in G^{\io\th} \times H/P_H \mid g\iv xg \in \eta_P\iv(\vS) \}.
\end{align*}
Here $\vS, \wh X_0, \wt X_0$ are open dense smooth subsets of 
$\ol\vS, \wh X, \wt X$, and the horizontal maps are natural inclusions.  
$\psi$ is defined by $(x,g) \mapsto (x, gP_H)$, $\a$ is defined by 
$(x,g) \mapsto \eta_P(g\iv xg)$, and $\psi', \a'$ are their restrictions on 
$\wh X_0$. 
$H$ acts on $\wt X$ by $h : (x, gP_H) \mapsto (hxh\iv, hgP_H)$, and 
$H \times P_H$ acts on $\wh X$ by $(h,p) : (x,g) \mapsto (hxh\iv, hgp\iv)$.
With the trivial action of $U_{P_H}$, $P_H$ acts on $\ol\vS$, and $\a$ 
is $H \times P_H$-equivariant, $\psi$ is $H$-equivariant. 
Similar properties hold for $\a', \psi'$ as the restriction of $\a, \psi$. 
\par
Let $\SE_1$ be the local system on $\vS$ as before. 
Since $\psi'$ is a principal bundle over $\wt X_0$ with 
respect to the free action of $P_H$, by a similar argument as in 3.6, 
there exists a unique local system (up to isomorphism)  
$\ol\SE_1$ on $\wt X_0$ such that
\begin{equation*}
(\a')^*\SE_1 \simeq (\psi')^*\ol\SE_1.
\end{equation*}
Note that $\wt Y$ is an open dense subset of $\wt X_0$, and the restriction of 
$\ol\SE_1$ on $\wt Y$ coincides with $\wt\SE_1$ defined in 3.6. 
Put 
$K_{\ol\SE_1} = \IC(\wt X, \ol\SE_1)[\dim \wt X]$.
Then $K_{\ol\SE_1}$ is an $H$-equivariant simple perverse sheaf on $\wt X_0$.
Since $\a$ is smooth with connected fibre $H \times U_P^{\io\th}$, and $\psi$ 
is a principal bundle with respect to the free action of $P_H$, the discussion 
in 3.6 works in the level of perverse sheaves, and we have
\begin{equation*}
\a^*\IC(\ol\vS, \SE_1)[\dim \vS] \simeq \psi^*\IC(\wt X, \ol\SE_1)[\dim \wt X].
\end{equation*}

We consider the complex $\pi_*K_{\ol\SE_1}$ on $X = \ol Y$.
The following result is an analogue of [L1, Proposition 4.5]. 
Note that here we don't need to assume that $(\vS, \SE_1)$ is cuspidal. 

\begin{prop}  
We have $\pi_*K_{\ol\SE_1} \simeq \IC(X, (\pi_0)_*\wt\SE_1)[\dim X]$.
\end{prop}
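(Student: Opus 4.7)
The plan is to verify the three defining properties of the intermediate extension for $\pi_* K_{\ol\SE_1}$ directly. First, since $\pi : \wt X \to X$ is proper and $K_{\ol\SE_1} = \IC(\wt X, \ol\SE_1)[\dim \wt X]$ is a simple perverse sheaf, the decomposition theorem guarantees that $\pi_* K_{\ol\SE_1}$ is a semisimple perverse sheaf on $X$ (after a shift by $\dim X = \dim \wt X$, which is the content of (3.5.1)). So the task reduces to showing that (i) its restriction to the open dense subset $Y \subset X$ is $(\pi_0)_*\wt\SE_1[\dim X]$, and (ii) no other simple summand supported on $X - Y$ appears.

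For (i), Lemma 3.5(ii) gives the isomorphism $\g : \wt Y \isom \pi\iv(Y)$, and by construction $\pi\iv(Y) \subset \wt X_0$ so that $\ol\SE_1$ restricts to $\wt\SE_1$ on $\wt Y$. Combining this with the fact that $\pi$ restricted to $\pi\iv(Y)$ matches $\pi_0 : \wt Y \to Y$ under $\g$, and that $\pi_0$ is a finite covering (hence $(\pi_0)_*\wt\SE_1$ is already a shifted perverse sheaf on $Y$), base change yields
\begin{equation*}
\bigl(\pi_* K_{\ol\SE_1}\bigr)\big|_Y \simeq (\pi_0)_*\wt\SE_1[\dim X].
\end{equation*}

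For (ii), I would verify the support and cosupport estimates characterizing $\IC(X,(\pi_0)_*\wt\SE_1)[\dim X]$. Letting $i : X - Y \hookrightarrow X$ denote the inclusion, proper base change gives, for $x \in X - Y$,
\begin{equation*}
\SH^k\bigl(i^*\pi_* K_{\ol\SE_1}\bigr)_x \simeq H^{k+\dim \wt X}\bigl(\pi\iv(x), \ol\SE_1|_{\pi\iv(x)}\bigr),
\end{equation*}
so the support condition amounts to bounding $2\dim \pi\iv(x) < \dim X - \dim(H\cdot x)$ for $x$ outside $Y$. Using the presentation $\wt X \simeq H\times^{P_H}\eta_P\iv(\ol\vS)$, the fibre $\pi\iv(x)$ is, up to a shift by $\dim P_H$, the variety $\{gP_H \in H/P_H \mid g\iv xg \in \eta_P\iv(\ol\vS)\}$. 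By stratifying $\ol\vS = \vS \sqcup \vS_1 \sqcup \vS_2$ (as in the proof of Lemma 3.5(iii)), one reduces the estimate to Proposition 2.2(ii) applied to each $H$-orbit of the semisimple part $x_s$ of $x$, together with the analogous orbit of the unipotent part in a smaller Levi. The strict inequalities provided by Proposition 2.2(iii) for bad orbits, together with the extra codimension one gains by moving $x_s$ out of $(Z_L^0)^{\io\th}\reg$ into $(Z_L^0)^{\io\th}_1$ or by degenerating the unipotent orbit to $\ol\SO_L - \SO_L$, are what make the inequality strict on $X - Y$. The cosupport condition follows by an identical argument applied to the Verdier dual, using that $\ol\SE_1$ has a self-dual IC extension.

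The main obstacle is the dimension estimate on $\pi\iv(x)$ for $x \in X - Y$; in our symmetric space setting the presence of $\SD_P$ (absent in the reductive case of [L1]) changes the relevant codimensions, which is why Proposition 2.2 was formulated with the correction term $\vD_P$. The match between the correction $\vD_P$ coming from $\dim U_P^{\io\th} - \dim U_P^{\th}$ in the dimension count (3.5.1) and the same $\vD_P$ appearing in the right-hand sides of Proposition 2.2 is precisely what makes the inequalities come out with the right sign, so once this bookkeeping is handled the rest of the argument follows the pattern of [L1, Proposition 4.5].
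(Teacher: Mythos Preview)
Your outline is the same argument the paper invokes (it defers to [L1, Proposition 4.5], using Proposition 2.2 as the dimension input), and your final paragraph correctly identifies the bookkeeping role of $\vD_P$. Two imprecisions are worth fixing: first, the decomposition theorem only gives a semisimple \emph{complex}, not a perverse sheaf, so perversity is part of what your step (ii) must establish, not something you get for free; second, your stalk formula $\SH^k(i^*\pi_*K_{\ol\SE_1})_x \simeq H^{k+\dim \wt X}(\pi\iv(x), \ol\SE_1|_{\pi\iv(x)})$ is only valid on the open stratum $\wt X_0$, since $K_{\ol\SE_1}$ is an IC sheaf rather than a shifted local system --- on the boundary strata $\wt X_\b$ one must use the hypercohomology spectral sequence together with the support estimates $\dim\supp\SH^j K_{\ol\SE_1} \le -j$ (strict for $j > -\dim\wt X$), which is exactly what your stratification of $\ol\vS$ into the pieces $\vS_\b = (Z_L^0)^{\io\th}\SO_\b$ is meant to handle. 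Once these are made precise the argument goes through as in [L1].
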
 

\begin{proof}
By Lemma 3.5 and Lemma 3.2, we have 
$\dim X = 2\nu_H - 2\nu_{L_H} + \dim \vS + \vD_P$. 
Then the proof of the proposition is done by an entirely similar 
way as the proof of Proposition 4.5 in [L1], by making use of Proposition 2.2.
We omit the details. 
\end{proof}

\para{3.9.}
By Proposition 3.8, $\pi_*K_{\ol\SE_1}$ is a semisimple perverse sheaf. 
By (3.6.1), it is decomposed as 

\begin{equation*}
\tag{3.9.1}
\pi_*K_{\ol\SE_1} \simeq \bigoplus_{\r \in \SA_{\SE_1}\wg}\r \otimes 
                   \IC(X, ((\pi_0)_*\wt\SE_1)_{\r})[\dim X].
\end{equation*}  

A simple perverse sheaf isomorphic to a direct summand of $\pi_*K_{\ol\SE_1}$ 
obtained from the various cuspidal pair $(\vS, \SE_1)$ is called an admissible 
complex. 

\remark{3.10.}   
In the case where $P = B$ and $L = T$, we have $\vS = T^{\io\th}$.
Then 
\begin{align*}
\wt X  &= \{ (x, gB_H) \in G^{\io\th} \times H/B_H \mid g\iv xg \in B^{\io\th}\}, \\
    X &= \bigcup_{g \in H}gB^{\io\th}g\iv, 
\end{align*}
and $\pi : \wt X \to X$ is the first projection. 
A similar map can be defined even in the case where $H = Sp_N$.  
However in that case, we have $\dim \wt X > \dim X$, and Lemma 3.5  
does not hold. Nevertheless, in the case where $\SE_1$ is a constant
sheaf $\Ql$ on $T^{\io\th}$, it was proved in [H] (see also [SS]) 
that $\pi_*\Ql$ is a semisimple complex equipped with $S_n$-action, and 
some modified formula of (3.9.1) holds.
If we consider the exotic symmetric space $G^{\io\th} \times V$ associated 
to the symplectic group $Sp(V)$, the map $\pi : \wt X \to X$ is also defined. 
In that case, an exactly analogous formula of (3.9.1) holds ([SS], [K]).    

\par\bigskip
\section{Sheaves on the variety of semisimple orbits}

\para{4.1.}
We keep the setting in Section 3. 
In particular, consider $Y = Y_{(L,\vS)}$ and $X = \ol Y$ 
with $\vS = Z_L^{\io\th}\SO_L$.
In this section, we assume that the local system $\SE_1$ on $\vS$ is 
of the form $\SE_1 = \Ql\boxtimes \SE^{\dag}_1$, where $\Ql$ is the constant sheaf 
on $Z_L^{\io\th}$ and $\SE^{\dag}_1$ is an $L_H$-equivariant simple local system 
on $\SO_L$. 

\par  
Let $\s' : G \to T/S_N$ be the Steinberg map with respect to $G = GL_N$. 
We have a natural embedding $A = T^{\io\th}/S_n \hra T/S_N$, where 
$A$ has a structure of an affine variety.
Let $G^{\io\th}_+$ be the subset of $G^{\io\th}$ consisting of $g \in G^{\io\th}$
such that its semisimple part $g_s$ is $H$-conjugate to an element in $T^{\io\th}$. 
(Note that $T^{\io\th}$ is not a maximal $\th$-anisotropic torus. See the example in 1.4, 
which corresponds to our setting with $N = 2$.)
Then the restriction of $\s'$ on $G^{\io\th}_+$ induces a map 
$\s : G^{\io\th}_+ \to A$. 
Since $\ol Y \subset G^{\io\th}_+$, 
one can consider $\s(\ol Y)$.  We have
\begin{equation*}
\s(\ol Y) = \s(\eta_P\iv(\ol\vS)) = \s(\ol\vS) = \s(Z_L^{\io\th}) \subset A, 
\end{equation*}
which we denote by $A_{Y}$. 
\para{4.2.}
Let $(B', T')$ be another $\th$-stable pair. 
By 1.9, the $H$-conjugate class of $(B',T')$ is described as follows. 
If $N$ is odd, we may choose $(B', T') = (B,T)$. 
If $N$ is even, then $(B',T') = (B, T)$ or $B' = t_n B t_n\iv, T' = T$, where
$t_n \in N_{G^{\th}}(T)$ is given as in 1.9. 
We put $B_1 = t_n B t_n\iv$. 
\par
Assume that $N$ is even, and let $P$ be a $\th$-stable parabolic subgroup
of $G$ containing $B$, and $L$ the Levi subgroup of $P$ containing $T$. 
Then $P' = t_nPt_n\iv$ is the $\th$-stable parabolic subgroup containing $B_1$, 
and $L' = t_n L t_n\iv$ is the Levi subgroup of $P'$ containing $T$. 
Note that $L' = L$ if $L \ne T$. 

\para{4.3.}
We now consider another $\th$-stable parabolic subgroup $P'$ containing $B'$ and 
its $\th$-stable Levi subgroup $L'$ containing $T$, where $B' = B$ or $B' = B_1$. 
We consider a set 
$\vS' \subset {L'}^{\io\th}$ similarly to $\vS \subset L^{\io\th}$. We shall denote 
by $Y', X'$, etc. various objects associated to $\vS', L', P'$ 
by attaching prime to corresponding objects $Y, X$, etc. associated to 
$\vS , L, P$.   
Put
\begin{align*}
Z = \{ (x, gP_H), (x',g'P'_H)) \in \wt X \times \wt X' \mid x = x' \}.  
\end{align*}
Thus $Z$ is isomorphic to the fibre product 
$\wt X \times_{G^{\io\th}} \wt X'$ of $\wt X$ and $\wt X'$ over $G^{\io\th}$. 
Here we assume that $X \cap X' \ne \emptyset$, otherwise $Z = \emptyset$. 
We define a map 
$\wt\s : Z \to A_Y \cap A_{Y'}$ by the composite $\s\circ p_1$ of the first projection 
$p_1 : Z \to X \cap X'$ and $\s$. 
For $a \in A_{Y}$, put $Z^a = \wt\s(a)\iv \subset Z$. 
\par
Since $\ol\vS = Z_L^{\io\th}\ol\SO_L$, $\ol\vS$ has a stratification 
$\ol\vS = \coprod_{\b}\vS_{\b}$ with smooth strata 
$\vS_{\b} = Z_L^{\io\th}\SO_{\b}$, 
where we put $\ol\SO_L = \coprod_{\b}\SO_{\b}$ ($\SO_{\b}$ : $L_H$-orbit in 
$L^{\io\th}\uni$). We put $\vS_{\b_0} = \vS = Z_L^{\io\th}\SO_L$ : 
the open dense stratum. By defining $\wt X_{\b}$ in a similar way as $\wt X_0$ in 3.7, 
we obtain 
a stratification $\wt X = \coprod_{\b}\wt X_{\b}$, with $\wt X_0 = \wt X_{\b_0}$,
where $\wt X_{\b}$ are locally closed subvarieties of $\wt X$. 
Given strata $\b$ of $\ol\vS$ and $\b'$ of $\ol\vS'$, 
we put $Z^a_{\b,\b'} = Z^a \cap (\wt X_{\b} \times_{G^{\io\th}} \wt X'_{\b'})$. 
Then the sets $Z^a_{\b,\b'}$ form a partition of $Z^a$ into locally closed pieces,
where $Z^a_{\b_0,\b'_0}$ is open dense in $Z^a$. 
Put
\begin{equation*}
\tag{4.3.1}
d_0 = 2\nu_H - \nu_{L_H} - \nu_{L'_H} + (\dim \SO_L + \dim \SO_{L'})/2 
      + (\vD_P + \vD_{P'})/2.
\end{equation*}
Take local systems $\SE_1$ on $\vS$ and $\SE_1'$ on $\vS'$, 
and consider the external tensor product 
$K_{\ol\SE_1}\boxtimes K'_{\ol\SE'_1}$ on $Z$.  We define 
\begin{equation*}
\tag{4.3.2}
\wt\ST = \SH^{-r - r'}(\wt\s_!(K_{\ol\SE_1}\boxtimes K'_{\ol\SE'_1})),
\end{equation*} 
where we put $r = \dim Z_L^{\io\th}$, $r' = \dim Z_{L'}^{\io\th}$. 
$\wt\ST$ is a constructible sheaf on $A_Y \cap A_{Y'}$. 

\par
Put $Z_0 = \wt X_0\times_{G^{\io\th}}\wt X'_0$.  Then $Z_0$ is an open dense subset
of $Z$.  Let $\wt\s_0: Z_0 \to A_Y \cap A_{Y'}$ be the restriction of $\wt\s$ on $Z_0$.
Noticing that 
\begin{equation*}
K_{\ol\SE_1}\boxtimes K'_{\ol\SE'_1}[-\dim X - \dim X']|_{Z_0} 
    = \ol\SE_1 \boxtimes \ol\SE'_1, 
\end{equation*} 
we define 
\begin{equation*}
\tag{4.3.3}
\ST = \SH^{-r -r'}((\wt\s_0)_!\bigl(K_{\ol\SE_1}\boxtimes K'_{\ol\SE'_1}|_{Z_0}\bigr)
         = \SH^{2d_0}\bigl((\wt\s_0)_!(\ol\SE_1\boxtimes\ol\SE'_1)\bigr).
\end{equation*}

$\ST$ is also a constructible sheaf on $A_Y \cap A_{Y'}$.  
Since $Z_0$ is open in $Z$, we have 
a natural map $\ST \to \wt\ST$. 
In order to obtain a relationship between $\wt\ST$ and $\ST$, we prepare a lemma, 
which is an analogue of [L1, Lemma 5.3].  The proof is done by a similar argument 
as in the proof of [loc. cit.], by using Proposition 2.2, and we omit the proof. 

\begin{lem}  
\begin{enumerate}
\item 
For any $\b,\b'$, we have
\begin{equation*}
\dim Z^a_{\b,\b'} \le d_0 - (\dim \SO_L + \dim \SO_{L'})/2  
    + (\dim \SO_{\b} + \dim \SO_{\b'})/2.
\end{equation*}
In particular, we have $\dim Z^a \le d_0$. 
\item
The natural map
\begin{equation*}
\hspace*{-5mm}
  H^{k + \dim X + \dim X'}_c(Z^a_{\b_0,\b_0}, \ol\SE_1\boxtimes\ol\SE'_1) \simeq 
   \BH^{k}_c(Z^a_{\b_0,\b_0}, K_{\ol\SE_1}\boxtimes K_{\ol\SE'_1}) 
        \to \BH_c^{k}(Z^a, K_{\ol\SE_1}\boxtimes K_{\ol\SE'_1})
\end{equation*}
is an isomorphism for $k > -r -r'$, and is surjective if $k = -r -r'$. 
It is an isomorphism for $k = -r -r'$ (i.e, for $k + \dim X + \dim X' = 2d_0$) 
if $a$ is such that $Z^a_{\b,\b'}$ is 
empty whenever exactly one of $\b,\b'$ is equal to $\b_0$ or $\b_0'$. 
\end{enumerate}
\end{lem}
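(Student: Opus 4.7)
The plan is to imitate Lusztig's proof of [L1, Lemma 5.3] closely, feeding in Proposition 2.2 to handle the symmetric-space corrections $\vD_P, \vD_{P'}$ and the constraint $\s(x)=a$. Part (i) is a dimension estimate obtained by double-coset bookkeeping, and part (ii) follows from it by an IC-support-plus-excision argument on the stratification of $Z^a$.

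\textbf{Part (i).} I would partition $Z^a_{\b,\b'}$ according to the $H$-orbits on $H/P_H \times H/P'_H$, which are indexed by double cosets $\w \in W_{L_H}\bs W_H / W_{L'_H}$. For a representative $w \in W_H$, the piece of $Z^a_{\b,\b'}$ lying over $\OO_\w$ can be parametrized by tuples $(u, u', y, y', z) \in U_P^{\io\th} \times {}^wU_{P'}^{\io\th} \times ({}^wL' \cap U_P) \times (L \cap {}^wU_{P'}) \times (L \cap {}^wL')$ subject to $y'\cdot u = y \cdot u'$ together with $zy'\th(zy')\iv \in \vS_\b$ and $\dw\iv zy\th(zy)\iv \dw \in \vS'_{\b'}$, modulo a suitable $E^{\th}$-action. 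This is the direct analogue of the setup in the proof of Proposition 2.2 (compare (2.2.3)--(2.2.10)); the fibres of the projection $(u,u',y,y',z) \mapsto (y, y', z)$ have constant dimension $\dim(U_P \cap {}^wU_{P'})^{\io\th}$, and the counterpart of (2.2.7)--(2.2.8) accounts for the correction $\vD_P + \vD_{P'}$ through the combinatorial invariant $b_w$. The new constraint $\s(x) = a$ then forces the common semisimple image $z\th(z)\iv$ to lie in finitely many $(M^\th)^0$-orbits in $M = L \cap {}^wL'$ (the finiteness of (2.2.11) specialized by $a$), dropping the dimension by $r + r'$ relative to (2.2.9). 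Inducting on $\dim G$ and applying the analogue of (2.2.10) for the pair of $\th$-stable parabolics $Q \subset L$, $Q' \subset {}^wL'$ yields the bound; specialization to $\b = \b_0, \b' = \b'_0$ gives $\dim Z^a \le d_0$.

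\textbf{Part (ii).} The open-closed decomposition $Z^a = Z^a_{\b_0,\b'_0} \sqcup (Z^a \setminus Z^a_{\b_0,\b'_0})$ gives an excision long exact sequence. On $Z^a_{\b_0,\b'_0}$ the complex $K_{\ol\SE_1}\boxtimes K'_{\ol\SE'_1}$ is the shifted local system $(\ol\SE_1 \boxtimes \ol\SE'_1)[\dim X + \dim X']$, which identifies its compactly supported hypercohomology in degree $-r - r'$ with $H^{2d_0}_c(Z^a_{\b_0,\b'_0}, \ol\SE_1 \boxtimes \ol\SE'_1)$. On the complement, two inputs combine: the IC support axioms force $\CH^{-\dim \wt X}(K_{\ol\SE_1})$ to vanish off $\wt X_0$ (and similarly for the primed side), so the top-degree stalks of the boxed product are concentrated on $\wt X_0 \times \wt X'_0$; and the strict inequality in (i) whenever $\b \ne \b_0$ or $\b' \ne \b'_0$ controls the dimension of every non-open stratum. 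A standard hypercohomology spectral sequence then shows that the complement contributes nothing in degrees $k > -r - r'$, and nothing in degree $-r - r'$ once the mixed strata are ruled out by the stated emptiness hypothesis. This produces the isomorphism for $k > -r - r'$, surjectivity at $k = -r - r'$, and the additional isomorphism under the final hypothesis.

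\textbf{Main obstacle.} The delicate step is the bookkeeping in part (i): one must verify that the counterparts of (2.2.7), (2.2.8) and (2.2.12) carry over to the asymmetric pair $(P, P')$, which may arise from non-$H$-conjugate $\th$-stable pairs $(B, T), (B_1, T)$ when $N$ is even (see 1.9 and 4.2), and that the invariant $b_w$ is read off correctly in this setting. Once the estimate in (i) is in hand, part (ii) is a routine consequence of IC support conditions and excision.
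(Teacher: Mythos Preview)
Your proposal is correct and matches the paper's own approach: the paper explicitly omits the proof, saying only that it is done ``by a similar argument as in the proof of [L1, Lemma 5.3], by making use of Proposition 2.2.'' Your sketch carries this out faithfully, including the double-coset decomposition, the fibre computation via $(U_P \cap {}^wU_{P'})^{\io\th}$, and the excision/IC-support argument for part (ii); you also correctly flag the one genuinely new bookkeeping issue, namely the asymmetric pair $(P,P')$ arising from the two $\th$-stable Borel choices when $N$ is even.

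One small point: you do not need a fresh induction on $\dim G$ for part (i). Proposition 2.2 is already established (by induction), so once you have reduced to the analogue of $\ol V^a$ and its projection $\pi_3$ to $M^{\io\th}$, you can directly invoke Proposition 2.2(i) for the parabolics $Q \subset L$ and $Q' \subset {}^wL'$ to bound the fibres; the role of induction has already been discharged there. Otherwise your outline is exactly what is needed.
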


\para{4.5.}
Let $\wt\ST_a$ (resp. $\ST_a$) be the stalk of $\wt\ST$ (resp. $\ST$) 
for $a \in A_Y \cap A_{Y'}$.  Then we have
\begin{align*}
\wt\ST_a &\simeq \BH^{-r -r'}_c(Z^a, K_{\ol\SE_1}\boxtimes K'_{\ol\SE'_1}), \\
\ST_a &\simeq \BH^{-r -r'}_c(Z^a_{\b_0,\b'_0}, K_{\ol\SE_1}\boxtimes K'_{\ol\SE'_1}), 
\end{align*}
and the natural map $\ST_a \to \wt\ST_a$ corresponds to the map 
in Lemma 4.4 (ii) for $k = -r -r'$.  Thus, by Lemma 4.4 (ii), we see that
\par\medskip\noindent
(4.5.1) \ The natural map of sheaves $\ST \to \wt\ST$ is surjective. 
\par\medskip
Let $\OO$ be an $H$-orbit in $H/P_H \times H/P'_H$, and put 
$Z_0^{\OO} = q\iv(\OO)$, where $q : Z_0 \to H/P_H \times H/P'_H$ 
is the projection $(x, gP_H, g'P'_H) \mapsto (gP_H, g'P'_H)$.
Put 
\begin{equation*}
\ST_{\OO} = \SH^{-r -r'}(\wt\s)_!\bigl(K_{\ol\SE_1}\boxtimes 
                                    K'_{\ol\SE'_1}|_{Z^{\OO}_0}\bigr)
          = \SH^{2d_0}(\wt\s_0)_!\bigl(\ol\SE_1\boxtimes \ol\SE'_1|_{Z^{\OO}_0}\bigr).
\end{equation*}

$\ST_{\OO}$ is a constructible sheaf on $A_Y \cap A_{Y'}$.  
Recall that $\SW = N(L_H)/L_H$ and $\SW_1 \simeq S_a$ is the subgroup of $\SW$. 
Let $N_H(L_H)_1$ be the inverse image of $\SW_1$ under the map $N_H(L_H) \to \SW$. 

We show a lemma.

\begin{lem}  
Assume that $(\vS, \SE_1), (\vS',\SE_1')$ are cuspidal pairs.  
Let $\OO = \OO_{\w}$ be an $H$-orbit corresponding to 
$\w \in W_{L_H} \backslash W_H /W_{L'_H}$, and $w \in W_H$ be 
a representative of $\w$.  Let $\dw \in N_H(T_H)$ be a representative of $w$.   
\begin{enumerate}
\item 
If $(L, \vS)$ and $(L', \vS')$ are not conjugate under $H$, then $\ST_{\OO} = 0$. 
\item 
Assume that $P = P', L = L'$ and $\vS = \vS'$. 
If $\dw \in N_H(T_H)$ is not 
contained in $N_H(L_H)_1$. Then $\ST_{\OO} = 0$.
\item
Assume that $N$ is even, and $P = B, P' = B_1, L = L' = T$.  Then $\ST_{\OO} = 0$. 
\end{enumerate}
\end{lem}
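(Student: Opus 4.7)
The plan is to follow closely the strategy of the analogous Lemma 5.4 in [L1], adapted to the symmetric-space setting using the dimension estimates of Proposition 2.2. For $a \in A_Y \cap A_{Y'}$, the stalk is
\begin{equation*}
\ST_{\OO,a} \simeq \BH_c^{2d_0}\bigl(Z^{\OO,a}_{\b_0,\b'_0}, \ol\SE_1\boxtimes\ol\SE'_1\bigr),
\end{equation*}
so it suffices to verify this cohomology vanishes in each of the three cases. Using $H$-equivariance and the projection $q : Z_0 \to H/P_H \times H/P'_H$, restrict to the fibre of $q$ over $(P_H, \dw P'_H) \in \OO_w$. This identifies $Z^{\OO,a}_{\b_0,\b'_0}$, up to a quotient by a stabilizer and an $H$-bundle factor, with data built from the fibre
\begin{equation*}
F = \eta_P^{-1}(\vS) \cap \dw\, \eta_{P'}^{-1}(\vS')\, \dw^{-1} \cap \s^{-1}(a),
\end{equation*}
carrying a local system induced from $\ol\SE_1$ and $\ol\SE'_1$.

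Next, analyse $F$ via the common Levi decomposition of $P$ and $\dw P'\dw^{-1}$. Since both contain $T$, the subgroup $Q = L \cap {}^wP'$ is a $\th$-stable parabolic of $L$ with Levi $M = L \cap {}^wL'$, and symmetrically $Q' = {}^wL' \cap P$ has Levi $M$. Compose $\eta_Q \circ \eta_P$ to map $F \to M^{\io\th}$. By an argument analogous to (2.2.11), for fixed $a$ the image lies in a finite union of $(M^{\th})^0$-orbits, and the fibre over $\ol z \in M^{\io\th}$ factors (up to an affine contribution from $(U_P \cap {}^wU_{P'})^{\io\th}$) as a product
\begin{equation*}
\bigl(\SO_L \cap \eta_Q^{-1}(\ol z)\bigr) \times \bigl(\SO_{L'} \cap \eta_{Q'}^{-1}(\ol z)\bigr).
\end{equation*}
Via a Leray spectral-sequence argument along this projection together with the dimension bounds of Proposition 2.2 (i), the top-degree $\BH_c$ of $F$ reduces to the top-degree cohomology of such product fibres with coefficients in the restrictions of $\SE_1^{\dag}$ and ${\SE'}_1^{\dag}$. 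By Lemma 2.5 applied to these cuspidal local systems, this top cohomology vanishes whenever $Q$ is a proper parabolic of $L$ or $Q'$ is a proper parabolic of ${}^wL'$, i.e.\ whenever $M \ne L$ or $M \ne {}^wL'$.

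The argument then concludes by showing in each case that the configuration forces at least one of $Q, Q'$ to be proper. In case (i), equality $M = L = {}^wL'$ combined with the compatibility of the orbits $\SO_L$ and $\SO_{L'}$ demanded by the non-vanishing of the top cohomology would produce an element of $H$ conjugating $(L,\vS)$ to $(L',\vS')$, contradicting the hypothesis. In case (ii), $M = L$ (with $L = L'$) forces $\dw \in N_H(L_H)$, and the further requirement that $\dw$ preserve the open piece $\vS\reg\SD_P^0$ characterises exactly $N_H(L_H)_1$; when $\dw \notin N_H(L_H)_1$, the sign component $(\BZ/2\BZ)^{a'}$ of $\SW = N_H(L_H)/L_H$ moves $\SD_P^0$ to a disjoint conjugate inside $U_P$, so the intersection defining $F$ misses the open $\b_0,\b'_0$-stratum and Lemma 4.4 (ii) yields the vanishing. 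In case (iii), the non-$H$-conjugacy of $B$ and $B_1$ (see 1.9) combined with an explicit comparison of $\SD_B$ and $\dw\SD_{B_1}\dw^{-1}$ inside $(B \cap \dw B_1 \dw^{-1})^{\io\th}$ shows that $F \cap (\vS\reg\SD_B^0 \times \vS\reg\SD_{B_1}^0)$ is empty for every $w \in W_H$, so the open-stratum cohomology vanishes trivially.

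The main obstacle will be the sign-component bookkeeping in case (ii): one must verify carefully that the $(\BZ/2\BZ)^{a'}$-part of $\SW$ genuinely moves $\SD_P^0$ to a \emph{disjoint} conjugate inside $U_P$ and then track the resulting open-stratum condition through the projection to $M^{\io\th}$. Case (iii) requires the parallel explicit control on how $\SD_B$ and $\SD_{B_1}$ intersect inside $B \cap \dw B_1 \dw^{-1}$; the remaining dimension bookkeeping parallels [L1, Lemma 5.4] and relies throughout on Proposition 2.2.
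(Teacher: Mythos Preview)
Your overall strategy and your treatment of case (i) match the paper: reduce to the fibre over $(P_H,\dw P'_H)$, factor through $Q = L\cap{}^wP'$ and $Q' = {}^wL'\cap P$, project to $M^{\io\th}$, and invoke cuspidality via Lemma~2.5 when $Q$ or $Q'$ is proper.

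However, your arguments for (ii) and (iii) contain a genuine gap. The fibre $F$ you write down involves $\eta_P^{-1}(\vS)$, which is essentially $(Z_L^0)^{\io\th}\SO_L\cdot U_P^{\io\th}$, \emph{not} the open subset $\vS\reg\SD_P^0$. The fact that the $(\BZ/2\BZ)^{a'}$-conjugates of $\SD_P^0$ are mutually disjoint in $U_P$ (which underlies the Galois structure of $\pi_0:\wt Y\to Y$ in 3.6) does not imply that $F$, or $F$ intersected with the $(\b_0,\b'_0)$-stratum, is empty when $\dw\notin N_H(L_H)_1$; indeed it typically is not. Your appeal to Lemma~4.4~(ii) is also misplaced: that lemma compares $\ST$ with $\wt\ST$ and says nothing about vanishing of $\ST_{\OO}$. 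The same objection applies to your emptiness claim in case~(iii).

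The paper's mechanism for (ii) and (iii) is a strict \emph{degree} inequality, not emptiness. After stripping off the affine factor $(U_P\cap{}^wU_{P'})^{\io\th}$, one must show $H^s_c(\ol V^a,\ol j^*(\SE_1\boxtimes\SE'_1))=0$ with $s=\dim\SO_L+\dim\SO_{L'}+\vD_P+\vD_{P'}-2b'_w$, where
\[
b'_w=\sharp\{\,1\le i\le n-n_0 \mid 1\le w^{-1}(i)\le n-n'_0\,\}
\]
counts how much of $\SD_P$ survives in the intersection. The combinatorial inequalities $\vD_Q+b'_w\le\vD_{P'}$ and $\vD_{Q'}+b'_w\le\vD_P$ then give $d+d'\le s-2\dim\wh\SO_i$; this becomes \emph{strict} precisely when $b'_w<\vD_P$, which is exactly the condition $\dw\notin N_H(L_H)_1$ in case~(ii), and holds for every relevant $\dw$ in case~(iii) since then $b'_w<\vD_B=n$. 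The vanishing follows because $s$ exceeds twice the fibre dimension. This $b'_w$-bookkeeping is the essential new ingredient beyond [L1, Lemma~5.4], and your proposal omits it.
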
  

\begin{proof}
In order to see $\ST_{\OO} = 0$, we have only to show that,  
for any $a \in A_Y \cap A_{Y'}$, 
\begin{equation*}
H^{2d_0}_c(Z^a \cap Z^{\OO}_{0}, \ol\SE_1\boxtimes\ol\SE'_1) = 0.
\end{equation*} 
Since $\dim (Z^a \cap Z^{\OO}_{0}) \le d_0$ (Lemma 4.4 (i)), by using the 
fibration $Z^a \cap Z^{\OO}_{0} \to \OO$, it is enough to see, 
for any $a \in A_Y \cap A_{Y'}$ and for any $g \in H$, 
$H_c^{2d_0 - 2\dim \OO}(V^a, j^*(\SE_1\boxtimes\SE'_1)) = 0$, where 
$\OO$ is the $H$-orbit containing $(P_H, \dw P'_H)$ (here $\dw \in N_H(T_H)$ 
is a representative of $w \in W_H$), and 
\begin{equation*}
V^a = \{ x \in \s\iv(a) \mid g\iv xg \in \eta_P\iv(\vS), 
             \dw\iv g\iv xg\dw \in \eta_{P'}\iv(\vS')\},
\end{equation*}
and $j : V^a \to \vS \times \vS'$ is defined by 
$j(x) = (\eta_P(g\iv xg), \eta_{P'}(\dw\iv g\iv xg\dw))$.
We have $g\iv xg \in P^{\io\th} \cap {}^w {P'}^{\io\th}$.
As in the proof of Proposition 2.2, one can write $g\iv xg$ as 
$g\iv xg = (zy')\cdot u = (zy)\cdot u'$, where $z \in L \cap {}^wL'$, 
$y' \in L \cap {}^wU_{P'}$, $y \in {}^wL' \cap U_P$, $u \in U_P^{\io\th}$, 
$u' \in {}^wU_{P'}^{\io\th}$.  
Thus $V^a$ can be described as follows; put
\begin{align*}
  \wt V^a =  &\{ (u,u', y, y', z) \in U_P^{\io\th} \times {}^wU_{P'}^{\io\th} 
    \times ({}^wL' \cap U_P) \times (L \cap {}^wU_{P'}) \times (L \cap {}^wL')  \\
      &\mid  y'\cdot u = y\cdot u', (zy')\th(zy')\iv \in \vS \cap \s\iv(a), 
               (zy)\th(zy)\iv \in \dw\vS'\dw\iv \cap \s\iv(a) \}.
\end{align*}
Then $V^a$ is isomorphic to the quotient of $\wt V^a$ by $E^{\th}$, where
$E = ({}^wL' \cap U_P) \times (L \cap {}^wU_{P'}) \times (L \cap {}^wL')$
and the action of $E^{\th}$ is given by the same formula as in the proof 
of Proposition 2.2.
We now consider the map 
$\wt V^a/E^{\th} \to E/E^{\th}$ induced by the projection 
$(u, u', y,y',z)$ $\mapsto (y,y',z)$.  Its image is given by 
\begin{align*}
\ol V^a = \{ (y,&y',z) \in E \mid (zy')\th(zy')\iv \in \vS \cap \s\iv(a),  \\ 
                  &(zy)\th(zy)\iv \in \dw\vS'\dw\iv \cap \s\iv(a) \}/E^{\th}.  
\end{align*}  
\par
By a similar computation as in (2.2.4), we see that all the fibres of this map are isomorphic to 
$(U_P \cap {}^wU_{P'})^{\io\th}$. 
Moreover, by a similar discussion as in the proof of (2.2.8), one can show that 
\begin{equation*}
\dim (U_P \cap {}^wU_{P'})^{\io\th} = 2\nu_H - \nu_{L_H} - \nu_{L'_H} 
  - \dim \OO_{\w} + b'_w,
\end{equation*}
where 
\begin{equation*}
b'_w = \sharp\{ i \mid 1 \le i \le n - n_0, 1 \le w\iv (i) \le n - n_0'\}. 
\end{equation*}
\par\medskip\noindent
(Here we assume that $L^{\th} \simeq (GL_1)^{(N - N_0)/2} \times GL^{\th}_{N_0}$ 
and ${L'}^{\th} \simeq (GL_1)^{(N - N_0')/2} \times GL_{N_0'}^{\th}$, and 
put $n_0 = [N_0/2], n_0' = [N_0'/2]$. If $B' = B$, a similar argument as in the proof 
of Proposition 2.2 can be applied. 
In the case where $N$ is even, we need to consider $B' = B_1$ also.
But in this case, if we replace $W_H = N_H(T_H)/T_H$ by a group 
$N_{G^{\th}}(T_H)/T_H$, which is isomorphic to $S_n\ltimes (\BZ/2\BZ)^n$, 
a similar argument as in the odd $N$ case works.)  
\par
Thus we are reduced to showing that 
\begin{equation*}
\tag{4.6.1}
H^s_c(\ol V^a, \ol j^*(\SE_1\boxtimes\SE'_1)) = 0,
\end{equation*}
where $s = \dim \SO_L + \dim \SO_{L'} + \vD_P + \vD_{P'} - 2b'_w$, 
and $\ol j : \ol V^a \to \vS \times \vS'$
is defined by $(y,y',z) \mapsto ((zy)\th(zy)\iv, \dw\iv(zy')\th(zy')\iv\dw)$.
\par
We use the notation $Q = L \cap {}^wP'$ the $\th$-stable parabolic subgroup of 
$L$ with Levi decomposition 
$Q = MU_Q$, where $M = L \cap {}^wL', U_Q = L \cap {}^wU_{P'}$.  Similarly, 
we define $Q' = {}^wL' \cap P$ the $\th$-stable parabolic subgroup of ${}^wL'$
with Levi decomposition $Q' = MU_{Q'}$, where $U_{Q'} = {}^wL' \cap U_P$.   
Let $\pi_3 : \ol V^a \to M ^{\io\th}$ be the map defined by 
$(y,y',z) \mapsto z\th(z)\iv$.   
Since the semisimple part of $z\th(z)\iv$ is contained in a fixed $H$-orbit, 
$\pi_3(\ol V^a)$ consists of finitely many $(M^{\th})^0$-orbits $\wh \SO_1, \dots, \wh\SO_m$ 
in $M^{\io\th}$. 
Since $\dim \ol V^a \le s/2$, it is enough to show, for any $i$,  that 
$H^{s-2\dim \wh\SO_i}_c(\pi_3\iv(\xi), \ol j^*(\SE_1\boxtimes \SE'_1)) = 0$ 
for $\xi \in \wh\SO_i$. 
Here $\pi_3\iv(\xi)$ is isomorphic to $D \times D'$, where 
\begin{align*}
D  &= \{ q \in Q \cap \vS  \mid  \eta_Q(q) = \xi\}, \\
D' &= \{ q' \in Q' \cap \dw\vS'\dw\iv \mid \eta_{Q'}(q') = \xi \}.
\end{align*}
\par\noindent
Moreover, the restriction of $\ol j^*(\SE_1\boxtimes\SE'_1)$ on $\pi_3\iv(\xi)$ 
corresponds to the tensor product $\SE_1|_D \boxtimes n^*\SE'_1|_{D'}$ 
(here $n : \dw \vS'\dw\iv \isom \vS'$). 
Here by Proposition 2.2 (i), $2\dim D \le d = \dim \SO_L - \dim \wh\SO_i + \vD_Q$, 
$2\dim D' \le d' = \dim \SO_{L'} - \dim \wh \SO_i + \vD_{Q'}$. 
In a similar way as in (2.2.12), we have
\begin{align*}
\vD _Q + b_w' \le \vD_{P'}, \quad 
\vD_{Q'} + b_w' \le \vD_P,
\end{align*}
since one can write as 

\begin{align*}
\vD_Q &= \sharp\{ i \mid n - n_0 + 1 \le i \le n, 1 \le  w\iv(i) \le n - n_0' \}, \\ 
\vD_{Q'} &= \sharp\{ i \mid n - n_0' + 1 \le i \le n, 1 \le w(i) \le n-n_0 \}. 
\end{align*}

It follows that 
\begin{align*}
d + d' = s - 2\dim \wh\SO_i + \vD_Q  + \vD_{Q'} - \vD_P - \vD_{P'} 
                  + 2b'_w \le s - 2\dim \wh\SO_i.
\end{align*}
Hence by the K\"unneth formula we are reduced to showing
\begin{equation*}
\tag{4.6.2}
H_c^d(D,\SE_1)\otimes H_c^{d'}(D', n^*\SE'_1) = 0.  
\end{equation*} 

Unless $L = {}^wL'$, 
$Q = L \cap {}^wP'$ is a proper parabolic subgroup of $L$, or 
$Q' = {}^wL' \cap P$ is a proper parabolic subgroup of ${}^wL'$.
In that case, since the local systems $\SE_1$, $\SE_1'$ are cuspidal, we have 
$H^{d}_c(D, \SE_1) = 0$ or $H^{d'}_c(D', n^*\SE'_1) = 0$  
by Lemma 2.5.  Thus (4.6.2), and so (4.6.1) holds. 
Now assume that $B' = B$.  Then by the choice of $P$ and $P'$, 
we have $L = L'$ and $\dw \in N_H(L_H)$.  If $\vS  \cap \vS' = \emptyset$, 
then $V^a = \emptyset$, so (4.6.1) holds. This proves (i). 
Assume that $\dw \in N_H(L_H)$, but $\dw \notin N_H(L_H)_1$. 
Then $Q = L$ and $\vD_Q = 0$.  Moreover $\vD_P > b'_w$.  Hence $d + d' < s - 2\dim \wh\SO_i$.
This implies that $H_c^{s - 2\dim \wh\SO_i}(\pi_3\iv(\xi),\ol j^*(\SE_1\boxtimes\SE'_1)) = 0$,
and so (4.6.1) holds. This proves (ii).  
Next assume that $N$ is even and $B' = B_1$.  If $L \ne T$, we have 
$L' = L$ (see 4.2), and $\dw \in N_H(L_H)$.  The same 
argument as before works, and (i), (ii) are proved. 
So assume that $L = L' = T$, and $B' = B_1$. 
This case is essentially the same as the case where $B' =  B$, and 
$\dw \in N_{G^{\th}}(T_H)$.  
We see that if $\dw \in N_{G^{\th}}(T_H) - N_H(T_H)_1$, 
then $b'_w  = \sharp\{ 1 \le i \le n \mid w\iv(i) > 0\} < \vD_B = n$.  
Hence $d + d' < s$, and (4.6.2) holds.  This proves (iii). 
The lemma is proved. 
\end{proof}

\para{4.7.}
Following [L1, 5.4], we recall the notion of perfect sheaves. 
A constructible sheaf $\SE$ on an irreducible variety $V$ is said to be perfect, 
if 
\par\medskip
(i) \ $\SE = \IC(V, \SE|_{V_0})$, where $V_0$ is an open dense smooth subset of $V$
and $\SE|_{V_0}$ is locally constant, 
\par
(ii) \ the support of any non-zero constructible subsheaf of $\SE$ is dense in $V$.
\par\medskip
In particular, the complex $\IC(V, \SE|_{V_0})$ is reduced to a single sheaf.  
The following properties hold. 
\par\medskip\noindent
(4.7.1) \ 
If $\pi : V' \to V$ is a finite morphism, with $V'$ smooth, and if $\SE'$ is a locally 
constant sheaf on $V'$, then $\SE = \pi_*\SE'$ is a perfect sheaf on $V$.
\par\medskip\noindent
(4.7.2) \ If $0 \to \SE_1 \to \SE_2 \to \SE_3 \to 0$ is an exact sequence of 
constructible sheaves on $V$, with $\SE_1, \SE_3$ perfect, then $\SE_2$ is perfect.

\para{4.8.}
We assume that $(L,\vS) = (L',\vS')$.  We also assume 
$\OO = \OO_{\w}$, where a representative of $\w$ satisfies the condition 
$n = \dw \in N_H(L_H)$ with $w \in \SW_1$.  
In this case, we have $n\vS n\iv = \vS$. 
The computation in the proof of the lemma shows that 
\begin{equation*}
\tag{4.8.1}
\ST_{\OO} \simeq R^s\ol\s_!(\SE_1\otimes n^*\SE'_1)
\end{equation*}
where $\ol\s : \vS \to A_Y$ is the restriction of $\s$, 
$s = 2\dim \SO_L$ (note that $\vD_P = b'_w = b_w$). 
Here $\vS = Z_L^{\io\th} \times \SO_L$. We denote by 
$\pi_1 : \vS \to Z_L^{\io\th}$ the projection, and 
$\ol\s_1 : Z_L^{\io\th} \to A_Y$ the restriction of $\s$. 
Since $\ol\s_1$ is a finite morphism, if we put 
$\SG = R^s(\pi_1)_!(\SE_1\otimes n^*\SE'_1)$, we have
\begin{equation*}
\SF_{\OO} = (\ol\s_1)_*\SG.
\end{equation*}

We note that 
\par\medskip\noindent
(4.8.2) \ $\SG$ is a locally constant sheaf on $Z_L^{\io\th}$. 
\par\medskip
In fact, $Z_L^{\io\th}$ has a structure of a torus. Since 
$\SE_1 = \Ql \boxtimes \SE_1^{\dag}$, $\SE_1$ is $Z_L^{\io\th}$-equivariant
local system on $\vS$ with respect to the action of $Z_L^{\io\th}$ on $\vS$ 
by the left multiplication. Hence $\SE_1\otimes n^*\SE'_1$ is also 
$Z_L^{\io\th}$-equivariant.  Since $\pi_1$ is $Z_L^{\io\th}$-equivariant, 
$\SG$ is $Z_L^{\io\th}$-equivariant with respect to the transitive action of 
$Z_L^{\io\th}$ on it.  Hence $\SG$ is locally constant.  
\par\medskip
Now $\ol\s_1 : Z_L^{\io\th} \to A_Y$ is a finite morphism, with  
$Z_L^{\io\th}$ smooth, we have, by (4.7.1), 
\par\medskip\noindent
(4.8.3) \ $\SF_{\OO}$ is a perfect sheaf on $A_Y$.   
\par\medskip

We now show 

\begin{thm}  
\begin{enumerate}
\item Assume that $(L,\vS)$ and $(L',\vS')$ are not $H$-conjugate.  
Then $\ST = 0$. 
\item 
Assume that $N$ is even, and $P = B, P' = B_1, L = L' = T$.  Then $\SF = 0$. 
\item
Assume that $P = P', L = L', \vS = \vS'$. Then 
$\SF$ is a perfect sheaf on $A_Y$.
\item 
The natural map of sheaves $\SF \to \wt\SF$ is an isomorphism. 
\end{enumerate}
\end{thm}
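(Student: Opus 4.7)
The plan is to stratify both $\SF$ and $\wt\SF$ by the decomposition of $H/P_H\times H/P'_H$ into $H$-orbits $\OO=\OO_\w$, and then apply the vanishing results of Lemma~4.6 together with the perfection machinery of 4.7--4.8. Concretely, the partition $Z_0=\coprod_\w Z_0^{\OO_\w}$ into locally closed pieces yields, via excision triangles for $(\wt\s_0)_!$, a filtration of $(\wt\s_0)_!\bigl(K_{\ol\SE_1}\boxtimes K'_{\ol\SE'_1}|_{Z_0}\bigr)$ whose successive subquotients are the pieces indexed by the orbits $\OO_\w$. Taking $\SH^{-r-r'}$ produces a filtration of $\SF$ whose successive subquotients are exactly the sheaves $\SF_\OO$ introduced in 4.5.

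Given this filtration, parts (i) and (ii) are immediate from Lemma~4.6: under the hypothesis of (i), Lemma~4.6(i) gives $\SF_\OO=0$ for every $\OO$, so $\SF=0$; and under the hypothesis of (ii), Lemma~4.6(iii) yields the same conclusion. For part (iii), Lemma~4.6(ii) confines the nonzero contributions to those $\w$ admitting a representative in $N_H(L_H)_1$, i.e.\ $\w\in\SW_1$. For such $\w$, the identity (4.8.1) expresses $\SF_\OO$ as $R^s\ol\s_!(\SE_1\otimes n^*\SE'_1)$, and (4.8.3) asserts that this is a perfect sheaf on $A_Y$. Iterating the extension property (4.7.2) across the finitely many nonzero subquotients of the filtration then shows that $\SF$ itself is perfect.

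For part (iv), the surjection $\SF\to\wt\SF$ is (4.5.1). In the settings of (i) and (ii), $\SF=0$ forces $\wt\SF=0$ by surjectivity, so the map is trivially an isomorphism. In the setting of (iii), I would argue injectivity from the perfection of $\SF$: every nonzero constructible subsheaf has dense support, so it suffices to exhibit an open dense $U\subset A_Y$ on which the kernel of $\SF\to\wt\SF$ vanishes. For $a$ in $U:=\ol\s_1\bigl((Z_L^0)^{\io\th}\reg\bigr)$, the semisimple part of every $x\in\s\iv(a)\cap X$ is $H$-conjugate to a regular element of $(Z_L^0)^{\io\th}$, which pins the unipotent part of $x$ down to $\SO_L$ itself rather than to any proper boundary orbit in $\ol\SO_L$; hence $Z^a_{\b,\b'}=\emptyset$ unless $(\b,\b')=(\b_0,\b'_0)$, and Lemma~4.4(ii) then supplies the stalkwise isomorphism $\SF_a\isom\wt\SF_a$ on $U$.

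The main obstacle is the stratum-emptiness claim powering (iv): one must verify carefully that the Jordan decomposition on $G^{\io\th}$ interacts cleanly with the stratification of $\ol\vS=(Z_L^0)^{\io\th}\ol\SO_L$ by the subsets $(Z_L^0)^{\io\th}\times\SO_\b$, so that a regular semisimple value of $\s$ really does force $\b=\b_0$. Granting this, the remainder of the argument reduces to formal applications of the excision filtration, Lemmas~4.4 and 4.6, and the closure properties (4.7.1)--(4.7.2) of perfect sheaves.
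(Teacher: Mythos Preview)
Your proposal is correct and follows essentially the same route as the paper: stratify by the $H$-orbits $\OO_\w$ on $H/P_H\times H/P'_H$, kill $\SF_\OO$ via Lemma~4.6 for (i) and (ii), obtain (iii) from (4.8.3) and iterated use of (4.7.2), and for (iv) combine the surjection (4.5.1) with the perfection of $\SF$ and the stratum-emptiness over $(A_Y)\reg$ to invoke Lemma~4.4(ii). Your identification of the regular-locus stratum-emptiness as the crux of (iv), and the mechanism you sketch (a regular value of $\s$ forces the centralizer of the semisimple part to have dimension $\dim L_H$, which is incompatible with landing in a boundary stratum $\vS_\b$ with $\b\ne\b_0$), matches the paper's argument.
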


\begin{proof}
Let $E$ be a locally closed subvariety of $H/P_H \times H/P'_H$ which is a
union of some $H$-orbits, and put $Z^E_0 = q\iv(E)$ ($q$ is as in 4.5).  One can define 
a constructible sheaf $\SF_E$ in a similar way as in the definition of $\SF_{\OO}$. 
According to the filtration of $H/P_H \times H/P'_H$ by various $E$, we obtain 
a filtration of $Z_0$ by various $Z^E_0$, which yields an exact sequence 
among various $\SF_E$. In the case of (i), since $\SF_{\OO} = 0$ by Lemma 4.6 (i), 
we have $\ST_E = 0$ for any $E$, and so $\ST = 0$. (ii) also follows from Lemma 4.6 (iii). 
On the other hand, since $\SF_{\OO}$ is perfect by (4.8.3), by using the property 
on exact sequences in (4.7.2), we see that $\SF$ is perfect. This proves (iii). 
\par
We show (iv). By (4.5.1) we know that the map $\SF \to \wt\SF$ is surjective. 
Hence it is enough to show that the kernel of $\SF \to \wt\SF$ is zero. Put 
$(A_Y)\reg = \s(\vS\reg) = \ol\s_1((Z_L)^{\io\th}\reg)$.
Then $(A_Y)\reg$ is an open dense subset of $A_Y$. Since $\SF$ is a perfect sheaf by (i), 
by the property (ii) of perfect sheaves, it is enough to show that the stalk of 
this kernel at any point $a \in (A_Y)\reg$ is zero.   
So, by Lemma 4.4 (ii), we have only to show that $Z^a_{\b,\b} = \emptyset$ 
if $a \in (A_Y)\reg$ and if $\b \ne \b_0$ or $\b' \ne \b_0$. 
Take $(x, gP_H, g'P'_H) \in Z$ such that the semisimple part $x_s$ of $x$ is $H$-conjugate
to an element in $(Z_L)^{\io\th}\reg$ and that $g\iv xg \in \eta_P\iv(\ol\vS)$, 
${g'}\iv xg' \in \eta_{P'}\iv(\ol\vS)$.  Then $Z_H(x_s)^0 \simeq L_H$.
We must have $g\iv xg \in \eta_P\iv(\vS)$ since if
$g\iv xg \notin \eta_P\iv(\vS)$, then $\dim Z_H(g\iv x_sg)> \dim L_H$, a contradiction.
Similarly, we have ${g'}\iv xg' \in \eta_{P'}\iv(\vS)$.  This shows that 
$(x, gP_H, g'P'_H) \in Z^a_{\b_0.\b_0}$.  (iv) is proved.    
\end{proof}

\begin{prop} 
Assume that $P = P', L = L', \vS = \vS'$. Then  
there exists an isomorphism of sheaves on $A_Y$; 
\begin{equation*}
\SF \simeq \bigoplus_{w \in \SW_1}\SF_{\OO(w)},
\end{equation*} 
where $\OO(w)$ is an $H$-orbit corresponding to $w \in \SW_1 \subset \SW = N_H(L_H)/L_H$. 
\end{prop}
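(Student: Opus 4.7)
The plan is to combine the $H$-orbit stratification of $H/P_H\times H/P_H$ with the vanishing of Lemma 4.6(ii) and the perfectness of $\SF$ established in Theorem 4.9(iii), reducing the claim to a direct computation on the generic locus $(A_Y)\reg$.

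First I would recall the partition $H/P_H\times H/P_H = \coprod_{\w}\OO_{\w}$ (indexed by $W_{L_H}\backslash W_H/W_{L_H}$) and pull it back under $q : Z_0 \to H/P_H\times H/P_H$ to the partition $Z_0 = \coprod_{\w} Z_0^{\OO_{\w}}$, producing the sheaves $\SF_{\OO_{\w}}$ of 4.5. For $w \in \SW_1$, since $w$ normalizes $L_H$ and hence $W_{L_H}$, the double coset $W_{L_H}wW_{L_H}$ coincides with $W_{L_H}w$, so distinct elements of $\SW_1$ yield pairwise distinct $H$-orbits $\OO(w)$. By Lemma 4.6(ii), $\SF_{\OO_{\w}} = 0$ unless $\w$ admits a representative in $N_H(L_H)_1$, equivalently $\w = \OO(w)$ for some $w \in \SW_1$.

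Next I would exploit the Galois structure on the open dense part. Recall from 3.6 that $\pi_0 : \wt Y \to Y$ is a finite Galois covering with group $\SW_1$. Via the open immersion $\g : \wt Y \hookrightarrow \wt X_0$ of Lemma 3.5(ii), the fiber product $\wt Y\times_Y \wt Y$ is an open subset of $Z_0$ and decomposes as
\begin{equation*}
\wt Y\times_Y \wt Y \;=\; \coprod_{w\in\SW_1}\wt Y_w,
\end{equation*}
where $\wt Y_w$ is the graph of the deck transformation $w$. Each $\wt Y_w$ is open and closed in $\wt Y\times_Y \wt Y$ and lies in $Z_0^{\OO(w)}$. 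For $a\in(A_Y)\reg$, the argument in the proof of Theorem 4.9(iv) (Lemma 4.4(ii) together with the vanishing of $Z^a_{\b,\b'}$ whenever one of $\b,\b'$ differs from the open stratum $\b_0$) identifies $\SF_a$ with $H^{2d_0}_c(Z^a\cap(\wt Y\times_Y \wt Y),\ol\SE_1\boxtimes\ol\SE'_1)$ and each $\SF_{\OO(w)}|_a$ with the contribution of the clopen piece $Z^a\cap\wt Y_w$. This yields
\begin{equation*}
\SF|_{(A_Y)\reg} \;\simeq\; \bigoplus_{w\in\SW_1}\SF_{\OO(w)}|_{(A_Y)\reg}
\end{equation*}
as locally constant sheaves on $(A_Y)\reg$.

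To conclude, both $\SF$ (by Theorem 4.9(iii)) and each $\SF_{\OO(w)}$ (by (4.8.3)) are perfect sheaves on $A_Y$, so they coincide with the intermediate extension of their restrictions to the open dense locus $(A_Y)\reg$. Property (ii) of perfect sheaves recalled in 4.7 then promotes the generic isomorphism to an isomorphism on all of $A_Y$. The main obstacle is precisely this last step: the orbit stratification a priori produces only a filtration of $\SF$ whose graded pieces are the $\SF_{\OO(w)}$, so nontrivial extension classes must be ruled out; it is the interplay of perfectness with the explicit generic splitting coming from the Galois cover $\pi_0$ that upgrades this filtration to an honest direct sum.
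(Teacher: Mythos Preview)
Your proposal is correct and follows essentially the same route as the paper: use Lemma~4.6(ii) to kill the pieces indexed by $\w\notin\SW_1$, exploit the $\SW_1$-Galois structure of $\pi_0:\wt Y\to Y$ to obtain a clopen decomposition of $\wt Y\times_Y\wt Y$ indexed by $\SW_1$, deduce the direct-sum decomposition of $\SF$ over the regular locus $(A_Y)\reg$, and then extend via perfectness of both sides (Theorem~4.9(iii) and (4.8.3)). One small imprecision: the argument from Theorem~4.9(iv) about $Z^a_{\b,\b'}$ for $(\b,\b')\ne(\b_0,\b_0)$ concerns the comparison $\SF\to\wt\SF$, not the identification of $Z_0^a$ with the fiber of $\wt Y\times_Y\wt Y$; what you actually need (and what the paper invokes, citing [L1, Prop.~5.11]) is that $\wt\s_0^{-1}((A_Y)\reg)=\wt Y\times_Y\wt Y$, i.e.\ that for $a$ regular every point of $Z_0^a$ already lies over $Y$---this follows from the same semisimple-part argument as in the proof of Lemma~3.5(ii)--(iii).
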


\begin{proof}
Recall that $\SF = R^{2d_0}(\wt \s_0)_!(\ol\SE_1\boxtimes \ol\SE'_1)$. 
Here $\wt\s_0 : Z_0 \to A_Y$, where $Z_0 \simeq X_0\times_{X}X_0$. 
By a similar argument as in the proof of [L1, Proposition 5.11], we see that 
$\wt\s_0\iv((A_Y)\reg) \simeq \wt Y \times_{Y}\wt Y$. Thus the restriction of
$\SF$ on $(A_Y)\reg$ is the same as $R^{2d_0}(\wt \s_1)_!(\wt\SE_1 \boxtimes\wt\SE_1)$, 
where $\wt\s_1 : \wt Y \times_Y\wt Y \to (A_Y)\reg$ is defined similarly to $\wt\s_0$. 
Consider the partition $Z_0 = \bigcup_{\OO}Z_0^{\OO}$ as before. 
The pieces $Z_0^{\OO}$ are locally closed subsets in $Z_0$. 
But since $\wt Y$ is a principal $\SW_1$-bundle over $Y$, the intersection  
$Z_0^{\OO(w)} \cap (\wt\s_0)\iv((A_Y)\reg)$ is an open and closed subset. 
Since $\SF_{\OO(w)} = 0$ if $w \notin \SW_1$ by Lemma 4.6 (ii),  
we have a natural isomorphism 
$\SF \simeq \bigoplus_{w \in \SW_1}\SF_{\OO(w)}$ over $(A_Y)\reg$. Since $\SF$ and 
$\bigoplus_{w \in \SW_1}\SF_{\OO(w)}$ are both perfect sheaves, this isomorphism 
can be extended uniquely to the isomorphism over $A_Y$. The proposition is proved.   
\end{proof}

\para{4.11.}
Under the assumption of Proposition 4.10, 
we consider the sheaf $\wt\SF = R^{-2r}\wt\s_!(K_{\ol\SE_1}\boxtimes K_{\ol\SE'_1})$. 
Since $\wt\s = \s\circ p_1$ with $p_1 : Z = \wt X \times_X\wt X \to X$ the natural 
projection, one can write as
\begin{equation*}
\tag{4.11.1}
\wt\ST \simeq R^{-2r}\s_!(\pi_*K_{\ol\SE_1}\otimes\pi_*K_{\ol\SE'_1}).
\end{equation*}
By Proposition 3.8, $\pi_*K_{\ol\SE_1}\otimes \pi_*K_{\ol\SE'_1}$ has a natural structure
of a module over the algebra $\SA_{\SE_1}\otimes \SA_{\SE'_1}$.  
Hence $\wt\ST$ inherits a natural action of $\SA_{\SE_1}\otimes \SA_{\SE'_1}$. 
As in [L1, 3.4], $\SA_{\SE_1}$ has a natural decomposition 
$\SA_{\SE_1} = \bigoplus_{w \in \SW_{\SE_1}}\SA_{\SE_1,w}$ such that 
$\SA_{\SE_1,w}\cdot \SA_{\SE_1,w'} = \SA_{\SE_1, ww'}$. 
Then the action of $\SA_{\SE_1}\otimes \SA_{\SE'_1}$ on 
$\wt\ST \simeq \ST$ satisfies the relation (see [L1, 5.12])
\begin{equation*}
\tag{4.11.2}
(\SA_{\SE_1,w_1}\otimes\SA_{\SE_1', w_1'})\cdot \ST_{\OO(w)} = 
               \ST_{\OO(w_1w{w_1'}\iv)}
\end{equation*}
for $w \in \SW_1, w_1 \in \SW_{\SE_1}, w_1' \in \SW_{\SE_1'}$. 
\par
We consider the stalk $\ST_1$ of $\ST$ at $1 \in A_Y$. 
For $w \in \SW_1$, 
the stalk $\ST_{\OO(w),1}$ of $\ST_{\OO(w)}$ at $1 \in A_Y$
is given, by (4.8.1), 
\begin{equation*}
\ST_{\OO(w),1} \simeq H_c^{2\dim\SO_L}(\SO_L, \SE_1\otimes n^*\SE_1').
\end{equation*}
Hence $\ST_{\OO(w),1}$ is a one-dimensional $\Ql$-vector space if 
$w$ satisfies the relation $n^*\SE_1' \simeq \SE^*_1$, the dual local system 
of $\SE_1$, and $\ST_{\OO(w),1} = 0$ otherwise. 
Now assume that $\SE_1' = \SE_1^*$.  Then $n^*\SE'_1 \simeq \SE_1^*$ 
if and only if $w \in \SW_{\SE_1}$.  Thus we have a decomposition  
\begin{equation*}
\wt\ST_1 = \ST_1 = \bigoplus_{w \in \SW_{\SE_1}}\ST_{\OO(w),1}
\end{equation*}
into one-dimensional vector spaces, and (4.11.2) implies that 
\begin{equation*}
(\SA_{\SE_1,w_1}\otimes\SA_{\SE^*_1, w_1'})\cdot \ST_{\OO(w),1}
                 = \ST_{\OO(w_1w{w_1'}\iv)}
\end{equation*}
for $w, w_1, w_1' \in \SW_{\SE_1}$. This is nothing but the two-sided regular 
representation of $\SA_{\SE_1}$ if we identify $\SA_{\SE^*_1}$ with 
$\SA_{\SE_1}^0$ the opposed algebra of $\SA_{\SE_1}$. Thus we have proved

\begin{prop}  
\begin{enumerate}
\item 
$\wt\ST_1 = 0$ if $\SE'_1$ and $\SE^*_1$ are not conjugate under $N_H(L_H)$.  
\item 
Assume that $\SE_1' = \SE_1^*$.  Then the stalk $\wt\ST_1$ of $\wt\ST$ at $1 \in A_Y$ 
is isomorphic to the $\SA_{\SE_1}\otimes \SA_{\SE_1}^0$-module $\SA_{\SE_1}$ 
(two-sided regular representation of $\SA_{\SE_1}$).  
\end{enumerate}
\end{prop}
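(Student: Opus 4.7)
The proof plan is to combine the identifications from 4.8, 4.10, and 4.11 to reduce $\wt\ST_1$ to a direct sum of one-dimensional stalks and then read off the module structure from (4.11.2).

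First I would invoke Theorem 4.9 (iv) to identify $\ST \isom \wt\ST$, and Proposition 4.10 to decompose
\begin{equation*}
\wt\ST_1 \simeq \ST_1 \simeq \bigoplus_{w \in \SW_1} \ST_{\OO(w), 1}.
\end{equation*}
By (4.8.1), the stalk $\ST_{\OO(w),1}$ is computed by pulling back along $\ol\s_1 : (Z_L^0)^{\io\th} \to A_Y$, so it is the stalk of $\SG = R^{2\dim \SO_L}(\pi_1)_!(\SE_1 \otimes n^*\SE'_1)$ at the identity of $(Z_L^0)^{\io\th}$. Using $\vS = (Z_L^0)^{\io\th} \times \SO_L$ and the fact that both local systems factor as $\Ql \boxtimes (\cdot)$ along the central factor, this stalk is
\begin{equation*}
\ST_{\OO(w), 1} \simeq H_c^{2\dim \SO_L}\bigl(\SO_L, \SE_1^{\dag} \otimes n^*{\SE'_1}^{\dag}\bigr).
\end{equation*}

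Next I would analyze when this top compactly-supported cohomology is non-zero. Since $\SO_L$ is a smooth irreducible $L_H$-orbit, the top cohomology $H_c^{2\dim \SO_L}(\SO_L, \SL)$ of an irreducible $L_H$-equivariant local system $\SL$ is one-dimensional when $\SL$ is trivial and vanishes otherwise (equivalently: the tensor $\SE_1^{\dag}\otimes n^*{\SE'_1}^{\dag}$ must contain the trivial local system as a summand, which forces $n^*{\SE_1'}^{\dag} \simeq (\SE_1^{\dag})^* = (\SE_1^*)^{\dag}$). This gives part (i): if no representative of any $w \in \SW_1 \subset N_H(L_H)/L_H$ conjugates $\SE_1^*$ to $\SE'_1$, every summand vanishes and $\wt\ST_1 = 0$.

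For part (ii), assuming $\SE'_1 = \SE_1^*$, the condition $n^*\SE'_1 \simeq \SE_1^*$ becomes $n^*\SE_1 \simeq \SE_1$, i.e.\ $w \in \SW_{\SE_1}$. Thus
\begin{equation*}
\wt\ST_1 = \bigoplus_{w \in \SW_{\SE_1}} \ST_{\OO(w), 1}
\end{equation*}
is a sum of one-dimensional spaces indexed by $\SW_{\SE_1}$. Finally I would invoke (4.11.2): the $\SA_{\SE_1}\otimes \SA_{\SE_1^*}$-action satisfies $(\SA_{\SE_1, w_1}\otimes \SA_{\SE_1^*, w_1'})\cdot \ST_{\OO(w),1} = \ST_{\OO(w_1 w {w_1'}\iv), 1}$, and under the standard identification $\SA_{\SE_1^*} \simeq \SA_{\SE_1}^0$ this is exactly the description of the two-sided regular representation of $\SA_{\SE_1} \simeq \Ql[\SW_{\SE_1}]$ on itself. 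The only delicate point — and the one I would want to verify carefully — is the vanishing statement, which requires that on a single $L_H$-orbit $\SO_L$ the equivariant simple local systems form distinct classes with trivial top cohomology (an application of $H^{2\dim \SO_L}_c(\SO_L,\SL) = H^0(\SO_L,\SL^*)^*$ together with $L_H$-equivariance); once this is granted the rest of the argument is bookkeeping via (4.11.2).
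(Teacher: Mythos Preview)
Your proposal is correct and follows essentially the same approach as the paper: the argument in \S4.11 likewise uses Theorem~4.9~(iv) and Proposition~4.10 to decompose $\wt\ST_1 \simeq \ST_1$ into the summands $\ST_{\OO(w),1}$, identifies each via (4.8.1) with $H_c^{2\dim\SO_L}(\SO_L,\SE_1\otimes n^*\SE_1')$, reads off the one-dimensional/zero dichotomy, and then invokes (4.11.2) for the bimodule structure. Your added justification of the vanishing step (via $H_c^{2\dim\SO_L}(\SO_L,\SL)\simeq H^0(\SO_L,\SL^*)^*$ and equivariance) makes explicit what the paper states without proof.
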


\par\bigskip
\section{ Generalized Springer correspondence}

\para{5.1.}
In this section, we shall establish the generalized Springer correspondence 
for $G^{\io\th}\uni$ following the discussion in [L1, \S 6]. 
Let $\SN_G$ be the set of pairs $(\SO, \SE)$, where $\SO$ is an $H$-orbit in 
$G^{\io\th}\uni$, and $\SE$ is an $H$-equivariant simple local system on $\SO$.  
We consider a triple $(L \subset P, \SO_L, \SE_1^{\dag})$, 
where $L$ is a $\th$-stable Levi subgroup of a $\th$-stable 
parabolic subgroup $P$ of $G$, and $\SO_L$ is an $L_H$-orbit in $L^{\io\th}\uni$, 
$\SE_1^{\dag}$ is an $L_H$-equivariant simple local system on $\SO_L$. 
Put $\vS = Z_L^{\io\th}\SO_L$, and $\SE_1 = \Ql\boxtimes \SE_1^{\dag}$. 
Let $\ScS_G$ be the set of triples $(L \subset P, \SO_L, \SE_1^{\dag})$, 
up to $H$-conjugate, 
such that $\SE_1$ is cuspidal on $\vS$. 
(Note that in the case where $N$ is odd, $P$ is determined by L, up to $H$-conjugate, 
hence we need not to write $P$ in the notation of $\ScS_G$ (see [L1, \S 6]).  
But in the case where $N$ is even and $L= T$, we need to distinguish 
the two cases $T \subset B$ and $T \subset B_1$).  
\par
For each $(L \subset P, \SO_L, \SE_1^{\dag}) \in \ScS_G$, 
one can construct a semisimple perverse sheaf 
$K = \pi_*K_{\ol\SE_1}$ on $X$ = $\ol Y$ with $Y = Y_{(L,\vS)}$.   
By Proposition 3.8, $K$ is equipped with an action of $\SA_{\SE_1}$, and is decomposed 
into simple perverse sheaves as follows;
\begin{equation*}
K \simeq \bigoplus_{\r \in \SA_{\SE_1}\wg}\r\otimes K_{\r}, 
\end{equation*} 
where $K_{\r} = \Hom (\r, K)$. 
We consider the restriction $K|_{G^{\io\th}\uni}$ of $K$ on $G^{\io\th}\uni$. 
Then $K|_{G^{\io\th}\uni}$ inherits the natural action of $\SA_{\SE_1}$.
The following result gives the generalized Springer 
correspondence for $G^{\io\th}\uni$, which is an analogue of 
[L1, Theorem 6.5]

\begin{thm} [generalized Springer correspondence]  
Under the notation above, 
\begin{enumerate}
\item 
Let $K = \pi_*K_{\ol\SE_1}$ for $(L \subset P, \SO_L, \SE_1^{\dag}) \in \ScS_G$, and 
put $r = \dim Z_L^{\io\th}$. 
Then $K[-r]|_{X\uni}$ is a semisimple perverse sheaf on 
$X\uni = X \cap G^{\io\th}\uni$, and is decomposed into simple perverse sheaves,
\begin{equation*}
\tag{5.2.1}
K[-r]|_{X\uni} \simeq \bigoplus_{(\SO, \SE) \in \SN_G}V_{(\SO,\SE)}\otimes
                         \IC(\ol\SO, \SE)[\dim \SO].
\end{equation*} 
The action of $\SA_{\SE_1}$ on $K[-r]|_{G^{\io\th}\uni}$ induces an action of 
$\SA_{\SE_1}$ on $V_{(\SO,\SE)}$, which makes $V_{(\SO,\SE)}$ an irreducible 
$\SA_{\SE_1}$ module if it is non-zero. 
The map $(\SO, \SE) \mapsto V_{(\SO,\SE)}$ gives a bijection 

\begin{equation*}
\{ (\SO,\SE) \in \SN_G \mid V_{(\SO,\SE)} \ne 0\} \simeq \SA_{\SE_1}\wg.
\end{equation*}
Moreover, under the above correspondence $(\SO,\SE) \lra \r$, we have

\begin{equation*}
\tag{5.2.2}
K_{\r}[-\dim X]|_{X\uni} \simeq \IC(\ol\SO, \SE)[-2d_{\SO}],  
\end{equation*}
where we put $d_{\SO} = (\nu_H - \dim \SO/2) - (\nu_{L_H} - \dim \SO_L/2) + \vD_P/2$.
\item
For each $(\SO,\SE) \in \SN_G$, there exists a unique 
$(L \subset P,\SO_L, \SE^{\dag}_1) \in \ScS_G$ 
such that $\IC(\ol\SO,\SE)[\dim \SO]$ is a direct summand of $\pi_*K_{\ol\SE_1}$. 
The correspondence $(\SO, \SE) \mapsto V_{(\SO,\SE)}$ gives a bijection 
\begin{equation*}
\SN_G \simeq \coprod_{(L \subset P,\SO_L, \SE_1^{\dag}) \in \ScS_G}\SA_{\SE_1}\wg.
\end{equation*} 

\item
For $(L \subset P,\SO_L,\SE_1^{\dag}) \in \ScS_G$, let $f : \wt X_0 \to X$ be the 
restriction of $\pi : \wt X \to X$.
Under the correspondence in (ii), the condition $(\SO,\SE)$ corresponds 
to $(L \subset P, \SO_L, \SE_1^{\dag})$ is that $\ol\SO \subset X$ and that 
$\SE$ appears as a direct summand in the local system $R^{2d_{\SO}}f_!\ol\SE_1|_{\SO}$. 
Moreover, in that case, the natural homomorphism, obtained from the embedding 
$\wt X_0 \subset \wt X$, 
\begin{equation*}
R^{2d_{\SO}}f_!\ol\SE_1|_{\SO} \to \SH^{2d_{\SO}}(\pi_*K_{\ol\SE_1}[-\dim X])|_{\SO} 
\end{equation*}
is an isomorphism. 
\end{enumerate}
\end{thm}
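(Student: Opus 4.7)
The plan is to adapt the argument of [L1, Theorem 6.5] to the symmetric space setting, with Proposition 3.8, Theorem 4.9, and Proposition 4.12 playing the roles of the corresponding inputs in [L1, Section 5]. For part (i), I would begin from the decomposition (3.9.1) of $K = \pi_*K_{\ol\SE_1}$ into isotypic components under $\SA_{\SE_1}$, and show that $K[-r]|_{X\uni}$ is a semisimple perverse sheaf on $X\uni$. The key geometric fact is that $X\uni = X \cap \s\iv(1)$ is the fiber over $1 \in A_Y$ of the Steinberg-type map $\s : X \to A_Y$, with $r = \dim A_Y$, and that $\s$ is equidimensional of relative dimension $\dim X - r$; this would follow from Proposition 2.2 combined with the description of $X$ via (3.4.1). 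Perversity after the shift $[-r]$ is then the general principle for restriction to a codimension-$r$ fiber of an equidimensional map. Irreducibility of the $\SA_{\SE_1}$-modules $V_{(\SO,\SE)}$ and the bijection with $\SA_{\SE_1}\wg$ follow by computing the endomorphism algebra of $K[-r]|_{X\uni}$: via proper base change this reduces to the stalk of $\wt\ST$ at $1$ from Proposition 4.12 (applied with $\SE_1' = \SE_1^*$), which is $\SA_{\SE_1}$ as a two-sided regular representation. The specific shift $-2d_{\SO}$ in (5.2.2) is obtained by reconciling $\dim \SO$ with $\dim X$ via Lemma 3.5 and the definition of $d_{\SO}$.

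For part (ii), disjointness of the contributions from distinct triples $(L \subset P, \SO_L, \SE_1^\dag) \in \ScS_G$ is a consequence of Theorem 4.9. If two non-$H$-conjugate triples produced a common $(\SO, \SE) \in \SN_G$, the pairing sheaf $\wt\ST$ built from them (with $\SE_1' = \SE_1^*$) would have a nonzero stalk at $1 \in A_Y$, contradicting the vanishing of $\ST$ in Theorem 4.9 (i), (ii). The case of the same $(L, \vS)$ but non-conjugate $\SE_1^\dag$ is ruled out by Proposition 4.12 (i). This argument also subsumes the case $N$ even, $L = L' = T$, $P = B$, $P' = B_1$. Combined with part (i), one obtains the bijection $\SN_G \simeq \coprod_{(L \subset P, \SO_L, \SE_1^\dag)} \SA_{\SE_1}\wg$.

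For part (iii), the cohomological criterion comes from proper base change applied to the open embedding $\wt X_0 \subset \wt X$ followed by $\pi : \wt X \to X$. For $x \in \SO \subset X\uni$, the stalk of $\SH^{2d_{\SO}}(\pi_*K_{\ol\SE_1}[-\dim X])$ at $x$ is computed by the top cohomology of $\pi\iv(x)$ with coefficients in $\ol\SE_1$ extended to $\wt X$. Using the stratification $\wt X = \coprod_\b \wt X_\b$ together with the dimension bounds of Proposition 2.2, the contribution from $\wt X_\b$ with $\b \ne \b_0$ falls in strictly lower cohomological degree, so the top degree is supplied entirely by $f : \wt X_0 \to X$, yielding the claimed isomorphism with $(R^{2d_{\SO}}f_!\ol\SE_1)_x$.

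The main obstacle will be rigorously verifying perversity of $K[-r]|_{X\uni}$, not merely semisimplicity. In the reductive case this uses flatness of the Steinberg map; in our setting Proposition 2.2 gives only dimension upper bounds, so one must argue that these bounds are sharp on the relevant strata to extract equidimensionality of $\s$. A secondary bookkeeping task is tracking the multiple shifts ($\vD_P$, $r$, $d_{\SO}$) through the identifications, using in particular that $\dim U_P^{\io\th} - \dim U_P^\th = \vD_P$ from (1.10.1) and (1.10.2).
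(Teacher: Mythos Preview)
Your outline has the right architecture, but there are two genuine gaps, and your perversity argument differs from the paper's in a way that matters.

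\textbf{Existence in part (ii).} You argue disjointness carefully (via Theorem 4.9 and Proposition 4.12), but you never explain why an arbitrary $(\SO,\SE) \in \SN_G$ must appear as a summand of $\pi_*K_{\ol\SE_1}$ for \emph{some} cuspidal triple. Part (i) only tells you what happens for pairs that \emph{do} appear; it does not cover all of $\SN_G$. The paper handles this by a separate argument (Proposition 5.6): given $(\SO,\SE)$, choose $P$ minimal among $\th$-stable parabolics with $H_c^{2\d}(\SO \cap \eta_P\iv(v),\SE) \ne 0$ for some $v$, and show by a Leray spectral sequence plus the additivity $\vD_P + \vD_{L\cap P'} = \vD_{P'}$ that the resulting $(\SO_L,\SE_1^\dag)$ is forced to be cuspidal. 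Lemma 5.4 (a reciprocity between $R^{2d_1}(f_1)_!$ and $R^{2d_2}(f_2)_!$) then translates this into the summand condition. Without this step the bijection in (ii) is only an injection on one side.

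\textbf{Part (iii): surjectivity is not isomorphism.} Your stratification argument with Proposition 2.2 shows that $\BH_c^{2d_\SO - \dim X}(\pi\iv(x)_\b, K_{\ol\SE_1}) = 0$ for $\b \ne \b_0$, which via the long exact sequence gives only that the map $R^{2d_\SO}f_!\ol\SE_1|_\SO \to \SH^{2d_\SO}(\pi_*K_{\ol\SE_1}[-\dim X])|_\SO$ is \emph{surjective}. To upgrade to an isomorphism you would need the analogous vanishing one degree lower, which the dimension bounds do not give. The paper instead argues by counting: letting $m_\SE$, $\wt m_\SE$ be the multiplicities of $\SE$ on the two sides, surjectivity gives $m_\SE \ge \wt m_\SE$; then one computes $\sum_{(\SO,\SE)} \wt m_\SE^{\,2} = \dim \SA_{\SE_1}$ from part (i), and separately $\sum_{(\SO,\SE)} m_\SE^{\,2} = \dim \ST_1 = \dim \SA_{\SE_1}$ by partitioning $Z^1_{\b_0,\b_0}$ into the pieces $Z_{0,\SO}$. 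Equality of the sums forces $m_\SE = \wt m_\SE$.

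\textbf{Perversity of $K[-r]|_{X\uni}$.} You are right to flag this as the hard point, but the paper does \emph{not} go through equidimensionality of $\s$. Instead it observes that $K[-r]|_{X\uni} \simeq (\pi_1)_*K^1_{\ol\SE_1}$ where $\pi_1 : \wt X\uni \to X\uni$ is the restriction of $\pi$ and $K^1_{\ol\SE_1} = \IC(\wt X\uni, \ol\SE_1^\dag)[\dim \wt X\uni]$, and then verifies the perversity conditions for $(\pi_1)_*K^1_{\ol\SE_1}$ directly, exactly as in Proposition 3.8 but now using Proposition 2.2 (iv) (the bound $\dim Z' \le d' + \vD_P$) in place of (iii). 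Semisimplicity is Deligne--Gabber. Your proposed route via flatness of the Steinberg map is not needed and, as you suspected, is not available from Proposition 2.2 alone.

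Finally, a smaller point on part (i): identifying $\End(K[-r]|_{X\uni})$ with $\wt\ST_1$ is not automatic. The paper shows $\wt\ST_1 \simeq \BH_c^0(X\uni, K_1 \otimes D(K_1))$ and then uses an orthogonality computation (Lemma 5.14: $\dim \BH_c^0(X\uni, K(\SE)\otimes K(\SE')) = \d_{\SE',\SE^*}$) to match this dimension with $\sum n_\SE^2 = \dim \End K_1$; injectivity of $\End K \to \End K_1$ is checked separately by showing each $K_\r|_{X\uni} \ne 0$.
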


\para{5.3.}
The remainder of this section is devoted to the proof of the theorem. 
Let $P = LU_P$ be a $\th$-stable parabolic subgroup of $G$, with 
$\th$-stable Levi subgroup $L$. Let $\SO$ be an $H$-orbit in $G^{\io\th}\uni$, 
$\SO_L$ an $L_H$-orbit in $L^{\io\th}\uni$.  We consider the following diagram.

\begin{equation*}
\tag{5.3.1}
\begin{CD}
V = H \times^{P_H}(\SO \cap \eta_P\iv(\SO_L)) @>f_1>> \SO  \\
@Vf_2VV   \\
V' = H \times^{P_H}\SO_L.
\end{CD}
\end{equation*}
\par\medskip
\noindent
Here in $V'$, we consider the action of $P_H$ on $\SO_L$ 
such that $U_{P_H}$ acts trivially.
$f_1$ is the map induced from the map $(g,x) \mapsto gxg\iv$, 
and $f_2$ is the map induced from the map 
$\SO \cap \eta_P\iv(\SO_L) \to \SO_L$ which is the restriction of 
the projection $\eta_P : P^{\io\th} \to L^{\io\th}$. 
$H$ acts on $V, V', \SO$ naturally, and $f_1, f_2$ are $H$-equivariant. 
Moreover, the action of $H$ on $V'$ and on $\SO$ are transitive.  
Put 
\begin{align*}
d_1 &= (\nu_H - \dim \SO/2) - (\nu_{L_H} - \dim \SO_L/2) + \vD_P/2,  \\
d_2 &= (\dim \SO - \dim \SO_L) /2 + \vD_P/2.  
\end{align*}
By Proposition 2.2 (ii), (i), all the fibres of $f_1$ have dimension $\le d_1$ and 
all the fibres of $f_2$ have dimension $\le d_2$. 
Moreover, some (or all) fibre of $f_1$ has dimension $d_1$ if and only if some (or all) 
fibre of $f_2$ has dimension $d_2$ since this is equivalent to the condition that 
$\dim V = (\nu_H - \nu_{L_H}) + (\dim \SO  + \dim \SO_L)/2 + \vD_P/2$. 
Under this situation, the following result was proved by [L1, (6.1.1)]. 

\begin{lem}  
Let $\SF$ be an $H$-equivariant simple local system on $\SO$, and 
$\SF'$ be an $H$-equivariant simple local system on $V'$ (note that $V'$ is a single 
$H$-orbit).  Then the multiplicity of $\SF$ in the $H$-equivariant local system 
$R^{2d_1}(f_1)_!(f_2^*\SF')$ on $\SO$ is equal to the multiplicity of $\SF'$ in the
$H$-equivariant local system $R^{2d_2}(f_2)_!(f_1^*\SF)$ on $V'$. 
\end{lem}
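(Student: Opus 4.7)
The plan is to express both multiplicities as dimensions of a single top-degree compactly supported cohomology group on $V$, where the symmetry in $(\SF, \SF', f_1, f_2)$ becomes manifest. We may assume the fibre-dimension bounds of Proposition 2.2 are attained, i.e.\ $\dim V = \dim \SO + d_1 = \dim V' + d_2$; otherwise $R^{2d_1}(f_1)_! = R^{2d_2}(f_2)_! = 0$ on any sheaf (by the bound $H^i_c(F,-) = 0$ for $i > 2\dim F$), and both multiplicities vanish.

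Since $\SO = H/Z_H(u)$ is a single $H$-orbit and $A_H(u)$ is finite, for any $H$-equivariant local system $\CG$ on $\SO$ one has
\begin{equation*}
\dim\Hom_H(\SF,\CG) = \dim\Hom_{A_H(u)}(\SF|_u,\CG|_u) = \dim H^{2\dim\SO}_c(\SO,\SF^*\otimes\CG),
\end{equation*}
using that $H^{2\dim\SO}_c(\SO,-)$ computes $A_H(u)$-coinvariants of the stalk at $u$ and that coinvariants and invariants of $\Ql$-representations of a finite group coincide. Apply this with $\CG = R^{2d_1}(f_1)_! f_2^*\SF'$ and use the projection formula to rewrite $\SF^*\otimes\CG \simeq R^{2d_1}(f_1)_!(f_1^*\SF^* \otimes f_2^*\SF')$. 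The Leray spectral sequence
\begin{equation*}
E_2^{p,q} = H^p_c\bigl(\SO, R^q(f_1)_!(f_1^*\SF^*\otimes f_2^*\SF')\bigr) \Longrightarrow H^{p+q}_c\bigl(V, f_1^*\SF^*\otimes f_2^*\SF'\bigr)
\end{equation*}
collapses in total degree $2\dim V = 2\dim\SO + 2d_1$: the fibre bound gives $R^q(f_1)_! = 0$ for $q>2d_1$, and $H^p_c(\SO,-)=0$ for $p>2\dim\SO$, so $(p,q)=(2\dim\SO,2d_1)$ is the only contributor. Hence the multiplicity of $\SF$ in $R^{2d_1}(f_1)_! f_2^*\SF'$ equals $\dim H^{2\dim V}_c(V, f_1^*\SF^* \otimes f_2^*\SF')$. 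Running the identical argument with $f_2$ in place of $f_1$ and $(\SF, \SF')$ swapped identifies the multiplicity of $\SF'$ in $R^{2d_2}(f_2)_! f_1^*\SF$ with $\dim H^{2\dim V}_c(V, f_1^*\SF \otimes f_2^*(\SF')^*)$.

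The two local systems $f_1^*\SF^*\otimes f_2^*\SF'$ and $f_1^*\SF\otimes f_2^*(\SF')^*$ on $V$ are mutually dual, and the $H$-equivariant structure forces the monodromy of each to factor through a finite quotient of $\pi_1(V^{\mathrm{sm}})$, where $V^{\mathrm{sm}}$ is the smooth open dense locus of $V$. Poincar\'e--Verdier duality on $V^{\mathrm{sm}}$ identifies the top-degree compactly supported cohomology of a local system with the coinvariants of its stalk, while that of the dual local system computes the invariants; for dual representations of a finite group over $\Ql$, invariants and coinvariants have matching dimensions, which gives the desired equality. The principal obstacle is this last step: one must justify the finite-monodromy/semisimplicity assertion in the possibly non-smooth variety $V$, or else bypass the dualization by a direct stalk-level symmetry exploiting both fibre-dimension equalities in the hypothesis simultaneously.
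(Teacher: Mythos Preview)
Your argument is correct and is essentially the proof in [L1, 6.1] that the paper cites without reproduction. The concern you raise about finite monodromy is not actually an obstacle, though the reason you offer is not quite the right one: $H$-equivariance of a local system on $V$ does not by itself force finite monodromy when $V$ is not a finite union of $H$-orbits. The cleaner observation is that $\SF$ and $\SF'$ are $H$-equivariant simple local systems on the \emph{single} $H$-orbits $\SO$ and $V'$, so their monodromy already factors through the finite component groups of the respective isotropy subgroups. Pullback along $f_1$, $f_2$ and tensor product preserve this property, so $f_1^*\SF^* \otimes f_2^*\SF'$ and its dual have finite monodromy on all of $V$. After restricting to the smooth locus of the top-dimensional part of $V$ (the complement has dimension below $\dim V$ and contributes nothing to $H^{2\dim V}_c$), Poincar\'e duality together with the equality of invariants and coinvariants for representations of a finite group over $\Ql$ yields
\[
\dim H^{2\dim V}_c(V, f_1^*\SF^* \otimes f_2^*\SF') = \dim H^{2\dim V}_c(V, f_1^*\SF \otimes f_2^*(\SF')^*),
\]
and the possible non-smoothness of $V$ causes no difficulty.
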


\para{5.5.}  If $(\SO, \SE) \in \SN_G$ is a cuspidal pair, 
by Lemma 2.5, for any $\th$-stable parabolic subgroup $P \ne G$, and any 
$L_H$-orbit $\SO_L$ of $L^{\io\th}\uni$, we have 
\begin{equation*}
H_c^{2\d}(\SO \cap \eta_P\iv(v), \SE) = 0,
\end{equation*}
where $\d = (\dim \SO - \dim \SO_L)/2 + \vD_P/2$ and $v \in \SO_L$.
Now let $(\SO,\SE) \in \SN_G$ be an arbitrary element. One can find a 
$\th$-stable parabolic subgroup $P$ of $G$ with the $\th$-stable Levi subgroup $L$ 
and an $L_H$-orbit $\SO_L$ in $L^{\io\th}\uni$ 
satisfying the property
\begin{equation*}
\tag{5.5.1}
H_c^{2\d}(\SO \cap \eta_P\iv(v),\SE) \ne 0
\end{equation*} 
for $v \in \SO_L$.
We choose $P$ a $\th$-stable minimal parabolic subgroup of $G$ satisfying 
the property (5.5.1). Note that $P = G$ satisfies the condition (5.5.1).   
\par
Let $f_2 : \SO \cap \eta_P\iv(\SO_L) \to \SO_L$ be the map defined by 
$x \mapsto \eta_P(x)$, and  let $\SE^{\dag}_1$ be a 
$L_H$-equivariant simple local system on $\SO_L$ which is a direct summand of 
$R^{2\d}(f_2)_!(\SE)$. 

\begin{prop}  
The triple $(L \subset P, \SO_L, \SE^{\dag}_1)$ is uniquely determined by $(\SO,\SE)$. 
Moreover, $(\SO_L, \SE^{\dag}_1)$ is a cuspidal pair on $L^{\io\th}$, namely 
$(L \subset P,\SO_L, \SE_1^{\dag}) \in \ScS_G$.  
\end{prop}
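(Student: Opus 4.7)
The proof splits into two parts: the cuspidality of $(\SO_L, \SE_1^{\dag})$, and the uniqueness of the triple up to $H$-conjugacy.

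For cuspidality, the plan is to argue by contradiction using the minimality of $P$. Suppose $\SE_1^{\dag}$ is not cuspidal on $\SO_L$; then by Lemma 2.5 applied to the symmetric space $L^{\io\th}$, there exist a proper $\th$-stable parabolic $Q = MU_Q$ of $L$, an $M_H$-orbit $\SO_M$ in $M^{\io\th}\uni$, and $v' \in \SO_M$ with $H_c^{2\d_L}(\SO_L \cap \eta_Q\iv(v'), \SE_1^{\dag}) \ne 0$, where $\d_L = (\dim\SO_L - \dim\SO_M)/2 + \vD_Q/2$. Set $\wh P = QU_P$; this is a $\th$-stable parabolic of $G$ strictly contained in $P$, with Levi $M$, satisfying $\eta_{\wh P} = \eta_Q \circ \eta_P$ on $\wh P^{\io\th}$ and $\vD_{\wh P} = \vD_P + \vD_Q$ (from the description of $\vD$ in terms of the $GL_1$-factors of the Levi's $\th$-fixed part). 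Setting $\d'' = (\dim\SO - \dim\SO_M)/2 + \vD_{\wh P}/2 = \d + \d_L$, I apply the Leray spectral sequence to $g_2 \circ f_2 : U \to \SO_M$, where $U = \SO \cap \eta_P\iv(\SO_L \cap \eta_Q\iv(\SO_M))$, $f_2$ is as in (5.3.1), and $g_2 : \SO_L \cap \eta_Q\iv(\SO_M) \to \SO_M$ is the analogous map for $L$. Proposition 2.2(i), applied to $P \subset G$ and $Q \subset L$, bounds the fibers of $f_2$ and $g_2$ by $\d$ and $\d_L$ respectively, so $R^q(f_2)_!\SE = 0$ for $q > 2\d$ and $R^p(g_2)_! = 0$ for $p > 2\d_L$; in total degree $2\d''$ only the corner term $(p,q) = (2\d_L, 2\d)$ of $E_2^{p,q} = R^p(g_2)_! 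R^q(f_2)_!\SE$ survives, and all differentials into and out of it vanish for degree reasons, giving
\[
R^{2\d''}(g_2 \circ f_2)_!\SE \,\cong\, R^{2\d_L}(g_2)_!\, R^{2\d}(f_2)_!\SE,
\]
whose stalk at $v'$ is non-zero since $\SE_1^{\dag}$ is a direct summand of $R^{2\d}(f_2)_!\SE$ on $\SO_L$.

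The remaining step is to transfer this to $R^{2\d''}(f_2'')_!\SE \ne 0$ at $v'$, where $f_2'' : \SO \cap \eta_{\wh P}\iv(\SO_M) \to \SO_M$, whence the minimality of $P$ is contradicted. Now $U$ is locally closed in the source of $f_2''$, and Proposition 2.2(i) applied to $\wh P \subset G$ gives the uniform bound $\dim (f_2'')\iv(w) \le \d''$ for all $w \in \SO_M$. Decomposing $(f_2'')\iv(v')$ into pieces indexed by the $L_H$-orbits $\SO_L'$ in $\eta_Q\iv(v') \cap L^{\io\th}\uni$, one has a stratified long exact sequence whose top-degree terms receive contributions only from the top-dimensional strata, so the non-vanishing on the $\SO_L$-piece propagates to the total fiber. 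This transfer step is the main technical obstacle: $U$ need not be open in the source of $f_2''$, and one must verify that strata coming from $L_H$-orbits $\SO_L' \ne \SO_L$ do not create cancellations in top degree. The key input is the sharp dimension bound from Proposition 2.2(i) for each such $\SO_L'$, which gives $\dim(\SO \cap \eta_P\iv(\SO_L' \cap \eta_Q\iv(v'))) \le \d_{\SO_L'} + \d_L^{(L')} \le \d''$, together with a filtration/spectral-sequence argument at top degree.

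For uniqueness, suppose a second triple $(L_1 \subset P_1, \SO_{L_1}, \SE_{1,1}^{\dag}) \in \ScS_G$ is obtained from $(\SO, \SE)$ by the construction of 5.5. I plan to apply Lemma 5.4 to the pair of maps $(f_1, f_2)$ of (5.3.1): the non-vanishing of $\SE_1^{\dag}$ as a summand of $R^{2\d}(f_2)_!\SE$ is equivalent to the non-vanishing of $\SE$ as a summand of $R^{2d_1}(f_1)_!(f_2^*\SE_1^{\dag})$. This latter local system on $\SO$ is, by smoothness of $\a$ and properness of $\pi$ in 3.7, precisely the top-degree stalk along $\SO$ of the admissible complex $\pi_*K_{\ol\SE_1}$ with $\SE_1 = \Ql \boxtimes \SE_1^{\dag}$, so $\IC(\ol\SO, \SE)[\dim\SO]$ is a simple direct summand of $\pi_*K_{\ol\SE_1}$, and symmetrically of $\pi_*K_{\ol\SE_{1,1}}$ where $\SE_{1,1} = \Ql \boxtimes \SE_{1,1}^{\dag}$. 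The existence of a non-zero morphism between these two admissible complexes then forces, via Theorem 4.9(i), that $(L, \vS)$ is $H$-conjugate to $(L_1, \vS_1)$; after normalising one may assume $L = L_1$ and $\vS = \vS_1$, and Proposition 4.12 then forces $\SE_{1,1}^{\dag}$ to be $N_H(L_H)$-conjugate to $\SE_1^{\dag}$, yielding $H$-conjugacy of the two triples.
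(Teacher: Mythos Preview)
Your cuspidality argument is essentially the same as the paper's, and in fact more scrupulous: the paper writes the Leray step as if $f_2$ maps all of $\SO\cap\eta_{P'}^{-1}(v')$ onto $\SO_L\cap\eta_{L\cap P'}^{-1}(v')$, glossing over exactly the stratification by $L_H$-orbits that you flag as the ``transfer step.'' Your resolution via top-degree cohomology and the uniform bound $\dim(\text{each stratum})\le\delta''$ is correct; concretely, a $\delta''$-dimensional irreducible component of the $\SO_L$-piece closes up to a $\delta''$-dimensional component of the whole fiber, so $H_c^{2\delta''}\ne 0$ survives.

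Your uniqueness argument, however, has a circularity. You claim that $\SE$ appearing in $R^{2d_1}(f_1)_!(f_2^*\SE_1^{\dag})|_{\SO}=R^{2d_{\SO}}f_!\ol\SE_1|_{\SO}$ implies that $\IC(\ol\SO,\SE)[\dim\SO]$ is a direct summand of $\pi_*K_{\ol\SE_1}$. But $\pi$ is defined on $\wt X$, not $\wt X_0$, and $K_{\ol\SE_1}$ is the IC extension; the identification of $R^{2d_{\SO}}f_!\ol\SE_1|_{\SO}$ with $\SH^{2d_{\SO}-\dim X}(\pi_*K_{\ol\SE_1})|_{\SO}$ is precisely Theorem~5.2(iii), whose proof (in~5.16) relies on Proposition~5.6. ``Smoothness of $\alpha$ and properness of $\pi$'' alone do not control the contribution from $\wt X\setminus\wt X_0$. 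Likewise, Theorem~4.9(i) is a statement about the sheaf $\ST$, not about $\Hom$ between admissible complexes; you have not produced $\ST_1\ne 0$.

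The paper avoids this detour entirely. It stays with the open piece $\wt X_0$: from $\SE$ a summand of both $R^{2d}f_!\ol\SE_1|_{\SO}$ and $R^{2d'}f'_!\ol\SE_1'^{*}|_{\SO}$, the tensor product contains $\Ql$, so $H_c^{2\dim\SO}(\SO,\,\cdot\,)\ne 0$; Leray with the fiber bound $d+d'$ gives $H_c^{2d_0}(Z_{0,\SO},\ol\SE_1\boxtimes\ol\SE_1'^{*})\ne 0$; since $Z_0^1=\coprod_{\SO}Z_{0,\SO}$ with each piece of dimension $\le d_0$, this forces $\ST_1=H_c^{2d_0}(Z_0^1,\ol\SE_1\boxtimes\ol\SE_1'^{*})\ne 0$. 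Then Theorem~4.9 and Proposition~4.12 finish. Your argument is easily repaired by deleting the admissible-complex step and computing $\ST_1$ directly in this way.
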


\begin{proof}
Suppose that $(\SO_L, \SE_1^{\dag})$ is not cuspidal. Then there exists 
a $\th$-stable parabolic subgroup $P' \subsetneq P$ and a Levi subgroup $L'$ of 
$P'$ such that $L' \subset L$, and an $L'_H$-orbit $\SO_{L'}$ in ${L'}^{\io\th}\uni$
satisfying the property 

\begin{equation*}
H_c^{2\d'}(\SO_L \cap \eta_{L \cap P'}\iv(v'), \SE^{\dag}_1) \ne 0
\end{equation*} 
\par\medskip\noindent
for $v' \in \SO_{L'}$, where $\d' = (\dim \SO_L - \dim \SO_{L'})/2 + \vD_{L \cap P'}$. 
It follows that 

\begin{equation*}
H_c^{2\d'}(\SO_{L} \cap \eta_{L \cap P'}\iv(v'), R^{2\d}(f_2)_!(\SE)) \ne 0. 
\end{equation*}
The map $f_2$ defines a map 
$\SO \cap \eta_{P'}\iv(v') \mapsto \SO_L \cap \eta_{L \cap P'}\iv(v')$ by the restriction,
and all the fibres have dimension $\le \d$ by Proposition 2.2.  By the Leray 
spectral sequence, we have

\begin{equation*}
H_c^{2\d' + 2\d}(\SO \cap \eta_{P'}\iv(v'), \SE) \ne 0.
\end{equation*} 
Since $\vD_P + \vD_{L \cap P'} = \vD_{P'}$, we have 
$\d' + \d = (\dim \SO - \dim \SO_{L'})/2 + \vD_{P'}$.  
This contradicts the minimality of $P$.  Hence $\SE_1^{\dag}$ is cuspidal on $\SO_L$. 
\par
Let $\vS = Z_L^{\io\th}\SO_L$, and put $\SE_1 = \Ql\boxtimes \SE_1^{\dag}$.
Then $(\vS, \SE_1)$ is a cuspidal pair in $L$. 
By Lemma 2.7, $L_H$ is of the form $L_H \simeq (GL_1)^a \times SO_{N_0}$. 
Hence the discussion in Section 3 can be applied.
We consider $Y = Y_{(L,\vS)}$ and the map $\pi : \wt X \to X = \ol Y$.
Let $\wt X_0 \simeq H \times^{P_H}\eta_P\iv(\vS)$ be as in 3.7.  
The local system  $\ol\SE_1$ on $\wt X_0$ is constructed from $\SE_1$. 
Let $f : \wt X_0 \to X$ be the restriction of the map $\pi : \wt X \to X$.
Then the restriction 
of $f$ on $f\iv(\SO)$ coincides with $f_1$ in 5.3. 
By the definition of $(L \subset P, \SO_L, \SE^{\dag}_1)$ and by Lemma 5.4, we see that 
$\SE$ is a direct summand of $R^{2d}f_!(\ol\SE_1)|_{\SO}$, where 
$d = (\nu_H - \dim \SO/2) - (\nu_{L_H} - \dim \SO_L/2) + \vD_P/2$ 
(note that $d_1 = \d, d_2 = d$ in the notation in Lemma 5.4).
\par
We show that such a triple $(L \subset P, \SO_L, \SE^{\dag}_1) \in \ScS_G$ is uniquely determined from 
$(\SO,\SE)$. Suppose that there exists another $\th$-stable minimal 
parabolic subgroup $P'$ satisfying (5.5.1), and 
$(L'\subset P', \SO_{L'}, {\SE_1'}^{\dag}) \in \ScS_G$.
We can define a map $f' : \wt X_0' \to X'$ and a local system $\ol\SE_1'$ on $\wt X_0'$
as before.  We see that $\SE$ appears as a direct summand in 
$R^{2d'}f'_!(\ol\SE'_1)|_{\SO}$.  It follows that 
$R^{2d}f_!(\ol\SE_1)\otimes R^{2d'}f'_1({\ol\SE'}^*_1)|_{\SO}$ contains a constant sheaf 
$\Ql$ on $\SO$, where ${\ol\SE'}^*$ is the dual local system of $\ol\SE_1'$.  
Hence 
\begin{equation*}
\tag{5.6.1}
H_c^{2\dim \SO}(\SO, R^{2d}f_!(\ol\SE_1)\otimes R^{2d'}f'_!({\ol\SE'}^*_1)|_{\SO}) \ne 0.
\end{equation*}

Let $p : Z_0 = \wt X_0 \times_{G^{\io\th}} \wt X_0' \to G^{\io\th}$ be the natural projection, 
and put $Z_{0,\SO} = p\iv (\SO)$.   
Since for each $x \in \SO$, $\dim p\iv(x) \le d + d'$, 
(5.6.1) implies, by the Leray spectral sequence, that 

\begin{equation*}
\tag{5.6.2}
H_c^{2d_0}(Z_{0,\SO}, \ol\SE_1\boxtimes {\ol\SE'}^*_1) \ne 0,
\end{equation*}  
where $d_0 = d + d' + \dim \SO$ is as in (4.3.1). 
We consider 
$Z_0^1 = Z_{\b_0,\b'_0}^1$ for $a = 1 \in A_Y \cap A_{Y'}$ under the notation in 4.3. 
Then $Z_{0,\SO'}$ form a partition of $Z^1_0$ by locally closed pieces $Z_{0,\SO'}$ 
of dimension $\le d_0$ (by Lemma 4.4) if $\SO'$ varies the $H$-orbits in $G^{\io\th}\uni$.  
It follows from (5.6.2), that 
\begin{equation*}
H_c^{2d_0}(Z^1_0, \ol\SE_1\boxtimes {\ol\SE'}^*_1) \ne 0.
\end{equation*}
\par\medskip
\noindent
This means that $\ST_1 \ne 0$, where $\ST$ is defined with respect to 
$(L \subset P, \vS, \SE_1)$ and $(L'\subset P',\vS', {\SE'}^*_1)$. 
Then by Theorem 4.9, $P = P', L = L', \vS = \vS'$.
Moreover by Proposition 4.12, $\SE_1'$ is $N_H(L_H)$-conjugate to $\SE_1$.
Thus the triple $(L \subset P, \SO_L, \SE^{\dag}_1)$ is 
uniquely determined from $(\SO,\SE)$. 
\end{proof}

\para{5.7.}
By Proposition 5.6, for each $(\SO,\SE) \in \SN_G$, there exists 
a unique triple $(L \subset P, \SO_L, \SE_1^{\dag}) \in \ScS_G$. In this case, 
we say that $(\SO,\SE)$ belongs to the series $(L \subset P, \SO_L, \SE_1^{\dag})$.
Hence we have a partition
\begin{equation*}
\tag{5.7.1}
\SN_G = \coprod_{\xi \in \ScS_G}\SN_G^{(\xi)},
\end{equation*}
where $\SN_G^{(\xi)}$ is the set of all $(\SO,\SE)$ which belong to the series
$\xi = (L \subset P, \SO_L, \SE_1^{\dag})$. 
\par
For each $(L \subset P ,\SO_L, \SE_1^{\dag}) \in \ScS_G$, we consider 
$Y = Y_{(L,\vS)}$ and $X = \ol Y$.  Let $f : \wt X_0 \to X$ be the restriction 
of $\pi : \wt X \to X$. The following result gives a characterization of the 
set $\SN_G^{(\xi)}$.  

\begin{lem}  
$(\SO, \SE) \in \SN_G$ belong to $\xi = (L \subset P,\SO_L, \SE_1^{\dag})$ if and only if 
$\SO \subset X$ and $\SE$ is a direct summand of $R^{2d}f_!(\ol\SE_1)|_{\SO}$, 
where $d = (\nu_H - \dim\SO/2) - (\nu_{L_H} - \dim \SO_L/2) + \vD_P/2$.  
\end{lem}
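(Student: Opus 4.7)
The plan is to establish both implications of the characterization separately, exploiting the fact that most of the technical work has already been performed in the proof of Proposition 5.6.

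For the forward direction, I would assume that $(\SO, \SE)$ belongs to $\xi = (L \subset P, \SO_L, \SE_1^{\dag})$. The construction in Proposition 5.6 selects $\SE_1^{\dag}$ as a direct summand of $R^{2\delta}(f_2)_!\SE$, where $f_2 : \SO \cap \eta_P\iv(\SO_L) \to \SO_L$ and $\delta = (\dim \SO - \dim \SO_L)/2 + \vD_P/2$. I would then invoke the reciprocity statement in Lemma 5.4, taking $\SF = \SE$ and $\SF' = \SE_1^{\dag}$, to swap roles and conclude that $\SE$ appears as a direct summand of $R^{2d_1}(f_1)_!(f_2^*\SE_1^{\dag})$. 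Because the restriction of $f : \wt X_0 \to X$ to $f\iv(\SO)$ is identified with $f_1 : V \to \SO$ in diagram (5.3.1), and because $\ol\SE_1$ restricts on $V$ to $f_2^*\SE_1^{\dag}$ by the construction of $\ol\SE_1$ in 3.7 (using $\SE_1 = \Ql\boxtimes\SE_1^{\dag}$), this yields precisely the stated conclusion. The containment $\SO \subset X$ is immediate from the same diagram, which exhibits $\SO$ as the image of the nonempty variety $V$ under $f_1$.

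For the backward direction, suppose instead that $\SO \subset X$ and that $\SE$ is a direct summand of $R^{2d}f_!(\ol\SE_1)|_\SO$. Proposition 5.6 supplies a unique series $\xi' = (L' \subset P', \SO_{L'}, {\SE'}_1^{\dag}) \in \ScS_G$ to which $(\SO, \SE)$ belongs, and the forward direction applied to $\xi'$ shows that $\SE$ is also a direct summand of $R^{2d'}f'_!(\ol{\SE'}_1)|_\SO$. To conclude that $\xi = \xi'$, I would replay the uniqueness argument from Proposition 5.6 verbatim: the tensor product $R^{2d}f_!(\ol\SE_1) \otimes R^{2d'}f'_!({\ol{\SE'}_1}^{*})|_\SO$ contains the constant sheaf $\Ql$ on $\SO$, so $H_c^{2\dim\SO}$ of this sheaf is nonzero; a Leray spectral sequence along $p : Z_{0,\SO} \to \SO$, combined with the dimension bound in Lemma 4.4(i), upgrades this to $H_c^{2d_0}(Z_0^1, \ol\SE_1\boxtimes{\ol{\SE'}_1}^{*}) \ne 0$, that is, to the non-vanishing of the stalk $\ST_1$ of the sheaf $\ST$ attached to $(L \subset P, \vS, \SE_1)$ and $(L' \subset P', \vS', {\SE'_1}^{*})$. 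Theorem 4.9(i) then forces $(L, \vS)$ and $(L', \vS')$ to be $H$-conjugate, after which Proposition 4.12 forces ${\SE'}_1$ to be $N_H(L_H)$-conjugate to $\SE_1$, so that $\xi = \xi'$ in $\ScS_G$.

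The main obstacle I expect is the bookkeeping in the forward direction, namely the identification $\ol\SE_1|_{f\iv(\SO)} \simeq f_2^*\SE_1^{\dag}$ that makes Lemma 5.4 directly applicable; this requires unraveling the definitions in 3.7 and using that the $(Z_L^0)^{\io\th}$-factor of $\SE_1$ is trivial so that $\ol\SE_1$ actually comes from $\SE_1^{\dag}$ along the projection $\eta_P\iv(\SO_L) \to \SO_L$. The backward direction, although it invokes heavier machinery from Sections 3 and 4, is essentially a direct transcription of the uniqueness argument in Proposition 5.6 once the forward direction is in hand.
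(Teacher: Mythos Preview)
Your forward direction matches the paper's exactly: both simply observe that the required implication was already established inside the proof of Proposition 5.6 via Lemma 5.4.

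Your backward direction is correct but takes a different, heavier route than the paper. You argue by contradiction at the level of series: let $(\SO,\SE)$ belong to some $\xi'$ by Proposition 5.6, apply the forward direction to $\xi'$, and then rerun the full $\ST_1$-nonvanishing argument (Lemma 4.4, Theorem 4.9, Proposition 4.12) to force $\xi=\xi'$. The paper instead stays entirely inside Section 5: from the hypothesis it applies Lemma 5.4 in the \emph{reverse} direction to deduce that $\SE_1^{\dag}$ is a direct summand of $R^{2\delta}(f_2)_!\SE$, so that $(P,L,\SO_L)$ satisfies condition (5.5.1); if this choice were not minimal, the very argument at the start of the proof of Proposition 5.6 would contradict the cuspidality of $\SE_1^{\dag}$. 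Hence $(P,L,\SO_L,\SE_1^{\dag})$ is a legitimate (and by Proposition 5.6 the unique) minimal triple for $(\SO,\SE)$. Your approach buys nothing extra and re-invokes the Section 4 machinery unnecessarily; the paper's approach is shorter and uses only Lemma 5.4 plus the minimality/cuspidality step already proved.
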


\begin{proof}
In the course of the proof of Proposition 5.6, we have already shown that 
if $(\SO,\SE)$ belongs to $(L \subset P, \SO_L, \SE_1^{\dag})$, then $\SO \subset X$, 
and $\SE$ is a direct summand of $R^{2d}f_1(\ol\SE_1)|_{\SO}$.
Conversely, assume that $\SO \subset X$ and $\SE$ is a direct summand of 
$R^{2d}f_!(\ol\SE_1)|_{\SO}$. Then by Lemma 5.4, $\SE^{\dag}_1$ is a direct 
summand of $R^{2\d}(f_2)_!(\SE)$, where $\d$ and $f_2$ are defined as in 5.5.
The discussion in the proof of Proposition 5.6 shows that 
$(P, L, \SO_L)$ satisfies 
the condition in (5.5.1), and if such a choice of $(P,L, \SO_L)$ is not minimal, 
it contradicts that $(\SO_L, \SE_1^{\dag})$ is a cuspidal pair.  Hence 
$(\SO,\SE)$ belongs to $(L \subset P,\SO_L, \SE_1^{\dag})$.   
\end{proof}

\para{5.9.}
We fix $(L \subset P, \SO_L, \SE_1^{\dag}) \in \ScS_G$. 
Put $X\uni = X \cap G^{\io\th}\uni$, and $\wt X\uni = \pi\iv(X\uni)$. 
Thus
\begin{align*}
\wt X\uni &= \{ (x, gP_H) \in G^{\io\th} \times H/P_H \mid g\iv xg \in \e_P\iv(\ol\SO_L)\}, \\
    X\uni &= \bigcup_{g \in H}g(\eta_P\iv(\ol\SO_L))g.
\end{align*}
Let $\pi_1 : \wt X\uni \to X\uni$ be the restriction of $\pi$ on $\wt X\uni$. 
Since $\pi_1$ is proper, surjective, $X\uni$ is a closed subset of $G^{\io\th}\uni$. 

\begin{lem}  
$\wt X\uni, X\uni$ are irreducible varieties.  We have 
\begin{equation*}
\dim \wt X\uni = \dim X\uni = 2\nu_H - 2\nu_{L_H} + \dim \SO_L + \vD_P.
\end{equation*} 
\end{lem}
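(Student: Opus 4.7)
The plan is to proceed in two stages: first identify $\wt X\uni$ explicitly as a homogeneous fibre bundle (which immediately yields irreducibility and the right dimension), and then transfer this to $X\uni$ via the proper map $\pi_1$, using the Steinberg map to pin down the dimension from below.

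First I will identify $\wt X\uni$ with a fibre bundle. Using the isomorphism $\wt X \simeq H \times^{P_H} \eta_P\iv(\ol\vS)$ from (3.4.1), and noting that $v \in \eta_P\iv(\ol\vS) \subset P^{\io\th}$ is unipotent if and only if $\eta_P(v) \in \ol\vS = (Z_L^0)^{\io\th}\ol\SO_L$ is unipotent (which, by Jordan decomposition in the Levi, forces the central semisimple factor to be trivial, i.e.\ $\eta_P(v) \in \ol\SO_L$), we obtain
\begin{equation*}
\wt X\uni \simeq H \times^{P_H} \eta_P\iv(\ol\SO_L).
\end{equation*}
By the affine bundle structure (2.1.2), $\eta_P : P^{\io\th} \to L^{\io\th}$ has fibre $U_P^{\io\th}$, so $\eta_P\iv(\ol\SO_L)$ is an affine bundle over the irreducible variety $\ol\SO_L$, hence irreducible of dimension $\dim \SO_L + \dim U_P^{\io\th}$. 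Since $\dim H/P_H = \dim U_P^{\th} = \nu_H - \nu_{L_H}$ and by (1.10.1)--(1.10.2) we have $\dim U_P^{\io\th} = \dim U_P^{\th} + \vD_P$, a direct computation gives
\begin{equation*}
\dim \wt X\uni = (\nu_H - \nu_{L_H}) + \dim \SO_L + \dim U_P^{\io\th} = 2\nu_H - 2\nu_{L_H} + \dim \SO_L + \vD_P,
\end{equation*}
and irreducibility of $\wt X\uni$ is immediate from the bundle description.

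Since $\pi_1 : \wt X\uni \to X\uni$ is the restriction of the proper map $\pi$ and is surjective by definition, its image $X\uni = \pi_1(\wt X\uni)$ is an irreducible closed subvariety of $G^{\io\th}\uni$, and the inequality $\dim X\uni \le \dim \wt X\uni$ is automatic.

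The main obstacle is the reverse inequality $\dim X\uni \ge \dim \wt X\uni$, which I would establish using the Steinberg map $\s : X \to A_Y$ of Section 4. As noted after (4.8.2), the restriction $\ol\s_1 : (Z_L^0)^{\io\th} \to A_Y$ is a finite morphism, so $A_Y$ is irreducible of dimension $\dim (Z_L^0)^{\io\th}$. An element $x \in X$ satisfies $\s(x) = 1$ if and only if its semisimple part is $1$, i.e.\ if and only if $x \in X\uni$; thus $\s\iv(1) = X\uni$ and $1 \in A_Y$ lies in the image. Since $\s$ is a surjective morphism of irreducible varieties, lower semicontinuity of fibre dimension gives
\begin{equation*}
\dim X\uni = \dim \s\iv(1) \ge \dim X - \dim A_Y.
\end{equation*}
Combining this with $\dim X = \dim \wt X = 2\nu_H - 2\nu_{L_H} + \dim \vS + \vD_P$ from Lemma 3.5(i) and the identity $\dim \vS - \dim (Z_L^0)^{\io\th} = \dim \SO_L$ yields $\dim X\uni \ge 2\nu_H - 2\nu_{L_H} + \dim \SO_L + \vD_P = \dim \wt X\uni$, completing the proof.
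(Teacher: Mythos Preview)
Your argument is correct. The first half of your proof---the fibre-bundle identification of $\wt X\uni$, its dimension, and the irreducibility of $X\uni$ via the proper surjection $\pi_1$---is exactly what the paper does.

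Where you diverge is in the lower bound $\dim X\uni \ge \dim \wt X\uni$. The paper argues by contradiction using the Steinberg-type variety: if $\dim X\uni = \d < \dim \wt X\uni$, then over a generic open $D \subset X\uni$ the fibre product $\pi_1\iv(D)\times_D \pi_1\iv(D)$ has dimension $2\dim \wt X\uni - \d$; but this sits inside $Z'$ of 2.1, whose dimension is bounded by $d_0 = \dim \wt X\uni$ via Proposition~2.2~(iv), forcing $\d \ge \dim \wt X\uni$. Your route instead observes that $X\uni = \s\iv(1)$ for the Steinberg map $\s : X \to A_Y$ of Section~4 and invokes the standard lower bound on fibre dimension for a surjective morphism of irreducible varieties. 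Your approach is lighter, using only the finiteness of $\ol\s_1$ and elementary dimension theory, and it bypasses the somewhat heavier estimate Proposition~2.2~(iv). The paper's route, on the other hand, keeps the argument internal to the framework already built in Section~2 and illustrates a use of the variety $Z'$ that recurs elsewhere; it also does not depend on the Steinberg map machinery of Section~4.
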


\begin{proof}
Since $\wt X\uni \simeq H \times^{P_H}\eta_P\iv(\ol\SO_L)$, $\wt X\uni$ is irreducible.
Also $\dim \wt X\uni = \dim H/P_H + \dim\eta_P\iv(\ol\SO_L) 
                   = 2\nu_H - 2\nu_{L_H} + \dim \SO_L + \vD_P$ (compare with Lemma 3.5). 
Thus $\dim \wt X\uni = d_0$.  
Since $\pi_1$ is surjective, $\dim \wt X\uni \ge \dim X\uni$, and $X\uni$ is irreducible. 
Suppose that $\dim X\uni = \d < \dim \wt X\uni$.  Then there exists an open dense subset 
$D$ of $X\uni$ such that $\dim \pi_1\iv(x) = \dim \wt X\uni - \d$ for $x \in D$. We consider 
the fibre product $\pi_1\iv(D) \times_D\pi_1\iv(D)$. It has the dimension 
$\dim D + 2(\dim \wt X\uni -\d) = 2\dim \wt X\uni - \d$.  On the other hand, 
$\pi_1\iv(D )\times_{D}\pi_1\iv(D) \subset \wt X\uni\times_{X\uni}\wt X\uni \simeq Z'$, 
where $Z'$ is as in 2.1. 
By Proposition 2.2 (iv), we have 
$\dim Z' \le d_0 = \dim \wt X\uni$.  
It follows that $\dim \wt X\uni \ge 2\dim \wt X\uni - \d$, and so $\d \ge \dim \wt X\uni$.
This is a contradiction, and the lemma follows. 
\end{proof}

\para{5.11.}
We now consider the restriction of $\pi_*K_{\ol\SE_1}$ on $X\uni$. 
Since $\ol\vS \simeq Z_L^{\io\th} \times \ol\SO_L$, 
$\IC(\ol\vS, \SE_1) \simeq \Ql \boxtimes \IC(\ol\SO_L, \SE_1^{\dag})$. 
It follows that the restriction of $K_{\ol\SE_1} = \IC(\wt X, \ol\SE_1)[\dim \wt X]$ 
on $\wt X\uni$ coincides with $K^1_{\ol\SE_1}[r]$, where 
$K^1_{\ol\SE_1} = \IC(\wt X\uni, \ol\SE_1^{\dag})[\dim \wt X\uni]$. Here  
$\ol\SE_1^{\dag}$ is defined from $\SE_1^{\dag}$ in a similar way as $\ol\SE_1$ 
is defined from $\SE_1$.
(Note that $\dim \wt X - \dim \wt X\uni = r$ by Lemma 5.10 and (3.2.1).)
Hence the restriction of $\pi_*K_{\ol\SE_1}$ on $X\uni$ coincides with 
$(\pi_1)_*K^1_{\ol\SE_1}[r]$. 
The following lemma can be proved in a similar way as Proposition 3.8, by using 
Proposition 2.2 (iv) (see also [L1, 6.6]). 

\begin{lem}  
$\pi_*K_{\ol\SE_1}[-r]|_{X\uni} \simeq (\pi_1)_*K^1_{\ol\SE_1}$ 
is a perverse sheaf on $X\uni$.  
\end{lem}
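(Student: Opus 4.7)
The plan is to follow the strategy of Proposition 3.8, using Proposition 2.2~(iv) in place of Proposition 2.2~(iii). By 5.11 the isomorphism $\pi_*K_{\ol\SE_1}[-r]|_{X\uni} \simeq (\pi_1)_*K^1_{\ol\SE_1}$ is already established, so it suffices to show that $(\pi_1)_*K^1_{\ol\SE_1}$ is a perverse sheaf on $X\uni$. By Lemma 5.10, both $\wt X\uni$ and $X\uni$ are irreducible of the same dimension $d_0 := 2\nu_H - 2\nu_{L_H} + \dim\SO_L + \vD_P$, and $K^1_{\ol\SE_1} = \IC(\wt X\uni, \ol\SE_1^{\dag})[\dim \wt X\uni]$ is a simple $H$-equivariant perverse sheaf on $\wt X\uni$. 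Since $\pi_1$ is proper, the decomposition theorem yields that $(\pi_1)_*K^1_{\ol\SE_1}$ is a semisimple complex, and the remaining task is to check that it is concentrated in perverse degree~$0$.

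I would proceed by establishing semi-smallness of $\pi_1$: for every $H$-orbit $\SO \subset X\uni$ and every $x \in \SO$,
\begin{equation*}
\dim \pi_1\iv(x) \le (d_0 - \dim \SO)/2.
\end{equation*}
Since $\wt X\uni$ and $X\uni$ are both irreducible of dimension $d_0$, this semi-smallness condition is exactly what is needed to conclude that $(\pi_1)_*K^1_{\ol\SE_1}$ is perverse, via the standard criterion applied to the IC-extension of a local system (compare [L1, 6.6]). Equivalently, it suffices to show that the fibre product $\wt X\uni \times_{X\uni} \wt X\uni$ has dimension $\le d_0$.

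The key input is Proposition 2.2~(iv). Using the bundle isomorphism $\wt X\uni \simeq H\times^{P_H}\eta_P\iv(\ol\SO_L)$ together with the identity $\eta_P\iv(\SO_{L,\b}) = \SO_{L,\b}U_P^{\io\th}$, and stratifying $\ol\SO_L = \coprod_{\b}\SO_{L,\b}$ into $L_H$-orbits, one identifies $\wt X\uni\times_{X\uni}\wt X\uni$ with a finite union of locally closed pieces, each naturally a subvariety of the variety $Z'$ of Proposition 2.2~(iv) associated to the pair $(L,\SO_{L,\b})$. Proposition 2.2~(iv) bounds each such piece by $d' + \vD_P = 2\nu_H - 2\nu_{L_H} + \dim\SO_{L,\b} + \vD_P \le d_0$. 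Taking the maximum over $\b$ yields the desired bound on the fibre product, and hence the semi-smallness inequality above.

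The main technical obstacle is the bookkeeping for strata with $\dim\SO_{L,\b} < \dim\SO_L$: a priori one might worry that such strata violate semi-smallness over smaller $H$-orbits $\SO$ in $X\uni$, but the smaller value of $\dim\SO_{L,\b}$ in Proposition 2.2~(iv) gives a correspondingly smaller bound on the relevant piece of the fibre product, which matches the correspondingly smaller $\dim\SO$ through the orbit correspondence. Once this compatibility is checked, perversity of $(\pi_1)_*K^1_{\ol\SE_1}$ follows immediately, completing the proof. The details are entirely parallel to those in [L1, 6.6] and would be omitted in the final write-up.
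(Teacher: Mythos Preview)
Your proposal is correct and follows precisely the approach indicated by the paper: reduce to showing $(\pi_1)_*K^1_{\ol\SE_1}$ is perverse, and verify this via the dimension estimates of Proposition~2.2~(iv) applied stratum-by-stratum over the $L_H$-orbits $\SO_{L,\b}$ in $\ol\SO_L$, exactly as in the proof of Proposition~3.8 (and [L1, 6.6]). One small expositional point: global semi-smallness of $\pi_1$ alone would only guarantee perversity of the pushforward if $\wt X\uni$ were smooth; since $K^1_{\ol\SE_1}$ is a genuine IC sheaf, what is actually needed (and what your final paragraph supplies) is the stratified bound $2\dim(\pi_1^{-1}(x)\cap \wt X_{\uni,\b}) \le \dim\wt X_{\uni,\b} - \dim\SO$, which follows from Proposition~2.2 applied with $\SO_L$ replaced by $\SO_{L,\b}$.
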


\para{5.13.}
Since $\pi_1$ is proper, by Deligne-Gabber's decomposition theorem, 
$(\pi_1)_*K^1_{\ol\SE_1}$ is a semisimple perverse sheaf on $X\uni$. 
Since $\pi_1$ is $H$-equivariant, each simple component in 
$(\pi_1)_*K^1_{\ol\SE_1}$ is an $H$-equivariant simple perverse sheaf, 
hence it is of the form  $K(\SE) = \IC(\ol\SO, \SE)[\dim \SO]$ for some 
$(\SO, \SE) \in \SN_G$. Here we prepare a lemma.

\begin{lem}  
Take $(\SO,\SE), (\SO',\SE') \in \SN_G$, and assume that $\SO, \SO' \subset X\uni$. 
Then 
\begin{equation*}
\dim \BH^0_c(X\uni,K(\SE)\otimes K(\SE')) = 
         \begin{cases}
                1  &\quad\text{ if $\SO = \SO'$ and $\SE' \simeq \SE^*$, } \\
                0  &\quad\text{ otherwise. }
         \end{cases}
\end{equation*}
\end{lem}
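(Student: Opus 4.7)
The plan is to use Verdier duality to reduce the computation of $\BH^0_c$ to a $\Hom$ space between simple perverse sheaves on $X\uni$. Since $\SO, \SO' \subset X\uni$, both $K(\SE) = \IC(\ol\SO,\SE)[\dim\SO]$ and $K(\SE') = \IC(\ol\SO',\SE')[\dim\SO']$ extend by zero to simple perverse sheaves on $X\uni$, and the tensor product $K(\SE)\otimes K(\SE')$ is understood as the derived tensor product.

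First, by Verdier duality applied on $X\uni$ one has a canonical isomorphism
\[
\BH^0_c(X\uni,\, K(\SE)\otimes K(\SE'))^* \simeq \BH^0(X\uni,\, \mathbb{D}(K(\SE)\otimes K(\SE'))).
\]
The tensor--Hom adjunction gives the standard identification $\mathbb{D}(A\otimes^L B) \simeq R\mathscr{H}\!om(A,\mathbb{D}B)$, and the BBD formula for the Verdier dual of an IC complex (together with the shift $[\dim\SO']$) yields $\mathbb{D}K(\SE') \simeq K(\SE'^*)$. Combining these,
\[
\BH^0_c(X\uni,\, K(\SE)\otimes K(\SE'))^* \simeq \Hom_{D^b_c(X\uni)}(K(\SE),\, K(\SE'^*)).
\]

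Since $K(\SE)$ and $K(\SE'^*)$ are simple perverse sheaves on $X\uni$, Schur's lemma implies that this Hom space is isomorphic to $\Ql$ when $K(\SE)\simeq K(\SE'^*)$ and is zero otherwise. Two such simple perverse sheaves are isomorphic precisely when their supports agree---which for $H$-orbits $\SO, \SO'$ forces $\SO = \SO'$---and their underlying local systems coincide---i.e.\ $\SE \simeq \SE'^*$, equivalently $\SE' \simeq \SE^*$. This gives the claimed dichotomy.

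The argument is essentially a formal application of Verdier duality, so no serious obstacle is anticipated. The only delicate point is verifying the shift in the identification $\mathbb{D}K(\SE') \simeq K(\SE'^*)$, which follows directly from the convention $K(\SE') = \IC(\ol\SO',\SE')[\dim\SO']$ adopted in the paper and the standard formula $\mathbb{D}\IC(\ol\SO',\SE') \simeq \IC(\ol\SO',\SE'^*)[2\dim\SO']$.
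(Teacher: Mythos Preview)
Your proof is correct, and it takes a genuinely different route from the paper's. You convert the problem via Verdier duality into computing $\Hom_{D^b_c(X\uni)}(K(\SE),K(\SE'^{*}))$ and then invoke Schur's lemma for simple perverse sheaves. The paper instead argues directly with the hypercohomology spectral sequence and the support conditions defining intersection cohomology: when $\SO\neq\SO'$ it shows that any nonvanishing term $H^i_c(X\uni,\SH^jK(\SE)\otimes\SH^{j'}K(\SE'))$ forces $i+j+j'<0$; when $\SO=\SO'$ it shows the analogous vanishing on $\ol\SO-\SO$ in degrees $0$ and $-1$, and then uses the open/closed long exact sequence to reduce to $H^{2\dim\SO}_c(\SO,\SE\otimes\SE')$. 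Your approach is shorter and more conceptual, and it makes transparent that the statement is really the orthogonality of simple perverse sheaves under the pairing $\BH^0_c(-\otimes-)$; the paper's approach is more elementary in that it uses only the support estimates built into the definition of $\IC$, stays in the spirit of Lusztig's original argument in [L1], and avoids invoking the $t$-structure machinery and the duality formula $\mathbb{D}(A\otimes^L B)\simeq R\mathscr{H}\!om(A,\mathbb{D}B)$.
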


\begin{proof}
First, by assuming that $\SO \ne \SO'$, we show that 
\begin{equation*}
\tag{5.14.1}
\BH^0_c(X\uni, K(\SE)\otimes K(\SE')) = 0.
\end{equation*}
In order to show (5.14.1), it is enough to see, by using the hypercohomology 
spectral sequence, that
\par\medskip\noindent
(5.14.2) \ If $H^i_c(X\uni, \SH^jK(\SE)\otimes \SH^{j'}K(\SE'))\ne 0$, then 
$i + j + j' < 0$. 
\par\medskip
Put $D_{j,j'} = \supp\SH^jK(\SE) \cap \supp\SH^{j'}K(\SE')$. 
Since the cohomology in (5.14.2) is not changed if we replace $X\uni$ by $D_{j,j'}$.
Thus we may assume that 
\begin{equation*}
H^i_c(D_{j,j'}, \SH^jK(\SE)\otimes \SH^{j'}K(\SE')) \ne 0.
\end{equation*} 
It follows that $i \le 2\dim D_{j,j'}$. 
By the condition on the intersection cohomology for $K(\SE)$ and $K(\SE')$, we have
\begin{align*}
\dim D_{j,j'} &\le \dim\supp\SH^jK(\SE) \le -j, \\
\dim D_{j,j'} &\le \dim\supp\SH^{j'}K(\SE') \le -j'.
\end{align*}
Hence $j,j' \le -\dim D_{j,j'}$. If we assume both are equalities, 
then we have $\SO = D_{j,j'} = \SO'$, a contradiction.  Hence we have
$i + j + j' < 0$, as asserted. 
\par
Next assume that $\SO = \SO'$. Put $D = \ol\SO - \SO$. We show the following.
\par\medskip\noindent
(5.14.3) \ $\BH^0_c(D, K(\SE)\otimes K(\SE')) = 0$ and 
          $\BH^{-1}_c(D, K(\SE)\otimes K(\SE')) = 0$. 
\par\medskip
As in the previous argument, we consider the condition that 
\begin{equation*}
H^i_c(D_{j,j'}, \SH^jK(\SE)\otimes \SH^{j'}K(\SE')) \ne 0,
\end{equation*}
where $D_{j,j'} = D \cap \supp\SH^jK(\SE) \cap \supp\SH^{j'}K(\SE')$.
Again we have $i \le 2\dim D_{j,j'}$. 
But since $\dim D < \dim \SO$, we have inequalities 
$j < -\dim D_{j,j'}, j' < -\dim D_{j,j'}$, and so 
$i + j + j' < -1$. This proves (5.14.3). By using the cohomology long exact sequence
with respect to $\SO \subset \ol\SO$, (5.14.3) implies that
\begin{equation*}
\BH^0_c(X\uni, K(\SE)\otimes K(\SE')) \simeq \BH^0_c(\SO, K(\SE)\otimes K(\SE')).
\end{equation*}
Since the cohomology in the right hand side coincides with 
$H_c^{2\dim \SO}(\SO, \SE\otimes \SE')$, we obtain the required formula.
The lemma is proved.
\end{proof}

\para{5.15.}
We are now ready to prove (i) of Theorem 5.2.
Put $K = \pi_*K_{\ol\SE_1}, K_1 = K[-r]|_{X\uni} = (\pi_1)_*K^1_{\ol\SE_1}$. 
We have $\End K \simeq \SA_{\SE_1}$. Let 
\begin{equation*}
\a: \End K \to \End K_1
\end{equation*}
be the natural homomorphism. We show that $\a$ gives rise to an isomorphism. 
Let us consider the sheaf $\wt\ST$ defined in (4.3.2) with $\SE_1' = \SE_1^*$.
Note that $K_{\ol\SE^*_1} = D(K_{\ol\SE_1})$ and $\pi_*K_{\ol\SE_1^*} = D(K)$, 
where $D$ is the Verdier dual operator. 
Using the expression (4.11.1), we have

\begin{align*}
\wt\ST_1 &\simeq \BH_c^{-2r}(Z', K_{\ol\SE_1}\boxtimes D(K_{\ol\SE_1})) \\ 
         &\simeq \BH_c^{-2r}(X\uni, K|_{X\uni}\otimes D(K)|_{X\uni}) \\
         &= \BH_c^0(X\uni, K_1\otimes D(K_1)).
\end{align*}
Let $n_{\SE}$ be the multiplicity of $K(\SE)$ appearing in the decomposition 
of the semisimple perverse sheaf $K_1$. 
Since $n_{\SE}$ coincides with the multiplicity of 
$K(\SE^*) = D(K(\SE))$ in the decomposition of $DK_1$, by using Lemma 5.14, we have
\begin{equation*}
\dim \wt\ST_1 = \dim \BH_c^0(X\uni, K_1\otimes D(K_1)) 
              = \sum_{(\SO,\SE) \in \SN_G}n_{\SE}^2.
\end{equation*}

On the other hand, by Proposition 4.12,
$\dim \wt\ST_1 = \dim \SA_{\SE_1} = \dim \End K$.
Since $\dim \End K_1 = \sum n_{\SE}^2$, we have  $\dim \End K = \dim \End K_1$.
Hence it is enough to show that $\a$ is injective.    
Now $K_1$ can be decomposed as $K_1 \simeq \bigoplus_{\r \in \SA_{\SE_1}}\r 
             \otimes (K_{\r}|_{X\uni})$, up to shift, and the 
$\SA_{\SE_1}\otimes\SA_{\SE_1}^0$-module structure 
of $\wt\ST_1 = \BH^0_c(X\uni, K_1\otimes D(K_1))$ is determined from this 
decomposition.  By Proposition 4.12, we know that $\wt\ST_1$ is the two-sided regular  
representation of $\SA_{\SE_1}$. In particular, $K_{\r}|_{X\uni} \ne 0$ for any 
$\r \in \SA_{\SE_1}\wg$.  Since $K_{\r}|_{X\uni}$ is a direct summand of $K_1$, 
$K_{\r}|_{X\uni}$ is a sum of various $K(\SE)$, at least one summand.
Thus $\a$ is injective, and so $\a$ gives an isomorphism.  
(5.2.1) follows from this. 
From the above discussion, we have 
$K_{\r}|_{X\uni} \simeq \IC(\ol\SO,\SE)[\dim \SO + r]$, which 
is equivalent to (5.2.2).
Thus (i) of Theorem 5.2 is proved.  

\para{5.16.}
We prove  (ii) and (iii) of Theorem 5.2. Assume that $\SO \subset X$. 
First we show that the homomorphism in (iii) is surjective.  For this, 
it is enough to see that 
$\BH_c^{2d_{\SO} - \dim X}(\pi\iv(x) - \pi\iv(x)_{\b_0}, K_{\ol\SE_1}) = 0$ 
for any $x \in \SO$, thus enough to see that 
$\BH_c^{2d_{\SO} - \dim X}(\pi\iv(x)_{\b}, K_{\ol\SE_1}) = 0$ for any $\b \ne \b_0$. 
(Here we put $\pi\iv(x)_{\b} = \pi\iv(x) \cap \wt X_{\b}$ under the notation in 4.3.)
By using the hypercohomology spectral sequence, we have only to show 
that if $H_c^i(\pi\iv(x)_{\b}, \SH^jK_{\ol\SE_1}) \ne 0$, then $i + j < 2d_{\SO} - \dim X$. 
If the cohomology is non-zero, Proposition 2.2 (ii) implies that 
\begin{equation*}
i \le 2\dim \pi\iv(x)_{\b} \le 2d_{\SO} - (\dim \SO_L - \dim \SO_{\b}),  
\end{equation*}
and $j < -\dim \wt X_{\b} = -\dim X  + (\dim \SO_L - \dim \SO_{\b})$ 
since $\b \ne \b_0$.  
Thus $i + j < 2d_{\SO} - \dim X$ as asserted. Hence the homomorphism is surjective.
\par
For an $H$-equivariant simple local system $\SE$ on $\SO$, we denote by 
$m_{\SE}$ the multiplicity of $\SE$ in $R^{2d_{\SO}}f_!\ol\SE_1|_{\SO}$, and 
by $\wt m_{\SE}$ the multiplicity of $\SE$ in $\SH^{2d_{\SO}-\dim X}K|_{\SO}$. 
Since the homomorphism in (iii) is surjective, we have 
\begin{equation*}
\tag{5.16.1}
m_{\SE} \ge \wt m_{\SE}.
\end{equation*}
By (5.2.2), we have
$\SH^{2d_{\SO}}(K_{\r}[-\dim X])|_{\SO} \simeq 
           \SH^0(\IC(\ol\SO,\SE))|_{\SO} \simeq \SE$, 
and so $\wt m_{\SE} = n_{\SE}$. 
Hence by the discussion in 5.15, we have
\begin{equation*}
\tag{5.16.2}
\sum_{(\SO,\SE) \in \SN_G}\wt m_{\SE}^2 = \dim \SA_{\SE_1}. 
\end{equation*}
We shall prove that 
\begin{equation*}
\tag{5.16.3}
\sum_{(\SO,\SE) \in \SN_G}m_{\SE}^2 = \dim \SA_{\SE_1}.
\end{equation*}
From the definition of $m_{\SE}$, we have
\begin{equation*}
\dim H^{2\dim \SO}_c(\SO, R^{2d_{\SO}}f_!(\ol\SE_1)\otimes R^{2d_{\SO}}f_!(\ol\SE_1^*))
       = \sum_{\SE}m_{\SE}^2,
\end{equation*}
where in the sum, $\SE$ runs over all the $H$-equivariant local systems on 
a fixed $\SO$.
Let $Z_0 = \wt X_0 \times_{X}\wt X_0$ and 
$p_1 : Z_0 \to X$ be the first projection.
For an $H$-orbit $\SO \subset X\uni$, put $Z_{0,\SO} = p_1\iv(\SO)$. 
We have a natural map $Z_{0,\SO} \to \SO$, and all fibres have dimension 
$\le 2d_{\SO}$. 
Since $2d_{\SO} + \dim \SO = d_0$, where 
$d_0$ is as in (4.3.1), 
by using the Leray spectral sequence, we have
\begin{equation*}
H_c^{2d_0}(Z_{0,\SO}, \ol\SE_1\boxtimes \ol\SE_1^*) 
           \simeq H_c^{2\dim \SO}(\SO, R^{2d_{\SO}}f_!(\ol\SE_1)\otimes 
                   R^{2d_{\SO}}f_!(\ol\SE_1^*)). 
\end{equation*}
Now $Z^1_{\b_0,\b_0} = \coprod_{\SO}Z_{0,\SO}$ gives a partition of 
$Z^1_{\b_0,\b_0} = p_1\iv(X\uni)$ by locally closed pieces $Z_{0,\SO}$, where 
$\dim Z_{0,\SO} \le d_0$. Hence
\begin{align*}
\sum_{(\SO,\SE)}m_{\SE}^2 &= 
   \sum_{\SO}\dim H^{2d_0}_c(Z_{0,\SO}, \ol\SE_1\boxtimes \ol\SE_1^*)  \\
     &= \dim H^{2d_0}_c(Z^1_{\b_0,\b_0}, \ol\SE_1\boxtimes\ol\SE_1^*)  \\
     &= \dim \ST_1 = \dim \SA_{\SE_1}. 
\end{align*}
Hence (5.16.3) holds. Comparing (5.16.2) and (5.16.3), we see, 
by (5.16.1), that $m_{\SE} = \wt m_{\SE}$.  This shows that the natural 
homomorphism in (iii) is an isomorphism.  
\par
By Lemma 5.8, the condition $(\SO,\SE) \in \SN_G$ belongs to 
$\xi = (L\subset P,\SO_L,\SE_1^{\dag}) \in \ScS_G$, i.e., $(\SO,\SE) \in \SN_G^{(\xi)}$, 
 is equivalent to the condition that 
$m_{\SE} \ne 0$.  On the other hand, let $\wt\SN_G^{(\xi)}$ be the set of 
$(\SO,\SE) \in \SN_G$ such that $\IC(\ol\SO,\SE)[\dim \SO]$ appears as a 
direct summand of $\pi_*K_{\ol\SE_1}$ for $\xi = (L \subset P,\SO_L, \SE_1^{\dag})$. 
Then by Theorem 5.2 (i), the condition 
$(\SO,\SE) \in \wt\SN_G^{(\xi)}$  is equivalent to the condition 
that $\wt m_{\SE} \ne 0$.   
Since $m_{\SE} = \wt m_{\SE}$, we conclude that $\SN_G^{(\xi)} = \wt\SN_G^{(\xi)}$.
Now (ii) and (iii) follows from  (5.7.1) and Lemma 5.8.  This completes 
the proof of the theorem. 

\remark{5.17.} 
It is likely that the previous discussion for establishing the 
generalized Springer correspondence will work for a symmetric space of 
general type if it satisfies the condition that $\vD^+_0 = \emptyset$. 

\par\bigskip
\section{Restriction Theorem}

\para{6.1.}
Let $P = LU_P$ be as in 3.1. Hence $L^{\th} \simeq (GL_1)^a \times GL^{\th}_{N_0}$ 
with $N_0 = N - 2a$.  We consider a $\th$-stable parabolic subgroup $Q$ of $G$ 
containing $P$, and the $\th$-stable Levi subgroup $M$ of $Q$ containing $L$.  
Here we assume that $M^{\th} \simeq (GL_1)^{a'} \times GL^{\th}_{N_0'}$, where 
$N_0' = N - 2a'$ with $a \ge a'$.    
Let $\SO_L$ be an $L_H$-orbit in $L^{\io\th}\uni$, and $\SE_1^{\dag}$ a cuspidal 
local system on $\SO_L$. Put $\vS = Z_L^{\io\th}\SO_L$, and 
$\SE_1 = \Ql\boxtimes\SE_1^{\dag}$. 
We consider the complex $\pi_*K_{\ol\SE_1}$ on $X$ obtained from the triple 
$(L \subset P, \SO_L, \SE_1^{\dag}) \in \ScS_G$.  Then the endomorphism algebra 
$\End(\pi_*K_{\ol\SE_1})$ is isomorphic to $\SA_{\SE_1}$, which is a twisted group   
algebra of $\SW_{\SE_1} \subset \SW_1 \simeq S_a$.
By applying a similar discussion for 
$(L \subset M \cap P, \SO_L, \SE_1^{\dag}) \in \ScS_M$, 
one can obtain a complex $\pi'_*K'_{\ol\SE_1}$
on $X'$.  Then the endomorphism algebra 
$\End(\pi'_*K'_{\ol\SE_1}) \simeq \SA'_{\SE_1}$, which is isomoprhic to a twisted 
group algebra of $\SW '_{\SE_1} \subset \SW '_1 \simeq S_{a'}$.    
(Here we denote by $X', \pi'$, etc. by attaching the primes to express the objects 
for $M$ corresponding to $X, \pi$, etc. for $G$).
$\SA'_{\SE_1}$ is canonically identified with the subalgebra of $\SA_{\SE_1}$. 
\par
We now apply the generalized Springer correspondence for $H$ and $M_H = (M^{\th})^0$. 
By Theorem 5.2 (ii), if $(\SO',\SE') \in \SN_M$ belongs to the series 
$(L \subset M \cap P, \SO_L, \SE_1^{\dag}) \in \ScS_M$, then an irreducible representation 
$\r' \in (\SA'_{\SE_1})\wg$ is determined from $(\SO',\SE')$. 
On the other hand, if $(\SO,\SE) \in \SN_G$ belongs to the series 
$(L \subset P, \SO_L, \SE_1^{\dag}) \in\ScS_G$, then an irreducible representation 
$\r \in \SA\wg_{\SE_1}$ is determined.  
We will describe the relationship between $\r'$ and $\r$.  
\par
Take $\SO$ and $\SO'$ as above. Let
$f_{\SO, \SO'} : \eta_{Q}\iv (\SO') \cap \SO \to \SO'$ be the restriction of 
the map $\eta_Q : Q^{\io\th} \to M^{\io\th}$. 
We define an integer $m_{\SE, \SE'}$ as the multiplicity of $\SE'$ in the 
local system $R^{2d_{\SO,\SO'}}(f_{\SO,\SO'})_!(\SE)$, where we put    
$d_{\SO,\SO'} = (\dim \SO - \dim \SO')/2 + \vD_Q/2$. The following result is 
an analogue of Lusztig's restriction theorem [L1, Theorem 8.3]. 

\begin{thm}  
Assume that $(\SO',\SE') \in \SN_M$ belongs to 
$(L \subset M \cap P,\SO_L, \SE_1^{\dag}) \in \ScS_M$. 
\begin{enumerate}
\item
If $m_{\SE,\SE'} \ne 0$, then $(\SO,\SE) \in \SN_G$ belongs to 
$(L \subset P,\SO_L, \SE_1^{\dag})$. 
\item
Assume that $(\SO,\SE) \in \SN_G$ belongs to $(L \subset P,\SO_L,\SE_1^{\dag})$, and let 
$\r \in \SA_{\SE_1}\wg$ $($resp. $\r' \in (\SA'_{\SE_1})\wg$ $)$ 
be the irreducible representation corresponding to $(\SO,\SE)$ $($resp. 
$(\SO',\SE')$ $)$.
Then we have
\begin{equation*}
\lp \r|_{\SA'_{\SE_1}}, \r' \rp_{\SA'_{\SE_1}} = m_{\SE, \SE'},
\end{equation*} 
where the left hand side means the multiplicity of $\r'$ in the restriction of 
$\r$ on $\SA'_{\SE_1}$.
\end{enumerate}
\end{thm}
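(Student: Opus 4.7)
The plan is to follow the strategy of Lusztig's restriction theorem [L1, Theorem 8.3], adapted to the symmetric-space setting developed in Sections 3--5. The essential step is to prove a ``transitivity of induction'' identity for the complexes $\pi_*K_{\ol\SE_1}$ and $\pi'_*K'_{\ol\SE_1}$, and then to extract the multiplicity statement from the generalized Springer correspondence (Theorem 5.2).

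First I would set up a restriction functor from $H$-equivariant complexes on $G^{\io\th}$ to $M_H$-equivariant complexes on $M^{\io\th}$. Put $Q_H = (Q^{\th})^0$ and consider the diagram
\begin{equation*}
G^{\io\th} \longleftarrow \{(x, gQ_H) \in G^{\io\th} \times H/Q_H \mid g\iv xg \in Q^{\io\th}\} \longrightarrow M^{\io\th},
\end{equation*}
where the first arrow is $(x,gQ_H) \mapsto x$ and the second is $(x,gQ_H) \mapsto \eta_Q(g\iv xg)$; the ``restriction'' $\Res_M^G K$ of an $H$-equivariant complex $K$ is obtained by pulling back along the first arrow and pushing forward along the second (with an appropriate shift accounting for the discrepancy between $\dim U_Q^{\th}$ and $\dim U_Q^{\io\th}$). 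The first substantive claim is the transitivity identity
\begin{equation*}
\tag{$*$}
\Res_M^G(\pi_* K_{\ol\SE_1}) \simeq \pi'_* K'_{\ol\SE_1} \quad \text{(up to an explicit shift),}
\end{equation*}
compatibly with the natural inclusion $\SA'_{\SE_1} \hra \SA_{\SE_1}$ of endomorphism algebras. This follows from the factorization $P = (M \cap P)U_Q$ and the affine bundle structures of $\eta_P$, $\eta_{M\cap P}$ and $\eta_Q$ coming from (2.1.2), provided one verifies that the $\SD_P$-shift factors compatibly as $\SD_P = \SD_{M \cap P}\cdot \SD_Q$ (with $\SD_Q := \SD \cap U_Q$), which reduces to a direct calculation using (1.10.2) and the commutation relation (1.11.1).

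With $(*)$ in hand, part (ii) is deduced as follows. Apply Theorem 5.2(iii) on both sides after restriction to the unipotent part. On the $G$-side one has $K_{\r}[-\dim X]|_{X\uni} \simeq \IC(\ol\SO,\SE)[-2d_{\SO}]$ for the $(\SO,\SE)$ matching $\r \in \SA_{\SE_1}\wg$, and similarly on the $M$-side for $(\SO',\SE') \leftrightarrow \r' \in (\SA'_{\SE_1})\wg$. On the other hand, unwinding the definition of $\Res_M^G$ on a single orbit $\SO \subset X\uni$ and using the fibration over $H/Q_H$ together with Proposition 2.2, one sees that the multiplicity of $\IC(\ol\SO',\SE')[\dim\SO']$ in $\Res_M^G \IC(\ol\SO,\SE)[\dim\SO]$ is exactly $m_{\SE,\SE'}$. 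Comparing this with the $\SA_{\SE_1}$-isotypical decomposition from $(*)$ and using the compatibility of $(*)$ with the algebra inclusion $\SA'_{\SE_1} \subset \SA_{\SE_1}$, the required identity $m_{\SE,\SE'} = \lp\r|_{\SA'_{\SE_1}}, \r'\rp_{\SA'_{\SE_1}}$ drops out. Part (i) is an immediate corollary: if $m_{\SE,\SE'} \ne 0$ for some $(\SO',\SE')$ in the series $(L \subset M \cap P, \SO_L, \SE_1^{\dag})$, then $\IC(\ol\SO,\SE)[\dim\SO]$ appears in $\pi_*K_{\ol\SE_1}$, and by the uniqueness part of Theorem 5.2(ii), $(\SO,\SE)$ lies in the series $(L \subset P, \SO_L, \SE_1^{\dag})$.

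The hard part will be the verification of $(*)$ as a genuine isomorphism of perverse sheaves, carrying the $\SA'_{\SE_1}$-module structure on the right-hand side to the restriction (via $\SA'_{\SE_1} \hra \SA_{\SE_1}$) of the $\SA_{\SE_1}$-module structure on the left. The delicate point is the interaction of the cuspidal local system $\SE_1$ with the finite Galois covering of 3.6: one must extend the covering description to a relative version over $M^{\io\th}$, in particular showing that $\SW_1 \simeq S_a$ restricts to $\SW'_1 \simeq S_{a'}$ as permutation groups of the factors of $\SD_P$ and $\SD_{M\cap P}$ respectively. Once this compatibility and the decomposition $\SD_P = \SD_{M \cap P}\cdot \SD_Q$ are established, the remainder of the argument is a direct transcription of [L1, §8], with Proposition 2.2 playing the role of [L1, Proposition 1.2] throughout.
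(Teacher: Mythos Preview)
Your proposed route via a parabolic restriction functor $\Res_M^G$ and the identity~$(*)$ is not the argument in the paper, and it is also not what [L1,~\S8] does, contrary to your final paragraph. Both Lusztig and the present paper work on the \emph{induction} side: they factor $\pi: \wt X \to X$ as $\pi'' \circ \pi'$ through the intermediate variety $\wt X_Q = \{(x,gQ_H) \mid g^{-1}xg \in \eta_Q^{-1}(X^M)\}$ lying over $G^{\io\th}$, prove that $\pi'_*K_{\ol\SE_1} \simeq \IC(\wt X_Q,\SL')[\dim\wt X_Q]$ via a cartesian diagram linking $\wt X_Q$ to $X^M$, and obtain the key decomposition $\pi''_*B_{\r'} \simeq \bigoplus_\r A_\r \otimes \Ql^{\lp\r,\r'\rp}$ (Lemma~6.6). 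No functor to $M^{\io\th}$ is used; everything stays over $X \subset G^{\io\th}$.

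The substantive gap in your sketch is the sentence ``unwinding the definition of $\Res_M^G$ \ldots\ one sees that the multiplicity of $\IC(\ol\SO',\SE')[\dim\SO']$ in $\Res_M^G\IC(\ol\SO,\SE)[\dim\SO]$ is exactly $m_{\SE,\SE'}$.'' The functor $\Res_M^G$ is not perverse $t$-exact, so one cannot simply read off IC-multiplicities from the top fibre cohomology $R^{2d_{\SO,\SO'}}(f_{\SO,\SO'})_!\SE$; contributions from boundary strata $\ol\SO' \setminus \SO'$ and from lower cohomology of $\IC(\ol\SO,\SE)$ must be excluded. The paper handles exactly this point in Proposition~6.8: it first proves the inequality $x_{\r,\r'} \ge \lp\r,\r'\rp$ by a long exact sequence argument together with the dimension estimates of Proposition~2.2, and then forces equality by the counting identity $\sum_{\r'}(\dim\r')\,x_{\r,\r'} = \dim\r$, established via a Leray spectral sequence over the partition $\wt X_0 = \coprod_{\SO'}\wt X_{0,\SO'}$. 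Only after this does Lemma~5.4 (the $f_1$/$f_2$ reciprocity) convert $x_{\r,\r'}$ into $m_{\SE,\SE'}$. Your outline omits this step entirely, and your claimed identity~$(*)$, even if true, would not let you bypass it.
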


\para{6.3.}
The remainder of this section is devoted to the proof of the theorem. 
The proof is done by a similar strategy as in [L1]. 
Take $(L \subset P,\SO_L, \SE_1^{\dag}) \in \ScS_G$, and let 
$Y = \bigcup_{g \in H}g(\vS\reg \SD^0_P)g\iv$, 
$\ol Y = X = \bigcup_{g \in H}g\eta_P\iv(\ol\vS)g\iv$
be as in Section 3.  
By replacing $G, P$ by $M, M \cap P$, we obtain similar 
varieties,
\begin{equation*}
Y^M = \bigcup_{g_1 \in M_H}g_1(\vS\reg \SD^0_{M \cap P})g_1\iv, \quad 
\ol{Y^M} = X^M = \bigcup_{g_1 \in M_H}g_1\eta_{M \cap P}\iv(\ol\vS)g_1\iv.
\end{equation*} 
Consider the following varieties,

\begin{align*}
\wt Y &= \{ (x, gL_H) \in G^{\io\th} \times H/L_H \mid g\iv xg \in \vS\reg\SD^0_P \}, \\ 
\wt Y_M &= \{ (x, gM_H) \in G^{\io\th} \times H/M_H 
               \mid g\iv xg \in Y^M \SD^0_Q \}, \\
\wt X &= \{ (x, gP_H) \in G^{\io\th} \times H/P_H \mid g\iv xg \in \eta_P\iv(\ol\vS) \}, \\
\wt X_Q &= \{ (x, gQ_H) \in G^{\io\th} \times H/Q_H \mid g\iv xg \in \eta_Q\iv(X^M)\}.
\end{align*}
Note that $Y^M\SD^0_Q \subset Q^{\io\th}$ since 
\begin{equation*}
g_1(\vS\reg \SD^0_{M \cap P})g_1\iv \cdot \SD^0_Q = 
    g_1(\vS\reg \SD^0_{M \cap P}\SD^0_Q)g_1\iv = 
    g_1(\vS\reg \SD^0_P)g_1\iv \subset Q^{\io\th}
\end{equation*} 
for $g_1 \in M_H$. 
Let $\pi_0 : \wt Y \to Y, \pi : \wt X \to X$ be the first projections as before.
We have the following commutative diagram,

\begin{equation*}
\tag{6.3.1}
\begin{CD}
\wt X @>\pi'>> \wt X_Q @>\pi''>> X  \\
@Aj_LAA         @AAj_MA       @AAjA   \\
\wt Y  @>\pi_0'>> \wt Y_M  @>\pi_0''>>  Y,
\end{CD}
\end{equation*}
where $\pi': (x,gP_H) \mapsto (x, gQ_H), \pi'': (x, gQ_H) \mapsto x$, 
$\pi_0': (x, gL_H) \mapsto (x, gM_H), \pi_0'' : (x, gM_H) \mapsto x$. 
(Note that $\pi_0'$ is well-defined since  
$g_1\iv xg_1 \in Y^M\SD^0_Q$ for $x \in \vS\reg \SD^0_P$, $g_1 \in M_H$ by 
$\SD^0_P = \SD^0_{P \cap M} \times \SD^0_Q$.)
Moreover, $j_L : (x, gL_H) \mapsto (x, gP_H), j_M : (x, gM_H) \mapsto (x, gQ_H)$,
$j : x \mapsto x$ are natural inclusions. 
By definition, $\pi = \pi''\circ \pi', \pi_0 = \pi''_0\circ \pi'_0$. 
By Lemma 3.5, $j_L, j$ give embeddings of $\wt Y, Y$ into $\wt X, X$ as open dense 
smooth subsets.  Since $\wt Y_M = (\pi'')\iv(Y)$, $\wt Y_M$ is open dense in $\wt X_Q$.
Here $\wt Y_M \simeq H \times^{M_H}(Y^M \SD^0_Q)$. Since 
$Y^M$ is smooth by Lemma 3.5, and so $Y^M \SD^0_Q$ is smooth, thus $\wt Y_M$ is smooth. 
\par
Recall that $\pi_0 : \wt Y \to Y$ is a finite Galois covering with Galois group 
$\SW_1 \simeq S_a$. $\SL = (\pi_0)_*\wt\SE_1$ is a semisimple 
local system on $Y$ and we have $\End \SL \simeq \SA_{\SE_1}$.  Thus $\SL$ is decomposed as 
\begin{equation*}
\tag{6.3.2}
\SL \simeq \bigoplus_{\r \in \SA_{\SE_1}\wg}\r \otimes \SL_{\r},
\end{equation*}  
where $\SL_{\r} = \Hom_{\SA_{\SE_1}}(\r, \SL)$. 
Similarly, $\pi_0' : \wt Y \to \wt Y_M$ is a finite Galois covering 
with group $\SW_1' \simeq S_{a'}$. Thus $\SL' = (\pi_0')_*\wt\SE_1$ is a semisimple local 
system on $\wt Y_M$, and we have $\End \SL' \simeq \SA'_{\SE_1}$. 
We have a decomposition into simple local systems, 
\begin{equation*}
\tag{6.3.3}
\SL' \simeq \bigoplus_{\r' \in (\SA'_{\SE_1})\wg}\r' \otimes \SL'_{\r'},
\end{equation*}
where $\SL'_{\r'} = \Hom_{\SA'_{\SE_1}}(\r', \SL')$. 
Since $\SL \simeq (\pi''_0)_*(\pi'_0)_*\wt\SE_1 \simeq (\pi''_0)_*\SL'$,
by applying $(\pi''_0)_*$ on both sides of (6.3.3), we have
\begin{equation*}
\tag{6.3.4}
\SL \simeq \bigoplus_{\r' \in (\SA'_{\SE_1})\wg}\r' \otimes (\pi''_0)_*\SL'_{\r'}.
\end{equation*} 
\par
By comparing (6.3.2) and (6.3.4), we see that $(\pi''_0)_*\SL'_{\r'}$ is a semisimple 
local system whose simple components are of the form $\SL_{\r}$ for some 
$\r \in \SA_{\SE_1}\wg$.  Since the action of $\SA'_{\SE_1}$ on $\SL$ in (6.3.4) is 
the restriction to $\SA'_{\SE_1}$ of the action of $\SA_{\SE_1}$ on $\SL$ in (6.3.2), 
we obtain the following decomposition into simple local systems.

\begin{equation*}
\tag{6.3.5}
(\pi''_0)_*\SL'_{\r'} \simeq \bigoplus_{\r \in \SA_{\SE_1}\wg}
             \SL_{\r}\otimes \Ql^{\lp \r,\r'\rp},
\end{equation*}  
where $\lp \r, \r'\rp = \lp \r|_{\SA'_{\SE_1}}, \r'\rp_{\SA'_{\SE_1}}$ is the multiplicity 
of $\r'$ in the restriction of $\r$ on $\SA'_{\SE_1}$. 

\para{6.4.}
We now consider $K_{\ol\SE_1} = \IC(\wt X, \ol\SE_1)[\dim \wt X]$.
By Proposition 3.8, 
$\pi_*K_{\ol\SE_1}$ is a semisimple perverse sheaf on $X$, and is decomposed 
into simple perverse sheaves

\begin{equation*}
\tag{6.4.1}
\pi_*K_{\ol\SE_1} \simeq \bigoplus_{\r \in \SA_{\SE_1}\wg}\r \otimes A_{\r},
\end{equation*}
where $A_{\r} = \IC(X, \SL_{\r})[\dim X]$. 
\par
Next we shall show that 
\begin{equation*}
\tag{6.4.2}
\pi'_*K_{\ol\SE_1} \simeq \IC(\wt X_Q,\SL')[\dim \wt X_Q]. 
\end{equation*}
Under the isomorphisms

\begin{equation*}
\tag{6.4.3}
\begin{aligned}
H \times^{P_H}(\eta_P\iv(\ol\vS)) &\isom \wt X, \\
H \times^{Q_H}(\eta_Q\iv(X^M)) &\isom \wt X_Q,
\end{aligned}
\end{equation*}
consider the following commutative diagram

\begin{equation*}
\tag{6.4.4}
\begin{CD}
H \times^{P_H}(\eta_P\iv(\ol\vS)) @<\wt\xi<<  H \times (Q_H \times^{P_H}(\eta_P\iv(\ol\vS))
               @>\wt\eta>> \wt X^M   \\
@V\pi'VV            @VV \wh\pi V      @VV\pi^M V    \\
H \times^{Q_H}(\eta_Q\iv(X^M))   @<\xi<<   H \times \eta_Q\iv(X^M) @>\eta>>  X^M,  
\end{CD}
\end{equation*}
where $\xi$ is the quotient map under the free action of $Q_H$ on 
$H \times \eta_Q\iv(X^M)$ by $q: (g,y) \mapsto (gq\iv, qyq\iv)$,  
$\wt\xi$ is the quotient map under the free action of $Q_H$ on 
$H \times (Q_H \times^{P_H}(\eta_P\iv(\ol\vS))$ as above, under the natural
isomorphism 
\begin{equation*}
H \times^{Q_H}(Q_H \times^{P_H}(\eta_P\iv(\ol\vS))) \simeq 
              H \times^{P_H}(\eta_P\iv(\ol\vS)).
\end{equation*}
$\eta$ is the composite of the projection $H \times \eta_Q\iv(X^M) \to \eta_Q\iv(X^M)$
and the map $\eta_Q$. 
The map $\pi^M : \wt X^M \to X^M$ is the map analogous to $\pi : \wt X \to X$ obtained 
by replacing $G$ by $M$. 
$\wt\eta$ is the composite of the projection 
$H \times (Q_H \times^{P_H}(\eta_P\iv(\ol\vS)) \to Q_H \times^{P_H}(\eta_P\iv(\ol\vS))$
and the natural map 
\begin{equation*}
Q_H \times^{P_H}(\eta_P\iv(\ol\vS)) \to 
    M_H \times^{(P_H \cap M_H)}(\eta_{P\cap M}\iv(\ol\vS)) \simeq \wt X^M 
\end{equation*}
obtained from the projection $Q \to M$. 
Under the isomorphisms in (6.4.3), $\pi'$ coincides with the map induced from 
the natural inclusion $H \times \eta_P\iv(\ol\vS) \hra H \times \eta_Q\iv(X^M)$.
$\wh \pi = \id \times \wh\pi_1$, where 
$\wh\pi_1 : Q_H\times^{P_H}(\eta_P\iv(\ol\vS)) \to \eta_Q\iv(X^M)$ induced from  
$(g,x) \mapsto gxg\iv$. 
\par
Here $\eta, \wt\eta$ are $Q_H$-equivariant with respect to the natural action of 
$M_H$ (and the trivial action of $U_{Q_H}$) on $\wt X^M$ and on $X^M$.  
Note that two squares are cartesian squares. 
\par
By applying the construction of $K_{\ol\SE_1} = \IC(\wt X, \ol\SE_1)[\dim \wt X]$ 
to $M$, one can define the perverse sheaf 
$K_{\ol\SE^M_1} = \IC(\wt X^M, \ol\SE^M_1)[\dim \wt X^M]$ on $\wt X^M$, and we obtain 
an $M_H$-equivariant  semisimple perverse sheaf $(\pi^M)_*K_{\ol\SE_1^M}$ on $X^M$.  
Here $\eta$ is a smooth morphism with connected fibre isomorphic to 
$H \times U_Q^{\io\th}$, and is $Q_H$-equivariant.  Then 
$\eta^*(\pi^M)_*K_{\ol\SE^M_1}[\a]$ is a $Q_H$-equivariant perverse sheaf on 
$H \times \eta_Q\iv(X^M)$, where $\a = \dim H + \dim U_Q^{\io\th}$. 
On the other hand, since $\xi$ is a $Q_H$-principal bundle, there exists 
a unique perverse sheaf $K_1$ on $H \times^{Q_H}(\eta_Q\iv(X^M))$ such that
\begin{equation*}
\tag{6.4.5}
\xi^*K_1[\b] \simeq \e^*(\pi^M)_*K_{\ol\SE^M_1}[\a],
\end{equation*}  
where $\b = \dim Q_H$. 
\par
A similar construction works also for the $M_H$-equivariant perverse sheaf
$K_{\ol\SE^M_1}$ on $\wt X^M$, and one can find a perverse sheaf $K_2$ on 
$H \times^{P_H}(\eta_P\iv(\ol\vS))$, unique up to isomorphism, such that

\begin{equation*}
\tag{6.4.6}
\wt\xi^*K_2[\b] \simeq \wt\eta^* K_{\ol\SE^M_1}[\a].
\end{equation*}    
Here, if we note that the two squares in the diagram (6.4.4) are cartesian squares, 
by using the proper base change theorem, (6.4.5) and (6.4.6) imply that 
\begin{equation*}
\tag{6.4.7}
K_1 \simeq \pi'_*K_2.
\end{equation*}
Thus, in order to prove (6.4.2), it is enough to see that 
\begin{equation*}
\tag{6.4.8}
K_2 \simeq \IC(\wt X, \ol\SE_1)[\dim \wt X], \quad 
     K_1 \simeq \IC(\wt X_Q, (\pi'_0)_*\wt\SE_1)[\dim \wt X_Q]. 
\end{equation*}
\par
It follows from the construction that $K_1$ (resp. $K_2$) is an 
intersection cohomology complex on $\wt X_Q$ (resp. $\wt X$) whose 
support is $\wt X_Q$ (resp. $\wt X$). Hence to prove (6.4.8), 
we have only to show that

\begin{equation*}
\tag{6.4.9}
K_2|_{\wt Y} \simeq \wt\SE_1[\dim \wt Y], \quad 
K_1|_{\wt Y_M} \simeq (\pi_0')_*\wt\SE_1[\dim \wt Y_M].
\end{equation*}

Under the isomorphisms
\begin{align*}
\wt Y &\simeq H \times^{L_H}(\vS\reg\SD^0_P), \\
\wt Y_M &\simeq H \times^{M_H}(Y^M \SD^0_Q),
\end{align*}
we have the following commutative diagram.

\begin{equation*}
\tag{6.4.10}
\begin{CD}
H \times^{L_H}(\vS\reg\SD^0_P) @<\wt\xi_0<<  H \times (M_H \times^{L_H}(\vS\reg\SD^0_P))
                    @>\wt\eta_0>> \wt Y^M   \\
@V\pi_0'VV    @VV\wh\pi_0 V     @VV\pi_0^MV   \\
H \times^{M_H}(Y^M \SD^0_Q)   @<\xi_0<<  H \times (Y^M\SD^0_Q)   @>\eta_0>>  Y^M.
\end{CD}
\end{equation*}
The maps are defined similarly to (6.4.4), and the diagram (6.4.10) enjoys 
similar properties as (6.4.4).   In particular, 
the squares in the diagram are both cartesian. 
We consider the local system $\wt\SE_1^M$ on $\wt Y^M$.  Then one can check 
that the local system on $H \times^{L_H}(\vS\reg\SD^0_P)$, obtained from $\wt\SE^M_1$
by a similar discussion as before, coincides with $\wt\SE_1$ on $\wt Y$.   
It follows from (6.4.10) that the local system on $H \times^{M_H}(Y^M\SD^0_Q)$, 
obtained from  the local system $(\pi_0^M)_*\wt\SE^M_1$ on $Y^M$ by a similar discussion 
as before, coincides with $(\pi'_0)_*\wt\SE_1$. (6.4.9) now follows from this by applying 
the proper base change theorem to (6.4.4) and (6.4.10). Thus (6.4.2) is proved. 

\para{6.5.}
By applying the decomposition of $\SL'$ in (6.3.3) to the isomorphism in (6.4.2), 
we obtain the decomposition of $\pi'_*K_{\ol\SE_1}$ into simple perverse sheaves, 
\begin{equation*}
\tag{6.5.1}
\pi'_*K_{\ol\SE_1} \simeq \bigoplus_{\r' \in (\SA'_{\SE_1})\wg}\r'\otimes B_{\r'}, 
\end{equation*} 
where $B_{\r'} \simeq \IC(\wt X_Q, \SL'_{\r'})[\dim \wt X_Q]$. 
By applying $\pi''_*$ on both sides of (6.5.1), we obtain 
\begin{equation*}
\tag{6.5.2}
\pi_*K_{\ol\SE_1} \simeq \bigoplus_{\r' \in (\SA'_{\SE_1})\wg}\r' \otimes \pi''_*B_{\r'}.
\end{equation*}
Since $\pi''$ is proper, $\pi''_*B_{\r'}$ is a semisimple complex by the theorem 
of Deligne-Gabber. Since $\pi''_*B_{\r'}$ is a direct summand of a semisimple 
perverse sheaf $\pi_*K_{\ol\SE_1}$, $\pi''_*B_{\r'}$ is a semisimple perverse sheaf, 
and all of its simple components have support on $X$. 
Since $\pi''_*B_{\r'}|_Y \simeq (\pi_0'')_*\SL'_{\r'}[\dim X]$, we have
\begin{equation*}
\pi''_*B_{\r'} \simeq \IC(X, (\pi_0'')_*\SL'_{\r'})[\dim X].
\end{equation*}   
By applying the decomposition of $(\pi''_0)_*\SL'_{\r'}$ in (6.3.5), we obtain 
the following lemma, which is a perverse sheaf version of (6.3.5).

\begin{lem}  
For each $\r' \in (\SA'_{\SE_1})\wg$, $\r \in \SA_{\SE_1}\wg$, put 
$A_{\r} = \IC(X, \SL_{\r})[\dim X]$, 
$B_{\r'} = \IC(\wt X_Q, \SL'_{\r'})[\dim \wt X_Q]$.  Then 
we have an isomorphism of perverse sheaves on $X$,
\begin{equation*}
\tag{6.6.1}
\pi''_*B_{\r'} \simeq \bigoplus_{\r \in \SA_{\SE_1}\wg}A_{\r}\otimes \Ql^{\lp \r,\r'\rp}. 
\end{equation*}
\end{lem}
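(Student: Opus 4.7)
The plan is to read off Lemma~6.6 directly from the work already carried out in paragraphs~6.3--6.5, since all the ingredients are now in place; essentially I only need to combine the perverse-sheaf identification from 6.5 with the local-system identity (6.3.5) and the additivity of the IC functor.

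More precisely, I would proceed as follows. First, from (6.5.2) and the discussion immediately after it, $\pi''_*B_{\r'}$ is a semisimple perverse sheaf on $X$ all of whose simple summands have support equal to $X$, and one has the identification
\begin{equation*}
\pi''_*B_{\r'} \simeq \IC(X,(\pi_0'')_*\SL'_{\r'})[\dim X].
\end{equation*}
Next, I would invoke (6.3.5), which gives the decomposition of local systems on $Y$:
\begin{equation*}
(\pi''_0)_*\SL'_{\r'} \simeq \bigoplus_{\r \in \SA_{\SE_1}\wg} \SL_{\r}\otimes \Ql^{\lp \r,\r'\rp}.
\end{equation*}
Since each $\SL_{\r}$ is a simple local system on the smooth open dense subset $Y \subset X$, and since the IC functor $\SF \mapsto \IC(X,\SF)[\dim X]$ from semisimple local systems on $Y$ to semisimple perverse sheaves on $X$ supported on $X$ is additive and sends direct sums to direct sums, applying it to the above decomposition yields
\begin{equation*}
\IC(X,(\pi''_0)_*\SL'_{\r'})[\dim X] \simeq \bigoplus_{\r \in \SA_{\SE_1}\wg} \IC(X,\SL_{\r})[\dim X]\otimes \Ql^{\lp \r,\r'\rp} = \bigoplus_{\r \in \SA_{\SE_1}\wg} A_{\r}\otimes \Ql^{\lp \r,\r'\rp},
\end{equation*}
which together with the first identification proves the lemma.

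There is really no serious obstacle, as the nontrivial perverse-sheaf content was already absorbed in paragraph~6.5 (namely, that $\pi'_*K_{\ol\SE_1}$ is itself an IC sheaf on $\wt X_Q$, so that $\pi''_*B_{\r'}$ is perverse and identifies with the IC extension of its restriction to the smooth locus $Y$). The only point requiring a brief sanity check is that the $A_\r$ are pairwise non-isomorphic simple perverse sheaves on $X$ (which follows from the fact that the $\SL_\r$ are pairwise non-isomorphic simple local systems on $Y$, this being the decomposition (6.3.2) of the semisimple local system $\SL$ under the action of its endomorphism algebra $\SA_{\SE_1}$), so that the multiplicities $\lp\r,\r'\rp$ in the statement are the honest multiplicities on both sides of the isomorphism.
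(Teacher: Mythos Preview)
Your proposal is correct and follows essentially the same approach as the paper. Indeed, the paper's proof is precisely the discussion in paragraph~6.5 leading up to the statement: one shows $\pi''_*B_{\r'}$ is a semisimple perverse sheaf with all simple summands supported on $X$, identifies it with $\IC(X,(\pi_0'')_*\SL'_{\r'})[\dim X]$ via restriction to $Y$, and then applies the local-system decomposition (6.3.5) together with additivity of the IC functor---exactly the steps you outline.
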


\para{6.7.}
Let $(\SO_{\r}, \SE_{\r}) \in \SN_G$ be the pair which belongs to 
$(L \subset P, \SO_L, \SE^{\dag}_1) \in \ScS_G$ and corresponds to $\r \in \SA\wg_{\SE_1}$. 
Put $d = \dim X$, and let 
$d_{\r} = d_{\SO_{\r}}$ be as in Theorem 5.2. 
By (5.2.2), we have $A_{\r}|_{X\uni} \simeq \IC(\ol\SO_{\r}, \SE_{\r})[d - 2d_{\r}]$, hence
the following property holds.
For any $\wt \r \in \SA_{\SE_1}\wg$, we have
\par\medskip\noindent
(6.7.1) \ 
$\SH^{2d_{\r} - d}A_{\wt \r}|_{\SO_{\r}}$ is isomorphic to $\SE_{\wt\r}$ 
if $\SO_{\wt\r} = \SO_{\r}$. 
\par\medskip\noindent
Comparing (6.7.1) with Lemma 6.6, we have the following.

\par\medskip\noindent
(6.7.2) \ $\lp \r, \r'\rp \le $ the multiplicity of $\SE_{\r}$ 
in the local system $\SH^{2d_{\r} - d}(\pi''_*B_{\r'})|_{\SO_{\r}}$. 
\par\medskip

Put $(\wt X_Q)\uni = \wt X_Q \cap (G^{\io\th}\uni \times H/Q_H)$. 
We will consider the restriction of $B_{\r'}$ on $(\wt X_Q)\uni$. 
Let $(\SO'_{\r'}, \SE'_{\r'}) \in \SN_M$ be the pair which belongs to 
$(L \subset M \cap P, ,\SO_L, \SE^{\dag}_1) \in \ScS_M$ and 
which corresponds to $\r' \in (\SA'_{\SE_1})\wg$.  
Put

\begin{equation*}
D = \{ (x,gQ_H) \in G^{\io\th}\uni 
           \times H/Q_H \mid g\iv xg \in \eta_Q\iv(\ol\SO'_{\r'}) \}.
\end{equation*}
$D$ is a closed subvariety of $(\wt X_Q)\uni$. We note that 

\begin{equation*}
\tag{6.7.3}
\supp B_{\r'}|_{(\wt X_Q)\uni} \subset D.
\end{equation*}
In fact, as in(6.4.1), we denote by $A'_{\r'}$ the simple perverse sheaf on $X^M$ 
appearing in the decomposition of $(\pi^M)_*K_{\ol\SE^M_1}$, which corresponds 
to $\r' \in (\SA'_{\SE_1})\wg$. Then $B_{\r'}$ can be constructed from $A'_{\r'}$ 
similarly to $K_1, K_2$, by using the diagram (6.4.4). 
On the other hand,  by the generalized Springer correspondence for $M$, 
the restriction of $A'_{\r'}$ on $M^{\io\th}\uni$ coincides with 
$\IC(\ol\SO'_{\r'},\SE_{\r'})$, up to shift. (6.7.3) follows from this.
\par
Let $\pi''_D$ be the restriction of $\pi''$ on $D$. 
In view of (6.7.3), (6.7.2) can be rewritten as follows.  
\par\medskip
\noindent
(6.7.4) \ $\lp \r, \r'\rp \le $ the multiplicity of $\SE_{\r}$ 
in the local system 
$\SH^{2d_{\r} - d}((\pi''_D)_!(B_{\r'}|_D))|_{\SO_{\r}}$. 
\par\medskip

Put $D_0 = \{ (x, gQ_H) \in G^{\io\th}\uni 
          \times H/Q_H \mid g\iv xg \in \eta_Q\iv(\SO'_{\r'})\}$.  
$D_0$ is an open subset of $D$. Let $\pi''_{D_0}$ be the restriction of $\pi''$ 
on $D_0$. 
We define an integer $x_{\r,\r'}$ as the multiplicity of $\SE_{\r}$ in the 
local system  $\SH^{2d_{\r} - d}((\pi''_{D_0})_!(B_{\r'}|_{D_0}))|_{\SO_{\r}}$. 
We show

\begin{prop}  
$\lp \r,\r'\rp$ coincides with $x_{\r,\r'}$. 
\end{prop}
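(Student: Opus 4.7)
The strategy is to exploit the distinguished triangle
\[
(\pi''_{D_0})_!(B_{\r'}|_{D_0}) \to (\pi''_D)_!(B_{\r'}|_D) \to (\pi''_{D_1})_!(B_{\r'}|_{D_1}) \xrightarrow{+1},
\]
arising from the open-closed decomposition $D = D_0 \sqcup D_1$ with $D_1 := D - D_0$. Applying $\SH^k(\cdot)|_{\SO_\r}$ and comparing (6.7.4) with the definition of $x_{\r,\r'}$, the equality $\lp\r,\r'\rp = x_{\r,\r'}$ reduces to showing that the $\SE_\r$-isotypic component of $\SH^k((\pi''_{D_1})_!(B_{\r'}|_{D_1}))|_{\SO_\r}$ vanishes for both $k = 2d_\r - d - 1$ and $k = 2d_\r - d$. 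I will in fact aim for the stronger assertion that these cohomology sheaves vanish entirely on $\SO_\r$.

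I next stratify $\overline{\SO'_{\r'}} - \SO'_{\r'} = \coprod_\b \SO'_\b$ by $M_H$-orbits of dimension strictly smaller than $\dim \SO'_{\r'}$, inducing a stratification $D_1 = \coprod_\b D_\b$ with $D_\b = \{(x, gQ_H) \in G^{\io\th}\uni \times H/Q_H \mid g\iv xg \in \eta_Q\iv(\SO'_\b)\}$. By a standard hypercohomology spectral sequence it suffices to prove, for every $\b$, every $x \in \SO_\r$, and every $(i,j)$ with
\[
H^i_c((\pi''|_{D_\b})\iv(x), \SH^j B_{\r'}|_{D_\b}) \ne 0,
\]
the strict inequality $i + j < 2d_\r - d - 1$. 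Two inputs combine to supply this: first, Proposition 2.2(ii) applied to the parabolic $Q$ with Levi $M$ and the pair of orbits $(\SO_\r, \SO'_\b)$ bounds $\dim(\pi''|_{D_\b})\iv(x)$ (and hence $i$) in terms of $\dim\SO_\r$, $\dim \SO'_\b$ and $\vD_Q$; second, because $B_{\r'} = \IC(\wt X_Q, \SL'_{\r'})[\dim \wt X_Q]$ and $D_\b$ sits inside a stratum of $\wt X_Q$ strictly deeper than its open dense smooth part (the depth being controlled by the strict drop $\dim \SO'_{\r'} - \dim \SO'_\b > 0$), the IC support condition forces a strict upper bound on $j$ improving on the generic one.

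Adding the two bounds and using the additivity $\vD_P = \vD_Q + \vD_{M \cap P}$ of the symmetric-space correction terms across the nested parabolics $L \subset M \cap P \subset P \subset Q$, the required strict inequality follows from the strict orbit-dimension inequality $\dim \SO'_\b < \dim \SO'_{\r'}$, paralleling the reductive-group analogue [L1, Proposition 8.8]. The main obstacle is the careful dimension bookkeeping: one must simultaneously track $\dim \wt X_Q$, $d = \dim X$, the fiber dimensions coming from Proposition 2.2, the IC-support bounds, and the $\vD$-correction terms peculiar to the symmetric-space setting (absent in [L1]). The key combinatorial identity behind the cancellation is that, with $d'_{\r'}$ the $M$-analogue of $d_\r$ attached to the triple $(L \subset M \cap P, \SO_L, \SE_1^{\dag}) \in \ScS_M$, the sum of the fiber bound and the IC-support bound on $\SH^j B_{\r'}|_{D_\b}$ collapses to $2d_\r - d - (\dim \SO'_{\r'} - \dim \SO'_\b)$, which is strictly less than $2d_\r - d$ exactly because $\SO'_\b$ is a proper degeneration of $\SO'_{\r'}$; the $\vD$-terms line up additively through the chain $L \subset M \subset G$ so as to cancel consistently on both sides.
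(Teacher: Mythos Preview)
Your triangle argument correctly yields the vanishing of $\SH^{2d_\r-d}\bigl((\pi''_{D_1})_!(B_{\r'}|_{D_1})\bigr)|_{\SO_\r}$, and hence the inequality $x_{\r,\r'}\ge\lp\r,\r'\rp$. The gap is in the second vanishing, at degree $2d_\r-d-1$: your claimed bound $i+j\le 2d_\r-d-(\dim\SO'_{\r'}-\dim\SO'_\b)$ is not what the estimates give. Concretely, Proposition~2.2(ii) for the parabolic $Q$ and the pair $(\SO_\r,\SO'_\b)$ gives
\[
i \le (2\nu_H-\dim\SO_\r)-(2\nu_{M_H}-\dim\SO'_\b)+\vD_Q,
\]
while the strict IC support condition for $A'_{\r'}|_{X^M\uni}\simeq\IC(\ol\SO'_{\r'},\SE'_{\r'})[d'-2d'_{\r'}]$ (transported to $B_{\r'}$ via the shift $d-d'$) gives
\[
j < 2d'_{\r'}-d+\bigl(\dim\SO'_{\r'}-\dim\SO'_\b\bigr).
\]
When you add these, the $\dim\SO'_\b$ terms occur with opposite signs and \emph{cancel}; using $\vD_P=\vD_Q+\vD_{M\cap P}$ you get only $i+j<2d_\r-d$, i.e.\ $i+j\le 2d_\r-d-1$. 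That is exactly (6.8.4) in the paper, and it does \emph{not} exclude a nonzero contribution in degree $2d_\r-d-1$. So the long exact sequence alone yields only surjectivity of $\vf$, not injectivity.

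The paper closes the gap by a different, global argument: once $x_{\r,\r'}\ge\lp\r,\r'\rp$ is known, it suffices to prove $\sum_{\r'\in(\SA'_{\SE_1})\wg}(\dim\r')\,x_{\r,\r'}=\dim\r$, since the same sum with $\lp\r,\r'\rp$ equals $\dim\r$ trivially. This is obtained by rewriting $x_{\SO'}=\sum_{\SO'_{\r'}=\SO'}(\dim\r')x_{\r,\r'}$ as a multiplicity in (6.8.6)--(6.8.8), then summing over all $M_H$-orbits $\SO'$ via the partition $\wt X_0=\coprod_{\SO'}\wt X_{0,\SO'}$, and finally invoking the decomposition (6.8.10) of $H^{2d_\r}_c(\SP_{\SO_L,x},\ol\SE_1)$ furnished by Theorem~5.2 to identify the total with $\dim\r$. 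In short: your spectral-sequence bound gives only one of the two vanishing degrees you need; the equality requires the additional counting step (6.8.5)--(6.8.10).
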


\begin{proof}
First we show that 
\begin{equation*}
\tag{6.8.1}
x_{r,\r'} \ge \lp \r, \r'\rp. 
\end{equation*}

Consider the long exact sequence of local systems obtained from the open embedding 
$D_0 \hra D$, 
\begin{equation*}
\begin{CD}
\cdots @>>> \SH^{2d_{\r} - d}((\pi''_{D_0})_!(B_{\r'}|_{D_0}))|_{\SO_{\r}} @>\vf >>  
     \SH^{2d_{\r}- d}((\pi''_D)_!(B_{\r'}|_D)|_{\SO_{\r}}  \\
@>>>  \SH^{2d_{\r}-d}((\pi''_{D - D_0})_!(B_{\r'}|_{D - D_0}))|_{\SO_{\r}} @>>> 
           \cdots.\phantom{**********}   
\end{CD}
\end{equation*}
If $\vf$ is surjective, then (6.8.1) holds by (6.7.4).  Thus it is enough to see that
\begin{equation*}
\SH^{2d_{\r}-d}((\pi''_{D - D_0})_!(B_{\r'}|_{D - D_0}))|_{\SO_{\r}} = 0. 
\end{equation*}
For this we have only to show that the stalk vanishes 
at any $x \in \SO_{\r}$.  Hence we will show that 
\begin{equation*}
\tag{6.8.2}
\BH^{2d_{\r} - d}_c(({\pi''}\iv(x) \cap (D - D_0), B_{\r'}) = 0 
\end{equation*} 
for any $x \in \SO_{\r}$. 

\par
We consider the decomposition of $\ol\SO'_{\r'}$ into $M_H$-orbits, 
$\ol\SO'_{\r'} - \SO'_{\r'} = \coprod_{\a}\SO'_{\a}$.  This implies 
the decomposition of $D - D_0$ into locally closed subsets, 
$D - D_0 = \coprod_{\a}D_{\a}$, where 

\begin{equation*}
D_{\a} = \{ (x, gQ_H) \in G^{\io\th} \times H/Q_H \mid g\iv xg \in \eta_Q\iv(\SO'_{\a}) \}.
\end{equation*}
In order to show (6.8.2), it is enough to see, for any $x \in \SO_{\r}$, and for 
any $D_{\a}$, that

\begin{equation*}
\tag{6.8.3}
\BH^{2d_{\r}-d}_c({\pi''}\iv(x) \cap D_{\a}, B_{\r'}) = 0.
\end{equation*} 
We remark that the following holds.

\par\medskip\noindent
(6.8.4) \ If $H^i_c({\pi''}\iv(x) \cap D_{\a}, \SH^jB_{\r'}) \ne 0$, then 
$i + j < 2d_{\r} - d$. 
\par\medskip

In fact, since the cohomology is non-zero, we have

\begin{equation*}
i \le 2\dim ({\pi''}\iv(x) \cap D_{\a}) \le 
    (2\nu_H - \dim \SO_{\r}) - (2\nu_{M_H} - \dim \SO'_{\a}) + \vD_Q, 
\end{equation*}
where the second formula is obtained from Proposition 2.2 (ii), applied to 
$Q_H \subset H$.
On the other hand, the condition $\SH^jB_{\r'}|_{D_{\a}} \ne 0$ is 
equivalent to the condition that $\SH^{j + (d-d')}A'_{\r'}|_{\SO'_{\a}} \ne 0$ 
under the correspondence $B_{\r'} \lra A'_{\r'}$ through the diagram (6.4.4), 
where $d' = \dim X^M$. 
Hence by applying (5.2.2) for $M$, we have 

\begin{align*}
j + d &\le (2\nu_{M_H} - \dim \SO'_{\r'}) - (2\nu_{L_H} - \dim \SO_L) + \vD_{P \cap M}  \\
      &< (2\nu_{M_H} - \dim \SO'_{\a}) - (2\nu_{L_H} - \dim \SO_L) + \vD_{P \cap M}. 
\end{align*}
Since $\vD_Q + \vD_{P \cap M} = \vD_P$, we obtain (6.8.4). 
\par
Now by applying the hypercohomology spectral sequence, we obtain (6.8.3) from 
(6.8.4). Thus (6.8.2) holds, and so (6.8.1) is proved. 
\par
Since $\sum_{\r' \in (\SA'_{\SE_1})\wg}(\dim \r') \lp \r, \r'\rp = \dim \r$, 
in view of (6.8.1), 
in order to prove the proposition, it is enough to show the following formula. 

\begin{equation*}
\tag{6.8.5}
\sum_{\r' \in (\SA'_{\SE_1})\wg}(\dim \r')x_{\r,\r'} = \dim \r.
\end{equation*}

For an $M_H$-orbit $\SO'$ in $M^{\io\th}\uni$, put
$x_{\SO'} = \sum_{\r'}(\dim \r')x_{\r,\r'}$, where the sum  is taken over all 
$\r' \in (\SA'_{\SE_1})\wg$ such that $\SO'_{\r'} = \SO'$. Then the sum in the 
left hand side of (6.8.5) coincides with $\sum_{\SO'}x_{\SO'}$, where the sum is taken 
over all $M_H$-orbits $\SO'$ in $M^{\io\th}\uni$.  
We consider $x_{\SO'}$ separately.  It follows from the decomposition (6.5.1) and 
from the definition of $x_{\r,\r'}$, we see that $x_{\SO'}$ coincides with 
the multiplicity of $\SE_{\r}$ in the local system
\begin{equation*}
\tag{6.8.6}
\SH^{2d_{\r} - d}\bigl((\pi''_{D_{\SO'}})_!(\pi'_*K_{\ol\SE_1}|_{D_{\SO'}})\bigr)|_{\SO_{\r}},
\end{equation*}
where $D_{\SO'}$ is defined similarly to $D_0$ by replacing $\SO'_{\r'}$ by $\SO'$. 

Here we note that the local system given in (6.8.6) coincides with the local system

\begin{equation*}
\tag{6.8.7}
\SH^{2d_{\r}- 2d'_{\r'}}\biggl((\pi''_{D_{\SO'}})_!
       \bigl(\SH^{2d'_{\r'}-d}(\pi'_*K_{\ol\SE_1})|_{D_{\SO'}}\bigr)\biggr)|_{\SO_{\r}}.
\end{equation*} 
since the discussion in the proof of (6.8.4) shows that 
if $\SH^i\bigl((\pi''_{D_{\SO'}})_!\SF\bigr)|_{\SO_{\r}} \ne 0$ for a sheaf $\SF$, then 
$i \le (2\nu_H - \dim \SO_{\r}) - (2\nu_{M_H} - \dim \SO'_{\r'}) + \vD_Q = 
    2d_{\r} - 2d'_{\r'}$ and 
if $\SH^j(\pi'_*K_{\ol\SE_1})|_{D_{\SO'}} \ne 0$, then $j \le 2d'_{\r'} - d$. 
\par
By applying Theorem 5.2 (iii) to $M$, and by converting it to 
the sheaves on $\wt X_0$ by making use of the diagram (6.4.4), we obtain 

\begin{equation*}
\SH^{2d'_{\r'} - d}(\pi'_*K_{\ol\SE_1})|_{D_{\SO'}} \simeq 
               \SH^{2d'_{\r'}}(\pi'|_{\wt X_0})_!\ol\SE_1|_{D_{\SO'}},
\end{equation*}
where $\wt X_0 \simeq H \times^{P_H} \eta_P\iv(\vS)$. 
Hence (6.8.7) can be rewritten as 

\begin{equation*}
\SH^{2d_{\r}- 2d'_{\r'}}\biggl((\pi''_{D_{\SO'}})_!
       \bigl(\SH^{2d'_{\r'}}(\pi'|_{\wt X_0})_!\ol\SE_1|_{D_{\SO'}}\bigr)\biggr)|_{\SO_{\r}}.
\end{equation*}
By using the spectral sequence associated to the composite $\pi = \pi''\circ \pi'$, 
we see that the last formula is equal to 
\begin{equation*}
\tag{6.8.8}
\SH^{2d_{\r}}\bigl((\pi''\circ\pi'|_{\wt X_{0,\SO'}})_!\ol\SE_1\bigr)|_{\SO_{\r}},
\end{equation*}
where $\wt X_{0,\SO'} = \wt X_0 \cap {\pi'}\iv(D_{\SO'})$. 
In particular, $x_{\SO'}$ coincides with the multiplicity of $\SE_{\r}$ 
in the local system given in (6.8.8). 
\par
Here we note that $\wt X_0 = \coprod_{\SO'}\wt X_{0,\SO'}$ gives a partition of 
$\wt X_0$ by locally closed pieces $\wt X_{0,\SO'}$, 
where $\SO'$ runs over all the $M_H$-orbits 
in $M^{\io\th}\uni$. 
By Proposition 2.2 (ii), $\dim (\wt X_{0, \SO'} \cap \pi\iv (x)) \le d_{\r}$ 
for any $x \in \SO_{\r}$. Then by applying the cohomology long exact sequence 
of $\pi\iv(x)$, we see that $\sum_{\SO'}x_{\SO'}$ coincides with the multiplicity 
of $\SE_{\r}$ in the local system 

\begin{equation*}
\tag{6.8.9}
\SH^{2d_{\r}}((\pi|_{\wt X_0})_!\ol\SE_1)|_{\SO_{\r}}.
\end{equation*}

On the other hand, the stalk at $x \in \SO_{\r}$ of the local system in (6.8.9) 
coincides with $H^{2d_{\r}}_c(\SP_{\SO_L,x}, \ol\SE_1)$, where 
$\SP_{\SO_L,x} = \{ gP_H \in H/P_H \mid g\iv xg \in \eta_P\iv(\SO_L)\}$.  
By Theorem 5.2, $H^{2d_{\r}}_c(\SP_{\SO_L,x}, \ol\SE_1)$ has a natural structure of 
$\SA_{\SE_1} \times A_H(x)$-module, and is decomposed as

\begin{equation*}
\tag{6.8.10}
H^{2d_{\r}}_c(\SP_{\SO_L,x}, \ol\SE_1) \simeq 
     \bigoplus_{\wt\r \in \SA_{\SE_1}\wg} V_{\wt\r} \otimes \tau_{\wt\r}, 
\end{equation*}
where $V_{\wt\r}$ is an $\SA_{\SE_1}$-module, which is isomorphic to $\wt\r$ if it is non-zero, 
and $\t_{\wt\r}$ is an irreducible representation of $A_H(x)$ corresponding to the local 
system $\SE_{\wt\r}$ on $\SO_{\r} = \SO_{\wt\r}$. 
In particular, the multiplicity of $\SE_{\r}$ in the local system in (6.8.9) 
coincides with $\dim \r$ if it is non-zero.  
But by (6.8.1), we know that $\sum_{\SO'}x_{\SO'} \ge \dim \r$.  It follows that 
$\sum_{\SO'}x_{\SO'} = \dim \r$. Thus we have proved (6.8.5), and the proposition 
follows. 
\end{proof}

\para{6.9.}
We are now ready to prove the theorem.
For an $H$-orbit $\SO$ in $G^{\io\th}\uni$ and an $M_H$-orbit $\SO'$ in 
$M^{\io\th}\uni$, consider the diagram as in (5.3.1), but by replacing 
$P$ by $Q$, namely, 
\begin{equation*}
\begin{CD}
V = H \times^{Q_H}(\SO \cap \eta_Q\iv(\SO')) @>f_1>> \SO  \\
@Vf_2VV   \\
V' = H \times^{Q_H}\SO'.
\end{CD}
\end{equation*}
By putting  
\begin{align*}
d_1 &= (\nu_H - \dim \SO/2) - (\nu_{M_H} - \dim \SO'/2) + \vD_Q/2,  \\
d_2 &= (\dim \SO - \dim \SO') /2 + \vD_Q/2.  
\end{align*}
as in 5.3, we can apply Lemma 5.4 for $Q$. 
\par
Since $\SO' \subset X^M$, we have $V \subset H \times^{Q_H}(\eta_Q\iv(X^M))$.
The lower row of the diagram (6.4.4) gives a diagram, where 
$\xi_1, \eta_1$ are restrictions of $\xi, \eta$ on $H \times (\SO \cap \eta_Q\iv(\SO'))$.  
\begin{equation*}
\begin{CD}
H \times^{Q_H}(\SO \cap \eta_Q\iv(\SO')) @<\xi_1<<  H \times (\SO \cap \eta_Q\iv(\SO'))
                @>\eta_1>>  \SO'. 
\end{CD}
\end{equation*}  
By using a similar argument as in 6.4, we obtain an $H$-equivariant 
simple local system $\wt\SE'$ on $V$ from  an $H$-equivariant simple local system $\SE'$ 
on $\SO'$. 
On the other hand, $V'$ is a single $H$-orbit, and its stabilizer is  
isomorphic to $Z_{M_H}(v)U_{Q_H}$ for $v \in \SO'$. Hence 
the set of $H$-equivariant simple local systems on $V'$ is in bijection with 
the set of $M_H$-equivariant simple local systems on $\SO'$.
We denote by $\wh\SE'$ the local system on $V'$ corresponding to $\SE'$ on $\SO'$. 
Here we note that

\begin{equation*}
\tag{6.9.1}
f_2^*\wh\SE' \simeq \wt\SE'.
\end{equation*}

In fact, since both of $V, V'$ are locally trivial fibration over $H/Q_H$, 
we have the following commutative diagram from the embedding of fibres.

\begin{equation*}
\tag{6.9.2}
\begin{CD}
\SO \cap \eta_Q\iv(\SO') @>i>>  H \times^{Q_H}(\SO \cap \eta_Q\iv(\SO')) \\
@Vf_{\SO,\SO'}VV                            @VVf_2V     \\
\SO'   @ >i'>>   H \times^{Q_H}\SO',
\end{CD}
\end{equation*}
where $f_{\SO,\SO'}$ is the map defined in 6.1, and $i,  i'$ are 
inclusions of fibres. 
Since $f_2^*\wh\SE', \wt\SE'$ are both $H$-equivariant local systems, 
it is enough to show that their restrictions on $\SO \cap \eta_Q\iv(\SO')$ coincide.  
Since the restriction of $\wh\SE'$ on $\SO'$ is equal to $\SE'$, the restriction of 
$f_2^*\wh\SE'$ coincides with $f_{\SO,\SO'}^*\SE'$. 
On the other hand, it follows from the construction of $\wt\SE'$, one can check that 
the restriction of $\wt\SE'$ on $\SO \cap \eta_Q\iv(\SO')$ coincides with $\SE'$.    
Thus (6.9.1) holds. 
\par
Recall that $m_{\SE,\SE'}$ is the multiplicity of $\SE'$ in the local system 
$R^{2d_{\SO,\SO'}}(f_{\SO,\SO'})_!\SE$.
Here $d_2 = d_{\SO,\SO'}$. Since (6.9.2) is the fibre product, we see that 
the restriction of $R^{2d_2}(f_2)_!(f_1^*\SE)$ on $\SO'$ coincides with 
$R^{2d_{\SO,\SO'}}(f_{\SO,\SO'})_!\SE$. 
Hence $m_{\SE,\SE'}$ coincides with the multiplicity of $\wh\SE'$ in the local system 
$R^{2d_2}(f_2)_!(f_1^*\SE)$. 
Then by using (6.9.1) and Lemma 5.4, we obtain 

\par\medskip\noindent
(6.9.3) \ $m_{\SE,\SE'}$ coincides with the multiplicity of $\SE$ in the local system
$R^{2d_1}(f_1)_!\wt\SE'$.  
\par\medskip

Here the perverse sheaf $B_{\r'}$ on $\wt X_Q$ is constructed from $A'_{\r'}$ on $X^M$
by using the diagram (6.4.4). The local system $\wt\SE'$ on $V$ is also constructed from 
$\SE'$ on $\SO'$ by (6.4.4).  
If we write $(\SO',\SE') = (\SO'_{\r'}, \SE'_{\r'})$, 
one can check that $\SH^{2d'_{\r'} - d}B_{\r'}|_V \simeq \wt\SE'$.  
Hence by (6.9.3), $m_{\SE,\SE'}$ coincides with the multiplicity of $\SE$ 
in the local system $R^{2d_1}(\pi''|_V)_!(\SH^{2d'_{\r'}-d}B_{\r'}|_V)|_{\SO}$. 
If we write $(\SO,\SE) = (\SO_{\r}, \SE_{\r})$, we have 
$d_1 = d_{\r} - d'_{\r'}$.  
It follows from the definition of $x_{\r,\r'}$ and (6.8.7), 
that this multiplicity of $\SE$ actually coincides with $x_{\r,\r'}$.  
Hence by Proposition 6.8, we conclude that $m_{\SE,\SE'} = \lp\r, \r'\rp$. 
This proves (ii) of the theorem.
\par
We show (i).
Take $(\SO',\SE') \in \SN_M$ which belongs to $(L \subset M \cap P,\SO_L, \SE^{\dag}_1)$.
Assume that $m_{\SE,\SE'} \ne 0$.   Since 
$\SH^{2d'_{\r'} - d}\pi'_*K_{\ol\SE_1}|_V$ contains $\wt\SE'$,  the above discussion 
implies that
$\SE$ appears in the local system 
$R^{2d_1}(\pi''|_V)_!R^{2d'_{\r'}}(\pi'|_{\wt X_{0,\SO}})_!(\ol\SE_1)|_V$  
on $\SO$ with non-zero multiplicity.
This is equivalent to saying that $\SE$ appears in the local system 
$R^{2d_1 + 2d'_{\r'}}(\pi|_{\wt X_{0,\SO}})(\ol\SE_1)|_{\SO}$ with non-zero 
multiplicity (since $\dim \pi\iv(x) \cap \wt X_{0,\SO} \le d_1 + d'_{\r'}$ 
for $x \in \SO$).  It follows that $\SE$ appears in the local system 
$R^{2d_1 + 2d'_{\r'}}(\pi|_{\wt X_0})(\ol\SE_1)|_{\SO}$ with non-zero 
multiplicity, by considering the partition of $\wt X_0$ 
by locally closed pieces $\wt X_{0,\SO'}$. 
Now (i) follows from Theorem 5.2 (iii) since $d_1 + d'_{\r'} = d_{\SO}$ in the notation 
there. 
This complete the proof of Theorem 6.2. 

\para{6.10.}
By making use of the discussion in 2.3, we reformulate 
the restriction theorem in a more convenient form for the 
application. 
We keep the notation in 6.1.  Let $(\SO, \SE) \in \SN_G$ and 
$(\SO',\SE') \in \SN_M$. We fix $u \in \SO$, $v \in \SO'$. 
Let $\t \in A_H(u)\wg$ be the irreducible representation corresponding to $\SE$, 
and $\t' \in A_{M_H}(v)\wg$ the irreducible representation corresponding 
to $\SE'$. 
As in 2.3, but by replacing $L$ by $M$, we define varieties

\begin{align*}
Y_{u,v} &= \{ gZ_{M_H}^0(v)U_Q^{\th} \mid g \in H, g\iv ug \in \eta_Q\iv(v) \}, \\
\wt Y_{u,v} &= \{ g \in H \mid g\iv ug \in \eta_Q\iv(v) \}. 
\end{align*}
$Z_H(u) \times Z_{M_H}(v)$ acts on $Y_{u,v}$ by 
$(g_1, g_2) : gZ_{M_H}^0(v)U_Q^{\th} \mapsto g_1gg_2\iv Z_{M_H}^0U_Q^{\th}$. 
By the discussion in 2.3, we have 

\begin{equation*}
\dim Y_{u,v} \le s = (\dim Z_H(u) - \dim Z_{M_H}(v))/2 + \vD_Q/2.
\end{equation*}

\par\medskip\noindent
Let $I(Y_{u,v})$ be the set of irreducible components in $Y_{u,v}$ of 
dimension $s$. Then $A_H(u) \times A_{M_H}(v)$ acts on $I(Y_{u,v})$ 
as the permutations.  We denote by $\ve_{u,v}$ the corresponding 
permutation representation of $A_H(u) \times A_{M_H}(v)$. 
As a corollary to the restriction theorem, we have the following result
(cf [LS, 0.4]). 

\begin{cor}   
Assume that $(\SO,\SE) \in \SN_G$ corresponds to $\r \in \SA\wg_{\SE_1}$ and 
$(\SO',\SE') \in \SN_M$ corresponds to $\r' \in (\SA'_{\SE_1})\wg$. Then 
\begin{equation*}
\tag{6.11.1}
\lp \r|_{\SA'_{\SE_1}}, \r'\rp_{\SA'_{\SE_1}} = 
          \lp \t \otimes \t'^*, \ve_{u,v} \rp_{A_H(u) \times A_{M_H}(v)}.
\end{equation*}
\end{cor}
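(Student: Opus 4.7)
The plan is to reduce to Theorem 6.2 (ii) and then analyze the stalk of the local system $R^{2d_{\SO,\SO'}}(f_{\SO,\SO'})_!\SE$ at $v$ by means of the Galois covering description set up in 2.3. By Theorem 6.2 (ii), $\lp \rho|_{\SA'_{\SE_1}}, \rho'\rp = m_{\SE,\SE'}$, so it suffices to prove
\begin{equation*}
m_{\SE,\SE'} = \lp \tau \otimes \tau'^*, \ve_{u,v}\rp_{A_H(u) \times A_{M_H}(v)}.
\end{equation*}
Since $R^{2d_{\SO,\SO'}}(f_{\SO,\SO'})_!\SE$ is $M_H$-equivariant on $\SO'$, its stalk at $v$ is an $A_{M_H}(v)$-module, and $m_{\SE,\SE'}$ equals the multiplicity of $\tau'^*$ in this stalk. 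This stalk is just $H^{2\d}_c(\SO \cap \eta_Q\iv(v), \SE)$ where $\d = d_{\SO,\SO'}$.

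Next, I would apply the constructions of 2.3 with $P, L$ replaced by $Q, M$. The finite Galois covering $\xi : H/Z_H^0(u) \to \SO$ with group $A_H(u)$, together with the identity $\SE = \Hom_{A_H(u)}(\tau, \xi_*\Ql)$ used in the proof of Lemma 2.5, gives a natural isomorphism of $A_{M_H}(v)$-modules
\begin{equation*}
H^{2\d}_c(\SO \cap \eta_Q\iv(v), \SE) \simeq \bigl(H^{2\d}_c(\xi\iv(\SO \cap \eta_Q\iv(v)), \Ql) \otimes \tau^*\bigr)^{A_H(u)}.
\end{equation*}
The diagram (2.3.1) (adapted to $Q, M$) identifies $\xi\iv(\SO \cap \eta_Q\iv(v))$ with $Z_H^0(u) \backslash \wt Y_{u,v}$, which by 2.3 has dimension at most $\d$, with its top-dimensional irreducible components in natural $A_H(u)$-equivariant bijection with $I(Y_{u,v})$ (via the chain $I(Y_{u,v}) \simeq I(\wt Y_{u,v}) \simeq I_{u,v}$). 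Since the top compactly supported cohomology of a $\d$-dimensional variety is the $\Ql$-vector space freely spanned by its top-dimensional components, we obtain
\begin{equation*}
H^{2\d}_c(\xi\iv(\SO \cap \eta_Q\iv(v)), \Ql) \simeq \ve_{u,v}
\end{equation*}
as $A_H(u) \times A_{M_H}(v)$-modules, where the $A_{M_H}(v)$-action comes from the right multiplication by $Z_{M_H}(v)$ on $\wt Y_{u,v}$ (which preserves the defining condition $g\iv u g \in \eta_Q\iv(v)$ since $Z_{M_H}(v)$ fixes $v$).

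Combining the two isomorphisms and applying Frobenius reciprocity,
\begin{equation*}
m_{\SE,\SE'} = \dim \Hom_{A_{M_H}(v)}\bigl(\tau'^*, (\ve_{u,v} \otimes \tau^*)^{A_H(u)}\bigr) = \lp \tau \otimes \tau'^*, \ve_{u,v}\rp_{A_H(u) \times A_{M_H}(v)},
\end{equation*}
as required. The main technical point is to check the $A_H(u) \times A_{M_H}(v)$-equivariance of all three bijections of irreducible components, i.e., that the two-sided action of $Z_H(u) \times Z_{M_H}(v)$ on $\wt Y_{u,v}$ descends compatibly along the maps $\f$ and $\vf$ of (2.3.1); beyond this bookkeeping the result follows formally from the already-established Section 2 machinery and Theorem 6.2.
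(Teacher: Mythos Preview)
Your proposal is correct and follows essentially the same route as the paper's proof: reduce to Theorem 6.2, then use the Galois covering $\xi$ from 2.3 to rewrite $H^{2\delta}_c(\SO \cap \eta_Q\iv(v),\SE)$ as the $\tau$-isotypic part of $H^{2\delta}_c(\xi\iv(\SO \cap \eta_Q\iv(v)),\Ql)$, and identify this top cohomology with the permutation module $\ve_{u,v}$ on $I(Y_{u,v})$. The only difference is cosmetic---you phrase the final step via the tensor identity and Frobenius reciprocity, while the paper speaks of ``the multiplicity space of $\tau$''; the compatibility check you flag is exactly what the paper acknowledges when it says the identification is compatible with the $A_{M_H}(v)$-action.
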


\begin{proof}
Assume that $I(Y_{u,v}) \ne \emptyset$.  
Then it follows from the discussion in 2.3 that 
$\dim (\SO \cap \eta_Q\iv(v)) = d_{\SO,\SO'} = (\dim \SO - \dim \SO')/2 + \vD_Q/2$. 
As in 6.1, let $m_{\SE,\SE'}$ be the multiplicity of $\SE'$ in the local system 
$R^{2d_{\SO,\SO'}}(f_{\SO,\SO'})_!\SE$.  Then $m_{\SE,\SE'}$ coincides with 
the multiplicity of ${\t'}^*$ in the $A_{M_H}(v)$-module 
$H_c^{2d_{\SO,\SO'}}(\SO \cap \eta_Q\iv(v),\SE)$.
We consider a similar diagram as in (2.3.1). We use the same notation there. 
$Z_H(u) \times Z_{M_H}(v)$ acts on $\wt Y_{u,v}$, $Z^0_H(u)\backslash \wt Y_{u,v}$, 
and $\f, \psi$ are $Z_H(u) \times Z_{M_H}(v)$-equivariant. On the other hand, 
let $\xi : \wt\SO = H/Z^0_H(u) \to \SO = H/Z_H(u)$ be the natural map. Then    
$Z_{M_H}(v)$ acts on $\xi\iv(\SO \cap \eta_Q\iv(v))$ under the isomorphism 
$\xi\iv(\SO \cap \eta_Q\iv(v)) \simeq Z_H^0(u)\wt Y_{u,v}$, and the map 
$\xi\iv(\SO \cap \eta_Q\iv(v)) \to (\SO \cap \eta_Q\iv(v))$ becomes 
$Z_{M_H}$-equivariant. Under this situation, 
we can identify $H^{2d_{\SE,\SE'}}_c(\SO \cap \eta_Q\iv(v), \SE)$ with 
the multiplicity space of $\t$ in the $A_H(u)$-module 
$H^{2d_{\SO,\SO'}}_c(\xi\iv(\SO \cap \eta_Q\iv(v)), \Ql)$. 
Moreover, this identification is compatible with 
the action of $A_{M_H}(v)$ on both cohomologies.  
It follows that $m_{\SE,\SE'}$ coincides with the multiplicity of $\t\otimes \t'^*$ 
in the $A_H(u) \times A_{M_H}(v)$-module 
$H^{2d_{\SO,\SO'}}_c(\xi\iv(\SO \cap \eta_Q\iv(v)), \Ql)$. 
From the discussion in 2,3, this $A_H(u) \times A_{M_H}(v)$-module 
is isomorphic to the permutation representation $\ve_{u,v}$ on the set $I(Y_{u,v})$.  
Thus the corollary follows from Theorem 6.2.
\end{proof}

\par\bigskip
\section{ Unipotent orbits}

\para{7.1.}
We follow the notation in 1.7.  Since $\Fg^-$ is the set of 
self-adjoint matrices in $\Fg = \Fg\Fl_N$, and $G^{\io\th}$ is 
the set of non-degenerate self-adjoint matrices in $\Fg$, the map 
$x \mapsto x-1$ gives an isomorphism between $G^{\io\th}\uni$ and $\Fg^-\nil$.
This map is clearly $H$-equivariant.  In this way, we have a natural bijection 
between the set of $H$-orbits in $G^{\io\th}\uni$ and the set of $H$-orbits in 
$\Fg^-\nil$.  
\par
For an integer $m \ge 0$, let $\SP_m$ be the set of partitions 
$\la = (\la_1, \dots, \la_r)$ with $\la_r \ge 0$ such that $\sum_i \la_i = m$. 
$m = |\la|$ is called the size of $\la$, and the maximal number $k = \ell(\la)$ 
such that $\la_k \ne 0$ is called the length of 
$\la$.  
For $\la = (\la_1, \dots, \la_r)$, put 
\begin{equation*}
\tag{7.1.1}
n(\la) = \sum_{i = 1}^r(i-1)\la_i.
\end{equation*}

It is known that the set of unipotent conjugacy classes in $G = GL_N$ is in bijection 
with $\SP_N$, via Jordan normal form. We denote by $\OO_{\la}$ the unipotent class 
in $G$ corresponding to $\la \in \SP_N$.  Similarly, the set of nilpotent orbits 
in $\Fg\nil$ is in bijection with $\SP_N$.  We also denote by $\OO_{\la}$ the 
corresponding nilpotent orbit in $\Fg\nil$, if there is no fear of confusion. 
For $x \in \OO_{\la} \subset \Fg\nil$, we define a Jordan basis 
$\{ v_{i,j} \mid  1 \le i \le \ell(\la), 1 \le j \le \la_i \}$ of $V$ with respect to $x$ by 
$xv_{i,j} = v_{i, j-1}$ if $j \ge 2$, and $xv_{i,1} = 0$.  
The following result gives a parametrization of $G^{\th}$-orbits in $G^{\io\th}\uni$. 
\begin{lem}  
For any $\la \in \SP_N$, $\OO_{\la} \cap G^{\io\th}\uni \neq \emptyset$. Any two elements 
in $\OO_{\la} \cap G^{\io\th}\uni$ is conjugate under $G^{\th}$. 
Thus the set of $G^{\th}$-orbits in $G^{\io\th}\uni$ is in bijection 
with $\SP_N$. 
\end{lem}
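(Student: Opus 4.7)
The plan is to transfer the question to the Lie algebra side and then classify nilpotent self-adjoint operators up to isometry. Via the $H$-equivariant isomorphism $G^{\io\th}\uni \isom \Fg^-\nil$, $g \mapsto g - 1$, together with the fact (from 1.7) that $\Fg^-$ is exactly the set of self-adjoint operators on $V$ with respect to $\lp\ ,\ \rp$, it suffices to show that for every $\la \in \SP_N$ there is a nilpotent self-adjoint $x \in \Fg\Fl(V)$ of Jordan type $\la$, and any two such are conjugate under $G^{\th} = O(V, \lp\ ,\ \rp)$. Note we work with $G^\th$-orbits here, so we may use the full orthogonal group, not just $H = SO_N$.

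For existence, I would fix a partition $\la = (\la_1, \dots, \la_r)$ and take an abstract vector space $V'$ with basis $\{ v_{i,j} \mid 1 \le i \le r,\ 1 \le j \le \la_i \}$. Define $x_0$ on $V'$ by $x_0 v_{i,j} = v_{i, j-1}$ (with $x_0 v_{i,1} = 0$), and define a symmetric bilinear form by declaring $\lp v_{i,j}, v_{k,\ell} \rp_0 = \delta_{i,k}\delta_{j+\ell, \la_i+1}$. A direct check shows this form is non-degenerate, symmetric, and makes $x_0$ self-adjoint. Since $\Bk$ is algebraically closed of odd characteristic, any two non-degenerate symmetric forms on an $N$-dimensional space are equivalent, so we may transport $(V', \lp\ ,\ \rp_0)$ to $(V, \lp\ ,\ \rp)$ by an isomorphism, obtaining a self-adjoint nilpotent $x \in \Fg^-$ of Jordan type $\la$.

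For uniqueness, the main step is an orthogonal decomposition lemma: if $x \in \Fg^-$ is nilpotent of Jordan type $\la$, then $V$ decomposes orthogonally as $V = \bigoplus_{i=1}^r W_i$ with each $W_i$ an $x$-stable non-degenerate subspace on which $x$ acts as a single Jordan block of size $\la_i$. The key point is to produce a cyclic vector generating a non-degenerate subspace. Let $k = \la_1 - 1$. I claim there exists $v \in V$ with $x^k v \ne 0$ and $\lp v, x^k v\rp \ne 0$. Indeed, otherwise polarization $\lp u, x^k v\rp + \lp v, x^k u\rp = 0$ together with self-adjointness forces $2\lp u, x^k v\rp = 0$ for all $u, v$, hence $x^k = 0$ by non-degeneracy (using $\ch \Bk \ne 2$), contradicting $\la_1 = k+1$. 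For such $v$, let $W = \lp v, xv, \dots, x^k v\rp$. Writing the Gram matrix of $W$ in the basis $x^{k-i}v$ ($0 \le i \le k$) as the Hankel matrix with entries $a_{i+j} = \lp v, x^{i+j}v\rp$, which vanish for $i+j > k$, its determinant is (up to sign) $a_k^{k+1} \ne 0$. Thus $W$ is non-degenerate, so $V = W \oplus W^\perp$, and $W^\perp$ is $x$-stable by self-adjointness; induction on $\dim V$ completes the decomposition.

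Finally, on each cyclic piece $W_i$ of dimension $m = \la_i$, I would produce a canonical basis $w_1, \dots, w_m$ with $xw_j = w_{j-1}$ and $\lp w_j, w_\ell\rp = \delta_{j+\ell, m+1}$, by starting from a cyclic vector $v$ as above and adjusting $v \mapsto v + c_1 xv + c_2 x^2 v + \cdots$ inductively to kill the lower-order pairings $\lp v, x^j v\rp$ for $j < k$ (a triangular system solvable in characteristic $\ne 2$), then rescaling to normalize $a_k = 1$. With this normal form, two self-adjoint nilpotents $x, x'$ of the same Jordan type admit orthogonal decompositions whose pieces are pairwise isometric via matching canonical bases; assembling these matchings produces the required element of $G^{\th}$. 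The main obstacle is the orthogonal decomposition lemma, and more precisely the existence of $v$ with $\lp v, x^k v\rp \ne 0$; the rest is bookkeeping once that is in place.
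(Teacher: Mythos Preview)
Your proposal is correct and follows essentially the same route as the paper: reduce to $\Fg^-\nil$, find a cyclic vector $v$ with $\lp v, x^{\la_1-1}v\rp \ne 0$, split off the resulting non-degenerate $x$-cyclic subspace orthogonally, and induct to obtain a normal basis as in (7.2.1). Your polarization argument for producing $v$ and your explicit adjustment $v \mapsto v + c_1 xv + c_2 x^2 v + \cdots$ to kill the lower pairings $\lp v, x^j v\rp$ ($j<\la_1-1$) are in fact more careful than the paper, which asserts the vanishing of these lower pairings without justification; otherwise the arguments coincide.
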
   

\begin{proof}
It is enough to prove the corresponding fact for $\Fg^-\nil$. 
The argument below is in part due to [CVX1, Lemma 2.2]. 
First assume that $\OO_{\la} \cap \Fg^-\nil \ne \emptyset$. 
Take $x \in \OO_{\la} \cap \Fg^-\nil$, with $\la = (\la_1, \dots, \la_r)$.  
We show that there exists a Jordan basis $\{ v_{i,j} \}$ of $V$ with respect to $x$
satisfying the following property;
\begin{equation*}
\tag{7.2.1}
\lp v_{i,j}, v_{i',j'} \rp = \begin{cases}
                       1   &\quad\text{ if $i = i'$ and $j + j' = \la_i + 1$, } \\
                       0   &\quad\text{ otherwise.}
                              \end{cases} 
\end{equation*}
Since $\lp \Ker x^{\la_1-1}, \Im x^{\la_1-1}\rp = 0$, 
there exists $v_1 \in V$ such that $\lp v_1, x^{\la_1-1}v_1 \rp \ne 0$. 
We may assume that $\lp v_1, x^{\la_1-1}v_1\rp = 1$.  Then we have 
$\lp x^iv_1, x^jv_1 \rp = 0$ unless $i + j = \la_1 -1$, in which case it is equal to 1. 
Let $W_1$ be the subspace of 
$V$ spanned by $v_1, xv_1, \dots, x^{\la_1-1}v_1$.  Then $\dim W_1 = \la_1$, and the 
restriction of the bilinear form $\lp \ , \ \rp$ on $W_1$ is non-degenerate. 
Hence we can write as $V = W_1 \oplus W_1^{\perp}$, and $W_1^{\perp}$ is stable by $x$. 
The Jordan type of $x|_{W_1^{\perp}}$ is $\la' = (\la_2, \dots, \la_r)$.  
Thus by induction on $N = \dim V$, one can find a Jordan basis $\{ v_{i,j}\}$ 
satisfying the property (7.2.1). 
\par
If we take another $x' \in \OO_{\la} \cap \Fg^-\nil$, we can find a Jordan basis 
$\{ v'_{i,j}\}$ satisfying (7.2.1). The map 
$g : v_{ij} \mapsto v'_{i,j}$ determines $g \in G^{\th}$, and we have $gx = x'$.
Thus $\OO_{\la} \cap \Fg^-\nil$ consists of a single $G^{\th}$-orbit.   
\par
Next we show that $\OO_{\la} \cap \Fg^-\nil \neq \emptyset$.  
Take $x \in \OO_{\la} \subset \Fg\nil$, and choose a Jordan basis 
$\{ v_{ij}\}$ of $V$ with respect to $x$. We define a symmetric bilinear form  
$\lp \ , \ \rp'$ on $V$ 
by a similar condition as in (7.2.1).   
Then $x$ is self-adjoint with respect to this bilinear form, namely, we have
$\lp xv, w\rp' = \lp v, xw\rp'$ for any $v,w \in V$. 
By 1.7, this implies that some $G$-conjugate of $x$ is contained in $\Fg^-\nil$.  
Hence $\OO_{\la} \cap \Fg^-\nil \neq \emptyset$. 
The lemma is proved. 
\end{proof}

\para{7.3.}
We denote by $\SO_{\la}$ the $G^{\th}$-orbit in $G^{\io\th}\uni$ corresponding to 
$\la \in \SP_N$. Also by the same symbol $\SO_{\la}$, we denote the $G^{\th}$-orbit
in $\Fg^-\nil$.  For $x \in \SO_{\la} \subset \Fg^-\nil$, a Jordan basis 
$\{ v_{i,j} \}$ of $V$ with respect to $x$ satisfying the property in (7.2.1) 
is called a normal basis of $x$. 
\par
We shall compute $\dim Z_{G^{\th}}(x)$ for $x \in \SO_{\la}$. 
First we recall the well-known result in the case of $Z_G(x)$ for $x \in \OO_{\la}$. 
Take $x \in \OO_{\la} \subset \Fg\nil$, 
and put $E^x = \{ y \in \End(V) \mid xy = yx \}$.
Note that $\dim Z_G(x) = \dim E^x$.  Let $\{ v_{i,j} \}$ be a Jordan basis of $V$ with 
respect to $x$. Following [AH, Proposition 2.8], we define $y_{i_1, i_2, s} \in \End(V)$
as follows;

\begin{equation*}
y_{i_1,i_2,s}(v_{i,j}) = \begin{cases}
                         v_{i_2, j-s}  &\quad\text{ if } i = i_1, s + 1 \le j \le \la_i, \\
                         0             &\quad\text{ otherwise.}
                        \end{cases}
\end{equation*}
Then 
\begin{equation*}
\bigl\{ y_{i_1, i_2, s}\mid i_1, i_2 \le \ell(\la), 
          \max\{0, \la_{i_1} - \la_{i_2}\} \le s \le \la_{i_1} - 1\bigr\} 
\end{equation*}
gives rise to a basis of the $\Bk$-vector space $E^x$. 
Hence we have

\begin{equation*}
\tag{7.3.1}
\dim E^x = \sum_{i_1 \ge i_2}\la_{i_2} + \sum_{i_1 < i_2}\la_{i_1}
         = N + 2n(\la). 
\end{equation*}

We show a lemma. 

\begin{lem}  
For $x \in \SO_{\la} \subset G^{\io\th}\uni$, we have $\dim Z_{G^{\th}}(x) = n(\la)$. 
\end{lem}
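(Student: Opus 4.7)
The plan is to reduce to a Lie algebra computation and then exploit the explicit basis of the centralizer $E^\nu$ given in 7.3. Via the $H$-equivariant isomorphism $G^{\io\th}\uni \simeq \Fg^-\nil$, $x \mapsto \nu = x-1$, and the fact that $H = (G^\th)^0$ has finite index in $G^\th$, we have $\dim Z_{G^\th}(x) = \dim Z_H(\nu)$. The Lie algebra of $Z_H(\nu)$ is $\Fg^+ \cap E^\nu$, where $E^\nu = Z_{\Fg}(\nu)$. Since $\nu \in \Fg^-$, one checks directly that $E^\nu$ is stable under the involution $\th = -(\cdot)^*$, so $E^\nu = (E^\nu)^+ \oplus (E^\nu)^-$ with $(E^\nu)^{\pm} = E^\nu \cap \Fg^{\pm}$. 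Hence the lemma reduces to proving $\dim (E^\nu)^+ = n(\la)$.

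Next I would fix a normal basis $\{v_{i,j}\}$ of $V$ from Lemma 7.2, so that the pairing $\lp\,,\,\rp$ takes the form (7.2.1), and consider the basis $\{y_{i_1,i_2,s}\}$ of $E^\nu$ from 7.3. Writing $y_{i_1,i_2,s} = \sum_{j=s+1}^{\la_{i_1}} e_{(i_2,j-s),(i_1,j)}$ in terms of matrix units and using the elementary formula
\begin{equation*}
e_{(a,b),(c,d)}^* = e_{(c,\la_c - 1 -d),(a,\la_a - 1-b)}
\end{equation*}
(obtained from (7.2.1), which shows that the basis $\{v_{i,\la_i-1-j}\}$ is dual to $\{v_{i,j}\}$), I would verify by a direct index shift that
\begin{equation*}
y_{i_1,i_2,s}^* = y_{i_2,i_1,\, s + \la_{i_2} - \la_{i_1}}.
\end{equation*}
In particular, the map $(i_1,i_2,s) \mapsto (i_2,i_1, s + \la_{i_2} - \la_{i_1})$ is an involution on the indexing set, whose fixed points are exactly the diagonal triples with $i_1 = i_2$. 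Along the way one must check that $s' = s + \la_{i_2} - \la_{i_1}$ satisfies the bound $\max\{0,\la_{i_2}-\la_{i_1}\} \le s' \le \la_{i_2}-1$, so that $y_{i_2,i_1,s'}$ is indeed one of the basis elements.

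Finally, I would count. On each diagonal basis vector $y_{i,i,s}$ we have $y^* = y$, hence $\th(y) = -y$ and $y \in \Fg^-$; on each off-diagonal orbit $\{y_{i_1,i_2,s},\, y_{i_2,i_1,s'}\}$ of size two, the $2$-dimensional span contributes a $1$-dimensional line to $(E^\nu)^+$ (spanned by $y_{i_1,i_2,s} - y_{i_2,i_1,s'}$) and a $1$-dimensional line to $(E^\nu)^-$. By (7.3.1) the diagonal part accounts for $\sum_i \la_i = N$ basis vectors (all in $\Fg^-$), and the off-diagonal part accounts for $\dim E^\nu - N = 2n(\la)$ basis vectors, split evenly. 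Therefore $\dim (E^\nu)^+ = n(\la)$, which proves the lemma.

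The main obstacle is the explicit adjoint formula $y_{i_1,i_2,s}^* = y_{i_2,i_1,\, s+\la_{i_2}-\la_{i_1}}$: the index bookkeeping (ensuring the shifted $s'$ lies in the admissible range of 7.3 in both the $\la_{i_1} \ge \la_{i_2}$ and $\la_{i_1} < \la_{i_2}$ cases) is the only nontrivial point, everything else being an application of the decomposition $E^\nu = (E^\nu)^+ \oplus (E^\nu)^-$ and a dimension count.
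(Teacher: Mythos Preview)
Your proposal is correct and follows essentially the same approach as the paper's own proof: reduce to the Lie algebra, decompose $E^x = (E^x)^+ \oplus (E^x)^-$, and use the explicit basis $\{y_{i_1,i_2,s}\}$ together with the adjoint formula $y_{i_1,i_2,s}^* = y_{i_2,i_1,\,s+\la_{i_2}-\la_{i_1}}$ to exhibit bases of the two eigenspaces and count. The paper states this adjoint relation more tersely (``by 1.7'') and records the resulting bases directly, but the content is the same.
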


\begin{proof}
It is enough to show the corresponding fact for $x \in \SO_{\la} \subset \Fg^-\nil$. 
Thus we have only to show that $\dim Z_{\Fg^+}(x) = n(\la)$. 
Here $E^x = Z_{\Fg}(x)$ is stable under $\th$, and we can decompose 
$E^x = (E^x)^+ \oplus (E^x)^-$, where $(E^x)^+ = Z_{\Fg^+}(x)$. 
We choose a normal basis of $x$, and define $y_{i_1,i_2,s}$ 
by using this basis. 
Then by 1.7,
\begin{equation*}
\{ y_{i_1, i_2, s} + y_{i_2, i_1, s + (\la_{i_2} - \la_{i_1})}
         \mid i_1 \le i_2, 0 \le s \le \la_{i_1} -1 \}
\end{equation*}
gives a basis of $(E^x)^-$ and 
\begin{equation*}
\{ y_{i_1, i_2, s} - y_{i_2, i_1, s + (\la_{i_2} - \la_{i_1})}
         \mid i_1 < i_2, 0 \le s \le \la_{i_1} -1 \}
\end{equation*}
gives a basis of $(E^x)^+$. It follows that 
\begin{equation*}
\dim (E^x)^+ = n(\la), \qquad \dim (E^x)^- = n(\la) + N.
\end{equation*}
The lemma follows from this. 
\end{proof}

\remark{7.5.}  
In the case where $\Bk = \BC$, the lemma follows from the general result 
of Kostant-Rallis [KR].  In fact, by [KR, Proposition 5], we have

\begin{equation*}
\tag{7.5.1}
\dim [\Fg^+, x] = \dim [\Fg, x]/2
\end{equation*} 
for $x \in \Fg^-$. Now assume that $x \in \SO_{\la}$. 
By (7.3.1), $\dim [\Fg, x] = \dim \OO_{\la} = N^2 - (N  + 2n(\la))$. 
If we notice that $\dim G^{\th} = (N^2 - N)/2$, (7.5.1) implies that 
$\dim [\Fg^+,x] = \dim \SO_{\la} = \dim G^{\th} - n(\la)$, and 
the lemma follows.   

\para{7.6.}
Take $x \in \SO_{\la} \subset \Fg^-\nil$. 
We write $\la \in \SP_N$ as $\la = (a_1^{m_1}, a_2^{m_2}, \dots, a_h^{m_h})$
with $a_1 > a_2 > \dots > a_h > 0$.
Then by using a normal basis of $x$, we have a decomposition 
of $V$ into $x$-stable subspaces
\begin{equation*}
\tag{7.6.1}
V = \bigoplus_{1 \le i \le h}\wt J_i,
\end{equation*}
where $\wt J_i$'s are mutually orthogonal, and the restriction of $x$ on $\wt J_i$ 
has the Jordan type $(a_i^{m_i})$.  
By this decomposition, we have an embedding of groups

\begin{equation*}
\tag{7.6.2}
\prod_{1 \le i \le h}G_i^{\th} \subset G^{\th},
\end{equation*}
where $G_i = GL(\wt J_i)$ is a $\th$-stable subgroup of $G$. 
One can find a subspace $J_i$ of $\wt J_i$ such that $\dim J_i = m_i$ and that 
$\wt J_i = J_i \oplus xJ_i \oplus \cdots \oplus x^{a_i-1}J_i$. 
We define a quadratic form $Q_i$ on $J_i$ by 

\begin{equation*}
\tag{7.6.3}
Q_i(v) = \lp v, x^{a_i-1}v\rp, \qquad (v \in J_i).
\end{equation*} 
Then $Q_i$ is non-degenerate on $J_i$.
Let $O_i = O(J_i)$ be the orthogonal group on $J_i$ with respect to the 
quadratic form $Q_i$. 
For each $g \in O_i$, we define a map $\wt g: \wt J_i \to \wt J_i$ by 
$\wt g(x^kv) = x^k(gv)$ for $0 \le k \le a_i-1, v \in J_i$. Then $\wt g$ commutes 
with the action of $x$ on $\wt J_i$, and $\wt g$ preserves the bilinear form 
$\lp \ , \, \rp$ on $\wt J_i$. Hence $\wt g \in Z_{G_i^{\th}}(x)$.  
It is clear that $g \mapsto \wt g$ gives a homomorphism 
$O_i \to Z_{G_i^{\th}}(x)$ whose image is isomorphic to $O_i$. 
Thus we can construct a closed subgroup of $Z_{G^{\th}}(x)$ which is isomorphic to 
$\prod_{1 \le i \le h}O_i$. 
Actually, this subgroup is the reductive part of $Z_{G^{\th}}(x)$, and, as in 
the classical case, one can show that 
\begin{equation*}
\tag{7.6.4}
Z_{G^{\th}}(x) \simeq \bigl(\prod_{1 \le i \le h}O_i\bigr) \ltimes U_1,
\end{equation*}    
where $U_1$ is a connected unipotent normal subgroup of $Z_{G^{\th}}(x)$.

\para{7.7}
Put $A_{G^{\th}}(x) = Z_{G^{\th}}(x)/Z^0_{G^{\th}}(x)$ for $x \in \SO_{\la}$. 
Then by (7.6.4), $A_{G^{\th}}(x) \simeq (\BZ/2\BZ)^h$. 
We write an element $\a \in A_{G^{\th}}(x)$ as $\a = (\a_1, \dots, \a_h)$
with $\a_i = \pm 1$, where $\a_i = -1$ is the generator of the $i$-th component 
$\BZ/2\BZ$. 
Let $A_H(x) = Z_H(x)/Z^0_H(x)$ as before.  Since $Z^0_{G^{\th}}(x) = Z^0_H(x)$, 
$A_H(x)$ is a subgroup of $A_{G^{\th}}(x)$.  It follows from the discussion in 7.6, 
we see easily that 

\begin{equation*}
\tag{7.7.1}
A_H(x) = \{ \a = (\a_1, \dots, \a_h) \in A_{G^{\th}}(x) 
                    \mid \prod_{a_i : \text{ odd }}\a_i = 1 \}.
\end{equation*} 
A partition $\la = (\la_1, \dots, \la_r)$ is called an even partition if 
$\la_i$ is even for each $i$. 
Then (7.7.1) can be written as 
\begin{equation*}
\tag{7.7.2}
A_H(x) \simeq \begin{cases}
                (\BZ/2\BZ)^h       &\quad\text{ if $\la$ is even,}  \\
                (\BZ/2\BZ)^{h-1}   &\quad\text{ if $\la$ is not even}. \\
              \end{cases}
\end{equation*}

\begin{prop}  
\begin{enumerate}
\item  Assume that $N$ is odd.  Then each $G^{\th}$-orbit $\SO_{\la}$ 
is a single $H$-orbit. 
\item
Assume that $N$ is even.  If $\la$ is not an even partition, $\SO_{\la}$ 
is a single $H$-orbit.  If $\la$ is an even partition, 
$\SO_{\la}$ splits into two $H$-orbits. 
\end{enumerate}
\end{prop}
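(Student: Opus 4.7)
The plan is to count the $H$-orbits inside the single $G^\th$-orbit $\SO_\la$ by a standard double coset argument, reducing everything to the component group information already computed in (7.6.4) and (7.7.2).

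Fix $x \in \SO_\la$, so $\SO_\la \simeq G^\th/Z_{G^\th}(x)$, and the $H$-orbits in $\SO_\la$ are parametrized by the double coset space $H \backslash G^\th / Z_{G^\th}(x)$. Since $H = (G^\th)^0$ is normal in $G^\th$, this set has cardinality
\begin{equation*}
[G^\th : H \cdot Z_{G^\th}(x)] \;=\; \frac{[G^\th : H]}{[Z_{G^\th}(x) : Z_H(x)]} \;=\; \frac{[G^\th : H]}{[A_{G^\th}(x) : A_H(x)]},
\end{equation*}
where the last equality uses the observation in 7.7 that $Z_H^0(x) = Z_{G^\th}^0(x)$. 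In our setting $G^\th \simeq O_N$ and $H \simeq SO_N$, so $[G^\th : H] = 2$.

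For (i), if $N$ is odd, then $|\la| = N$ is odd, so $\la$ cannot be an even partition. By (7.7.2), $A_H(x)$ has index $2$ in $A_{G^\th}(x) \simeq (\BZ/2\BZ)^h$, so the displayed formula gives $2/2 = 1$ as the number of $H$-orbits. (Equivalently, $-I \in Z_{G^\th}(x)$ is central with $\det(-I) = (-1)^N = -1$, so $-I \in G^\th \setminus H$, forcing $H \cdot Z_{G^\th}(x) = G^\th$.)

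For (ii), with $N$ even, the same computation via (7.7.2) handles both cases: if $\la$ is not an even partition then $[A_{G^\th}(x) : A_H(x)] = 2$ and we get a single $H$-orbit, while if $\la$ is an even partition then $A_H(x) = A_{G^\th}(x)$ and $\SO_\la$ splits into exactly $[G^\th : H] = 2$ orbits under $H$. The only substantive verification beyond what is already in the text is the identity $[Z_{G^\th}(x):Z_H(x)] = [A_{G^\th}(x):A_H(x)]$, which follows immediately from $Z^0_{G^\th}(x) = Z^0_H(x)$; everything else is bookkeeping on top of (7.6.4) and (7.7.2), and I do not anticipate any real obstacle.
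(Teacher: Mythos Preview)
Your proof is correct and is essentially the same as the paper's: both reduce to comparing $Z_{G^\th}(x)$ with $Z_H(x)$ via (7.7.2) and the index-$2$ inclusion $H \subset G^\th$, with your double coset formula $[G^\th : H]/[Z_{G^\th}(x):Z_H(x)]$ being a clean packaging of the paper's direct argument that $G^\th x = Hx$ iff $Z_{G^\th}(x) \supsetneq Z_H(x)$. For $N$ odd the paper uses the explicit element $-1 \in Z_{G^\th}(x) \setminus Z_H(x)$ (your parenthetical), which bypasses (7.7.2) in that case, but this is only a cosmetic difference.
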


\begin{proof}
Since $G^{\th} = H \cup H\z$ for some $\z \in G^{\th} - H$, we have 
$G^{\th}x = H x \cup H (\z x)$.
Thus $G^{\th}x = Hx$ if $\z \in Z_{G^{\th}}(x)$, 
namely, if $Z_{G^{\th}}(x) \supsetneq Z_H(x)$.  
If $N$ is odd, $-1 \in Z_{G^{\th}}(x) - Z_H(x)$ satisfies this condition. 
If $N$ is even and $\la$ is not an even partition, then 
$Z_{G^{\th}}(x) \supsetneq Z_H(x)$ by (7.7.2). Hence $\SO_{\la}$ is a single 
$H$-orbit.  Now assume that $N$ is even and $\la$ is an even partition. 
In this case, $Z_{G^{\th}}(x) = Z_H(x)$.  We have

\begin{equation*}
G^{\th}/Z_{G^{\th}}(x) \simeq (H \coprod \z H)/Z_H(x)
\simeq H/Z_H(x) \coprod \z H/Z_H(x).
\end{equation*} 
Hence $\SO_{\la}$ splits into two $H$-orbits. 
\end{proof}

\para{7.9.}
Let $\OO_{\la}$ be the regular unipotent class in $G\uni$, where $\la = (N) \in \SP_N$. 
Then $\OO_{\la}$ is open dense in $G\uni$.  Since 
$\OO_{\la} \cap G^{\io\th}\uni \neq \emptyset$, $\OO_{\la} \cap G^{\io\th}\uni$ is 
an open dense subset of $G^{\io\th}\uni$. In the case where $N$ is odd, 
$\OO_{\la} \cap G^{\io\th}\uni = \SO_{\la}$ is a single $H$-orbit. 
Thus $\SO_{\la}$ is the unique $H$-orbit in $G^{\io\th}\uni$ such that 
$G^{\io\th}\uni = \ol\SO_{\la}$.  In particular, $G^{\io\th}\uni$ is an irreducible 
variety. In the case where $N$ is even, $\SO_{\la} = \OO_{\la} \cap G^{\io\th}\uni$
splits into two $H$-orbits since $(N)$ is an even partition. Thus if we decompose 
$\SO_{\la}$ into two $H$-orbits, 
$\SO_{\la} = \SO_{\la}' \cup \SO_{\la}''$, 
$G^{\io\th}\uni = \ol\SO'_{\la} \cup \ol\SO''_{\la}$ gives a decomposition of 
$G^{\io\th}\uni$ into irreducible components. Sunning up the above arguments, we have

\begin{lem}  
Let $\la = (N) \in \SP_N$.
\begin{enumerate}
\item
Assume that $N$ is odd.  Then $G^{\io\th}\uni$ is irreducible, which is the closure 
of the $H$-orbit $\SO_{\la}$.
\item 
Assume that $N$ is even. Then $G^{\io\th}\uni$ consists of two irreducible components, 
which are the closures of $H$-orbits $\SO'_{\la}$ and $\SO''_{\la}$. 
We have $\ol\SO'_{\la} \cap \ol\SO''_{\la} = \ol\SO_{\la'}$, where $\SO_{\la'}$ is 
the $H$-orbit corresponding to $\la' = (N-1, 1) \in \SP_N$, and 
$\ol\SO_{\la'}$ is irreducible.  
In particular, we have

\begin{equation*}
\ol\SO'_{\la} - \ol \SO_{\la'} = \SO'_{\la}, \qquad
\ol\SO''_{\la} - \ol\SO_{\la'} = \SO''_{\la}. 
\end{equation*}
\end{enumerate}
\end{lem}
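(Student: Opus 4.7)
The plan is to read the lemma off Proposition 7.8 together with two auxiliary observations: $\SO_\la$ (for $\la = (N)$) is open dense in $G^{\io\th}\uni$, and the dominance closure order on $GL_N$-conjugacy classes governs $H$-orbit closures in $G^{\io\th}\uni$.

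First I would show $\SO_\la$ is open dense in $G^{\io\th}\uni$. The regular class $\OO_\la$ is open in $G\uni$ and $G^{\io\th}\uni$ is closed in $G\uni$, so $\SO_\la = \OO_\la \cap G^{\io\th}\uni$ is open; by Lemma 7.4 with $n(\la) = 0$, $\dim \SO_\la = \dim H$, while $n(\mu) \ge 1$ for any other $\mu \in \SP_N$ forces $\dim \SO_\mu < \dim H$, so $\SO_\la$ is dense and $G^{\io\th}\uni = \ol\SO_\la$. Part (i) follows at once from Proposition 7.8 (i), which makes $\SO_\la$ a single irreducible $H$-orbit.

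For part (ii), Proposition 7.8 (ii) splits $\SO_\la = \SO'_\la \sqcup \SO''_\la$ since $(N)$ is even, and each piece, being a coset of $H$ in $G^{\th}/Z_H(u)$, is a connected component of $\SO_\la$ and hence open in $G^{\io\th}\uni$. Taking closures gives $G^{\io\th}\uni = \ol\SO'_\la \cup \ol\SO''_\la$, with $\ol\SO'_\la \ne \ol\SO''_\la$ (otherwise the disjoint dense opens $\SO'_\la, \SO''_\la$ would have to meet in a common irreducible variety), yielding the two irreducible components. Since $\la' = (N-1, 1)$ has the odd part $N-1$, Proposition 7.8 (ii) also makes $\SO_{\la'}$ a single $H$-orbit, so $\ol\SO_{\la'}$ is irreducible.

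To establish $\ol\SO_{\la'} \subset \ol\SO'_\la \cap \ol\SO''_\la$, I would note that the irreducible set $\SO_{\la'}$ lies in $G^{\io\th}\uni = \ol\SO'_\la \cup \ol\SO''_\la$ and hence in one of them, say $\ol\SO'_\la$. Choosing $\z \in G^\th - H$, conjugation by $\z$ (which agrees with the twisted action here since $\z \in G^\th$) interchanges $\SO'_\la$ and $\SO''_\la$ but preserves the $G^\th$-orbit $\SO_{\la'}$ setwise, so $\SO_{\la'} = \z\SO_{\la'}\z\iv \subset \z\ol\SO'_\la \z\iv = \ol\SO''_\la$ as well. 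For the reverse inclusion, the same dense-opens argument yields $\SO'_\la \cap \ol\SO''_\la = \emptyset$ (and symmetrically), so $\ol\SO'_\la \cap \ol\SO''_\la \subset \ol\SO_\la - \SO_\la$; by the dominance closure order (every $\mu < (N)$ satisfies $\mu \le (N-1,1)$), $\ol\SO_\la - \SO_\la \subset \ol\SO_{\la'}$, giving equality. The final identities $\ol\SO'_\la - \ol\SO_{\la'} = \SO'_\la$ and its analogue then drop out from $\ol\SO'_\la - \SO'_\la \subset \ol\SO_\la - \SO_\la = \ol\SO_{\la'} \subset \ol\SO'_\la$. The main non-routine input is the dominance closure order on $H$-orbits in $G^{\io\th}\uni$, which I would invoke as inherited from the classical closure order on $GL_N$-conjugacy classes restricted to the closed subvariety $G^{\io\th}\uni$.
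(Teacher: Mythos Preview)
Your proposal is correct and follows essentially the same route as the paper: both arguments use that $\SO_\la$ is open dense in $G^{\io\th}\uni$ (the paper records this in 7.9 just before the lemma), invoke Proposition~7.8 for the splitting, and then identify the complement $G^{\io\th}\uni - \SO_\la$ with $\ol\SO_{\la'}$ via the dominance closure order on $GL_N$-classes restricted to $G^{\io\th}\uni$. Your explicit $\zeta$-conjugation step showing $\SO_{\la'}\subset \ol\SO'_\la\cap\ol\SO''_\la$ is a detail the paper leaves implicit, but it is the natural way to fill that in and does not change the overall strategy.
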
 

\begin{proof}
It is enough to show the latter statement of (ii). 
It is well-known that $\OO_{\la'}$ is the subregular unipotent class in $G\uni$
and that $G\uni - \OO_{\la} = \ol\OO_{\la'}$.  Since 
$\ol\OO_{\la'} = \bigcup_{\mu \le \la'}\OO_{\mu}$ (here $\mu \le \la'$ is the dominance 
order on $\SP_N)$, we have $G^{\io\th}\uni - \SO_{\la} = 
\ol \SO_{\la'} = \bigcup_{\mu \le \la'}\SO_{\mu}$. (ii) follows from this. 
Note that $\SO_{\la'}$ is a single $H$-orbit since $\la'$ is not an even partition, 
so $\ol\SO_{\la'}$ is irreducible. 
\end{proof}

\para{7.11.}
Let $\la = (N) \in \SP_N$. We discuss the relationship between the regular unipotent 
orbit $\SO_{\la}$ and the varieties $X\uni$ appeared in 5.9.  We consider the case 
where $P = B, L = T$, and $\vS = T$.  Let $Y = Y_{(L,\vS)}$ and $X = \ol Y$. Then 
$X\uni$ is given by

\begin{equation*}
\tag{7.11.1}
X\uni = \bigcup_{g \in H}gU^{\io\th}g\iv,
\end{equation*}
where $U$ is the unipotent radical of $B$. 
Under the notation in 1.7, we define $x -1 \in \Fg\nil$ by 
\begin{equation*}
(x-1) : f_1 \mapsto f_2 \mapsto \cdots \mapsto f_n \mapsto e_0 \mapsto 
                 e_n \mapsto \cdots \mapsto e_2 \mapsto e_1 \mapsto 0
\end{equation*}
in the case where $N$ is odd, and by 
\begin{equation*}
(x-1) : f_1 \mapsto f_2 \mapsto \cdots \mapsto f_n \mapsto 
                 e_n \mapsto \cdots \mapsto e_2 \mapsto e_1 \mapsto 0
\end{equation*}
in the case where $N$ is even.
Then $x \in U^{\io\th} \cap \SO_{\la}$. 
In particular, $X\uni$ contains an $H$-orbit containing $x$. 
\par
Assume that $N$ is odd.  In this case $\SO_{\la}$ is a single $H$-orbit, 
and is contained in $X\uni$. Since $X\uni$ is irreducible, $X\uni$ 
is a closed subset of $G^{\io\th}\uni$, and $\ol\SO_{\la} = G^{\io\th}\uni$, we conclude 
that 
\begin{equation*}
\tag{7.11.2}
X\uni = G^{\io\th}\uni.
\end{equation*}  
\par
Assume that $N$ is even. In this case, we need to consider the pairs 
$(B, T)$ and $(B_1, T)$, where $B_1 = t_n B t_n\iv$ as in 1.9.
Put $U_1 = t_n U t_n\iv$. 
Then $X\uni$ associated to $(B,T)$ is defined as in (7.11.1), which we denote by $X^+\uni$. 
A similar variety is defined by replacing $U^{\io\th}$ by $U_1^{\io\th}$ in (7.11.1), 
which we denote by $X^-\uni$.  Note that $X^-\uni = \z X^+\uni \z\iv$ for 
$\z \in G^{\th} - H$. 
Let $\SO^+_{\la}$ be the $H$-orbit containing $x$.  Then another $H$-orbit contained in
$\SO_{\la}$ is given by $\SO^-_{\la} = \z \SO^+_{\la} \z\iv$.  
We have $\SO^+_{\la} \subset X^+\uni$ and $\SO^-_{\la} \subset X^-\uni$. 
Note that $X^{\pm}\uni$ are irreducible, closed subsets of $G^{\io\th}\uni$. 
By Lemma 7.10 (ii), $\ol\SO^+_{\la} \cup \ol\SO^-_{\la}$ gives a decomposition of 
$G^{\io\th}\uni$ into irreducible components. This implies that
\begin{equation*}
\tag{7.11.3}
X^+\uni = \ol\SO^+_{\la}, \qquad X^-\uni = \ol\SO^-_{\la},
\end{equation*}
and $G^{\io\th}\uni = X^+\uni \cup X^-\uni$ gives a decomposition into irreducible
components. 

\par\bigskip\noindent
\section{Structure of the algebra $\SA_{\SE_1}$}

\para{8.1.}
We follow the notation in 6.1. 
Let $Q$ be as in 6.1. Here we assume that 
$M_H \simeq GL_1 \times SO_{N-2}$. Hence $Q_H$ is a maximal parabolic subgroup of 
$H$, and $H/Q_H$ can be identified with the set of isotropic lines in $\BP(V)$. 
Let $\SO$ be an $H$-orbit in $G^{\io\th}\uni$ and $\SO'$ an $M_H$-orbit in $M^{\io\th}\uni$.
We fix $u \in \SO, v \in \SO'$, and consider the varieties, $Y_{u,v}, \wt Y_{u,v}$ 
as in 6.10.
Let $\wt\SO'$ be the $M^{\th}$-orbit in $M^{\io\th}$ containing $\SO'$. 
Put
\begin{equation*}
\tag{8.1.1}
\wt Y_{u,\SO'} = \{g \in H \mid g\iv ug \in \eta_Q\iv(\wt\SO')\}. 
\end{equation*}
Then $Q_H$ acts on $\wt Y_{u,\SO'}$ by $x : g \mapsto gx\iv$, and 
$\wt Y_{u,\SO'}/Q_H$ is a locally closed subset of $H/Q_H$. 
Let $\tau : \wt Y_{u,\SO'} \to \wt\SO'$ be the map defined by 
$g \mapsto \eta_Q(g\iv ug)$. Then 
$\t$ is $M_H$-equivariant, and for any $v' \in \wt\SO'$, $\tau\iv(v') \simeq \wt Y_{u,v}$.
Hence $\dim \wt Y_{u,\SO'} = \dim \wt Y_{u,v} + \dim \SO'$, and we have
\begin{equation*}
\tag{8.1.2}
\dim \wt Y_{u,\SO'}/Q_H = \dim \wt Y_{u,v} + \dim \SO' - \dim Q_H.
\end{equation*} 

\para{8.2.}
Let $\Fm = \Lie M$, and consider the subvariety $\Fm^-\nil$ of $\Fm$ on which $M_H$ acts. 
We denote by the same symbol 
$\SO$ (resp. $\SO'$) the $H$-orbit in $\Fg^-\nil$ 
(resp. $M_H$-orbit in $\Fm^-\nil$)
corresponding to  $\SO$ (resp. $\SO'$) as in Section 7. 
In the Lie algebra setting, the map 
$\eta_Q : \Fqq^{-\th} \to \Fm^{-\th}$ is defined similarly, where $\Fqq = \Lie Q$,  
and $\wt Y_{x,\SO'}$ is defined, for $x \in \SO \subset \Fg^{-\th}\nil$,  similarly 
to (8.1.1). 
We have $\wt Y_{u,\SO'} = \wt Y_{x, \SO'}$ for $x = u-1$. 
\par
By Lemma 7.2, the set of $G^{\th}$-orbits in $G^{\io\th}\uni$ is 
parametrized by $\SP_N$, $\SO$ corresponds to $\la \in \SP_N$ if $u \in \SO$ 
has Jordan type $\la$ as an element in $G\uni$.  
Similarly,  the set of $M^{\th}$-orbits in $M^{\io\th}\uni$ 
is parametrized by $\SP_{N-2}$. 
\par
Now assume that $\SO $ corresponds to $\la \in \SP_N$, and $\SO'$ corresponds to 
$\la' \in \SP_{N-2}$.  
Take $x \in \SO \subset \Fg^-\nil$. 
Put $W = \Ker x$, and let $\BP(W)^0$ be the set of isotropic lines in $\BP(W)$, 
namely, $\BP(W)^0 = \{ \lp v \rp  \in \BP(W) \mid \lp v, v \rp = 0\}$.
Note that the form $\lp\ ,\ \rp$ induces a non-degenerate symmetric bilinear form 
on $\ol V = V_1^{\perp}/V_1$ for $V_1 \in \BP(W)^0$, and $x|_{\ol V}$ is self-adjoint 
with respect to this form.  Here $\Fg\Fl(\ol V)$ is $H$-conjugate to $\Fm$, 
and $x|_{\ol V}$ gives an element in $\Fm^-\nil$ under this isomorphism.
It is easy to see that 
\begin{equation*}
\tag{8.2.1}
\wt Y_{x,\SO'}/Q_H \simeq \{ V_1 \in \BP(W )^0 \mid x|_{V_1^{\perp}/V_1} 
      \text{ : Jordan type } \la' \},
\end{equation*}    
  
\para{8.3.}
We will see the variety on the right hand side on (8.2.1) more precisely.   
Write $\la \in \SP_N$ as $\la = (a_1^{m_1}, a_2^{m_2}, \dots, a_h^{m_h})$ 
as in 7.6, and put $W^i = \Ker x \cap \Im x^{a_i-1}$. 
We have a filtration of $W$ by subspaces

\begin{equation*}
W = W^h  \supsetneq W^{h-1} \supsetneq \cdots \supsetneq W^1 \supsetneq W^0 = \{ 0\}, 
\end{equation*}
where $\dim W^i/W^{i-1} = m_i$ for $i = 1, \dots, h$. 
For $v, v' \in W^i$, put $(v,v') = \lp v, v_1'\rp$, where $v_1' \in V$ is such that 
$x^{a_i-1}v_1' = v'$. By using a normal basis of $x \in \Fg^{-}\nil$, one can  
check that this gives a well-defined symmetric bilinear form on $W^i$, which is 
identically zero on $W^{i-1}$. The induced form $( \ , \ )$ on $W^i/W^{i-1}$ gives 
a non-degenerate symmetric bilinear form on $W^i/W^{i-1}$. 
For each non-zero $v \in W^i$, let $V_1 = \lp v \rp$ be the line spanned by $v$. 
Since $v \in \Ker x$, $x$ induces a linear map $\ol x = x|_{\ol V}$ 
on $\ol V = V_1^{\perp}/V_1$.  We have 
$\ol x \in \Fg\Fl(\ol V)^{-}\nil \simeq \Fm^{-}\nil$. 
Define a variety $S_i$ of $V$ such that $W^{i-1} \subset S_i \subset W^i$ by 
\begin{equation*}
S_i = \{ v \in W^i \mid (v,v) = 0\}.
\end{equation*}

We consider the following conditions on $\la \in \SP_N$ and $\la'\in \SP_{N-2}$.
\par\medskip
\noindent
($\text{A}_i$) \ The Young diagram of 
$\la'$ is obtained from that of $\la$ by replacing one row of \\
\phantom{*****} length $a_i$ by 
a row of length $a_i-2$.
\par\medskip\noindent
($\text{B}_i$) \  The Young diagram of 
$\la'$ is obtained from that of $\la$ by replacing two rows of \\
\phantom{*****} length $a_i$ by 
two rows of length $a_i-1$ ( in this case, we assume that $m_i \ge 2$). 
\par\medskip\noindent
Moreover, we divide ($\text{A}_i$) into two cases $(\text{A}'_i)$ and $(\text{A}''_i)$. 
Here $(\text{A}'_i)$ is the case where $a_{i+1} \le a_i-2$, and $(\text{A}''_i)$ is 
the case where $a_{i+1} = a_i -1$. 
\par\medskip
The following lemma can be checked easily, by using a normal basis of $x$. 
(It is reduced to the case where $\la = (a^m)$ with $m = 1$ or 2). 

\begin{lem}   
Under the notation above, let $\la'$ be the Jordan type of 
$\ol x \in \Fg\Fl(\ol V)^{-}\nil$. 
Then we have 
\begin{enumerate}
\item  If $v \notin S_i$, $\la'$ is obtained from  $\la$ 
by the procedure $(\text{\rm A}_i)$. 
\item  If $v \in S_i$ (in this case $m_i \ge 2)$, $\la'$ is 
obtained from  $\la$ by the procedure $(\text{\rm B}_i)$. 
\end{enumerate}
\end{lem}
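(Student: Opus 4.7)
The plan follows the paper's parenthetical hint: reduce to the model cases $\la = (a^m)$ with $m = 1$ or $m = 2$ by carving off an $x$-stable orthogonal direct summand of $V$ that contains $v$, and then compute directly.

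Using the orthogonal decomposition $V = \bigoplus_{j=1}^h \wt J_j$ from (7.6.1), one verifies that $W^i = \bigoplus_{j \le i}(\Ker x|_{\wt J_j})$, so $v \in W^i \setminus W^{i-1}$ decomposes uniquely as $v = v^{\text{top}} + v^{\text{low}}$ with $0 \ne v^{\text{top}} \in \Ker x|_{\wt J_i}$ and $v^{\text{low}} \in W^{i-1}$. The form $(\,,\,)$ on $W^i$ restricts to a nondegenerate form on $\Ker x|_{\wt J_i} \cong J_i$, and $(v, v) = (v^{\text{top}}, v^{\text{top}})$; hence the dichotomy $v \notin S_i$ versus $v \in S_i$ corresponds to $v^{\text{top}}$ being anisotropic versus nonzero isotropic (the latter forcing $m_i \ge 2$).

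The central step is to construct an $x$-stable nondegenerate subspace $V' \subseteq V$ containing $v$, of Jordan type $(a_i)$ in Case (i) and $(a_i, a_i)$ in Case (ii); then $V'' := (V')^{\perp}$ is automatically $x$-stable, $V = V' \oplus V''$ orthogonally, and $x|_{V''}$ has Jordan type $\la$ minus the blocks of $V'$. The key device is a ``low-absorption'' lift: writing $v^{\text{low}} = \sum_{j<i} x^{a_j - 1} u_j$ with $u_j \in \wt J_j$, set $v_1^{\text{low}} := \sum_{j<i} x^{a_j - a_i} u_j$, well-defined since $a_j > a_i$, satisfying $x^{a_i - 1} v_1^{\text{low}} = v^{\text{low}}$. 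In Case (i), take $\tilde v^* := v_1^{\text{top}} + v_1^{\text{low}}$, where $v_1^{\text{top}} \in \wt J_i$ is a lift of $v^{\text{top}}$ normalized so $\lp v^{\text{top}}, v_1^{\text{top}} \rp = 1$, and let $V'$ be the span of the Jordan chain $\{x^k \tilde v^* : 0 \le k \le a_i - 1\}$, which ends in $v$. In Case (ii), additionally pick $w^{\text{top}} \in \Ker x|_{\wt J_i}$ hyperbolically paired with $v^{\text{top}}$ (available because $v^{\text{top}}$ is nonzero isotropic in a nondegenerate quadratic space of dimension $m_i \ge 2$), lift it to $w_1^{\text{top}} \in \wt J_i$ with appropriate Witt normalization, and let $V'$ be the span of the two chains generated by $\tilde v^*$ and $w_1^{\text{top}}$. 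Nondegeneracy of $V'$ is verified by direct Gram-matrix computation, exploiting block-orthogonality $\wt J_j \perp \wt J_k$ for $j \ne k$ and the vanishing $x^{a_j} u_j = 0$ for $j < i$ to kill the off-diagonal and lower-antidiagonal pairings.

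Given $V = V' \oplus V''$ orthogonally with $v \in V'$, we have $V_1^{\perp}/V_1 = (V' \cap V_1^{\perp}/V_1) \oplus V''$, so it remains to compute the $V'$ contribution in the model cases. For Case (i), $V' \cap V_1^{\perp}/V_1 = \mathrm{span}\{\overline{x \tilde v^*}, \ldots, \overline{x^{a_i - 2}\tilde v^*}\}$ is a single Jordan chain of length $a_i - 2$, yielding $(\text{A}_i)$. For Case (ii), passing to the hyperbolic Jordan basis $v'_{1,k} = v_{1,k} + \zeta v_{2,k}$, $v'_{2,k} = \tfrac{1}{2}(v_{1,k} - \zeta v_{2,k})$ with $\zeta^2 = -1$ identifies $v$ with $v'_{1,1}$ and a direct inspection of $V_1^{\perp}$ yields two Jordan chains of length $a_i - 1$, giving $(\text{B}_i)$. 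The main obstacle is the nondegeneracy check in Case (ii): the hyperbolic lifts $v_1^{\text{top}}, w_1^{\text{top}}$ inside $\wt J_i$ must be normalized so that the $2a_i \times 2a_i$ Gram matrix of the two chains is block-antidiagonal, which reduces to performing a Witt decomposition of the isotropic pair $(v^{\text{top}}, w^{\text{top}})$ inside the nondegenerate quadratic space $J_i$.
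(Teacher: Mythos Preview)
Your argument is correct and carries out exactly the reduction the paper indicates (the paper gives no proof beyond the parenthetical hint that one reduces, via a normal basis, to $\la = (a^m)$ with $m \le 2$). One small simplification: the ``Witt normalization'' you flag as the main obstacle in Case~(ii) is unnecessary, since for \emph{any} lifts $v_1^{\text{top}}, w_1^{\text{top}}$ with $(v^{\text{top}},w^{\text{top}})=1$ the Gram matrix of the interleaved basis $(\tilde v^*, w_1^{\text{top}}, x\tilde v^*, xw_1^{\text{top}},\dots)$ is automatically anti-lower-triangular with $1$'s on the anti-diagonal---the entries above come from $\langle x^{k+l}\,\cdot\,,\,\cdot\,\rangle$ with $k+l \ge a_i-1$, which vanish by self-adjointness and the isotropy of $v^{\text{top}},w^{\text{top}}$.
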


The following corollary is immediate from Lemma 8.4.

\begin{cor}  
$\wt Y_{x, \SO'} = \emptyset$ unless $\la'$ is obtained from $\la$ 
by $(\text{\rm A}_i)$, $(\text{\rm B}_i)$ for some $i$. 
If $\wt Y_{x,\SO'} \ne \emptyset$, then 

\begin{equation*}
\dim \wt Y_{x,\SO'}/Q_H = \begin{cases}
                    m_1 + \cdots + m_i - 1,  &\quad\text{ {\rm case} $(\text{\rm A}_i)$, } \\
                    m_1 + \cdots + m_i - 2,  &\quad\text{ {\rm case} $(\text{\rm B}_i)$. }
                  \end{cases}
\end{equation*}
\end{cor}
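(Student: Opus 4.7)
The starting point is the identification (8.2.1), which converts $\wt Y_{x,\SO'}/Q_H$ into the locally closed subvariety of $\BP(W)^0$ parameterizing isotropic lines $V_1 = \lp v \rp$ with $x|_{V_1^{\perp}/V_1}$ of Jordan type $\la'$. Lemma 8.4 classifies the Jordan type of the quotient according to the stratum $W^i \setminus W^{i-1}$ containing $v$, together with whether $v \in S_i$ or $v \notin S_i$. These alternatives are mutually exclusive as $i$ and the $(\text{A}/\text{B})$-case vary, so the parameter variety decomposes into pieces indexed by $(\text{A}_i)$ and $(\text{B}_i)$, and $\wt Y_{x,\SO'} = \emptyset$ whenever $\la'$ arises from none of these procedures.

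For the dimension, I would fix a normal basis $\{v_{i,j}\}$ of $x$ adapted to the decomposition (7.6.1) and compute the forms $\lp \cdot,\cdot\rp$ and $(\cdot,\cdot)$ on $W^i$ explicitly. The form $(\cdot,\cdot)$ descends to a non-degenerate symmetric form on $W^i/W^{i-1}$ (as already noted in 8.3), so that $S_i$ is a reduced quadric hypersurface in $W^i$ containing $W^{i-1}$; the hypothesis $m_i \ge 2$ in case $(\text{B}_i)$ ensures that this form is not the zero form on $W^i/W^{i-1}$, hence that $S_i$ has the expected codimension one. A parallel computation for $\lp \cdot,\cdot\rp$ shows that its only non-trivial contributions on $W$ come from Jordan blocks of size one; hence for $v \in W^i$ with $i < h$ or with $a_h \ge 2$ the isotropy condition $\lp v,v\rp = 0$ is automatic.

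With this in hand the count becomes immediate. In case $(\text{A}_i)$ the relevant subset of $W^i$ is the open locus $W^i \setminus (W^{i-1} \cup S_i)$, of dimension $\dim W^i = m_1 + \cdots + m_i$; projectivizing yields $m_1 + \cdots + m_i - 1$. In case $(\text{B}_i)$ we take the open subvariety $S_i \setminus W^{i-1}$ of the quadric $S_i$, of dimension $m_1 + \cdots + m_i - 1$; projectivizing yields $m_1 + \cdots + m_i - 2$.

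The only genuine subtlety is the compatibility of the two forms at the boundary case $i = h$ with $a_h = 1$, where $\lp v,v\rp$ and $(v,v)$ coincide: case $(\text{A}_h)$ is automatically excluded (since the procedure $(\text{A}_h)$ would require $a_h \ge 2$), and case $(\text{B}_h)$ collapses to a single quadric condition, still producing the stated dimension. The remaining arguments are routine linear-algebraic bookkeeping in the normal basis.
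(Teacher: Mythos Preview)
Your proposal is correct and is precisely the argument the paper has in mind: the paper declares the corollary ``immediate from Lemma 8.4'' and in 8.10 identifies $\wt Y_{x,\SO'}/Q_H$ with $p(C_i)$ or $p(S_i^*)$, which is exactly the stratification and dimension count you spell out. Your careful treatment of the isotropy condition (automatic when $a_i\ge 2$ because $W^i\subset \Im x$, and collapsing with the $S_h$-condition when $a_h=1$) and of the boundary case $i=h$, $a_h=1$ makes explicit what the paper leaves to the reader, but there is no difference in approach.
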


\para{8.6.}
Recall that $\dim Y_{u,v} \le s$, where 
$s = (\dim Z_H(u) - \dim Z_{M_H}(v))/2 + \vD_Q/2$ by 6.10.
Since we know by Lemma 7.4 that $\dim Z_H(u) = n(\la)$ if $u \in G^{\io\th}\uni$ has 
Jordan type $\la$,  
the number $s$ can be computed explicitly. 
We have the following.

\begin{prop}   
Assume that $u \in \SO$ has type $\la$, $v \in \SO'$ has type $\la'$.  
Then $Y_{u,v}$ is 
non-empty if and only if $\la'$ is obtained from $\la$ by the procedure
$(\text{\rm A}_i)$, or $(\text{\rm B}_i)$. We have 
$\dim Y_{u,v} = s$ if and only if $\la'$ is obtained from $\la$ by $(\text{\rm A}'_i)$. 
\end{prop}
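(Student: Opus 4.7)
The plan is to reduce the statement to Corollary 8.5 via two dimension identities, and then to read off the equality condition from a short combinatorial computation of $s$.

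First I would dispose of non-emptiness. The projection $\tau : \wt Y_{u,\SO'} \to \wt \SO'$ has all fibres isomorphic to $\wt Y_{u,v}$, and $Y_{u,v}$ is the quotient of $\wt Y_{u,v}$ by a free right action of $Z^0_{M_H}(v)U_Q^{\th}$. Hence $Y_{u,v}$ is empty precisely when $\wt Y_{u,\SO'}$ is, and by Corollary 8.5 this happens unless $\la'$ is obtained from $\la$ by some procedure $(\mathrm{A}_i)$ or $(\mathrm{B}_i)$.

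Second, assuming $Y_{u,v}$ is non-empty, I want to prove that $\dim Y_{u,v} = \dim \wt Y_{u,\SO'}/Q_H$. Combining (8.1.2) with the identities $\dim \wt Y_{u,v} = d_{u,v} + \dim Z_H(u)$ and $\dim Y_{u,v} = \dim \wt Y_{u,v} - \dim Z_{M_H}(v) - \dim U_Q^{\th}$ from 2.3, and using $\dim Q_H = \dim M_H + \dim U_Q^{\th}$ together with $\dim \SO' = \dim M_H - \dim Z_{M_H}(v)$, all the intermediate terms cancel. The explicit values of $\dim \wt Y_{u,\SO'}/Q_H$ provided by Corollary 8.5 then yield $\dim Y_{u,v} = m_1 + \cdots + m_i - 1$ in case $(\mathrm{A}_i)$ and $\dim Y_{u,v} = m_1 + \cdots + m_i - 2$ in case $(\mathrm{B}_i)$.

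Third, I would compute $s$ explicitly. Lemma 7.4 gives $\dim Z_H(u) = n(\la)$, and since $M_H \simeq GL_1 \times SO_{N-2}$ with the $GL_1$ factor central (and $v_1 = 1$), the same lemma applied to $SO_{N-2}$ gives $\dim Z_{M_H}(v) = 1 + n(\la')$. With $\vD_Q = (N - (N-2))/2 = 1$, this gives $s = (n(\la) - n(\la'))/2$. To evaluate this difference I will use the column identity $n(\la) = \sum_{j \ge 1}\binom{\la^{\prime}_j}{2}$, where $\la^{\prime}$ denotes the conjugate partition, and note the effect on columns: in case $(\mathrm{A}_i)$ columns $a_i$ and $a_i - 1$ each lose one cell, while in case $(\mathrm{B}_i)$ column $a_i$ alone loses two cells. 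With $\la^{\prime}_{a_i} = m_1 + \cdots + m_i$, and $\la^{\prime}_{a_i - 1}$ equal to $m_1 + \cdots + m_i$ in case $(\mathrm{A}'_i)$ but to $m_1 + \cdots + m_i + m_{i+1}$ in case $(\mathrm{A}''_i)$, the formulas collapse to $s = m_1 + \cdots + m_i - 1$, $s = m_1 + \cdots + m_i + m_{i+1}/2 - 1$, and $s = m_1 + \cdots + m_i - 3/2$ in the three cases respectively.

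Finally, comparison with the dimensions computed in the second step gives $\dim Y_{u,v} = s$ exactly in case $(\mathrm{A}'_i)$, while $\dim Y_{u,v} < s$ strictly in the other two cases, since $m_{i+1}/2 > 0$ in case $(\mathrm{A}''_i)$ and $2 - 3/2 = 1/2 > 0$ in case $(\mathrm{B}_i)$. I expect the main obstacle to be the dimension collapse of the second step, where several compensating terms must be tracked carefully; so I would set this up meticulously before proceeding. The partition-combinatorial computation is then routine.
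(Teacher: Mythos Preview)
Your proposal is correct and follows essentially the same route as the paper: reduce non-emptiness to Corollary~8.5, identify $\dim Y_{u,v}$ with $\dim \wt Y_{u,\SO'}/Q_H$ via the dimension formulas of 2.3 and (8.1.2), compute $s$ from Lemma~7.4, and compare. Your use of the column identity $n(\la)=\sum_j\binom{\la'_j}{2}$ to evaluate $n(\la)-n(\la')$ is a clean way to do the combinatorics; in fact your value $s=m_1+\cdots+m_i+m_{i+1}/2-1$ in case $(\mathrm{A}''_i)$ is the correct general formula (the paper's displayed value $(m_1+\cdots+m_i-1)+1/2$ tacitly takes $m_{i+1}=1$), though either way the strict inequality $\dim Y_{u,v}<s$ follows.
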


\begin{proof}
Since $Y_{u,v} \ne \emptyset$ if and only if $\wt Y_{x,\SO'} \ne \emptyset$, 
the first assertion follows from Corollary 8.5. Now assume 
$Y_{u,v} \ne \emptyset$.  Since $\dim Y_{u,v} = \dim \wt Y_{x,\SO'}/Q_H$, 
$\dim Y_{u,v}$ can be computed from the formula in Corollary 8.5.
Since $M_H \simeq GL_1 \times SO_{N-2}$, $\dim Z_{M_H}(v) = n(\la') + 1$ by 
Lemma 7.4. Then we have

\begin{equation*}
\tag{8.7.1}
\dim Z_H(u) - \dim Z_{M_H}(v) = 
                 \begin{cases}
                   2(m_1 + \cdots + m_i -1) -1  &\quad\text{ case ($\text{A}'_i$), } \\
                   2(m_1 + \cdots + m_i - 1)    &\quad\text{ case ($\text{A}''_i$), } \\
                   2(m_1 + \cdots + m_i -2)     &\quad\text{ case ($\text{B}_i$). } 
                 \end{cases}
\end{equation*}
Since $\vD_Q = 1$, we have

\begin{equation*}
   s = \begin{cases}
          m_1 + \cdots + m_i -1  &\quad\text{ case ($\text{A}'_i$), } \\
          (m_1 + \cdots + m_i - 1) + 1/2  &\quad\text{ case ($\text{A}''_i$), } \\
          (m_1 + \cdots + m_i -2) + 1/2  &\quad\text{ case ($\text{B}_i$). }
       \end{cases}  
\end{equation*}

\par\medskip
The proposition follows. 
\end{proof}

\para{8.8.}
For $x \in \Fg^{-}\nil$, let $\SB_x = \{ gB_H \in H/B_H \mid 
g\iv x \in \Lie B_H \}$ be the Springer fibre of $x$. We also 
define $\SP_x = \{ gQ_H \in H/Q_H \mid g\iv x \in \Lie Q_H \}$
and a natural map $\pi_Q : \SB_x \to \SP_x$, $gB_H \mapsto gQ_H$. 
$\SP_x$ can be identified with $\BP(W)^0$.  
Thus $\SP_x$ is partitioned into locally closed pieces 
$\SP_{x,\la'} \simeq \wt Y_{x,\SO'}/Q_H$, where $\wt\SO'$ is the 
$M^{\th}$-orbit corresponding to $\la' \in \SP_{N-2}$,
and we  have a partition
of $\SB_x$ into locally closed pieces
\begin{equation*}
\tag{8.8.1}
\SB_x = \coprod_{\la' \in \SP_{N-2}}\pi_Q\iv(\SP_{x,\la'}).
\end{equation*}  

Assume that $x \in \SO$ has type $\la$.  By Lemma 8.4, 
if $\pi_Q\iv(\SP_{x,\la'}) \ne \emptyset$, then 
$\la'$ is obtained from $\la$ by the procedure given in (i) or (ii) in Lemma 8.4.
If $v \in W $ is given, $\ol x \in \Fg\Fl(\ol V)^{-}\nil$ is defined as above.  
We denote by $\ol\SB_{\ol x}$ the corresponding Springer fibre for $SO(\ol V)$.   
By applying Proposition 2.2  (ii) for the case $P = B, L = T$, we have

\begin{align*}
\tag{8.8.2}
\dim \SB_x &\le \nu_H - \dim \SO/2 + \vD_B/2 \\
           &= \dim Z_H(x)/2,
\end{align*}
where $\vD_B = [N/2] = \dim T_H$. 

The following result was proved by [CVX1, Proposition 3.1] in the case where $N$ is odd, and 
$\Bk = \BC$. The discussion below  seems to be simpler. 
(But this result will not be used in later discussions.) 

\begin{prop}  
Assume that $x \in \SO$ has type $\la$.  
Then $\dim \SB_x = \dim Z_H(x)/2$ if and only if 
$\la = (\la_1, \dots, \la_r) \in \SP_N$ satisfies the condition that 
$\la_i$ is even for $i \ge 2$ $($hence $\la_1$ is odd $($resp. even$)$ if 
$N$ is odd $($resp. even$))$. 
\end{prop}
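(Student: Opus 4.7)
The plan is to proceed by induction on $N$, with trivial base cases $N \le 1$. For the inductive step I would exploit the stratification (8.8.1), namely $\SB_x = \coprod_{\la' \in \SP_{N-2}} \pi_Q\iv(\SP_{x,\la'})$. For each $\la'$, the restriction of $\pi_Q$ to this piece is a fibration over $\SP_{x,\la'}$ with typical fibre the Springer fibre $\ol\SB_{\ol x}$ of a nilpotent $\ol x \in \Fm^-\nil$ of Jordan type $\la'$ acting on $\ol V = V_1^{\perp}/V_1$; hence
\begin{equation*}
\dim \pi_Q\iv(\SP_{x,\la'}) = \dim \SP_{x,\la'} + \dim \ol\SB_{\ol x}.
\end{equation*}
By Lemma 8.4 this piece is non-empty only when $\la'$ arises from $\la$ via one of $(\text{A}_i')$, $(\text{A}_i'')$ or $(\text{B}_i)$.

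A direct computation from (7.1.1) gives
\begin{align*}
(\text{A}_i'):\ & n(\la) - n(\la') = 2(m_1 + \cdots + m_i - 1), \\
(\text{A}_i''):\ & n(\la) - n(\la') = 2(m_1 + \cdots + m_i - 1) + m_{i+1}, \\
(\text{B}_i):\ & n(\la) - n(\la') = 2(m_1 + \cdots + m_i) - 3.
\end{align*}
Combining these with the values of $\dim \SP_{x,\la'}$ from Corollary 8.5 and the inductive bound $\dim \ol\SB_{\ol x} \le n(\la')/2$ (with equality iff $\la'$ satisfies the condition), one obtains $\dim \pi_Q\iv(\SP_{x,\la'}) \le n(\la)/2$ in case $(\text{A}_i')$, with equality precisely when $\la'$ satisfies the condition, whereas the inequality is strict (with defect $m_{i+1}/2$ and $1/2$, respectively) in cases $(\text{A}_i'')$ and $(\text{B}_i)$. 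Hence $\dim \SB_x = n(\la)/2$ if and only if some $(\text{A}_i')$-move sends $\la$ to a $\la'$ satisfying the condition.

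The remaining step is combinatorial: I claim $\la$ satisfies the condition iff such an $(\text{A}_i')$-move exists. For the forward direction, applying $(\text{A}_h')$ (always valid since $i = h$ is permitted) reduces the smallest part $a_h$ by $2$ while preserving its parity, so the result still has all parts at positions $\ge 2$ even. For the backward direction, observe that $(\text{A}_i')$ preserves the parity of every part, so the number $o(\la)$ of odd parts of $\la$ equals $o(\la')$; if $\la$ violates the condition then either $o(\la) \ge 2$ (so $o(\la') \ge 2$ forces an odd part of $\la'$ at some position $\ge 2$), or $o(\la) = 1$ with $\la_1$ even, in which case $\la'_1 \in \{a_1, a_1 - 2\}$ remains even and the unique odd part of $\la'$ must again sit at position $\ge 2$.

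The main obstacle will be the combinatorial analysis in the last paragraph: although each case is elementary, one must carefully track how $(\text{A}_i')$ reorders the parts (depending on whether $a_i - 2 > a_{i+1}$, $a_i - 2 = a_{i+1}$, or $i = h$) in order to be sure the parity argument is exhaustive. Once this and the dimension arithmetic are in place, the conclusion follows directly from Lemma 8.4, Corollary 8.5 and the induction hypothesis.
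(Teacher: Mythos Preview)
Your proposal is correct and follows essentially the same route as the paper: induction on $N$ via the stratification (8.8.1), computing $\dim \SP_{x,\la'} + n(\la')/2$ in each of the cases $(\text{A}_i')$, $(\text{A}_i'')$, $(\text{B}_i)$ exactly as in the proof of Proposition~8.7, and then reducing to the combinatorial claim about $(\text{A}_i')$-moves. The paper's proof is terser (it just cites ``a similar argument as in the proof of Proposition~8.7'' and leaves the combinatorics implicit); your only small omission is that in the forward direction you should note $a_h \ge 2$ (which follows from the hypothesis on $\la$ once $N \ge 2$) so that the move $(\text{A}_h')$ is actually available.
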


\begin{proof}
In the notation of 8.8, the restriction of the map $\pi_Q : \SB_x \to \SP_x$ 
on $\pi_Q\iv(\SP_{x,\la'})$ is surjective, and each fibre is isomorphic to 
$\ol\SB_{\ol x}$.  
Hence 
\begin{equation*}
\dim \pi_Q\iv(\SP_{x,\la'}) = \dim \SP_{x,\la'} + \dim \ol\SB_{\ol x},
\end{equation*}  
and $\dim \SB_x = \max \{ \dim \pi_Q\iv(\SP_{x,\la'}) \mid \la' \in \SP_{N-2} \}$.
$\dim \SP_{x,\la'}$ is given in Corollary 8.5. If we denote $\ol H = SO(\ol V)$, then  
$\dim \ol\SB_{\ol x} \le \dim Z_{\ol H}(\ol x)/2$, and 
$\dim Z_{\ol H}(\ol x) = n(\la')$. 
Thus by using a similar argument as in the proof of Proposition 8.7, 
we see that $\dim \SB_x = \dim Z_H(x)/2$ if and only if there exists $i$ such that    
$\la' \in \SP_{N-2}$ is obtained from $\la$ by ($\text{A}'_i$) and that 
$\dim \ol\SB_{\ol x} = \dim Z_{\ol H}(\ol x)/2$.
The proposition follows from this by induction on $N$.   
\end{proof}

\para{8.10.}
$Z_H(x)$ acts on $\wt Y_{x, \SO'}$ by $h : g \mapsto hg$, hence induces an 
action of $Z_H(x)$ on $\wt Y_{x,\SO'}/Q_H$. 
Put $C_i = W^i - S_i$, and let $p(C_i), p(S^*_i)$ be the image of 
$C_i, S_i^* = S_i - \{0\}$ under the map $p : W^i - \{0\} \to \BP(W)$. Then 
by Lemma 8.4, the variety $\wt Y_{x,\SO'}/Q_H$ is isomorphic to 
$p(C_i)$ or $p(S^*_i)$ if $\SO'$ corresponds to $\la'$, which is obtained from $\la$ 
as in Lemma 8.4 for some $i$.  
The corresponding action of $Z_H(x)$ on $p(C_i)$ or $p(S^*_i)$ is 
given as follows; 
The natural action of $Z_H(x)$ on $V$ leaves $W^i$ invariant.  Moreover, 
the bilinear form $( \ , \ )$ on $W^i$ is $Z_H(x)$-invariant.  Hence 
$S_i$ and $C_i$ are stable by the action of $Z_H(x)$.  The induced action on 
$p(S^*_i)$ or $p(C_i)$ coincides 
with the action of $Z_H(x)$ on $\wt Y_{x,\SO'}/Q_H$. We show the following lemma.

\begin{lem}  
\begin{enumerate}
\item
The action of $Z_H(u)$ on $\wt Y_{u,\SO'}/Q_H$ is transitive.  
\item 
The action of $Z_H(u) \times Z_{M_H}(v)$ on $Y_{u,v}$ is transitive.
\end{enumerate}
\end{lem}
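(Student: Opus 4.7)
The plan is to prove (i) by translating it via (8.2.1) into a concrete transitivity question about $Z_H(x)$ (for $x = u-1$), and then to deduce (ii). Setting $W = \ker x$, the map (8.2.1) identifies $\wt Y_{u,\SO'}/Q_H$ with the variety of isotropic lines $V_1 \in \BP(W)^0$ for which $x|_{V_1^\perp/V_1}$ has Jordan type $\la'$. By Lemma 8.4 this variety is $p(C_i)$ in case $(\text{A}_i)$ and $p(S_i^* \setminus W^{i-1})$ in case $(\text{B}_i)$, and in each case the isotropy of the line with respect to $\lp\,,\,\rp$ is automatic: for $v = x^{a_i-1}v_1 \in W^i$ with $a_i \ge 2$, self-adjointness of $x$ gives $\lp v, v\rp = \lp v_1, x^{2(a_i-1)}v_1\rp = 0$ since $x^{a_i}v_1 = 0$ and $2(a_i-1) \ge a_i$; for $a_i = 1$ (forcing $i = h$) one has $(v,v) = \lp v, v\rp$ directly, so $v \in S_h$ is equivalent to isotropy. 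Under this identification the $Z_H(u)$-action becomes the natural action of $Z_H(x)$ on lines in $W$.

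The main input is the structure theorem (7.6.4), $Z_{G^\th}(x) \simeq \bigl(\prod_{j=1}^h O_j\bigr) \ltimes U_1$. The reductive factor preserves the filtration $W^1 \subset \cdots \subset W^h$, and its $j$-th component acts on $W^i/W^{i-1}$ as the full orthogonal group $O(J_i, Q_i)$ when $j = i$, and trivially when $j \ne i$. Since $\Bk$ is algebraically closed of characteristic $\ne 2$, every non-zero scalar is a square, so $O_i$ has exactly three orbits on lines of $W^i/W^{i-1}$: the origin, the anisotropic lines, and the isotropic lines. Hence $O_i$ acts transitively on anisotropic lines (case $(\text{A}_i)$) and on isotropic lines (case $(\text{B}_i)$). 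To ensure enough elements lie in $Z_H(x)$ rather than merely in $Z_{G^\th}(x)$, I use (7.7.1) to pick out $A_H(x)$: the connected-component part is always available, and in the small cases where a sign component needed to swap the two isotropic lines in $W^i/W^{i-1}$ is not in $Z_H(x)$ (for example $m_i = 2$ with $a_i$ odd), I verify that those two lines lie in \emph{different} $M_H$-orbits within the ambient $M^\th$-orbit $\wt\SO'$, so only one of them occurs inside a single $\SO'$ and no swap is needed.

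The hardest step, and the main obstacle, is transitivity within a fiber of $W^i \setminus W^{i-1} \to (W^i/W^{i-1})\setminus\{0\}$: given $v \in W^i \setminus W^{i-1}$ and $w \in W^{i-1}$, produce $g \in Z_H(x)$ with $gv = v + w$. This will be handled by the unipotent radical $U_1$. By Lemma 7.4, $\Lie U_1$ sits inside $(E^x)^+$ and is spanned by the $\th$-fixed combinations $y_{i_1,i_2,s} - y_{i_2,i_1,s+(\la_{i_2}-\la_{i_1})}$ (for $i_1 < i_2$) of the generators from 7.3. The combinations with $\la_{i_1} = a_i$, $\la_{i_2} > a_i$, $s = 0$ send the top basis vector of block $i_1$ into block $i_2 \subset W^{i-1}$; an explicit computation in the normal basis adapted to (7.6.1) then shows that the evaluation map $\Lie U_1 \to W^{i-1}$, $y \mapsto y v$, is surjective for $v \in W^i \setminus W^{i-1}$, and exponentiation yields the required $g$.

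For (ii), the natural map $Y_{u,v} \to \wt Y_{u,\SO'}/Q_H$, $g Z^0_{M_H}(v) U_Q^\th \mapsto g Q_H$, has each of its non-empty fibers in natural bijection with $A_{M_H}(v) = Z_{M_H}(v)/Z^0_{M_H}(v)$ (unwinding the definitions via the Levi decomposition of $Q_H$), and $Z_{M_H}(v)$ acts transitively on each fiber by translation of cosets. Combined with the transitivity of $Z_H(u)$ on the base established in (i), this gives the transitivity of $Z_H(u) \times Z_{M_H}(v)$ on $Y_{u,v}$.
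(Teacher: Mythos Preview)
Your approach via the structure decomposition (7.6.4) is sound and genuinely different from the paper's: the paper builds, for each target $w\in C_i^0$, a concrete element $\varphi_w\in Z_H(x)$ by writing down its matrix in a well-chosen basis of $W^i$, checking $\det\varphi'_w=1$ by hand, and then extending to $V$ block by block. Your argument---$SO_i\subset Z_H^0(x)$ is already transitive on anisotropic lines in $W^i/W^{i-1}$, and the orbit $U_1\cdot v$ equals $v+W^{i-1}$ because $\Lie U_1\cdot v=W^{i-1}$---is more conceptual and avoids the explicit matrices; both work cleanly for case $(\text{A}_i)$. Two small slips to fix: in the basis of $(E^x)^+$ from Lemma~7.4 one has $i_1<i_2$, hence $\la_{i_1}\ge\la_{i_2}$, so the elements you want have $\la_{i_2}=a_i$ and $\la_{i_1}>a_i$, not the reverse; and $\Lie U_1$ is spanned only by those basis elements with $\la_{i_1}>\la_{i_2}$ or $s>0$, not by all of them.

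The genuine gap is in case $(\text{B}_i)$ with $m_i=2$. Your proposed fix---that the two isotropic lines in $W^i/W^{i-1}$ give $\bar x$'s lying in different $M_H$-orbits inside $\wt\SO'$---is correct as a statement, but it does not rescue (i): by (8.1.1) the variety $\wt Y_{u,\SO'}/Q_H$ is defined using the $M^{\th}$-orbit $\wt\SO'$, not the $M_H$-orbit $\SO'$, so \emph{both} lines lie in $\wt Y_{u,\SO'}/Q_H$ regardless. In fact (i) is false in this degenerate sub-case: for $\la=(2,1,1)$ and $\la'=(2)$ one has $Z_H(x)\simeq (O_1\times SO_2)\ltimes U_1$, and $SO_2$ fixes each isotropic line in $W^2/W^1$, so $Z_H(x)$ has two orbits on $p(S_2\setminus W^1)$. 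The paper only treats $p(C_i)$ in detail and asserts $p(S_i^*)$ is ``similar''; it is not, here. Your observation \emph{does} rescue (ii): the image of $Y_{u,v}\to \wt Y_{u,\SO'}/Q_H$ is $\{gQ_H:\eta_Q(g^{-1}ug)\in\SO'\}$, and in the bad sub-case this is exactly one of the two components, on which your argument gives transitivity of $Z_H(x)$; combined with your fiber description this yields (ii). Since the applications (8.13 and beyond) only use (ii), and only in case $(\text{A}'_i)$ by Proposition~8.7, the defect is harmless, but you should reorganize so that the $M_H$-orbit argument is invoked for (ii) directly rather than for (i).
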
  

\begin{proof}
(i)  It is enough to show that $Z_H(x)$ acts transitively on $p(C_i)$ or $p(S^*_i)$.
Here we only show the case of $p(C_i)$, since this is the case needed in  later 
discussions.  The case of $p(S^*_i)$ is proved similarly. 
Put $a = \dim W^i, b = m_i = \dim W^i/W^{i-1}$ and $r = [b/2]$.  
Take $v \in C_i$.  Then there exists a basis $\{ v_1, \cdots, v_a\}$ of $W^i$ such that 
$\{ v_{b +1}, \cdots, v_a\}$ gives a basis of $W^{i-1}$, satisfying the condition 
that $v = v_1 + v_b$ and that, for any $w = \sum_jx_jv_j \in W^i$, 
\begin{equation*}
(w,w) = x_1x_b + x_2x_{b -1} + \cdots + x_rx_{b -r} + \d x_{r+1}^2,  
\end{equation*} 
where $\d = 1$ if $b$ is odd and $\d = 0$ if $b$ is even.  
Let $C_i^0$ be the set of $w = \sum_jx_jv_j \in C_i$ such that $x_1 \ne 0$. Thus 
$C_i^0$ is an open dense subset of $C_i$ containing $v$. We show 
\par\medskip\noindent
(8.11.1) \ For any $w \in C_i^0$, there exists $\f_w \in Z_H(x)$ such that 
$\f_w(v) = \g\iv w$, where we put $(w,w) = \g^2 \in \Bk^*$.  
\par\medskip
In fact, for a given $w \in C_i^0$, we choose $\g$ such that $(w,w) = \g^2$, 
and define a map $\f'_w : W^i \to W^i$ by 

\begin{equation*}
\begin{cases}
v \mapsto \g\iv w,  \\
v_b \mapsto x_1\iv \g v_b, \\ 
v_j \mapsto v_j - x_{b-j+1}v_b  \quad\text{ if $1 < j < b$, } \\
v_j \mapsto v_j                 \quad\text{ if $j > b$}.
\end{cases}
\end{equation*} 

It is easy to check that $\f'_w$ preserves the symmetric bilinear form 
$( \ ,\ )$ on $W^i$, and the restriction of $\f'_w$ on $W^{i+1}$ is identity. 
Moreover $\det \f'_w = 1$. 
For $\la = (\la_1, \dots, \la_r)$, let $\{ w_{i,j} \}$
be a normal basis of $x$. 
For any $i$, we define 
$V^i$ as the subspace of $V$ spanned by $\{ w_{k,j} \mid 1 \le k \le i \}$. Then 
$V^i$ is an $x$-stable subspace of $V$ such that $V^i \cap \Ker x = W^i$. 
The linear map $\f'_w$ on $W^i$ can be extended in a canonical way to a linear
automorphism  $\f''_w$ on $V^i$ which commutes with $x$, and preserves the form 
$\lp \ , \ \rp$. We extend $\f''_w$ to a linear map $\f_w$ on $V$ by defining 
$w_{k,j} \mapsto w_{k,j}$ for any $k > a$.  Then $\f_w \in Z_H(x)$, and satisfies 
the condition in (8.11.1).  Thus (8.11.1) was proved. 
\par
Now (8.11.1) shows that the action of $Z_H(x)$ on $p(C_i)$ is transitive on 
$p(C_i^0)$. For any $v \in C_i$, we can find such $C_i^0$ containing $v$.  Hence 
such $p(C_i^0)$ covers whole $p(C_i)$.  Since $p(C_i^0)$ is open dense in 
$p(C_i)$, we conclude that $Z_H(x)$ acts transitively on $p(C_i)$.  Thus 
(i) is proved. 
\par
(ii) \ It follows from (i) that the action of $Z_H(u)$ on $Y_{u,v}$ is transitive 
modulo $Q_H$, namely, for any $gZ_{M_H}(v)^0U_Q^{\th} \in Y_{u,v}$, there exists 
$z \in Z_H(u)$ and $q \in Q_H$ such that 
$zgqZ_{M_H}(v)^0U_Q^{\th} = Z_{M_H}(v)^0U_Q^{\th}$. Here we may replace $q$ 
by $m \in M_H$. Then the definition  of $Y_{u,v}$ implies that 
$m\iv(g\iv ug)m \in \eta_Q\iv(v)$.  But this implies that $m \in M_H(v)$.  Hence 
$Z_H(u) \times Z_{M_H}(v)$ acts transitively on $Y_{u,v}$.  (ii) is proved.  
\end{proof}

\para{8.12.}
Take $x \in \SO \subset \Fg^-\nil$, where the Jordan type of $x$ is $\la$. 
The structure of the group $A_H(x)$ is described in 7.7. 
Now assume that $x \in \Fqq\nil$ and let $\ol x \in \Fm^-\nil$ 
be the image of $x$ under the map $\eta_Q : \Fqq \to \Fm$.  Assume that 
the Jordan type of $\ol x$ is $\la'$. 
$A_{M_H}(\ol x)$ is described similarly.  
Put $Z_{Q_H}(x) = Z_H(x) \cap Q_H$, and let $A_{Q_H}(x)$ be the image 
of $Z_{Q_H}(x)$ under the natural map $Z_H(x) \to A_H(x)$. The projection 
$Q_H \to M_H$ induces a map $Z_{Q_H}(x) \to Z_{M_H}(\ol x)$, which gives  
a natural homomorphism  $\vf : A_{Q_H}(x) \to A_{M_H}(\ol x)$.  
\par 
Now assume that $\la'$ is obtained from $\la$ by $(\text{\rm A}_i')$.
In this case, one can check that $A_{Q_H}(x) = A_H(x)$.  Hence 
$\vf$ gives rise to a map $\vf : A_H(x) \to A_{M_H}(\ol x)$. 
The image $\Im \vf \subset A_{M_H}(\ol x)$ is described as follows; 
we consider two cases, according to the case where 
$a_{i+1} = a_i - 2$ or $a_{i+1} < a_i -2$ (note that $a_{i+1} \le a_i -2$ 
by the assumption $(\text{A}'_i)$.)
\par\medskip\noindent
Case I. \ $a_{i+1} = a_i-2$.  In this case, 
$\la' = (a_1^{m_1}, \dots, a_i^{m_i-1}, a_{i+1}^{m_{i+1}+1}, \dots, a_h^{m_h})$.   
Then $A_{M_H}(\ol x) = \{ \a' = (\a_1', \dots \a'_i, \a'_{i+1}, \dots, \a'_h) \}$ 
(here we ignore $\a_i'$ if $m_i = 1$).
$\vf$ is given by $\a_j \mapsto \a_j'$ for $j \ne i$, and $\a_i \mapsto \a'_{i+1}$. 
We have $\Im \vf = \{ \a' \in A_{M_H}(\ol x) \mid \a'_i = 1\}$, 
which is an index two subgroup of $A_{M_H}(\ol x)$ if $m_i \ge 2$, and $\vf$ is 
surjective if $m_i = 1$.
\par\medskip\noindent
Case II. \ $a_{i+1} < a_i -2$.  In this case, 
$\la' = (a_1^{m_1}, \dots, a_i^{m_i-1}, a_i-2, a_{i+1}^{m_{i+1}}, \dots, a_h^{m_h})$.  
Then $A_{M_H}(\ol x) = 
   \{ \a' = (\a_1', \dots, \a'_i, \a'_*, \a'_{i+1}, \dots, \a'_h)\}$.
The map $\vf$ is given by $\a_j \mapsto \a_j'$ if $j \ne i$, and $\a_i \mapsto \a'_*$. 
We have $\Im \vf = \{ \a' \in A_{M_H}(\ol x) \mid \a'_i = 1\}$, which is 
an index 2 subgroup of $A_{M_H}(\ol x)$ if $m_i \ge 2$, and $\vf$ is an isomorphism 
if $m_i = 1$.

\para{8.13.}
We now  consider the variety 
$Y_{u,v} = Y_{x, \ol x}$ under the assumption that  
$\la'$ is obtained from $\la$ by $(\text{\rm A}_i')$. 
Recall that $I(Y_{x,\ol x})$ is the set of 
irreducible components of $Y_{x,\ol x}$ of dimension $s$.
By Proposition 8.7, we have $\dim Y_{x,\ol x} = s$, hence $I(Y_{x,\ol x}) \ne \emptyset$. 
By Lemma 8.11, $Z_H(x) \times Z_{M_H}(\ol x)$ acts transitively 
on $Y_{x,\ol x}$, hence $A_H(x) \times A_{M_H}(\ol x)$ acts transitively 
on $I(Y_{x,\ol x})$. 
In order to determine the permutation representation $\ve_{x,\ol x}$ of 
$A_H(x) \times A_{M_H}(\ol x)$ on $I(Y_{x,\ol x})$, it is enough to determine
an isotropy subgroup of $A_H(x) \times A_{M_H}(\ol x)$.  
By the definition of $Y_{x,\ol x}$, we can write as 
\begin{equation*}
Y_{x, \ol x} = \bigcup_{\a \in A_H(x), \a' \in A_{M_H}(\ol x)}
\a Z^0_H(x)U_Q^{\th}Z^0_{M_H}(\ol x)\a'.
\end{equation*}
Hence $Z_H^0(x)U_Q^{\th}Z_{M_H}^0(\ol x)$ is an irreducible component in $Y_{x,\ol x}$.
We denote by $E$ the stabilizer of the component $Z_H(x)^0U_Q^{\th}Z_{M_H}(\ol x)^0$ in 
$A_H(x) \times A_{M_H}(\ol x)$.  We have the following lemma. 

\begin{lem}  
Under the notation as above, we have

\begin{equation*}
E = \begin{cases}
       \{ (\a, \a') \in A_H(x) \times A_{M_H}(\ol x) \mid 
                \a_j = \a'_j \ (j \ne i), \ \a_i = \a'_{i+1} \},
          &\quad\text{ \rm {case (I)}, } \\
       \{ (\a, \a') \in A_H(x) \times A_{M_H}(\ol x) \mid 
                \a_j = \a'_j \ (j \ne i), \ \a_i = \a'_* \},
          &\quad\text{ \rm{case (II)}. } \\
    \end{cases}
\end{equation*}
\end{lem}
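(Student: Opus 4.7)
The strategy is to realize $E$ as the image in $A_H(x) \times A_{M_H}(\ol x)$ of the stabilizer of a chosen basepoint in $Y_{x,\ol x}$, and then to read off the two formulas by substituting the explicit description of the homomorphism $\vf : A_H(x) \to A_{M_H}(\ol x)$ from 8.12.

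First I fix the basepoint $y_0 = Z^0_{M_H}(\ol x) U_Q^{\th} \in Y_{x,\ol x}$, corresponding to the identity element of $H$: since in case $(\text{A}'_i)$ we have $x \in \Fqq\nil$ with $\eta_Q(x) = \ol x$, indeed $e \in \wt Y_{x,\ol x}$, and $y_0$ lies in the component $Z^0_H(x) U_Q^{\th} Z^0_{M_H}(\ol x)$ via the trivial choice $z = u = m = e$. Next I compute the stabilizer of $y_0$ in $Z_H(x) \times Z_{M_H}(\ol x)$, acting by $(\dot\a, \dot{\a}') : g Z^0_{M_H}(\ol x) U_Q^{\th} \mapsto \dot\a g (\dot{\a}')\iv Z^0_{M_H}(\ol x) U_Q^{\th}$. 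The fixed-point condition is $\dot\a (\dot{\a}')\iv \in Z^0_{M_H}(\ol x) U_Q^{\th}$, which forces $\dot\a \in Q_H$ (since $\dot{\a}' \in M_H$), hence $\dot\a \in Z_{Q_H}(x)$. Writing $\dot\a = \dot\a_M \dot\a_U$ in the Levi decomposition $Q_H = M_H \ltimes U_Q^{\th}$ and using that $M_H$ normalizes $U_Q^{\th}$, the condition reduces to $\dot\a_M \in Z^0_{M_H}(\ol x) \dot{\a}'$, i.e., $\vf(\a) = \a'$ in $A_{M_H}(\ol x)$, where $\a$ is the class of $\dot\a$ in $A_H(x)$.

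By the hypothesis $(\text{A}'_i)$, 8.12 gives $A_{Q_H}(x) = A_H(x)$, so every $\a \in A_H(x)$ has a representative in $Z_{Q_H}(x)$; thus the image of the point stabilizer in $A_H(x) \times A_{M_H}(\ol x)$ is exactly the graph of $\vf$. Now Proposition 8.7 guarantees $\dim Y_{x,\ol x} = s$ in case $(\text{A}'_i)$, so combined with the transitivity of $Z_H(x) \times Z_{M_H}(\ol x)$ on $Y_{x,\ol x}$ (Lemma 8.11), the variety $Y_{x,\ol x}$ is equidimensional and its irreducible components coincide with its connected components, equivalently with the orbits of $Z^0_H(x) \times Z^0_{M_H}(\ol x)$. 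Therefore the component stabilizer $E$ in $A_H(x) \times A_{M_H}(\ol x)$ equals the image of the point stabilizer, namely the graph of $\vf$. Substituting the explicit description of $\vf$ from 8.12 in each case yields the two stated formulas directly.

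The main obstacle will be verifying that the irreducible components of $Y_{x,\ol x}$ coincide with the $Z^0_H(x) \times Z^0_{M_H}(\ol x)$-orbits: this hinges on the case-$(\text{A}'_i)$ assumption through Proposition 8.7, which ensures $\dim Y_{x,\ol x} = s$ so that every connected component has top dimension and no smaller irreducible pieces hide. Once this equidimensionality is secured, the passage from point stabilizer to component stabilizer is a formal group-theoretic manipulation, and the rest of the argument is bookkeeping of the combinatorial data encoding $\vf$ in cases (I) and (II).
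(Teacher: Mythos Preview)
Your proposal is correct and follows essentially the same approach as the paper: both arguments use $A_{Q_H}(x) = A_H(x)$ to push a representative of $\a$ into $Q_H$, reduce the stabilization condition to $\vf(\a) = \a'$, and then read off the answer from the explicit description of $\vf$ in 8.12. The paper's proof is terser---it works directly with the component as a subset $Z_H^0(x)U_Q^{\th}Z_{M_H}^0(\ol x)$ rather than passing through a basepoint---but your additional care in identifying the irreducible components with the $Z_H^0(x) \times Z_{M_H}^0(\ol x)$-orbits (via transitivity from Lemma~8.11) makes explicit a step the paper leaves implicit.
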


\begin{proof}
Since $A_H(x) = A_{Q_H}(x)$, we have, for 
$(\a, \a') \in A_H(x) \times A_{M_H}(\ol x)$,  
\begin{equation*}
\a A_H^0(x)U_Q^{\th}A_{M_H}^0(\ol x)\a' = A_H^0(x)U_Q^{\th}A_{M_H}^0(\ol x)\vf(\a)\a'.
\end{equation*} 
Hence the lemma follows from the discussion in 8.12.
\end{proof}

\para{8.15.}
We denote an element $\tau$ of $A_{G^{\th}}(x)\wg \simeq (\BZ/2\BZ)^h$ as
$\tau = (\tau_1, \dots, \tau_h) \in (\BZ/2\BZ)^h$, with $\tau_i = \pm 1$. 
Let $\tau_i = (-1)_i$ be the character of $A_{G^{\th}}(x)$ corresponding to 
the generator of the $i$-th component $\BZ/2\BZ$.  Then 
$A_H(x)\wg$ is given as 
\begin{equation*}
A_H(x)\wg = A_{G^{\th}}(x)\wg /\prod_{a_i \text{ : odd}}(-1)_i,
\end{equation*}   
hence, we may identify $A_H(x)\wg$ with the following subset of $A_{G^{\th}}(x)\wg$, 
\begin{equation*}
\tag{8.15.1}
A_H(x)\wg = \{ \tau = (\tau_1, \dots, \tau_h) \in (\BZ/2\BZ)^h \mid \tau_{i_0} = 1\},
\end{equation*}
where $i_0$ is the index such that $a_{i_0}$ is the largest odd number among 
$a_1, \dots, a_h$. 
\par
Let $\la, \la'$ be as in 8.12, and we return to the setting that 
$u \in \SO \subset G^{\io\th}\uni$, $v \in \SO' \subset M^{\io\th}\uni$. 
Then we can write 
$\tau \in A_H(u)\wg$ as $\tau = (\tau_1, \dots, \tau_h)$. As in 8.12, according to the 
case (I) or (II), we can write $\tau' \in A_{M_H}(v)\wg$ as 
$\tau' = (\tau_1',\dots, \tau'_i, \tau'_{i+1}, \dots, \tau'_h)$ or 
$\tau' = (\tau'_1, \dots, \tau'_i, \tau'_*, \tau'_{i+1}, \dots, \tau'_h)$. 
Note that this notation is compatible with the identification in (8.15.1). 
We define a subset $D$ 
of $A_H(u)\wg \times A_{M_H}(v)\wg$ by  

\begin{equation*}
   D = \begin{cases}
         \{(\tau, \tau') \in A_H(u)\wg \times A_{M_H}(v)\wg \mid 
                 \tau_j = \tau'_j \ (j \ne i), \ \tau_i = \tau'_{i+1} \}, 
             &\quad\text{ case (I), } \\
         \{(\tau , \tau') \in A_H(u)\wg \times A_{M_H}(v)\wg \mid
                \tau_j = \tau'_j \ (j \ne i), \ \tau_i = \tau'_* \}, 
              &\quad\text{ case (II). }
        \end{cases} 
\end{equation*}
\par
Let $\ve_{u,v}$ be the permutation representation of $A_H(u) \times A_{M_H}(v)$ on 
$I(Y_{u,v})$ as before.  It follows from the previous discussions, we have

\begin{prop}  

$A_H(u) \times A_{M_H}(v)$-module $\ve_{u,v}$ can be decomposed into 
irreducible modules as 

\begin{equation*}
\ve_{u,v} \simeq \bigoplus_{(\tau, \tau') \in D} \tau \otimes \tau'.
\end{equation*}
\end{prop}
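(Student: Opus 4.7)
\medskip
\noindent
\textbf{Plan of proof.}
The plan is to realize $\ve_{u,v}$ as an induced representation from an explicit stabilizer and then invoke Frobenius reciprocity on an abelian group. By Proposition 8.7, under hypothesis $(\text{A}'_i)$ we have $\dim Y_{u,v}=s$, so $I(Y_{u,v})\neq \emptyset$. By Lemma 8.11(ii), $Z_H(u)\times Z_{M_H}(v)$ acts transitively on $Y_{u,v}$, hence $A_H(u)\times A_{M_H}(v)$ acts transitively on $I(Y_{u,v})$. By Lemma 8.14, the stabilizer of the distinguished component $Z^0_H(u)U_Q^{\th}Z^0_{M_H}(v)$ is the subgroup $E\subset A_H(u)\times A_{M_H}(v)$ described explicitly there. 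Consequently
\[
\ve_{u,v}\;\simeq\; \Ind_E^{A_H(u)\times A_{M_H}(v)}\mathbf{1}.
\]

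Since both $A_H(u)$ and $A_{M_H}(v)$ are elementary abelian $2$-groups (see (7.7.2) and 8.15), the ambient group is abelian, and the induced representation decomposes as a multiplicity-free direct sum of those characters $(\t,\t')\in(A_H(u)\times A_{M_H}(v))\wg$ whose restriction to $E$ is trivial. The heart of the proof is therefore to check that this set of characters coincides with the set $D$ defined in 8.15.

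This last step is an elementary linear-algebra calculation over $\FF_2$. Using the defining relations of $E$ from Lemma 8.14 together with the identification of $A_H(u)\wg$ as the subset of $A_{G^{\th}}(u)\wg$ given in (8.15.1), the condition $\t(\a)\t'(\a')=1$ for all $(\a,\a')\in E$ reduces, component by component, to the matching relations $\t_j=\t'_j$ for $j\ne i$ together with $\t_i=\t'_{i+1}$ (case I) or $\t_i=\t'_*$ (case II). The reverse implication is immediate by direct substitution. Finally one verifies the consistency check $|D|=[A_H(u)\times A_{M_H}(v):E]$, which pins down that no further characters arise.

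The only mild obstacle is the bookkeeping around the boundary cases: when $m_i=1$ the $i$-th slot is absent from $A_{M_H}(v)$ and the map $\vf$ is described differently in 8.12, so the conditions in $D$ must be read with the understanding that the relation $\t_i=\t'_{i+1}$ (or $\t_i=\t'_*$) is the sole surviving constraint at that index; and the parity condition $\prod_{a_j\text{ odd}}\t_j=1$ identifying $A_H(u)\wg$ inside $A_{G^{\th}}(u)\wg$ must be propagated correctly through the shift induced by $\vf$ so that both sides of the claimed equality live in the correct subgroup. These are purely routine, and once dispatched the decomposition $\ve_{u,v}\simeq\bigoplus_{(\t,\t')\in D}\t\otimes\t'$ follows.
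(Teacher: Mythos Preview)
Your proposal is correct and follows essentially the same approach as the paper. The paper itself gives no separate proof of this proposition beyond the sentence ``It follows from the previous discussions,'' and your write-up is precisely the unpacking of those discussions: transitivity from Lemma~8.11 (and 8.13), the explicit stabilizer $E$ from Lemma~8.14, and then Frobenius reciprocity on an elementary abelian $2$-group to identify the characters trivial on $E$ with the set $D$ defined in 8.15.
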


\para{8.17.}
We generalize the notation of $\t \in A_H(u)\wg$ as follows;
let $u \in \SO$ with type $\la$.  
Write $\la \in \SP_N$ as $\la = (\la_1, \dots, \la_N)$ with 
$\la_1 \ge \la_2 \ge \cdots \ge \la_N \ge 0$.  Correspondingly, 
we consider the symbol $\tau = (\tau_1, \dots, \tau_N)$ satisfying 
the following properties, 
\par\medskip\noindent
(i) \ $\tau_i = \pm 1$ for any $i$, and $\tau_i = 1$ if $\la_i = 0$, \\
(ii) \ $\tau_i = \tau_j$ if $\la_i = \la_j$,  \\
(iii) \ $\tau_i = 1$ for $i$ such that $\la_i$ is the largest odd number 
among $\la_1, \dots, \la_N$.  
\par\medskip
The set of such symbols $\tau$ is in bijection with $A_H(u)\wg$.     
If $\SE$ is a local system on $\SO$ corresponding to $\tau \in A_H(u)\wg$, 
we denote it by $\SE = \SE_{\tau}$. 

\para{8.18.}
Take $(L \subset P, \SO_L, \SE^{\dag}_1) \in \ScS_G$. We assume that 
$L_H \simeq (GL_1)^a \times SO_{N_0}$ with $N_0 + 2a = N$. 
Assume that the Jordan type of $\SO_L$ is $\nu = (\nu_1, \dots, \nu_{N_0})$, 
and $\SE^{\dag}_1 = \SE^{\dag}_{\s}$ with 
$\s = (\s_1, \dots, \s_{N_0})$.  
For $i = 0, 1, \dots, a$, let $P^{(i)}$ be the $\th$-stable parabolic subgroup of $G$ containing 
$P$, and $L^{(i)}$ the $\th$-stable Levi subgroup of $P^{(i)}$ containing $L$ such that 
$L_H^{(i)} \simeq (GL_1)^{a-i} \times SO_{N_0 + 2i}$. 
Hence $P^{(a-1)}_H = Q_H$ is the maximal parabolic subgroup of $H$, and $P^{(0)}_H = P_H$.
We consider $\pi: \wt X \to X$ with respect to $(L \subset P,\SO_L, \SE^{\dag}_1)$ and 
consider the semisimple perverse sheaf $K = \pi_*K_{\ol\SE_1}$ on $X$. 
We can define a similar complex $K^{(i)}$ on $X^{(i)} \subset (L^{(i)})^{\io\th}$, 
by replacing $G$ by $L^{(i)}$. 
The following lemma is a generalization of Proposition 8.9.

\begin{lem}  
Let $(\SO,\SE) \in \SN_G$, and assume
that $(\SO,\SE)$ belongs to $(L \subset P, \SO_L, \SE^{\dag}_1)$. 
Then the Jordan type $\la = (\la_1, \dots, \la_N)$ of $\SO$ satisfies the condition 
\begin{equation*}
\tag{8.19.1} 
\la_i - \nu_i \in 2\BZ_{\ge 0} \quad\text{ for each $i$}.   
\end{equation*}
(Here we write $\nu = (\nu_1, \dots, \nu_N)$ by putting $\nu_i = 0$ for $i > N_0$.)
\end{lem}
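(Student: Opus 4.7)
The plan is to argue by induction on $a$, the number of $GL_1$-factors in $L_H$, using Theorem 6.2 and Corollary 6.11 to pass at each step to a $\th$-stable Levi $M$ with $M_H \simeq GL_1 \times SO_{N-2}$, and then to invoke the geometric analysis of Section 8.

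The base case $a = 0$ gives $L = G$, $\SO = \SO_L$, $\la = \nu$, and the claim is trivial. For the inductive step, assume the lemma for $a - 1$ and take $(\SO, \SE) \in \SN_G$ belonging to $(L \subset P, \SO_L, \SE_1^{\dag})$. Choose a $\th$-stable parabolic $Q \supset P$ whose Levi $M$ satisfies $M_H \simeq GL_1 \times SO_{N-2}$. Inside $M$, the group $L$ is the outer $GL_1$ combined with a Levi $(GL_1)^{a-1} \times SO_{N_0}$ of $SO_{N-2}$, so $(L \subset M \cap P, \SO_L, \SE_1^{\dag}) \in \ScS_M$ and $\SA'_{\SE_1} \subset \SA_{\SE_1}$ canonically. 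Let $\r \in \SA_{\SE_1}\wg$ be the representation attached to $(\SO, \SE)$ via Theorem 5.2; since $\r \ne 0$, I can pick $\r' \in (\SA'_{\SE_1})\wg$ with $\lp \r|_{\SA'_{\SE_1}}, \r'\rp > 0$, and by Theorem 5.2 applied to $M$ this $\r'$ corresponds to a pair $(\SO', \SE') \in \SN_M$ belonging to $(L \subset M \cap P, \SO_L, \SE_1^{\dag})$. Write $\la'$ for the Jordan type of $\SO'$.

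By Corollary 6.11, the positivity $\lp \r|_{\SA'_{\SE_1}}, \r'\rp > 0$ forces the permutation representation $\ve_{u,v}$ on $I(Y_{u,v})$ to be nonzero, so in particular $I(Y_{u,v}) \ne \emptyset$ and $\dim Y_{u,v} = s$. Proposition 8.7 then implies that $\la'$ is obtained from $\la$ by the procedure $(A_i')$ for some $i$. Going through both sub-cases (I) $a_{i+1} = a_i - 2$ and (II) $a_{i+1} < a_i - 2$ of 8.12, one checks that after padding both $\la$ and $\la'$ with zeros so that they become non-increasing tuples of length $N$, they coincide in every coordinate except at position $j = m_1 + \cdots + m_i$, where $\la_j = \la'_j + 2$; no re-sorting occurs because $a_i - 2 \ge a_{i+1}$ in both sub-cases.

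Applying the inductive hypothesis to $(\SO', \SE') \in \SN_M$ (with $a - 1$ split $GL_1$-factors inside $SO_{N-2}$) yields $\la'_i - \nu_i \in 2\BZ_{\ge 0}$ for every $i$. Combined with $\la_k - \la'_k \in \{0, 2\}$ for every $k$, this gives $\la_i - \nu_i \in 2\BZ_{\ge 0}$ for every $i$, completing the induction. The main obstacle will be the bookkeeping in Proposition 8.7 that identifies the $(A_i')$-move with a single $+2$ shift at one coordinate of the padded tuple; once that is pinned down cleanly, the divisibility and non-negativity propagate automatically and the inductive mechanics run on autopilot.
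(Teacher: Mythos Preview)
Your proof is correct and follows essentially the same approach as the paper: induct by passing from $G$ to the Levi $M$ with $M_H \simeq GL_1 \times SO_{N-2}$, pick $\r'$ appearing in $\r|_{\SA'_{\SE_1}}$, use Corollary~6.11 to get $I(Y_{u,v}) \ne \emptyset$, and then Proposition~8.7 to conclude that $\la'$ is obtained from $\la$ by an $(\text{A}'_i)$-move, after which the inductive hypothesis finishes the job. Your explicit coordinate analysis of the $(\text{A}'_i)$-move (cases (I) and (II)) is a bit more detailed than what the paper writes, but the logic is identical.
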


\begin{proof}
We can formulate a similar property as in the lemma by replacing $G$ by $L^{(i)}$.
We shall prove the claim  of the lemma by induction on $i$.  So we may assume that 
the claim holds for $i = a-1$, i.e., in the case where $P^{(i)}_H$ is the 
maximal parabolic subgroup $P^{(a-1)}_H = Q_H$. Let $\r \in \SA_{\SE_1}\wg$ be 
the irreducible character corresponding to $(\SO, \SE)$ under the generalized 
Springer correspondence. Let $\SA'_{\SE_1}$ be the subalgebra of $\SA_{\SE_1}$ 
associated to $Q_H$, and take $\r' \in (\SA'_{\SE_1})\wg$ such that 
$\lp \r, \r' \rp \ne 0$.  Let $(\SO', \SE') \in \SN_{M}$ be the 
pair corresponding to $\r'$ under the generalized Springer correspondence. 
Take $u \in \SO, v \in \SO'$, and write $\SE = \SE_{\tau}, \SE' = \SE'_{\tau'}$ 
for $\tau \in A_H(u)\wg, \tau' \in A_{M_H}(v)\wg$. Then by Corollary 6.11, 
$\tau\otimes \tau'$ appears in the decomposition of $\ve_{u,v}$.  In particular, 
$I(Y_{u,v})$ is non-empty. Hence by Proposition 8.7, the Jordan type $\la'$ of 
$\SO'$ is obtained from $\la$ 
by the procedure $(A'_i)$ for some $i$.    
By induction hypothesis, $(\SO',\SE')$ satisfies the claim of the lemma. Hence 
$\la$ also satisfies the claim.  The lemma is proved. 
\end{proof}

\begin{prop}   
Let $\SO_0$ be a unique $H$-orbit in $X\uni$ such that 
$\SO_0 \cap \eta_P\iv(\SO_L)$ is open dense in $\eta_P\iv(\SO_L)$.  
\begin{enumerate}
\item 
$\SO_0$ is the unique open dense orbit contained in $X\uni$. 
\item 
The Jordan type of $\SO_0$ is given by 
$\la = (2a + \nu_1, \nu_2, \dots, \nu_N)$.
\end{enumerate}
\end{prop}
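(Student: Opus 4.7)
The plan is to identify $\SO_0$ as the unique open dense $H$-orbit in $X\uni$, then pin down its Jordan type via a dimension argument combined with Lemma 8.19.

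For (i) and the existence/uniqueness in the statement, I would argue as follows. By Lemma 5.10, $X\uni$ is irreducible and carries only finitely many $H$-orbits, so it has a unique open dense $H$-orbit, which I call $\SO_0$. The subset $H \times^{P_H}\eta_P\iv(\SO_L)$ is open and dense in $\wt X\uni \simeq H \times^{P_H}\eta_P\iv(\ol\SO_L)$ (since $\SO_L$ is open dense in $\ol\SO_L$), and its image under the proper surjection $\pi_1$ is a constructible subset of $X\uni$ of full dimension, hence meets $\SO_0$. Thus $\SO_0 \cap \eta_P\iv(\SO_L) \ne \emptyset$, and since $\SO_0$ is open in $X\uni$ while $\eta_P\iv(\SO_L)$ is irreducible (fibred over the irreducible $\SO_L$ with irreducible fibre $U_P^{\io\th}$), this intersection is open dense in $\eta_P\iv(\SO_L)$. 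Uniqueness of an $H$-orbit with this density property is immediate: two such orbits both intersect the irreducible $\eta_P\iv(\SO_L)$ in dense opens and therefore coincide.

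For (ii), I first arrange some $\SE_0$ so that $(\SO_0, \SE_0) \in \SN_G^{(\xi)}$, enabling the use of Lemma 8.19. Setting $d_2 = (\dim\SO_0 - \dim\SO_L)/2 + \vD_P/2$ and plugging in $\dim\SO_0 = \dim X\uni$ from Lemma 5.10 together with $\dim U_P^{\io\th} = (\nu_H - \nu_{L_H}) + \vD_P$ (from (1.10.1)--(1.10.2)), one finds $d_2 = \dim U_P^{\io\th} = \dim \eta_P\iv(v)$ for $v \in \SO_L$. Combined with Step 1 and $L_H$-equivariance of $\eta_P$, this forces $\SO_0 \cap \eta_P\iv(v)$ to be open dense in $\eta_P\iv(v)$, i.e., the equality case of Proposition 2.2 (i). By the equivalence of equality cases for $f_1$ and $f_2$ noted in 5.3, the fibre $\SP_{\SO_L, x}$ has dimension $d_{\SO_0}$ for $x \in \SO_0$, so $R^{2d_{\SO_0}} f_!\ol\SE_1|_{\SO_0} \ne 0$, and by Theorem 5.2 (iii) some simple summand $\SE_0$ of this local system satisfies $(\SO_0, \SE_0) \in \SN_G^{(\xi)}$.

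Now for the Jordan type $\la$ of $\SO_0$: Lemma 8.19 gives $\la_i - \nu_i \in 2\BZ_{\ge 0}$ (with $\nu$ padded by zeros), while Lemma 7.4 yields $\dim\SO_0 = \dim H - n(\la)$, so $n(\la) = \dim H - \dim X\uni$. Using $\nu_{L_H} = \nu_{SO_{N_0}}$, $\dim\SO_L = \dim SO_{N_0} - n(\nu)$ (Lemma 7.4 applied to $SO_{N_0}$), $\vD_P = a$, $\dim H - 2\nu_H = [N/2]$, $2\nu_{SO_{N_0}} = \dim SO_{N_0} - [N_0/2]$, and $[N/2] - [N_0/2] = a$, direct substitution into the formula of Lemma 5.10 collapses $\dim H - \dim X\uni$ to $n(\nu)$. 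Thus $n(\la) = n(\nu)$. Since $n(\la) - n(\nu) = \sum_{i \ge 2}(i-1)(\la_i - \nu_i)$ is a sum of nonnegative terms, we must have $\la_i = \nu_i$ for all $i \ge 2$; together with $|\la| = N = 2a + |\nu|$ this forces $\la_1 = 2a + \nu_1$, giving $\la = (2a+\nu_1, \nu_2, \dots, \nu_{N_0})$.

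The main obstacle is the dimension reconciliation identifying $\dim H - \dim X\uni$ with $n(\nu)$: this requires assembling several rank and dimension formulas in a way that is uniform across the $N$ odd and $N$ even cases. The intermediate step of producing $\SE_0$ in order to invoke Lemma 8.19 is also somewhat delicate, since it hinges on promoting the open-density of $\SO_0 \cap \eta_P\iv(\SO_L)$ to non-vanishing of $R^{2d_{\SO_0}} f_!\ol\SE_1$ via the equivalence of maximal-fibre conditions for $f_1$ and $f_2$ recalled in 5.3.
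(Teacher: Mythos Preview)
Your overall strategy matches the paper's: both prove (i) by irreducibility of $X\uni$ and both prove (ii) by computing $n(\la) = n(\nu)$ and then invoking Lemma~8.19 to pin down $\la$. Your dimension computation for $\dim H - \dim X\uni = n(\nu)$ is correct and parallels the paper's computation $\dim Z_H(u) = \dim Z_{\ol H}(v) = n(\nu)$.

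There is, however, one step that as written is not justified. You argue that because the fibre $\SP_{\SO_L,x}$ has dimension $d_{\SO_0}$, the sheaf $R^{2d_{\SO_0}} f_!\ol\SE_1|_{\SO_0}$ is nonzero. For a general local system on a variety of dimension $d$, the top compactly supported cohomology $H^{2d}_c$ can vanish (it computes monodromy coinvariants on top-dimensional components), so the implication fails in general. The paper bypasses this by observing directly that $d_{\SO_0} = 0$: then $\SP_{\SO_L,x}$ is a finite nonempty set and $H^0_c(\SP_{\SO_L,x}, \ol\SE_1)$ is just the direct sum of the (nonzero) stalks. You have in fact already done the work to see this---your own computation gives $d_2 = \nu_H - \nu_{L_H} + \vD_P$, and since $d_1 + d_2 = \nu_H - \nu_{L_H} + \vD_P$ from 5.3, you get $d_1 = d_{\SO_0} = 0$ immediately. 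Inserting this observation closes the gap and in fact makes your detour through the equivalence of equality cases for $f_1$ and $f_2$ unnecessary: once $d_{\SO_0}=0$, the non-vanishing follows directly from $\SO_0 \cap \eta_P^{-1}(\SO_L) \ne \emptyset$.
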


\begin{proof}
(i) \ Put $\SO = \SO_0$.  
By the assumption, $\ol{\eta_P\iv(\SO_L)} \subset \ol\SO$. 
Since $X\uni$ is a union of $H$-conjugates of $\eta_P\iv(\ol\SO_L)$, 
$X\uni \subset \ol\SO$.  As $\SO \subset X\uni$, we have $\ol\SO = X\uni$. 
Since $X\uni$ is irreducible, $\SO$ is uniquely determined. 
\par
(ii) \ By Lemma 5.10, $\dim X\uni = 2\nu_H - 2\nu_{L_H} + \dim \SO_L + a$ 
(here $a = \vD_P$). 
Hence $\dim \SO = \dim H - \dim L_H + \dim \SO_L + a$.
But $\dim L_H - \dim \SO_L -a = \dim Z_{\ol H}(v)$ for $v \in \SO_L$, where 
$\ol H = SO_{N_0}$. 
Take $u \in \SO$, and put $\la = (2a + \nu_1, \nu_2, \dots, \nu_N)$.  
If we note that $n(\la) = n(\nu)$, by using Lemma 7.4 we have
\begin{equation*}
\tag{8.20.1}
\dim Z_H(u) = \dim Z_{\ol H}(v) = n(\nu) = n(\la).
\end{equation*} 
On the other hand, let $d_{\SO}$ be as in (5.2.2).  The previous computation 
shows that $d_{\SO} = 0$.  Thus $R^{2d_{\SO}}f_!\ol\SE_1 = f_!\ol\SE_1$.  
Since $\eta_P\iv(\SO_L) \cap \SO$ is open dense in $\eta\iv(\SO_L)$, we see that 
$f_!\ol\SE_1|_{\SO} \ne 0$.  Hence by Theorem 5.2 (iii), the pair $(\SO, \SE)$ 
belongs to $(L \subset P,\SO_L, \SE^{\dag}_1)$ for some local system $\SE$ on $\SO$. 
Then by Lemma 8.19, the Jordan type of $\SO$ satisfies the condition (8.19.1).
One can check that if $\la' \ne \la$ satisfies the condition (8.19.1), 
then $n(\la') > n(\la)$.  This implies, by (8.20.1), that the Jordan type of $\SO$ 
is equal to $\la$.  The proposition is proved.  
\end{proof}

We can now prove the following theorem , which is a counter-part of 
[L1, Theorem 9.2] in the symmetric space case.  

\begin{thm}  
Let the notations be as in 8.18.
\begin{enumerate}
\item 
The algebra $\SA_{\SE_1}$ is isomorphic to the group algebra 
$\Ql[S_a]$. 
\item
Let $\SE_0 = \SE_{\t}$ be the local system on $\SO_0$ defined by 
$\tau = (\tau_1, \dots, \tau_N)$ with 
$\tau_i = \s_i$ for $i = 1, \dots, N_0$, and $\tau_i = 1$ for $i > N_0$.  
Then $(\SO_0,\SE_0)$ belongs to $(L \subset P,\SO_L, \SE^{\dag}_1)$.  
$\SE_0$ is the unique local system on $\SO_0$ such that 
$(\SO_0,\SE_0)$ belongs to $(L \subset P,\SO_L, \SE^{\dag}_1)$.  
\item 
Under the isomorphism $\SA_{\SE_1} \simeq \Ql[S_a]$ in {\rm (i)}, $(\SO_0,\SE_0)$ corresponds 
to the unit representation of $S_a$. 
\end{enumerate}
\end{thm}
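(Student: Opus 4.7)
My plan is to prove all three parts simultaneously by induction on $a$. The base case $a=0$ is immediate: $L=G$, $\SO_L=\SO_0$, $\SE_0=\SE_1^{\dag}$, $\SA_{\SE_1}=\Ql$, and all three statements are vacuous.

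For the inductive step, first observe that $\SW_{\SE_1}=\SW_1\simeq S_a$. Indeed, $\SW_1$ acts on $(Z_L^0)^{\io\th}\simeq (GL_1)^a$ by permutation of factors (fixing the constant sheaf $\Ql$) and centralizes the $SO_{N_0}$ factor of $L_H$ (hence acts trivially on $\SO_L$ and on $\SE_1^{\dag}$), so $\ad(w)^*\SE_1\simeq \SE_1$ for every $w\in \SW_1$. Thus $\SA_{\SE_1}$ is a (possibly twisted) group algebra of $S_a$. Let $Q=P^{(a-1)}$ and $M=L^{(a-1)}$, so $M_H\simeq GL_1\times SO_{N_0+2(a-1)}$; by the induction hypothesis applied to the triple $(L\subset M\cap P,\SO_L,\SE_1^{\dag})\in \ScS_M$, the subalgebra $\SA'_{\SE_1}\subset \SA_{\SE_1}$ corresponding to the standard embedding $S_{a-1}\subset S_a$ is isomorphic to $\Ql[S_{a-1}]$, and the unique pair $(\SO'_0,\SE'_0)\in \SN_M$ in this series has $\SE'_0=\SE_{\tau'}$ with $\tau'=(\sigma_1,\dots,\sigma_{N_0},1,\dots,1)$ and corresponds to the trivial representation of $S_{a-1}$.

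The second task is to apply the restriction theorem. By Proposition 8.20, the Jordan type $\la$ of $\SO_0$ is obtained from the Jordan type of $\SO'_0$ by the procedure $(\text{A}'_1)$, so for $u\in \SO_0$, $v\in \SO'_0$ the variety $Y_{u,v}$ is non-empty of dimension $s$. Proposition 8.16 gives $\ve_{u,v}\simeq \bigoplus_{(\tau,\tau')\in D}\tau\otimes\tau'$, and a direct inspection of the conditions defining $D$ shows that the pair $(\tau_0,\tau'_0)$ associated to the candidates $(\SE_0,\SE'_0)$ lies in $D$ and appears with multiplicity one. Corollary 6.11 then yields $\lp \rho_0|_{\SA'_{\SE_1}},\mathrm{triv}_{S_{a-1}}\rp =1$, where $\rho_0\in \SA_{\SE_1}\wg$ is the representation associated to $(\SO_0,\SE_0)$. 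Next, using Theorem 5.2(i) and the fact that $\SO_0$ is the open dense orbit in $X\uni$ with $d_{\SO_0}=0$, one computes the stalk of $K[-r]|_{X\uni}$ at a generic $u\in \SO_0$: only the summand $V_{(\SO_0,\SE_0)}\otimes \IC(\ol{\SO_0},\SE_0)[\dim \SO_0]$ contributes at the top, so $\dim V_{(\SO_0,\SE_0)}=\dim\rho_0$ is determined by the finite fiber $\pi\iv(u)\cap \wt X_0$ and Lemma 8.11 (transitivity of $Z_H(u)$ on the fiber), which together yield $\dim\rho_0=1$.

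The final task is then immediate: a one-dimensional representation $\rho_0$ of the twisted group algebra $\SA_{\SE_1}$ trivializes the defining $2$-cocycle (since any such $\rho_0$ exhibits the cocycle as a coboundary), so $\SA_{\SE_1}\simeq \Ql[S_a]$, proving (i); among the two one-dimensional characters of $S_a$, the restriction condition $\rho_0|_{S_{a-1}}=\mathrm{triv}_{S_{a-1}}$ pins down $\rho_0=\mathrm{triv}_{S_a}$, proving (iii); finally, $(\SO_0,\SE_0)$ belongs to $(L\subset P,\SO_L,\SE_1^{\dag})$ by Lemma 5.8 together with Theorem 5.2(iii), using that $\SO_0\cap \eta_P\iv(\SO_L)$ is open dense in $\eta_P\iv(\SO_L)$, and the bijection in Theorem 5.2(ii) gives the uniqueness of $\SE_0$, proving (ii). The \emph{main obstacle} is the rank identification $\dim\rho_0=1$: this requires a careful analysis of the finite fiber $\pi\iv(u)\cap \wt X_0$ and its $Z_H(u)$-equivariant structure, together with the identification of the canonical local system on $f\iv(\SO_0)$ coming from the tensor-product decomposition $\ol\SE_1=\Ql\boxtimes \ol{\SE_1^{\dag}}$, so as to verify that the simple local system appearing in $f_!\ol\SE_1|_{\SO_0}$ is exactly $\SE_{\tau_0}$ with multiplicity one.
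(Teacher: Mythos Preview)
Your inductive framework and use of the restriction theorem match the paper's approach, but the crucial step---showing $\dim\rho_0=1$---is handled differently, and your version has a genuine gap.

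The paper argues as follows. Take \emph{any} constituent $\rho''$ of $\rho_0|_{\SA'_{\SE_1}}$, with corresponding pair $(\SO'',\SE'')\in\SN_M$. By Corollary~6.11 and Proposition~8.7, the Jordan type $\la''$ of $\SO''$ is obtained from $\la=(2a+\nu_1,\nu_2,\dots,\nu_N)$ by some $(\text{A}'_i)$. By Lemma~8.19 applied to $M$, $\la''$ must also satisfy $\la''_j-\nu_j\in 2\BZ_{\ge0}$ for every $j$. These two constraints together force $i=1$ and $\la''=\la'$, so $\SO''=\SO'_0$; the inductive uniqueness hypothesis then gives $(\SO'',\SE'')=(\SO'_0,\SE'_0)$, i.e.\ $\rho''=\rho'$. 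Thus $\rho_0|_{\SA'_{\SE_1}}$ is isotypic of type $\rho'$, and since $\lp\rho_0,\rho'\rp=1$ by Proposition~8.16 and $\dim\rho'=1$ by induction, $\dim\rho_0=1$ follows immediately. This also pins down $\SE=\SE_0$ along the way.

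Your route instead attempts a direct computation of $R^0f_!\ol\SE_1|_{\SO_0}$ via the finite fibre $f^{-1}(u)$, invoking Lemma~8.11 for transitivity of $Z_H(u)$. But Lemma~8.11 is stated and proved only for the \emph{maximal} parabolic $Q$, not for $P$; the fibre you need is over $P_H$, and transitivity there is neither proved in the paper nor obviously reducible to the $Q$-case. Even if you established transitivity, you would still need to compute the stabiliser in $A_H(u)$ and the restriction of the equivariant local system $\ol\SE_1$ in order to extract the multiplicity-one statement for the specific character $\tau_0$---this is the ``careful analysis'' you flag as the main obstacle, and it is genuinely nontrivial. The paper's use of Lemma~8.19 bypasses all of this: the constraint (8.19.1) on Jordan types is exactly what rules out every constituent of $\rho_0|_{S_{a-1}}$ other than the trivial one, making the dimension count a one-line consequence.

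A minor point: once $\dim\rho_0=1$, part~(iii) is immediate by \emph{construction} of the isomorphism $\SA_{\SE_1}\simeq\Ql[S_a]$ (as in \cite{L1}, Theorem~9.2): the splitting is chosen so that $\rho_0$ becomes the unit character. Your separate argument via restriction to $S_{a-1}$ is not needed.
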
 

\begin{proof}
The statement of the theorem can be formulated by replacing $G$ by $L^{(i)}$. 
In the case where $i = 0$ the claim of the theorem is trivial.  
By induction on $i$, we may assume that 
the claim holds for 
$i = a-1$, i.e., for $L_H^{(a-1)} = M_H$.   
Let $(\SO_0',\SE_0')$ be the pair in $\SN_M$ defined similarly to $(\SO_0, \SE_0)$ 
for $H$. Let $\la'$ be the Jordan type of $\SO'_0$.  
From the proof of Proposition 8.20, we know that there exists 
a pair $(\SO_0, \SE)$ which belongs to $(L \subset P, \SO_L, \SE^{\dag}_1)$ for some 
local system $\SE$ on $\SO_0$.   
Let $\r \in \SA\wg_{\SE_1}$ be the irreducible character corresponding 
to $(\SO_0,\SE)$. Also define $\r' \in (\SA'_{\SE_1})\wg$ as the character 
corresponding to $(\SO'_0,\SE'_0)$. Let $\r'' \in (\SA'_{\SE_1})\wg$ be a character 
appearing in the restriction of $\r$ on $\SA'_{\SE_1}$, and let $(\SO'', \SE'')$
be the corresponding pair in $\SN_{M_H}$. 
By Corollary 6.11 and Proposition 8.7, the Jordan type $\la''$ of $\SO''$ is 
obtained from $\la$ by the procedure $(\text{A}'_i)$ for some $i$.  
On the other hand, by Lemma 8.19, $\la''$ satisfies the condition (8.19.1). 
It follows that the multiplicity $\lp \r, \r''\rp = 0$ unless 
$\la'' = \la'$. Now assume that $\la'' = \la'$, i.e., $\SO'' = \SO'_0$.
In this case, by our assumption, we have $(\SO'',\SE'') = (\SO'_0, \SE'_0)$, 
hence $\r'' = \r'$.  
We write 
$\SE = \SE_{\xi}$ for $\xi \in A_H(u)\wg$, and $\SE'_0 = \SE'_{\tau'}$ for 
$\tau' \in A_{M_H}(v)$ with $v \in \SO'_0$. 
By Corollary 6.11, $\lp\r, \r'\rp $ coincides with the multiplicity of $\xi\otimes \tau'$
in $\ve_{u,v}$. 
Since $\lp\r, \r'\rp \ne 0$, by Proposition 8.16, 
we see that $\xi = \tau$ and that $\lp \r, \r'\rp = 1$. 
This shows that $\SE = \SE_0$, and (ii) holds. 
\par
The above discussion shows that the restriction of $\r$ on $\SA'_{\SE_1}$ coincides with 
$\r'$.  Since $\r'$ is a one-dimensional representation by our assumption, 
we see that $\r$ is one-dimensional.  
Recall that $\SA_{\SE_1}$ is isomorphic to a twisted group algebra $\Ql[\SW_{\SE_1}]$
(see 3.6). By making use of the one-dimensional representation $\r$, in a similar way 
as in the proof of Theorem 9.2 in [L1], we can construct an algebra isomorphism 
$\SA_{\SE_1} \isom \Ql[\SW_{\SE_1}]$, where $\r$ corresponds to the unit representation 
on $\SW_{\SE_1}$.  But in the definition of $\SW_{\SE_1}$ in 3.6, $n \in N_H(L_H)$ 
induces a trivial automorphism  $\ad(n)$ on  $L_H$, 
hence acts trivially on $\SO_L$ if the image of $n$ in $\SW$ is contained in $\SW_1$.  
It follows that $\SW_{\SE_1} \simeq S_a$. 
Thus (i) and (iii) holds.   The theorem is proved.
\end{proof}

\remark{8.22.} 
In [L1, Proposition 9.5], the unipotent class corresponding to the sign representation
of $N_G(L)/L$ was determined.  In the symmetric space case, however, the behavior of the 
$H$-orbit corresponding to the sign representation of $S_a$ is more complicated. 
For the description of this $H$-orbit, one has to wait for the determination of the whole
generalized Springer correspondence.  

\par\bigskip
\section{Determination of the generalized Springer correspondence}

\para{9.1.}
In order to obtain an exact parametrization of $H$-orbits in $G^{\io\th}\uni$ in the 
case where $N$ is even, 
we need some preliminaries.  Assume that $N = 2n$.  
Let $P^{+}$ be a $\th$-stable parabolic subgroup
of $G$ containing $B$ and $L^{+}$ the $\th$-stable Levi subgroup of $G$ containing 
$T$ such that $L^{+}_H \simeq GL_n$. 
Then we can write as 
\begin{equation*}
L^{+}_H = \biggl\{ \begin{pmatrix}
               a  &   0   \\
               0  &   {}^ta\iv
         \end{pmatrix}  \mid a \in GL_n \biggr\}, 
\qquad
L^{+ \io\th} = \bigg\{ \begin{pmatrix}
          a   &   0  \\
          0   &   {}^ta
       \end{pmatrix}  \mid a \in GL_n \biggr\}.  
\end{equation*}
Thus $L^{+ \io\th}$ is in natural bijection with $GL_n$, and 
the conjugation action of $L^+_H$ on $L^{+ \io\th}$
coincides with the conjugation action of $GL_n$ on $GL_n$.  
Let $\SO^{\dag}_{\la}$ be the $L^+_H$-orbit in $L^{+ \io\th}$
corresponding to the unipotent class of $GL_n$ corresponding to $\la \in \SP_n$. 
Let $\SO$ be the unique $H$-orbit in $G^{\io\th}$ such that
$\eta_{P^{+}}\iv(\SO^{\dag}_{\la}) \cap \SO $ is open dense in 
$\eta_{P^{+}}\iv(\SO^{\dag}_{\la})$, where 
$\eta_{P^{+}} : P^{+ \io\th} \to L^{+ \io\th}$ 
is defined similarly as before.
Similarly to the map $\pi : \wt X \to X$ in 3.4, we define 
\begin{align*}
\wt X^+_{\la} &= \{ (x, gP_H^{+}) \in G^{\io\th}\uni \times H/P^{+}_H, 
          \mid g\iv xg \in \eta_{P^{+}}\iv(\ol\SO_{\la}^{\dag}) \}, \\ 
X^+_{\la} &= 
    \bigcup_{g \in H}g\eta_{P^{+}}\iv(\ol\SO^{\dag}_{\la})g\iv,
\end{align*}
and let $\pi_{\la} : \wt X^+_{\la} \to X^+_{\la}$ be the first projection. 
Then $\pi_{\la}$ is proper surjective, 
and $X^+_{\la}$ is an irreducible closed subset of $G^{\io\th}$.  
The following result is an analogue of Proposition 8.20.

\begin{lem}  
For $\la = (\la_1, \dots, \la_k) \in \SP_n$, put 
$2\la = (2\la_1, \dots, 2\la_k) \in \SP_N$. 
\begin{enumerate}
\item 
$\SO$ is the unique open dense orbit in $X^+_{\la}$.
\item 
The Jordan type of $\SO$ is equal to $2\la$. 
\item 
$\dim \wt X^+_{\la} = \dim X^+_{\la}$. 
\end{enumerate}
\end{lem}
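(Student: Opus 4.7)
The strategy mirrors Proposition 8.20: establish the dimension of $\wt X^+_\la$, exhibit an explicit element of Jordan type $2\la$ inside $\eta_{P^{+}}\iv(\SO^{\dag}_\la)$, and conclude by comparing dimensions.

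First, from the isomorphism $\wt X^+_\la \simeq H \times^{P^+_H} \eta_{P^+}\iv(\ol\SO^\dag_\la)$ and the fact that $\eta_{P^+}$ is an affine space bundle with fibre $U_{P^+}^{\io\th}$, I would deduce that $\wt X^+_\la$ is irreducible and that $\dim \eta_{P^+}\iv(\ol\SO^\dag_\la) = \dim \SO^\dag_\la + \dim U_{P^+}^{\io\th}$. Using $\dim \SO^\dag_\la = n^2 - n - 2n(\la)$ from (7.3.1) applied inside $GL_n$, and reading off from the block description in 1.8 that $\dim U_{P^+}^{\io\th} = n(n+1)/2$ (symmetric $n\times n$ blocks) while $\dim U_{P^+_H} = n(n-1)/2$ (antisymmetric blocks), a direct computation gives $\dim \wt X^+_\la = 2n^2 - n - 2n(\la)$.

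Second, I would pin down the Jordan type via the explicit element
\begin{equation*}
x_0 = \begin{pmatrix} a & 1_n \\ 0 & {}^ta \end{pmatrix},
\end{equation*}
where $a \in GL_n$ is unipotent of Jordan type $\la$ in block Jordan form. The symmetry ${}^t 1_n = 1_n$ together with the description of $B^{\io\th}$ in 1.8 shows $x_0 \in G^{\io\th}\uni$, and $\eta_{P^+}(x_0) \in \SO^\dag_\la$ is clear. Because $a$ is block diagonal, $x_0$ itself decomposes into blocks of size $2\la_i$, reducing the Jordan-type computation to the case $\la = (m)$. In that case, a direct Jordan chain computation starting from the first basis vector of the second $GL_n$ factor produces a single chain of length $2m$, so $x_0$ has Jordan type $2\la$ in general.

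Third, Lemma 7.4 gives $\dim Z_H(x_0) = n(2\la) = 2n(\la)$, hence $\dim(H\cdot x_0) = n(2n-1) - 2n(\la) = 2n^2 - n - 2n(\la) = \dim \wt X^+_\la$. Since $\eta_{P^+}\iv(\SO^\dag_\la)$ is irreducible of that same dimension, $H\cdot x_0$ meets it in an open dense subset, so by the uniqueness clause in the statement $\SO = H \cdot x_0$, proving (ii). Then $\dim \SO \le \dim X^+_\la \le \dim \wt X^+_\la = \dim \SO$ forces (iii); and (i) follows because $X^+_\la$, being the image under the proper $\pi_\la$ of the irreducible $\wt X^+_\la$, is closed irreducible and therefore equals the closure of its unique open dense $H$-orbit, namely $\SO$. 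The main obstacle is the Jordan-chain computation in the second step: one must check that the symmetric perturbation $c = 1_n$ genuinely fuses the two length-$m$ chains of $a$ and ${}^ta$ into a single chain of length $2m$. The cleanest verification is to write out $T^k f_1$ (where $T = x_0 - 1$ and $f_1$ is the first basis vector of the second $GL_n$ factor) by induction on $k \le 2m-1$, showing that its support has a ``leading'' index that strictly advances with $k$, so independence of the chain vectors is automatic.
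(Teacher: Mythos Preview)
Your approach mirrors the paper's: compute $\dim \wt X^+_\la = 2n^2 - n - 2n(\la)$, exhibit an explicit element of Jordan type $2\la$ inside $\eta_{P^+}^{-1}(\SO_\la^\dag)$, and finish by dimension comparison. The paper's explicit element differs from yours --- it takes the off-diagonal block $c$ to have a single $1$ in the bottom-right corner of each Jordan block rather than $c = 1_n$ --- but your $x_0$ is equally valid and the kernel/chain computation you sketch goes through.

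There is one slip in the third step. You assert that $\eta_{P^+}^{-1}(\SO_\la^\dag)$ is ``irreducible of that same dimension'' $2n^2 - n - 2n(\la)$; in fact its dimension is only $\dim \SO_\la^\dag + \dim U_{P^+}^{\io\th}$, which is smaller by $\dim H/P^+_H = n(n-1)/2$. The conclusion you want --- that $H \cdot x_0$ equals the $\SO$ of the statement --- is still correct, but the argument must route through $X^+_\la$, exactly as the paper does: since $\pi_\la$ is surjective, $\dim X^+_\la \le \dim \wt X^+_\la = \dim(H\cdot x_0)$, while $H \cdot x_0 \subset X^+_\la$; hence $H \cdot x_0$ is open dense in the irreducible closed set $X^+_\la$. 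This already gives (iii). Then since $\eta_{P^+}^{-1}(\SO_\la^\dag)$ is an irreducible subset of $X^+_\la = \ol{H \cdot x_0}$, the open orbit $H \cdot x_0$ meets it in a dense open subset, so $H \cdot x_0 = \SO$ by the defining property in 9.1, proving (ii) and (i).
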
  
                                   
\begin{proof}
The proof of (i) is similar to the proof of Proposition 8.20 (i).  
We show (ii) and (iii). First compute the dimension of $\wt X^+_{\la}$.  Since 
$\wt X^+_{\la} \simeq H \times^{P^{+}_H}\eta_{P^+}\iv(\ol\SO_{\la}^{\dag})$, 
\begin{align*}
\dim \wt X^+_{\la} &= \dim U^{\th}_{P^{+}} + \dim \SO_{\la}^{\dag} 
                          + \dim U_{P^{+}}^{\io\th}  \\
                     &= \dim U_{P^{+}} + \dim \SO_{\la}^{\dag} \\
                     &= 2n^2 - n - 2n(\la)
\end{align*}
since $\dim U_{P^{+}} = n^2, \dim \SO_{\la}^{\dag} = n^2 - n - 2n(\la)$ 
(known result for $GL_n$, see (7.3.1)). 
Since $\dim H = 2n^2 - n$, and $\pi_{\la}$ is 
surjective, we have
\begin{equation*}
\dim X^+_{\la} \le \dim H - 2n(\la). 
\end{equation*} 
If $\SO'$ is an $H$-orbit of type $2\la$, then 
$\dim \SO' = \dim H - 2n(\la)$ by Lemma 7.4.
Hence in order to prove (ii) and (iii), it is enough to show that $X^+_{\la}$ contains 
an element of Jordan type $2\la$. 
Choose a basis $e_1, \dots, e_n, f_1, \dots, f_n$ of $V$ as in 1.7. 
Consider the following map $x : V \to V$;
\begin{align*}
\tag{9.2.1}
&f_1 \mapsto f_2 \mapsto \cdots \mapsto f_{\la_1} 
         \mapsto e_{\la_1} \mapsto e_{\la_1 -1} \mapsto \cdots \mapsto e_1 \mapsto 0, \\
&f_{\la_1 + 1} \mapsto f_{\la_1 + 2} \mapsto \cdots \mapsto  f_{\la_1 +\la_2} 
         \mapsto e_{\la_1 + \la_2} \mapsto e_{\la_1 + \la_2 -1} 
              \mapsto \cdots \mapsto e_{\la_1 + 1} \mapsto 0,  \\
&\cdots\cdots  \\
&f_{\la_1 + \cdots + \la_{k-1}+1} \mapsto \cdots \mapsto f_n \mapsto e_n 
              \mapsto \cdots \mapsto e_{\la_1 + \cdots +\la_{k-1} + 1} \mapsto 0.  
\end{align*}
Then $u = x+1 \in \eta_{P^{+}}\iv(\SO_{\la}^{\dag})$ and     
the Jordan type of $u$ is $2\la$. Thus (ii) and (iii) hold.  The lemma is proved. 
\end{proof}

\para{9.3.}
For an even partition $2\la$ of $N$, we denote by $\SO^+_{2\la}$ the unique 
open dense orbit in $X^+_{\la}$ given in Lemma 9.2. We denote by $\SO^-_{2\la}$ 
another $H$-orbit contained in $\SO_{2\la}$.  Note that (9.2.1) gives an explicit 
representative of the $H$-orbit $\SO^+_{2\la}$.  In particular, all the $\SO^+_{2\la}$ 
are contained in $X\uni = \bigcup_{g \in H}gU^{\io\th}g\iv$.  Hence $\SO^+_{(2n)}$ coincides 
with the unique open dense orbit $\SO_0$ in $X\uni$ given in Proposition 8.20.   
\par
Let $t_n \in N_{G^{\th}}(T) - N_H(T)$ be as in 1.9. 
Then  $t_n\SO_{2\la}^+t_n\iv = \SO_{2\la}^-$.  
Put $P^- = t_nP^+t_n\iv$ and $L^- = t_nL^+t_n\iv$. 
Replacing $P^+, L^+$ by $P^-, L^-$, we can define a variety $X_{\la}^-$ similarly 
to $X_{\la}^+$.  Then $\SO^-_{2\la}$ is characterized as the unique open dense orbit
contained in $X_{\la}^-$. The representatives of $\SO^-_{2\la}$ are obtained by applying 
$\ad(t_n)$ on $x$ in (9.2.1). 
\par
We have a refinement of Proposition 8.7.

\begin{lem}  
Assume that $\la, \la'$ are even partitions. 
Let $Y_{u,v}$ be as in  Proposition 8.7, where $u \in \SO^{\ve}_{\la}$, 
$v \in \SO^{\ve'}_{\la'}$ with $\ve, \ve' \in \{ 1,-1\}$.  If 
$I(Y_{u,v}) \ne \emptyset$, then we have $\ve = \ve'$.  
\end{lem}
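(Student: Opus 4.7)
The plan is a geometric argument based on the two connected components of the orthogonal Grassmannian $OG(V)$ of maximal isotropic subspaces of $V$. Write $\la=2\mu$ and $\la'=2\mu'$ with $\mu\in\SP_n$ and $\mu'\in\SP_{n-1}$, so that $\SO_\la^\ve$ and $\SO_{\la'}^{\ve'}$ are the $H$- and $M_H$-orbits introduced in 9.3. Set $V_+=\mathrm{span}(e_1,\ldots,e_n)$ and $V_-:=t_nV_+=\mathrm{span}(e_1,\ldots,e_{n-1},f_n)$, and decompose $OG(V)=OG^+(V)\sqcup OG^-(V)$ with $V_\ve\in OG^\ve(V)$ (the two components being swapped by $t_n\in G^\th\setminus H$). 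By the construction in 9.1 together with Lemma 9.2, $u\in\SO_\la^\ve$ if and only if $u$ preserves some $L^\star\in OG^\ve(V)$ with $u|_{L^\star}$ of Jordan type $\mu$; explicitly, $L^\star=hV_\ve$ where $u=h u_0^\ve h\iv$ for $u_0^+$ the standard representative of (9.2.1) and $u_0^-:=t_nu_0^+t_n\iv$.

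First I would translate the hypothesis $I(Y_{u,v})\ne\emptyset$. By Proposition 8.7, this forces $\la'$ to be obtained from $\la$ via the procedure $(\text{\rm A}'_i)$ for some $i$, so $\mu'=\mu-e_i$ (one box removed from the row indexed by $i$), and it supplies some $g\in H$ with $g\iv u g\in\eta_Q\iv(v)$. Equivalently, $u$ preserves the isotropic line $\ell:=g\ell_0$ stabilized by $gQ_Hg\iv$, and $v$ is identified with the induced action of $u$ on $\ell^\perp/\ell$ via the standard identification $M^{\io\th}\cong GL_{N-2}^{\io\th}$ transported through $g$.

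The crucial geometric input is the inclusion $\Ker(u-1)\subset L^\star$. For the standard representative $u_0^\ve$, $\Ker(u_0^\ve-1)$ is spanned by the $\ell(\mu)$ last basis vectors of the Jordan chains, all of which lie in $V_\ve$ by direct inspection of (9.2.1) and the analogous chains for $u_0^-$; transport by $h$ then gives $\Ker(u-1)\subset L^\star$ for arbitrary $u\in\SO_\la^\ve$. Since $u$ is unipotent, every $u$-stable isotropic line is contained in $\Ker(u-1)$, and we conclude $\ell\subset L^\star$. Hence $\bar L^\star:=L^\star/\ell\subset\ell^\perp/\ell$ is a maximal isotropic subspace preserved by $v$, and a short Jordan-type computation (using that $\ell$ sits in the block of $\Ker(u_0^\ve-1)$ indexed by $i$ via case $(\text{\rm A}'_i)$) shows $v|_{\bar L^\star}$ has Jordan type exactly $\mu'$. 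The natural bijection $L\mapsto L/\ell$ between maximal isotropics of $V$ containing $\ell$ and maximal isotropics of $\ell^\perp/\ell$ respects the two-component decomposition, so $\bar L^\star$ lies in the component of $OG(\ell^\perp/\ell)$ determined by $\ve$; after transporting via $g$ back to the reference space $\ell_0^\perp/\ell_0$ used to define $\SO_{\la'}^\pm$ in $M_H$, we conclude $v\in\SO_{\la'}^\ve$ and thus $\ve'=\ve$.

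The main obstacle is ensuring compatibility of the sign conventions in the last step: $\SO_\la^\pm$ in $H$ and $\SO_{\la'}^\pm$ in $M_H$ are defined via separate choices of Lagrangians (namely $V_\pm\subset V$ and the corresponding Lagrangians $V_\pm^M\subset\ell_0^\perp/\ell_0$), and one must verify that these are matched by the bijection $L\mapsto L/\ell$ composed with $g$-transport. This reduces to an $H$-equivariance check, which I would settle by choosing normalized data: e.g., $\ell_0=\mathrm{span}(e_1)$, so that $V_+/\ell_0=\mathrm{span}(\bar e_2,\ldots,\bar e_n)$ serves as the canonical $+$-Lagrangian $V_+^M$ for $M_H$. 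The facts that $H\subset SO(V)$ preserves components of $OG(V)$ and that $Q_H$ preserves the induced component structure on $OG(\ell_0^\perp/\ell_0)$ then yield the desired compatibility.
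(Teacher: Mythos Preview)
Your argument is correct and takes a more structural route than the paper's. The paper simply fixes the explicit representative $u_0^\ve$ from (9.2.1), picks one particular isotropic line (the terminal vector of a Jordan chain), and observes by inspection that the induced $\bar x$ on $\ell^\perp/\ell$ is again of the shape (9.2.1) for the smaller space, hence lies in $X^\ve_{\mu'}$. Implicitly this relies on the transitivity of $Z_H(u)$ on $\wt Y_{u,\SO'}/Q_H$ (Lemma 8.11(i)) to reduce the general case to that single computation.

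Your proof instead works for arbitrary $u$ and arbitrary admissible line $\ell$, bypassing transitivity by tracking the invariant Lagrangian $L^\star$ and the component structure of $OG(V)$ directly. The price is that you must justify two points the paper's explicit choice sidesteps. First, that the filtration level of $\ell$ in $\Ker(u-1)$ computed inside $V$ agrees with the level computed inside $L^\star$, so that $v|_{\bar L^\star}$ genuinely has type $\mu'$ rather than $\mu$ with a box removed from some other row; this is what your ``short Jordan-type computation'' must actually check. It follows once you note $\Ker(u-1)=\Ker((u-1)|_{L^\star})$ (since $\Ker(u-1)\subset L^\star$) and verify on the standard representative that $\Ker(u-1)\cap\Im(u-1)^{2b_i-1}=\Ker(u-1)\cap\Im((u-1)|_{L^\star})^{b_i-1}$, both being the span of the terminal $e$-vectors of blocks $1,\ldots,i$. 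Second, the sign compatibility you flag: with the normalization $\ell_0=\lp e_1\rp$ and $V_+^M=V_+/\ell_0$, the map $L\mapsto L/\ell_0$ sends $V_\pm$ to the two components of $OG(\ell_0^\perp/\ell_0)$ consistently (since $t_n$ stabilizes $\ell_0$ and descends to an element of $O\setminus SO$ on the quotient), so this goes through. Both points are short, and your approach then yields the conclusion without any representative-specific calculation.
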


\begin{proof}
Assume that $u \in \SO^+_{\la}$.  We choose a representative $u$ such that 
$x = u-1$ is given as in (9.2.1).  Then $\ol x \in \SO^{\pm}_{\la'}$ is 
obtained from $x$, for example, by replacing 
\begin{equation*}
f_1\mapsto f_2 \mapsto \cdots f_{\la_1} \mapsto e_{\la_1} \mapsto \cdots 
  \mapsto e_2 \mapsto e_1 \mapsto 0
\end{equation*} 
by 
\begin{equation*}
f_2 \mapsto \cdots \mapsto f_{\la_1} \mapsto e_{\la_1} \mapsto \cdots \mapsto e_2 \mapsto 0.
\end{equation*}
Thus $v = \ol x + 1$ is contained in a similar variety $X^+_{\la'}$ as 
$X^+_{\la}$ defined for $\la'$.  Hence $v \in \SO^+_{\la'}$.  If $u \in \SO^-_{\la}$, 
we can apply the same argument by replacing $X^+_{\la}$ by $X^-_{\la}$, and 
obtain that $v \in \SO^-_{\la'}$. 
The lemma is proved.    
\end{proof}

\para{9.5.}
Assume that $N \ge 1$ is an odd integer.  A pair $(\la, \tau)$ is called 
a signed partition if $\la = (\la_1, \dots, \la_N)$ is a partition of $N$, 
and if $\tau = (\tau_1, \dots, \tau_N)$ satisfies the condition (cf. 8.17) that 
\begin{enumerate}
\item
$\tau_i = \pm 1$, and $\tau_i = 1$ if $\la_i = 0$, 
\item 
$\tau_i = \tau_j$ if $\la_i = \la_j$, 
\item
$\tau_{i_0} = 1$ where $i_0$ is the index such that $\la_{i_0}$ is the largest 
odd number among $\la_1, \dots, \la_N$.
\end{enumerate}  
\par
We denote by $\Psi_N$ the set of signed partitions of $N$. 
Note that, by Proposition 7.8 (i), the set of $H$-orbits 
in $G^{\io\th}\uni$ is parametrized by $\SP_N$. 
We denote by $\SO_{\la}$ the $H$-orbit corresponding to $\la \in \SP_N$.  
Then as in 8.17, $H$-equivariant simple local system on $\SO_{\la}$ can 
be expressed as $\SE_{\tau}$ for $(\la, \tau) \in \Psi_N$, and the map 
$(\la, \tau) \mapsto (\SO_{\la}, \SE_{\tau})$ gives a bijection 
$\Psi_N \isom \SN_G$ if $N \ge 3$. 
\par
Next assume that $N \ge 0$ is an even integer. 
In this case, for each partition $\la \in \SP_N$, we prepare 
two copies $\la^{\pm}$, and assume that $\la^+ = \la^-$ if $\la$ is 
not an even partition. A signed partition $(\la^{\pm}, \tau)$ is defined 
similarly as above, for each $\la^+$ and $\la^-$.   We denote by 
$\Psi_N$ the set of signed partitions of $N$. Note that if $N = 0$, we 
regard the empty partition $\emptyset$ as an even partition, so we consider 
$(\emptyset^+, 1)$ and $(\emptyset^-,1)$.  
By Proposition 7.8 (ii), the set of $G^{\th}$-orbits in $G^{\io\th}\uni$ 
is parametrized by $\SP_N$. $G^{\th}$-orbit $\SO_{\la}$ is a single $H$-orbit unless 
$\la$ is an even partition, in which case, $\SO_{\la}$ splits into two $H$-orbits.
By 9.3, we denote those two $H$-orbits by $\SO^+_{\la}$ and $\SO^-_{\la}$.  
By abuse of the notation, we denote $\SO_{\la}$ by $\SO^+_{\la} = \SO^{-}_{\la}$ 
if $\la$ is not an even partition.  
Thus we have a bijective map  $\Psi_N \isom  \SN_G$ by 
$(\la^{\pm}, \tau) \mapsto (\SO^{\pm}_{\la}, \SE_{\tau})$, 
if $N \ge 2$.  

\para{9.6.}
Assume that $N$ is odd.  
Take integers $N_0 \ge 1, a \ge 0$ such that $N = 2a + N_0$.  For a fixed 
$\xi = (\nu, \s) \in \Psi_{N_0}$, we define a map 
$\vG_{\xi} : \SP_a \to \Psi_N$ as follows; write 
$\nu = (\nu_1, \dots, \nu_N), \s = (\s_1, \dots, \s_N)$ by putting 
$\nu_i = 0, \s_i = 1$ for $i > N_0$.  For each 
$\mu = (\mu_1, \dots, \mu_N) \in \SP_a$, define integers $\la_1, \dots, \la_N$ by 

\begin{equation*}
\tag{9.6.1}
\la_i = \nu_i + 2\mu_i \quad \text{ for $i = 1, \dots, N$.}
\end{equation*}   
Then $\la = (\la_1, \dots, \la_N) \in \SP_N$, and the pair $(\la, \s)$ gives 
a well-defined element in $\Psi_N$.  By definition, $\vG_{\xi}(\mu) = (\la, \s)$.
In the case where $N$ is even, the map $\vG_{\xi} : \SP_a \to \Psi_N$ is defined
similarly, but we consider $\xi^{\pm} = (\nu^{\pm}, \s)$, and put 
$\vG_{\xi^{\pm}}(\mu) = (\la^{\pm}, \s)$. 
\par
Let $N \ge 0$ be an integer. 
An element $(\la, \tau) \in \Psi_N$ (or $(\la^{\pm},\tau) \in \Psi_N$) 
is called cuspidal if 
\par\medskip
\begin{enumerate}
\item \
$\la_i - \la_{i+1} \le 2$ for $i = 1, \dots, N$ (here we put $\la_{N+1} = 0$),  
\item \
If $\la_i - \la_{i+1} = 2$, then $\tau_i \ne \tau_{i+1}$. 
\end{enumerate}
\par\medskip
We denote by $\SC_N$ the set of triples $(N_0, \nu, \s)$ such that 
$N - N_0 \in 2\BZ_{\ge 0}$ and $\xi = (\nu,\s) \in \Psi_{N_0}$ is a cuspidal element. 
For $c \in \SC_N$, we denote by $\Psi_N^{(c)}$ the image of 
$\vG_{\xi} : \SP_a \to \Psi_N$, where 
$a = (N - N_0)/2$. 
Clearly $\vG_{\xi}$ gives a bijection $\SP_a \isom \Psi_N^{(c)}$. 
We also denote by $\Psi_N^{(0)}$ the set of cuspidal elements in $\Psi_N$. 

\begin{prop}  
There exists a partition  
\begin{equation*}
\tag{9.7.1}
\Psi_N = \coprod_{c \in \SC_N}\Psi_N^{(c)}.
\end{equation*}
\end{prop}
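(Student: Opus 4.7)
The plan is to prove Proposition 9.7 by constructing an explicit inverse to the map
$\coprod_{c \in \SC_N}\vG_\xi \colon \coprod_c \SP_a \to \Psi_N$.  Concretely, given $(\la,\tau) \in \Psi_N$ I will produce a unique cuspidal $\xi = (\nu,\s)$ and partition $\mu$ (of size $a = (N - |\nu|)/2$) with $\vG_\xi(\mu) = (\la,\tau)$; this yields simultaneously the surjectivity of $\coprod_c \Psi_N^{(c)} \to \Psi_N$ and the disjointness of the pieces.

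The inverse is built by descending induction on $i = N, N-1, \dots, 1$, with the boundary conventions $\la_{N+1} = \nu_{N+1} = \mu_{N+1} = 0$ and $\tau_{N+1} = 1$.  Writing $d_i = \la_i - \la_{i+1} \ge 0$, I define the increment $\mu_i - \mu_{i+1}$ by three cases: it equals $(d_i-1)/2$ if $d_i$ is odd (so $\nu_i - \nu_{i+1} = 1$), equals $d_i/2$ if $d_i$ is even and $\tau_i = \tau_{i+1}$ (so $\nu_i - \nu_{i+1} = 0$), and equals $d_i/2 - 1$ if $d_i$ is even and $\tau_i \ne \tau_{i+1}$ (so $\nu_i - \nu_{i+1} = 2$).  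I then set $\s_i = \tau_i$ when $\nu_i > 0$ and $\s_i = 1$ otherwise.  Nonnegativity of $\mu_i - \mu_{i+1}$ is automatic except in the third case; there $\tau_i \ne \tau_{i+1}$ combined with condition (ii) of a signed partition forces $\la_i > \la_{i+1}$, hence $d_i \ge 1$, and evenness of $d_i$ then forces $d_i \ge 2$.

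It remains to verify that $(\nu,\s)$ lies in $\Psi_{N_0}^{(0)}$ with $N_0 = |\nu|$ and that $\vG_\xi(\mu) = (\la,\tau)$.  By construction $\nu_i - \nu_{i+1} \in \{0,1,2\}$, so $\nu$ is a partition satisfying cuspidality condition~(i); the case $\nu_i - \nu_{i+1} = 2$ occurs precisely when $\tau_i \ne \tau_{i+1}$, hence $\s_i \ne \s_{i+1}$, giving~(ii).  The identity $\vG_\xi(\mu) = (\la,\tau)$ follows from $\la_i = \nu_i + 2\mu_i$ and the prescription $\s_i = \tau_i$ on indices where $\nu_i > 0$.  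The condition $\s_{j_0} = 1$ at the index of the largest odd part of $\nu$ is preserved because $\la_i$ and $\nu_i$ have the same parity (as $\la_i - \nu_i = 2\mu_i$), so this index $j_0$ coincides with the corresponding index $i_0$ for $\la$, and $\s_{j_0} = \tau_{i_0} = 1$.  For the $N$-even case, the labels $\la^\pm$ and $\nu^\pm$ correspond bijectively since evenness of $\la$ is equivalent to evenness of $\nu$, and $\vG_{\xi^\pm}$ preserves the $\pm$ label by definition.

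The main point requiring care is the ``equal-parts-equal-signs'' axiom for $(\nu,\s)$: namely that $\s_i = \s_j$ whenever $\nu_i = \nu_j$.  Since $\nu$ is weakly decreasing this reduces to the case $\nu_i = \nu_{i+1}$, which by the algorithm occurs exactly when $d_i$ is even and $\tau_i = \tau_{i+1}$; in that case $\s_i = \s_{i+1}$ by construction (with the zero-tail convention consistent because a trailing run of zeros in $\nu$ forces all relevant $\tau_j$ to equal $\tau_{N+1} = 1$).  Once this verification is in place, the inverse map is well-defined on all of $\Psi_N$ and inverts $\coprod_c \vG_\xi$, establishing the proposition.
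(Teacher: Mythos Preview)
Your argument is correct and takes a genuinely different route from the paper.  The paper proves the proposition by induction on $N$: if $(\la,\tau)$ is not cuspidal one finds an index $i$ where the cuspidality condition fails, replaces $\la_i$ by $\la_i - 2$, and invokes the inductive hypothesis for $\Psi_{N-2}$ to locate the cuspidal datum.  You instead give a closed-form inverse, reading off all the increments $\nu_i - \nu_{i+1}$ and $\mu_i - \mu_{i+1}$ simultaneously from the parity of $d_i = \la_i - \la_{i+1}$ and the comparison of $\tau_i,\tau_{i+1}$.  Your approach is more explicit and yields a direct formula for the cuspidal support of any $(\la,\tau)$ without recursion; the paper's approach is closer in spirit to the peeling procedure that drives the inductive arguments elsewhere in \S8--9.

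One point deserves a sentence more than you give it.  You claim that the constructed $(\nu,\s,\mu)$ is the \emph{unique} cuspidal decomposition of $(\la,\tau)$, and you need this for disjointness, but you only verify $\vG_\xi(\mu) = (\la,\tau)$ (the right-inverse property).  You should also check the left-inverse property: starting from any cuspidal $(\nu',\s')$ and any $\mu'$, your algorithm applied to $\vG_{(\nu',\s')}(\mu')$ returns $(\nu',\s',\mu')$.  This is immediate from the same trichotomy---writing $d_i = (\nu'_i - \nu'_{i+1}) + 2(\mu'_i - \mu'_{i+1})$ with $\nu'_i - \nu'_{i+1} \in \{0,1,2\}$, oddness of $d_i$ forces the value $1$; for even $d_i$ the signed-partition axiom (ii) on $(\nu',\s')$ gives $\tau_i = \tau_{i+1}$ when $\nu'_i = \nu'_{i+1}$, while cuspidality condition (ii) gives $\tau_i \ne \tau_{i+1}$ when $\nu'_i - \nu'_{i+1} = 2$---but it should be stated, since without it you have only shown $\Psi_N = \bigcup_c \Psi_N^{(c)}$, not that the union is disjoint.
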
 

\begin{proof}
For simplicity we assume that $N$ is odd.  The case where
$N$ is even is dealt similarly.  
Assume that $(\la,\tau) \in \Psi_N$ is not cuspidal.  
Then there exists 
some $i$ such that $\la_i >  \la_{i+1} + 2$ or that $\la_i = \la_{i+1} + 2$ 
with $\tau_i = \tau_{i+1}$. Put $\la' = (\la'_1, \dots, \la'_N) \in \SP_{N-2}$, 
where $\la_i' = \la_i -2, \la'_j = \la_j$ for $j \ne i$.  If we put $\tau' = \tau$,    
we have $(\la', \tau') \in \Psi_{N-2}$ by our assumption.
By induction on $N$, we may assume (9.7.1) holds for $\Psi_{N-2}$.  Hence 
there exists $c = (N_0, \nu, \s) \in \SC_{N-2}$ such that 
$(\la',\tau') \in \Psi^{(c)}_{N-2}$. In particular, 
$(\la',\tau') = \vG_{\xi}(\mu')$ for some $\mu' \in \SP_a$, where 
$\xi = (\nu, \s)$ and $a = (N -2 - N_0)/2$.  Here $\mu' = (\mu_1', \dots, \mu'_N)$
satisfies the condition that $\mu'_{i-1} \ge \mu'_i +1$, and if we define 
$\mu = (\mu_1, \dots, \mu_N) \in \SP_{a+1}$ by $\mu_i = \mu'_i +1, \mu_j = \mu'_j$ 
for $j \ne i$, $(\la,\tau) = \vG_{\xi}(\mu)$.  Hence 
$(\la,\tau) \in \Psi_N^{(c)}$. $c$ is determined by $(\la,\tau)$ uniquely. In fact, 
suppose that $(\la,\tau) \in \Psi^{(c')}_N$ for another $c'$. The above 
argument shows, since $(\nu,\s)$ is cuspidal, that $(\la', \tau') \in \Psi_{N-2}^{(c')}$.     
Hence by induction on $N$, we have $c = c'$.  
\end{proof}

\para{9.8.} 
We have a natural parametrization of $S_a\wg$ by $\SP_a$.  We denote by 
$\r_{\mu}$ the irreducible representation of $S_a$ corresponding to $\mu \in \SP_a$.
(Here the partition $(a)$ corresponds to the unit representation, $(1^a)$ corresponds 
to the sign representation.) 
\par
Under the identification $\Psi_N \isom  \SN_G$ in 9,5, we denote by $\SN_G^{(0)}$ the 
image of $\Psi_N^{(0)}$.
We also denote by $\SN_G^{(c)}$ the subset of $\SN_G$ which is the image of 
$\Psi_N^{(c)}$.  For each $c = (N_0, \nu, \s) \in \SC_N$ with $\xi = (\nu,\s)$, 
the map $\vG_{\xi}$ induces a bijection $\wt\vG_c : S_a\wg \isom \SN_G^{(c)}$, combined 
with $\SP_a \simeq S_a\wg$. Thus, by Proposition 9.7, 
we have a bijective map  

\begin{equation*}
\tag{9.8.1}
\wt\vG : \coprod_{c \in \SC_N}S_a\wg \isom \coprod_{c \in \SC_N}\SN_G^{(c)} = \SN_G
\end{equation*}
where $a = (N - N_0)/2$ for $c = (N_0, \nu, \s) \in \SC_N$.
\par 
Recall the definition of $\SN_G^{(\xi)}$ in 5.7 for 
$\xi = (L \subset P, \SO_L, \SE_1^{\dag}) \in \ScS_G$. 
The following result gives a combinatorial description of the generalized 
Springer correspondence for the symmetric space associated to orthogonal groups. 

\begin{thm}  
Let the notations be as above. 
\begin{enumerate}
\item
$\SN_G^{(0)}$ coincides with the set of cuspidal pairs in $\SN_G$. 
\item 
$c = (N_0, \nu,\s) \mapsto \xi = (L \subset P, \SO_L, \SE^{\dag}_1)$ gives a bijection 
$\SC_N \isom \ScS_G$ such that $\SN_G^{(c)} = \SN_G^{(\xi)}$, 
where $L$ is such that $L_H \simeq (GL_1)^a \times SO_{N_0}$,
and $(\SO_L, \SE^{\dag}_1)  = (\SO_{\nu}, \SE^{\dag}_{\s})$. 
\item
The map $\wt\vG$ in \rm{(9.8.1)} gives a bijection in Theorem 5.2 (ii). 
\end{enumerate}
\end{thm}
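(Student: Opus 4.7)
The plan is to prove (i), (ii), (iii) by induction on $N$, with the inductive step controlled by the restriction theorem (Corollary 6.11) applied to the maximal $\th$-stable parabolic subgroup $Q$ of $G$ whose Levi $M$ satisfies $M_H \simeq GL_1 \times SO_{N-2}$. The key geometric ingredient is the explicit description of the permutation module $\ve_{u,v}$ given by Proposition 8.16; the key representation-theoretic ingredient is the multiplicity-free branching rule $S_a \downarrow S_{a-1}$, available once Theorem 8.21 identifies $\SA_{\SE_1} \simeq \Ql[S_a]$. When $N$ is even, the distinction between $(B, T)$ and $(B_1, T)$ matches on the combinatorial side via the $\pm$-decoration on even partitions from 9.5, the sign compatibility being ensured by Lemma 9.4.

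At induction step $N$, first handle (ii) and (iii) for $\xi \in \ScS_G$ with $a = (N - N_0)/2 \ge 1$. For such $\xi$, Lemma 2.7 yields $L_H \simeq (GL_1)^a \times SO_{N_0}$ with $N_0 < N$, so the inductive hypothesis (i) at $N_0$ identifies $(\SO_L, \SE_1^\dag) = (\SO_\nu, \SE_\s^\dag)$ for a unique $(\nu, \s) \in \Psi_{N_0}^{(0)}$, setting up the restricted bijection $\{c \in \SC_N : a \ge 1\} \isom \{\xi \in \ScS_G : a \ge 1\}$. For such fixed $\xi$, argue (iii) by nested induction on $a$; the anchor is Theorem 8.21 (iii) together with Proposition 8.20 identifying the unit representation with $(\SO_0, \SE_0) = \wt\vG_c((a))$. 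For general $\mu \in \SP_a$, apply Corollary 6.11 to $Q$:
\begin{equation*}
\lp \r_\mu|_{\SA'_{\SE_1}}, \r_{\mu'} \rp = \lp \tau \otimes (\tau')^*, \ve_{u,v} \rp,
\end{equation*}
where $(\SO_\la, \SE_\tau) \leftrightarrow \r_\mu$ and $(\SO_{\la'}, \SE_{\tau'}) \leftrightarrow \r_{\mu'}$. The branching rule makes the left side $1$ iff $\mu'$ is $\mu$ with one box removed; Proposition 8.16 makes the right side $1$ iff $\la'$ arises from $\la$ by case $(A'_i)$ of 8.3 with $(\tau, \tau') \in D$. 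By the inductive hypothesis for $M$ (at $N - 2 < N$), $(\la', \tau') = (\nu + 2\mu', \s)$, and comparing the two descriptions over all $\mu'$ together with multiplicity-freeness pins down $(\la, \tau) = (\nu + 2\mu, \s) = \wt\vG_c(\mu)$.

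At this stage, all non-cuspidal pairs in $\SN_G$ are covered by $\coprod_{\xi : a \ge 1} \SN_G^{(\xi)}$, and all $(\la, \tau) \in \Psi_N \setminus \Psi_N^{(0)}$ are covered by $\coprod_{c : a \ge 1} \Psi_N^{(c)}$ by Proposition 9.7. Since the bijection $\Psi_N \isom \SN_G$ of 9.5 matches these two unions, the remaining signed partitions $\Psi_N^{(0)}$ correspond exactly to the cuspidal pairs in $\SN_G$, proving (i). The $a = 0$ case of (ii) then follows: each $c = (N, \nu, \s)$ with $(\nu, \s) \in \Psi_N^{(0)}$ yields $\xi = (G \subset G, \SO_\nu, \SE_\s^\dag) \in \ScS_G$ by (i) at $N$, and the singleton $\SN_G^{(\xi)} = \{(\SO_\nu, \SE_\s)\} = \wt\vG_c(\emptyset)$ completes (iii).

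The hard part will be the sign bookkeeping in the inductive step for (iii) with $a \ge 1$: one must verify, separately in cases (I) and (II) of 8.12, that the constraint $(\tau, \tau') \in D$ combined with the inductive form $(\la', \tau') = (\nu + 2\mu', \s)$ forces $\tau$ into the prescribed shape --- namely, $\tau_i = \s_i$ on entries originating from $\nu$ and $\tau_i = 1$ on the extra entries in $\la = \nu + 2\mu$ --- and that distinct choices of the index $i$ in $(A'_i)$ biject with distinct box-removals from $\mu$ under the substitution $\la = \nu + 2\mu$. The combinatorial cuspidality of $(\nu, \s)$ is essential for preventing accidental sign collisions that would otherwise spoil the multiplicity-freeness on the geometric side.
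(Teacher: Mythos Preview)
Your approach is essentially the paper's: induction on $N$, restriction to the maximal $\th$-stable parabolic $Q$ with $M_H \simeq GL_1 \times SO_{N-2}$ via Corollary 6.11, matching the geometric side (Propositions 8.7, 8.16) against the branching rule $S_a \downarrow S_{a-1}$ once Theorem 8.21 gives $\SA_{\SE_1}\simeq\Ql[S_a]$, with Lemma 9.4 tracking the $\pm$ decoration in the even case. The deduction of (i) and (ii) from (iii) via Proposition 9.7 is also the paper's.

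One point needs care. Your claim that ``comparing over all $\mu'$ together with multiplicity-freeness pins down $(\la,\tau)$'' only works for $a\ge 3$: in Young's lattice the set of one-box removals determines $\mu$ uniquely when $|\mu|\ge 3$, but for $a=2$ both $\mu=(2)$ and $\mu=(1^2)$ have the single removal $\{(1)\}$, so the branching constraint alone cannot separate $\r_{(2)}$ from $\r_{(1^2)}$. The paper treats $a=1$ and $a=2$ by hand: $a=1$ is immediate from Theorem 8.21; for $a=2$ it invokes Lemma 8.19 to list the two admissible Jordan types, assigns $\r_{(2)}$ to $(\nu_1+4,\nu_2,\dots)$ via Theorem 8.21, and obtains $\r_{(1^2)}$ by elimination (then reads off $\tau$ from Proposition 8.16). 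You should insert this step. Relatedly, ``nested induction on $a$'' is not the right description, since $a$ is determined by $\xi$; the actual mechanism is just the outer induction on $N$ (passing to $M$ replaces $N$ by $N-2$ and $a$ by $a-1$), supplemented by the separate $a\le 2$ arguments.
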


\begin{proof}
We prove the theorem by induction on $N$.
In the case where $N = 2, 3$, the claim (iii) is verified directly, see Appendix.     
First consider the case where $N$ is odd. 
Assume that the theorem 
holds for $L \ne G$. We show (iii). 
Choose $(L \subset P, \SO_L, \SE^{\dag}_1) \in \ScS_G$ with $L \ne G$.  By (i) for $L$, 
we can attach $(N_0, \nu,\s) \in \SC_N$ to $(L \subset P,\SO_L,\SE^{\dag}_1)$. 
Take $\r = \r_{\mu} \in S_a\wg$, and let $(\SO, \SE) \in \SN_G$ be the pair which belongs to
$(L \subset P,\SO_L,\SE^{\dag}_1)$ corresponding to $\r$. Put $(\la, \tau) = \vG_{\xi}(\mu)$ 
for $\xi = (\nu,\s)$.  In order to prove (iii), it is enough to see that 
$\SO = \SO_{\la}, \SE = \SE_{\tau}$.  Let $Q_H$ be the maximal parabolic subgroup 
of $H$ containing $P$ and $M_H$ its Levi subgroup containing $L$ as in Section 8. 
Let $\r' = \r_{\mu'}$ be an irreducible character of 
$S_{a-1}$, and $(\SO',\SE') \in \SN_{M}$ the pair which belongs to 
$(L \subset M \cap P,\SO_L, \SE^{\dag}_1)$ 
corresponding to $\r'$. By induction hypothesis, one can write 
as $(\SO', \SE') = (\SO'_{\la'}, \SE'_{\tau'})$ with $(\la', \tau') = \vG_{\xi}(\mu')$.
By Corollary 6.11 and Proposition 8.7, if $\lp \r, \r'\rp \ne 0$, then the Jordan type of $\SO$ is 
obtained from $\la'$ by the inverse operation of $(\text{A}'_i)$ for some $i$. This 
is true for any $\r'$ such that $\lp \r, \r' \rp \ne 0$.  In the case where $a \ge 3$,
this condition determines the Jordan type of $\SO$ uniquely, and we have $\SO = \SO_{\la}$. 
Moreover, in this case, by Proposition 8.16, $\SE$ is determined uniquely from 
$\SE'_{\tau'}$, namely we have $\SE = \SE_{\tau}$. Thus we have proved (iii) for 
$a \ge 3$.  
\par
Now assume that $a = 1$.  In this case, $S_1 = \{ 1\}$, and 
$\pi_*K_{\ol\SE_1}|_{X\uni} = \IC(\ol\SO_{\la'}, \SE_{\tau'})$ up to shift, 
where $\la' = (\nu_1 + 2, \nu_2, \dots, \nu_N), \tau' = (\s_1, \dots, \s_N)$
by Theorem 8.21. Hence the claim holds. Next assume that $a = 2$.
In this case, $S_2\wg$ has two representations, 
the unit representation $\r_{(2)}$ and the sign representation $\r_{(1^2)}$. 
By Lemma 8.19, the possibility for $(\SO, \SE)$ belonging to 
$(L \subset P,\SO_L, \SE^{\dag}_1)$ 
is only $\SO = \SO_{\la}$ with $\la = (\nu_1 + 4, \nu_2, \dots, \nu_N)$ 
or $\la = (\nu_1 + 2, \nu_1 + 2, \nu_3, \dots, \nu_N)$.    
By Theorem 8.21, $\r_{(2)}$ corresponds to $(\SO_0,\SE_0) = (\SO_{\la}, \SE_{\tau})$, 
where $\la = (\nu_1 + 4, \nu_2, \dots, \n_N)$ and $\tau = (\s_1, \dots, \s_N)$. 
Hence $\r_{(1^2)}$ corresponds to $(\SO_{\la}, \SE_{\tau})$ with 
$\la = (\nu_1 +2, \n_2 + 2, \nu_3, \dots, \nu_N)$.  The restriction of $\r_{(1^2)}$ 
on $S_1$ is $\r_{(1)}$ which corresponds to $(\SO_{\la'}, \SE_{\tau'})$. By 
a similar argument as above, $\tau$ is determined from $\tau'$, namely 
we see that  $(\la, \tau) = \vG_{\xi}(1^{(2)})$.  Thus the claim holds for $a = 2$.
Hence (iii) is proved for odd $N$. 
\par
Next consider the case where $N$ is even.  
We choose $(L \subset P, \SO_L, \SE_1^{\dag})\in \ScS_G$. 
Assume that the Jordan type of $\SO_L$ is not an even partition. Then a similar argument 
as before works.  (Note that in that case the Jordan type of $\SO$ is not even 
for any $(\SO, \SE)$ belonging to $(L \subset P, \SO_L, \SE_1^{\dag})$ by Lemma 8.19.)
Thus we assume that the Jordan type of $\SO_L$ is even.  Then by the same reason, 
the Jordan type of $\SO$ is even for such $(\SO, \SE)$. 
The previous argument shows that if $(\SO,\SE)$ corresponds to $\r = \r_{\mu} \in S_a\wg$, 
then the Jordan type of $\SO$ is $\la$, and 
$\SE = \SE_{\tau}$ for $\vG_{\xi}(\mu) = (\la, \tau)$.
But we have to show that $\SO = \SO_{\la}^{\pm}$ if 
$\vG_{\xi^{\pm}}(\mu) = (\la^{\pm}, \tau)$.   
This is done by using Lemma 9.4. Thus (iii) is proved for even $N$ similarly as above.    
\par
We show (i) and (ii).  Assume that $N$ is even.  If $N_0 = 0$, then $a = n$, 
and there exists two triples $c^+, c^- \in \SC_N$, where 
$c^{\pm} = (0, \emptyset^{\pm}, 1)$. We have $\Psi^{(c^{\pm})} \simeq \SP_n$.
On the other hand, 
$N_0 = 0$ corresponds to the case where $L = T$, $\SO_L = \{ 1\}$ and 
$\SE_1^{\dag} = \Ql$ : the constant sheaf.  Hence there exist two triples 
$(T \subset B, \{1\}, \Ql), (T \subset B_1, \{1\}, \Ql) \in \ScS_G$, 
where $B_1 = t_n B t_n\iv$. By (iii), the Springer correspondence gives 
a bijection $\Psi^{(c^{\pm})} \simeq \SN_G^{(c^{\pm})}$.   
For other cases (for any $N$), $c = (N_0, \nu,\s) \in \SC_N$ determines a unique 
triple $(L \subset P, \SO_L, \SE_1^{\dag}) \in \ScS_G$.  By (iii), 
and by Proposition 9.7, this proves (i).  (ii) follows from  (i).
The theorem is proved.
\end{proof}

\para{9.10.}
Following the discussion in [L1, 10.6], we shall give 
a formula which describes the cardinality of $\Psi^{(0)}_N$, namely, 
the number of the cuspidal pairs $(\SO,\SE)$ in $\SN_G$ .  
Note that the computation below is much simpler compared 
to the case of generalized Springer correspondence for orthogonal groups. 
\par
Let $t$ be an indeterminate.  Recall that the partition function 
$p(n) = |\SP_n|$ is defined by the formula 
$\prod_{i = 1}^{\infty}(1 - t^i)\iv = \sum_{n \ge 0}p(n)t^n$.  
We define functions $q_1(n), q_2(n)$ for $n \in \BZ_{\ge 0}$ as follows;

\begin{align*}
\prod_{i = 1}^{\infty}(1 + t^i)^2 &= \sum_{n \ge 0}q_1(n)t^n, \\
\prod_{i = 1}^{\infty}(1 + t^{2i}) &= \sum_{n \ge 0}q_2(n)t^n. 
\end{align*} 

\begin{prop}  
The cardinality $|\Psi^{(0)}_N| = |\SN_G^{(0)}|$ of cuspidal pairs 
is given by the formula, 

\begin{equation*}
|\Psi^{(0)}_N| = \begin{cases}
                   \displaystyle\frac{1}{2} q_1(N) &\quad\text{ if $N \ge 3$ : odd, } \\ \\
                   \displaystyle\frac{1}{2} q_1(N) + \frac{3}{2}q_2(N)
                       &\quad\text{ if $N \ge 2$ : even. }
                 \end{cases}
\end{equation*}
\end{prop}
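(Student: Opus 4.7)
The plan is to reduce the enumeration of $\Psi_N^{(0)}$ to classical partition generating functions. First I would unpack the combinatorial structure of $(\la, \tau) \in \Psi_N^{(0)}$: write $\la$ with distinct positive values $a_1 > a_2 > \cdots > a_h$ and multiplicities $m_j \ge 1$; condition (i) of 9.6 then reads $d_j := a_j - a_{j+1} \in \{1,2\}$ for $j = 1,\ldots, h$, where we set $a_{h+1} := 0$. Let $t_j \in \{\pm 1\}$ denote the common value of $\tau$ on the $j$-th block; extending by $t_{h+1} := 1$ (matching $\tau_i = 1$ when $\la_i = 0$), condition (ii) becomes $t_j \ne t_{j+1}$ precisely when $d_j = 2$. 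Recursively determining $t_h, t_{h-1}, \ldots, t_1$ from $t_{h+1}$ shows that for each cuspidal $\la$ the number of admissible $\tau$ equals $2^{u_\la}$ with $u_\la = \#\{j : d_j = 1\}$.

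Second, let $A(N)$ count cuspidal pairs $(\la, \tau)$ satisfying only the flip constraints (ignoring $\tau_{i_0} = 1$ and the $\la^\pm$ bookkeeping), and $A^{\mathrm{ev}}(N)$ the same restricted to even $\la$. Partition conjugation $\la \mapsto \la'$ identifies cuspidal partitions of $N$ with partitions of $N$ each of whose parts occurs at most twice, sending $d_j$ to the multiplicity of the $j$-th distinct part of $\la'$. Summing $2^{u_\la}$ yields
\begin{equation*}
\sum_{N \ge 0} A(N)\, t^N
   = \prod_{i \ge 1}\bigl(1 + 2 t^i + t^{2i}\bigr)
   = \prod_{i \ge 1}(1+t^i)^2,
\end{equation*}
so $A(N) = q_1(N)$. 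For $A^{\mathrm{ev}}(N)$, evenness together with cuspidality forces $d_j = 2$ for every $j$, so the distinct parts are $\{2, 4, \ldots, 2h\}$ with $m_j \ge 1$ and $\tau$ uniquely determined; hence
\begin{equation*}
\sum_{N \ge 0} A^{\mathrm{ev}}(N)\, t^N
   = \sum_{h \ge 0}\prod_{i=1}^{h}\frac{t^{2i}}{1 - t^{2i}}
   = \prod_{i \ge 1}(1 + t^{2i})
\end{equation*}
by Euler's identity $\prod_{n \ge 1}(1+q^n) = \sum_{k \ge 0} q^{k(k+1)/2}/\prod_{i=1}^k(1-q^i)$ specialized at $q = t^2$, so $A^{\mathrm{ev}}(N) = q_2(N)$.

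Third, I would incorporate the condition $\tau_{i_0} = 1$ (when applicable) and the $\la^\pm$ doubling (for even $\la$ with $N$ even). Iterating the recursion in the first step gives
\begin{equation*}
t_{j_0} = (-1)^{\#\{k \ge j_0 : d_k = 2\}} \prod_{k \ge j_0,\, d_k = 1} \ve_k,
\end{equation*}
where $\ve_k \in \{\pm 1\}$ encodes the free choice at $k$. Hence $t_{j_0} = 1$ is a non-degenerate linear equation on the $\ve_k$ and exactly halves the admissible count, provided some $k \ge j_0$ satisfies $d_k = 1$. Otherwise every $d_k = 2$ for $k \ge j_0$, forcing $a_{j_0} = 2(h - j_0 + 1)$ to be even, contradicting that $a_{j_0}$ is odd. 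So whenever $\la$ has an odd part, the $\tau_{i_0} = 1$ constraint halves the $\tau$-count. For odd $N$ every cuspidal $\la$ has an odd part, giving $|\Psi_N^{(0)}| = A(N)/2 = q_1(N)/2$. For even $N$, splitting by parity of $\la$ yields $2 A^{\mathrm{ev}}(N)$ from even $\la$ (two copies $\la^\pm$, no $\tau_{i_0}$ constraint) plus $(A(N) - A^{\mathrm{ev}}(N))/2$ from $\la$ with an odd part, summing to $q_1(N)/2 + \tfrac{3}{2} q_2(N)$.

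The main obstacle is the consistency check in the third step: the constraint $\tau_{i_0} = 1$ must neither force a contradiction nor be automatically satisfied, and this reduces to the simple parity observation that $a_{j_0}$ odd implies at least one $d_k = 1$ for $k \ge j_0$. Everything else is generating-function bookkeeping plus Euler's classical identity.
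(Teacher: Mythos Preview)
Your argument is correct, but it takes a genuinely different route from the paper's. The paper does not count $\Psi_N^{(0)}$ directly. Instead it computes generating functions for the total $|\Psi_N|$, split according to whether $\la$ is even or not: setting $x_N' = \#\{(\la,\tau)\in\Psi_N : \la \text{ not even}\}$ and $x_N'' = \#\{(\la,\tau)\in\Psi_N : \la \text{ even}\}$, one finds
\[
1 + \sum_{N\ge 1}(2x_N' + x_N''/2)\,t^N = \prod_{i\ge 1}(1-t^{2i})^{-1}\prod_{i\ge 1}(1+t^i)^2,
\qquad
1 + \sum_{N\ge 2}(x_N''/2)\,t^N = \prod_{i\ge 1}(1-t^{2i})^{-1}\prod_{i\ge 1}(1+t^{2i}),
\]
so that $x_N = x_N' + x_N'' = \sum_{0\le a\le N/2} p(a)\bigl(\tfrac12 q_1(N-2a) + \tfrac32 q_2(N-2a)\bigr)$ for even $N$ (and the analogue for odd $N$). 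It then invokes the partition $\Psi_N = \coprod_c \Psi_N^{(c)}$ of Proposition~9.7 to obtain the recursion $|\Psi_N| = \sum_{0\le a\le N/2} p(a)\,|\Psi^{(0)}_{N-2a}|$, and concludes by induction on $N$.

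Your approach, by contrast, attacks $\Psi_N^{(0)}$ head-on via the conjugate-partition bijection and Euler's identity, and then adjusts for the $\tau_{i_0}=1$ normalization and the $\la^\pm$ doubling by a parity argument. This is more elementary and self-contained: it uses no induction and does not appeal to Proposition~9.7 or the Springer correspondence. The paper's route, on the other hand, makes the identity $|\Psi_N| = \sum_a p(a)\,|\Psi^{(0)}_{N-2a}|$ the conceptual centerpiece, which is natural given the structure of the paper but less direct as pure enumeration. One small remark on your write-up: your displayed formula for $t_{j_0}$ is correct only if $\ve_k$ denotes the ratio $t_k t_{k+1}$ rather than the value $t_k$ itself; either parametrization gives $2^{u_\la}$ choices and the halving conclusion, but it is worth making the choice of $\ve_k$ explicit.
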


\begin{proof}
Let $x_N$ be the number of elements in $\Psi_N$, and $x'_N$ (resp. $x_N''$)  
the number of elements $(\la, \tau) \in \Psi_N$ such that $\la$ is not an even partition 
(resp. even partition). Hence $x_N = x_N' + x_N''$. 
Then as in [L1, 10.6], 

\begin{align*}
1 + \sum_{N \ge 1}\biggl( 2x'_N + x''_N/2\biggr)t^N &= 
  \sum_{i_1, i_2, \dots \ge 0}2^{\sharp\{a : i_a> 0\}}t^{1i_1 + 2i_2 + 3i_3 + \cdots } \\
  &= \bigl(1 + \sum_{i_1 \ge 1}2t^{i_1}\bigr)\bigl(1 + \sum_{i_2 \ge 1}2t^{i_2})\cdots  \\
  &= \frac{1 + t}{1 - t}\frac{1 + t^2}{1 - t^2} \cdots \\
  &= \prod_{i = 1}^{\infty}(1 - t^{2i})\iv \prod_{i = 1}^{\infty}(1 + t^i)^2.  
\end{align*} 
It follows that 
\begin{equation*}
\tag{9.11.1}
2x_N' + x_N''/2 = \sum_{0 \le a \le N/2}p(a)q_1(N - 2a). 
\end{equation*}

By a similar computation shows that 
\begin{align*}
1 + \sum_{N \ge 2; \text{even}}(x_N''/2)t^N &= 
        \sum_{i_1, i_2, \dots\ge 0}2^{\sharp\{ a: i_a > 0\}}t^{2(1i_1 + 2i_2 + \cdots )}  \\
    &= \bigl(1 + \sum_{i_1 \ge 1}2t^{2i_1}\bigr)\bigl(1 + \sum_{i_2 \ge 1}2t^{4i_2}\bigr)\cdots \\
    &= \frac{1 + t^2}{1 - t^2}\frac{1 + t^4}{1 - t^4}\cdots \\
    &= \prod_{i = 1}^{\infty}(1 - t^{2i})\iv \prod_{i = 1}^{\infty}(1 + t^{2i}).
\end{align*}
Hence we have

\begin{equation*}
\tag{9.11.2}
x_N''/2 = \sum_{0 \le a \le N/2}p(a)q_2(N -2a). 
\end{equation*}
Now assume that $N$ is even.  Then by (9.11.1) and (9.11.2), we have

\begin{equation*}
x_N = x_N' + x_N'' = \sum_{0 \le a \le N/2}p(a)
                     \biggl\{ \frac{1}{2}q_1(N - 2a) + \frac{3}{2}q_2(N-2a)\biggr\}.
\end{equation*}

On the other hand, by the generalized Springer correspondence (Theorem 9.9), we have

\begin{equation*}
|\Psi_N| = \sum_{0 \le a \le N/2}|S_a\wg||\Psi^{(0)}_{N-2a}|. 
\end{equation*}
Then by induction on $N$, we obtain the formula for $|\Psi^{(0)}_N|$. 
The case where $N$ is odd si similar (in this case, $x_N'' = 0$). 
\end{proof}
\par\bigskip
\section{ Induction}

\para{10.1.}
We consider the group $G$ and $\th : G \to G$ as in (2.1.4).
We fix a $\th$-stable pair $(B, T)$ of a Borel subgroup $B$ and 
a maximal torus of $G$.  Let $P$ be a $\th$-stable parabolic subgroup of $G$
containing $B$ and $L$ the $\th$-stable Levi subgroup of $P$ containing $T$. 
As before, let $\eta_P : P^{\io\th} \to L^{\io\th}$ be the natural projection.
Consider the following diagram 

\begin{equation*}
\tag{10.1.1}
\begin{CD}
L^{\io\th} @<\psi << \wh X^P @>\vf'>> \wt X^P @>\vf''>>  G^{\io\th}, 
\end{CD}
\end{equation*}
where 

\begin{align*}
\wh X^P &= \{ (x, g) \in G^{\io\th} \times H \mid g\iv xg \in P^{\io\th} \}, \\ 
\wt X^P &= \{ (x, gP_H) \in G^{\io\th} \times H/P_H \mid g\iv xg \in P^{\io\th} \},
\end{align*}
and  $\vf': (x, g) \mapsto (x, gP_H), \vf'': (x, gP_H) \mapsto x$, and 
$\psi : (x, g) \mapsto \eta_P(g\iv xg)$. 
Then $\vf', \psi$ are smooth with connected fibres. 
Moreover, $H \times P_H$ acts on $\wh X^P$ by 
$(h, p) : (x,g) \mapsto (hxh\iv, hgp\iv)$, and $H$ acts on $\wt X^P$ by 
$h : (x, gP_H) \mapsto (hxh\iv, hgP_H)$.  $\vf', \vf''$ are $H$-equivariant,
and $\psi$ is $H \times P_H$-equivariant with respect to the trivial action of $H$ 
and the action of $P_H$ on $L^{\io\th}$ induced from the map $P_H \to L_H$.  
\par
Let $K$ be an $L_H$-equivariant perverse sheaf on $L^{\io\th}$, which is regarded 
as an $H \times P_H$-equivariant perverse sheaf.   Since $\psi$ is smooth with connected fibre, 
there exists a perverse sheaf $\psi^*K[\a]$ on $\wh X^P$, where $\a$ is the dimension of 
the fibre. $\psi^*K[\a]$ is $H \times P_H$-equivariant, and since $\vf'$ is a locally trivial 
principal $P_H$-bundle, there exits a perverse sheaf $K_1$ on $\wt X^P$ such that 
$\psi^*K[\a] \simeq {\vf'}^*K_1[\b]$, where $\b = \dim P_H$.  
We define $\ind K = \vf''_!K_1$.  Since $\vf''$ is proper, $\ind K$ is a semisimple complex 
on $G^{\io\th}$.  Since $K_1$ is $H$-equivariant, $\ind K$ is $H$-equivariant.  
We also write $\ind K = \ind_P^G K$. 
$\ind K$ is an analogue of the induction functor of the character sheaves ([L2]). 
\par 
Let $Q$ be a $\th$-stable parabolic subgroup of $G$ containing $P$, and $M$ the $\th$-stable 
Levi subgroup of $Q$ containing $L$.  Then $M \cap P$ is a $\th$-stable 
parabolic subgroup of $M$ with 
Levi subgroup $L$. Thus one can define functors $\ind_{M \cap P}^M$ and $\ind_Q^G$.    
The following transitivity property can be proved in a similar way as in 
[L2, Proposition 4.2]. 

\begin{prop}  
Let $K$ be an $L_H$-equivariant perverse sheaf on $L^{\io\th}$.  
Assume that $\ind_{M \cap P}^MK$ is an $M$-equivariant perverse sheaf on $M^{\io\th}$.
Then we have

\begin{equation*}
\ind_Q^G(\ind_{M \cap P}^M K) \simeq \ind_P^G K. 
\end{equation*}
\end{prop}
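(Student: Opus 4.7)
The plan is to adapt the proof of [L2, Proposition 4.2] to the symmetric-space setting. I would begin by writing down the diagram (10.1.1) for each of the three induction functors: $\ind_P^G$, $\ind_{M \cap P}^M$ (using $M$, $M_H$, $(M \cap P)_H$ in place of $G$, $H$, $P_H$), and $\ind_Q^G$. The hypothesis that $\ind_{M \cap P}^M K$ is perverse is precisely what makes the outer composition $\ind_Q^G \circ \ind_{M \cap P}^M$ legitimate, since $\ind$ is defined only on equivariant perverse sheaves. The goal is to assemble these three diagrams into a single commutative diagram whose outer edges compute each side of the claimed isomorphism.

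The key intermediate object is the incidence variety
\begin{equation*}
\wt X^{Q \supset P} = \bigl\{(x, gQ_H, hP_H) \in G^{\io\th} \times H/Q_H \times H/P_H \bigm| h \in gQ_H,\ h\iv xh \in P^{\io\th}\bigr\}.
\end{equation*}
Since $P \subset Q$, the projection $\wt X^{Q \supset P} \to \wt X^P$, $(x, gQ_H, hP_H) \mapsto (x, hP_H)$, is an isomorphism (the coset $gQ_H$ is forced to equal $hQ_H$). Dually, the projection $\wt X^{Q \supset P} \to \wt X^Q$, $(x, gQ_H, hP_H) \mapsto (x, gQ_H)$, is a locally trivial fibration whose fibre over $(x, gQ_H)$ with $y = g\iv xg \in Q^{\io\th}$ is $\{qP_H \mid q \in Q_H/P_H,\ q\iv yq \in P^{\io\th}\}$. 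Under $\eta_Q : Q^{\io\th} \to M^{\io\th}$ and the identification $Q_H/P_H \simeq M_H/(M \cap P)_H$, this is precisely the fibre at $\eta_Q(y)$ of the map $\wt X^{M \cap P}_M \to M^{\io\th}$ appearing in the definition of $\ind_{M \cap P}^M$. Thus $\wt X^{Q \supset P}$ realizes $\wt X^P$ as a cartesian product involving $\wt X^Q$, $\wt X^{M \cap P}_M$, and the map $\eta_Q$.

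Using this identification, I would compute $\ind_P^G K = \vf''_{P,!}K_1^P$ by factoring $\vf''_P$ through $\vf''_Q$, applying smooth base change along $\eta_Q$ (which is smooth with connected fibre $U_Q^{\io\th}$ by (2.1.2)) and principal-bundle descent for the $Q_H$- and $(M \cap P)_H$-actions. The compatibility $\eta_P = \eta_{M \cap P} \circ \eta_Q|_{P^{\io\th}}$ identifies the two pullbacks of $K$ into $\wh X^P$, and the output of this factorization is precisely the defining formula for $\ind_Q^G(\ind_{M \cap P}^M K)$. The main obstacle is the careful bookkeeping of shifts arising from the three smooth pullbacks (by $\psi_P$, $\psi_Q$, $\psi_{M \cap P}$) and three principal-bundle quotients (by $P_H$, $Q_H$, $(M \cap P)_H$); the required matching follows from the decomposition $U_P = U_{M \cap P} \cdot U_Q$ and its $\th$- and $\io\th$-stability, yielding $\dim U_P^{\io\th} = \dim U_{M \cap P}^{\io\th} + \dim U_Q^{\io\th}$ together with $\dim P_H - \dim (M \cap P)_H = \dim Q_H - \dim M_H$. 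Once the shifts are reconciled, the conclusion is a formal consequence of proper and smooth base change applied to the intermediate diagram.
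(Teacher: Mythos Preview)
Your proposal is correct and follows exactly the approach the paper indicates: the paper gives no detailed proof but simply states that the transitivity can be proved ``in a similar way as in [L2, Proposition 4.2]'', and what you have outlined is precisely that adaptation to the symmetric-space setting. Your introduction of the intermediate variety $\wt X^{Q \supset P}$, the identification of its two projections, the use of $\eta_P = \eta_{M \cap P}\circ\eta_Q|_{P^{\io\th}}$, and the shift bookkeeping via $U_P = U_{M \cap P}\cdot U_Q$ are the standard ingredients of Lusztig's argument, correctly transplanted.
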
 

\para{10.3.} 
Returning to the original setting, we consider $G = GL_N$, and let $P$ be 
the $\th$-stable parabolic subgroup such that $L_H \simeq (GL_1)^a \times SO_{N_0}$, 
where $N = N_0 + 2a$.  
Let $Q$ be the $\th$-stable parabolic subgroup of $G$ containing $P$ such that 
$M_H \simeq GL_a \times SO_{N_0}$.
Thus $(M \cap P)_H \simeq B_a \times SO_{N_0}$, where $B_a$ is a Borel subgroup of 
$GL_a$ such that $B_H \cap (GL_a \times GL_a)^{\th} \simeq B_a$. 
Let $\xi = (\SO_L, \SE^{\dag}_1)$ be a cuspidal pair on $L^{\io\th}\uni$, and consider 
the $L_H$-equivariant perverse sheaf $K_{\xi} = \IC(\ol\SO_L, \SE^{\dag}_1)[\dim \SO_L]$ 
on $L^{\io\th}$. 
Since $\ind_{M \cap P}^M K_{\xi}$ is isomorphic to 
$(\ind_{B_a}^{GL_a}K_0) \boxtimes K_{\xi}$, where $K_0$ is the constant sheaf $\Ql$ on 
$\{ 1\} \subset T_a$ ($T_a$ is the maximal torus of $B_a$), by a well-known 
result of Borho-MacPherson ([BM]) for $GL_a$, $\ind_{M \cap P}^MK_{\xi}$ is a semisimple 
perverse sheaf on $M^{\io\th}\uni$, equipped with $S_a$-action, and is decomposed as 

\begin{equation*}
\tag{10.3.1}
\ind_{M \cap P}^M K_{\xi} \simeq 
       \bigoplus_{\mu \in \SP_a}\r_{\mu} \otimes (K^a_{\mu}\boxtimes K_{\xi}),  
\end{equation*}   
where $K^a_{\mu} = \IC(\ol\SO^a_{\mu}, \Ql)[\dim \SO^a_{\mu}]$ (here 
$\SO^a_{\mu}$ is the unipotent class in $GL_a$ with Jordan type $\mu$).
Thus $\ind_{M \cap P}^MK_{\xi}$ is a $M_H$-equivariant perverse sheaf on $M^{\io\th}$, 
and one can apply the functor $\ind_{Q}^G$ on it.  By the transitivity of 
induction (Proposition 10.2), 
we have  
\begin{equation*}
\tag{10.3.2}
\ind_P^G K_{\xi} \simeq \bigoplus_{\mu \in \SP_a}
                   \r_{\mu} \otimes \ind_{Q}^G (K^a_{\mu}\boxtimes K_{\xi}).  
\end{equation*} 

On the other hand, by comparing the construction of $\ind_P^GK_{\xi}$ with 
the complex $\pi_*K_{\ol\SE_1}$ constructed in 3.7, we see that  
$\ind_P^G K_{\xi} \simeq \pi_*K_{\ol\SE_1}[-r]|_{X\uni}$. Hence by Theorem~5.2 
together with Theorem 9.9, we have

\begin{equation*}
\tag{10.3.3}
\ind_P^GK_{\xi} \simeq \bigoplus_{\mu \in \SP_a}
                  \r_{\mu} \otimes K_{\vG_{\xi}(\mu)}, 
\end{equation*}
where $K_{\vG_{\xi}(\mu)} = \IC(\ol\SO, \SE)[\dim \SO]$ if 
$(\SO,\SE) \in \SN_G$ corresponds to $\vG_{\xi}(\mu) \in \Psi_N$.  
\par
$\ind_Q^G(K_{\mu}^a\boxtimes K_{\xi})$ is an $H$-equivariant semisimple 
complex on $G^{\i\th}$. Since it is a direct summand of the semisimple 
perverse sheaf on $G^{\io\th}$, it is a semisimple perverse sheaf. 
By counting the multiplicities $\sum \dim \r_{\mu}$, 
we see that $\ind_Q^G(K_{\mu}^a\boxtimes K_{\xi})$ is a simple perverse sheaf 
on $G^{\io\th}\uni$.   Hence we have
\par\medskip\noindent
(10.3.4) \ $\ind_Q^G(K_{\mu}^a\boxtimes K_{\xi})$ coincides with one of 
the simple component appearing in the right hand side of (10.3.3). 
\par\medskip

\remark{10.4} 
If the isomorphisms in (10.3.2) and (10.3.3) are compatible with $S_a$-action, 
we will immediately get the isomorphism 
$\ind_Q^G(K_{\mu}^a\boxtimes K_{\xi}) \simeq K_{\vG_{\xi}(\mu)}$.
Note that the diagram in 3.7 with respect to $\pi : \wt X \to X$ corresponds to the 
diagram (10.1.1) with respect to $\ind_P^G$.
However, in the case of $\ind_Q^G$, we cannot construct an analogous diagram of  
3.6.  So, it is not certain  whether 
$\ind_Q^G$ commutes with the action of $S_a$.  
Nevertheless we will show in the discussion below that 
the above isomorphism actually holds. 

\para{10.5.}
Let $\SO'$ be an $M_H$-orbit in $M^{\io\th}\uni$.  Then there exists 
a unique $H$-orbit $\SO$ in $G^{\io\th}\uni$ such that 
$\SO \cap \eta_Q\iv(\SO')$ is open dense in $\eta_Q\iv(\SO')$. 
$\SO$ is called the $H$-orbit induced from $\SO'$, and is denoted by 
$\SO = \Ind_Q^G \SO'$.  Put

\begin{align*}
\wt X_{\SO'} &= \{ (x, gQ_H) \in G^{\io\th} 
         \times H/Q_H \mid g\iv xg \in \eta_Q\iv(\ol\SO') \}, \\
    X_{\SO'} &= \bigcup_{g \in H}g\eta_Q\iv(\ol\SO')g, 
\end{align*}
and let $\pi_{\SO'} : \wt X_{\SO'} \to X_{\SO'}$ be the first projection. 
Then $\pi_{\SO'}$ is proper, surjective.  By a similar argument as in the proof of 
Proposition 8.20, we see that $\SO$ is the unique open dense orbit contained in 
$X_{\SO'}$. 
Let $\SO_L$ be the $L_H$-orbit in $L^{\io\th}\uni$ as before, 
and consider $\SO' = \SO_{\mu}^a \times \SO_L$ 
for each $\mu \in \SP_a$.   We write $\SO' = \SO'_{\mu}$, 
and express $\pi_{\SO'} : \wt X_{\SO'} \to X_{\SO'}$ as 
$\pi_{\mu} : \wt X_{\mu} \to X_{\mu}$. 
The following result shows that the induction of $H$-orbits can be 
realized in the level of perverse sheaves. 

\begin{prop}   
Let $\mu \in \SP_a$.  Under the notation above, 
\begin{enumerate}
\item   
$\ind_Q^G(K_{\mu}\boxtimes K_{\xi}) \simeq K_{\vG_{\xi}(\mu)}$. 
\item  
Let $(\SO,\SE) \in \SN_G$ be the pair corresponding to 
$\vG_{\xi}(\mu) \in \Psi_N$.  Then $\SO = \Ind_Q^G(\SO^a_{\mu} \times \SO_L)$.  
In particular, the Jordan type of $\SO$ is equal to $\nu + 2\mu$. 
\item 
$\dim \wt X_{\mu} = \dim X_{\mu}$. 
\end{enumerate}
\end{prop}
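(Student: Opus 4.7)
The plan is to first establish (ii) by constructing an explicit representative of $\Ind_Q^G(\SO_\mu^a \times \SO_L)$ of Jordan type $\nu + 2\mu$, then derive (iii) from a dimension count, and finally deduce (i) by combining (10.3.4) with (ii). The logic for the last step is that (10.3.4) forces $\ind_Q^G(K_\mu^a \boxtimes K_\xi) \simeq K_{\vG_\xi(\mu')}$ for some $\mu' \in \SP_a$, whose support equals $X_\mu = \ol{\Ind_Q^G(\SO'_\mu)}$. Once (ii) identifies this support with $\ol\SO_{\nu+2\mu}$, comparing Jordan types forces $\mu' = \mu$, since $\mu \mapsto \nu + 2\mu$ is injective on $\SP_a$.

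For Step 1, I adapt the construction in (9.2.1) together with a normal basis of the cuspidal nilpotent. Decompose $V = V_+ \oplus V_0 \oplus V_-$, where $V_0$ is the $SO_{N_0}$-invariant piece of dimension $N_0$, and $V_\pm$ are the $GL_a$-pieces (so $V_+ \oplus V_-$ is the natural $(GL_a \times GL_a)$-module corresponding to the $GL_a$-factor of $M$). Fix a normal basis $\{ w_{i,j} \}$ of $V_0$ relative to a type-$\nu$ self-adjoint nilpotent $x_0 \in \Fg\Fl(V_0)^-\nil$, and bases of $V_+, V_-$ dual with respect to the form on $V$. Define $x \in \Fg^-\nil$ by taking, for each $k$, a Jordan chain of total length $\nu_k + 2\mu_k$ that enters $V_+$ along $\mu_k$ basis vectors, crosses into $V_0$ along the length-$\nu_k$ sub-chain of $\{w_{i,j}\}$ with the $k$-th row index, and exits through $V_-$ along $\mu_k$ basis vectors symmetrically. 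The resulting $u = x + 1$ lies in $\eta_Q\iv(\SO_\mu^a \times \SO_L) \cap G^{\io\th}\uni$, is self-adjoint by construction, and has Jordan type $\nu + 2\mu$ on $V$. When $\nu + 2\mu$ is an even partition (so the $H$-orbit splits), the choice of parabolic $P$, which encodes the sign of $\xi$, places $u$ in the correct component $\SO^\pm$, paralleling Lemma 9.4.

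For Step 2, the isomorphism $\wt X_\mu \simeq H \times^{Q_H}\eta_Q\iv(\ol{\SO'_\mu})$ yields
\begin{equation*}
\dim \wt X_\mu = 2(\nu_H - \nu_{M_H}) + \dim \SO'_\mu + \vD_Q.
\end{equation*}
Combining $\dim \SO_\mu^a = a^2 - a - 2n(\mu)$ (the classical formula (7.3.1) for $GL_a$), the dimension formula for $\SO_L$ coming from Lemma 7.4 applied to $SO_{N_0}$, and Lemma 7.4 for $\SO$ via the identity $n(\nu + 2\mu) = n(\nu) + 2n(\mu)$, one verifies $\dim \SO = \dim H - n(\nu + 2\mu) = \dim \wt X_\mu$. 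Since $\SO$ is open dense in $X_\mu$ and $\dim X_\mu \le \dim \wt X_\mu$ (as $\pi_\mu$ is surjective), this forces $\dim X_\mu = \dim \wt X_\mu$, proving (iii). Step 3 then follows as outlined above. The main obstacle lies in Step 1: verifying that the explicit $x$ defined piecewise on $V_+, V_0, V_-$ actually realizes Jordan type $\nu + 2\mu$ on all of $V$, rather than some coarser partition arising from unintended alignment of the $V_0$-chains with the $V_\pm$-chains, and moreover ensuring the $\pm$-component assignment is correctly transported to the induced orbit when the type is an even partition. This requires carefully tracking how a normal basis for the cuspidal orbit composes with the $GL_a$-Jordan chains in $V_\pm$ through the unipotent radical of $Q$.
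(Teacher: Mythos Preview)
Your approach is correct but takes a different route from the paper. You prove (ii) first via an explicit construction, deduce (iii) from the dimension identity $n(\nu+2\mu)=n(\nu)+2n(\mu)$, and then read off (i) from (10.3.4) by matching supports. The paper instead proves (i) first, by backward induction on $n(\mu)$: it invokes the theory of Hall polynomials to see that the $GL_N$-class meeting $\SO_{\mu,\mu,\nu}U_Q$ densely has Jordan type $2\mu+\nu$, so the support of $\ind_Q^G(K_\mu^a\boxtimes K_\xi)$ has Jordan type of the form $2\mu'+\nu\le 2\mu+\nu$; since all $\mu''$ with $n(\mu'')>n(\mu)$ are already assigned by induction and the assignment $\mu\mapsto\ind_Q^G(K_\mu^a\boxtimes K_\xi)$ is forced to be a bijection by comparing (10.3.2) and (10.3.3), equality $\mu'=\mu$ follows. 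Parts (ii) and (iii) are then consequences. The authors explicitly note (Remark 10.7) that their argument avoids exhibiting an element of type $\nu+2\mu$ inside $X_\mu$, which is precisely the step you isolate as the main obstacle.

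What each route buys: your construction is concrete and yields explicit representatives, extending (9.2.1) in a natural way, but it carries the burden of checking self-adjointness and the correct $\pm$-component when $\nu+2\mu$ is even; the paper's argument is cleaner and bypasses these verifications entirely, at the cost of importing the Hall-polynomial description of induced nilpotent classes from $GL_N$. One small point you leave implicit in Step 3: to know the support of $\ind_Q^G(K_\mu^a\boxtimes K_\xi)$ is all of $X_\mu$ (and not a proper closed subvariety), you need that $\pi_\mu$ is generically finite over $\SO_{\nu+2\mu}$, which is exactly what (iii) provides, together with the observation that generic fibres meet the open stratum of $\wt X_\mu$. This is straightforward once (ii) and (iii) are in hand, but deserves a sentence.
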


\begin{proof}
First we note the following property, which is obtained from 
the explicit description of the generalized Springer correspondence 
in Theorem 9.9 (iii).
\par\medskip
\noindent
(10.6.1) \ Let $(\SO,\SE), (\SO', \SE') \in \SN_G$.  
Assume that $(\SO,\SE)$ corresponds to $\vG_{\xi}(\mu)$, and 
$(\SO', \SE')$ corresponds to $\vG_{\xi}(\mu')$.   If the Jordan type of 
$\SO$ is the same as that of $\SO'$, then $\mu = \mu'$.  
\par\medskip

We prove (i) by the backward induction on $n(\mu)$.  
Assume that the statement holds for $\mu'$ such that $n(\mu') > n(\mu)$.
Put $\ind_Q^G(K_{\mu}\boxtimes K_{\xi}) = \IC(\SO',\SE')[\dim \SO']$. 
It follows from the definition of $\ind_Q^G$, $\SO'$ is open dense in 
$X_{\mu}$. 
Let $\SO_{\mu, \mu, \nu}$ be the unipotent class 
in $M$ such that its intersection with $M^{\io\th}$ coincides with 
$\SO'_{\mu} = \SO^a_{\mu} \times \SO_L$.  Then  
$\eta_Q\iv(\SO'_{\mu}) \subset \SO_{\mu,\mu,\nu}U_Q$.  It is known 
from the theory of Hall polynomials (see [M]) that the unipotent class 
in $GL_N$ which has open dense intersection with $\SO_{\mu,\mu,\nu}U_Q$ 
has Jordan type $2\mu + \nu$, and all other unipotent classes with non-zero 
intersection with $\SO_{\mu,\mu,\nu}U_Q$ have Jordan type $< 2\mu + \nu$.    
Hence the Jordan type of $\SO'$ is of the form 
$2\mu' + \nu \le 2\mu + \nu$. 
By induction hypothesis, for any $\mu''$ such that $n(\mu'') > n(\mu)$, the pair 
$(\SO_1, \SE_1)$ such that $\SO_1$ has Jordan type $2\mu'' + \nu$ is already 
assigned to $\ind_Q^G(K_{\mu''}\boxtimes K_{\xi})$.  Thus $n(\mu') = n(\mu)$, 
and so the Jordan type of 
$\SO'$ must coincide with $2\mu + \nu$. Hence $(\SO',\SE')$ corresponds to 
$\vG_{\xi}(\mu)$ by (10.6.1).  This proves (i).  
\par
We show (ii) and (iii). 
It follows from the definition that $\SO'$ coincides with $\Ind_Q^G \SO'_{\mu}$. 
Thus (ii) follows from (i). We have

\begin{align*}
\dim X_{\mu} = \dim \SO' = \dim H - n(\nu + 2\mu). 
\end{align*}

On the other hand, since $\wt X_{\mu} \simeq H \times^{Q_H}\eta_Q\iv(\ol\SO'_{\mu})$, 
we have

\begin{align*}
\dim \wt X_{\mu} &= \dim H - \dim Q_H + \dim \SO'_{\m} + \dim U_Q^{\io\th}.
\end{align*}
Since $\dim \SO'_{\mu} = (a^2 - a - 2n(\mu)) + (\dim SO_{N_0} - n(\nu))$, 
$\dim U_Q^{\io\th} = \dim U_Q^{\th} + a$, 
we conclude that $\dim \wt X_{\mu} = \dim X_{\mu}$.  Thus (iii) holds. 
The proposition is proved. 
\end{proof}

\remark{10.7.} 
Proposition 10.6 is a generalization of Proposition 8.20 and of 
Lemma 9.2.  But in contrast to the previous cases, in this discussion, 
we don't need to show the existence of an element of Jordan type 
$\nu + 2\mu$ inside of $X_{\mu}$.   

\par\bigskip\medskip
\begin{center}
\sc{Appendix}
\end{center}

\par\medskip
We give some examples of the generalized Springer correspondence. 
Here we use a simplified notation to denote the signed partition $(\la, \tau)$.
If $\la = (a_1^{m_1}, a_2^{m_2}, \dots)$, we only denote the signature 
$\pm 1$ corresponding to the block $a_i^{m_i}$ as $\pm$.  For example, 
$(1^3; +1,+1, +1)$ is written as $(1^3;+)$ and $(2^21^2;-1,-1,+1,+1)$ is written as 
$(2^21^2;-+)$.  
If $c = (N_0,\nu,\s) \in \SC_N$, the set $\Psi_N^{(c)}$ is the set of 
signed partitions corresponding to $\r_{\mu} \in S_a\wg$ with $a = (N - N_0)/2$, 
which we denote by $S_a;(\nu,\s)$.  
Let $\Psi^{(0)}_N$ be the set of cuspidal elements in $\Psi_N$.
In the first column, we only list up the elements which are not 
contained in $\Psi^{(0)}_N$.  
\par\bigskip\medskip\noindent
$\bullet$ $N = 3.$

\begin{table}[h]
\begin{center}
\begin{tabular}{|c|c|}
\hline
  $(\la, \tau)$  &   $S_1; (1;+)$  \\  
\hline 
 $(3; +)$    &     (1)    \\
\hline
\end{tabular}
\end{center}
\end{table}
\par\noindent
$\Psi_N^{(0)} = \{ (21,++), (21;-+), (1^3;+) \}$.

\par\bigskip\bigskip\noindent
$\bullet \ N = 5.$
\par\medskip
\begin{table}[h]
\begin{center}
\begin{tabular}{|c|c|ccc|}
\hline
  $(\la, \tau)$  &   $S_2; (1;+)$  &   $S_1; (21;++)$  & $S_1; (21;-+)$ 
                 &   $S_1; (1^3;+)$ \\  
\hline 
 $(5; +)$     &     $(2)$  &           &          &          \\
 $(41; ++)$   &            &   $(1)$   &          &          \\    
 $(41;-+)$    &            &           &   $(1)$  &          \\
 $(32;++)$    &  $(1^2)$   &           &          &           \\
 $(31^2;++)$  &            &           &          &   $(1)$   \\
 \hline
\end{tabular}
\end{center}
\end{table}
\par\noindent
$\Psi_N^{(0)} = \{(32;+-), (31^2;+-), 
        (2^21; ++), (2^21; -+), (21^3; ++), (21^3;-+), (1^5;+) \}$. 

\par\bigskip\bigskip\noindent
$\bullet \ N = 7$ 
\par\medskip
\begin{table}[h]
\begin{tabular}{|c|c|ccc|}
\hline 
 $(\la, \tau)$  &   $S_3; (1;+)$  &   $S_2; (21;++)$  &  $S_2;(21;-+)$
                 &   $S_2; (1^3;+)$    \\  
\hline 
 $(7;+)$       &  $(3)$   &           &          &          \\
 $(61;++)$     &          &   $(2)$   &          &          \\    
 $(61;-+)$     &          &           &   $(2)$  &          \\ 
 $(52; ++)$    &  $(21)$  &           &          &          \\
 $(52; +-)$    &          &           &          &          \\ 
 $(51^2; ++)$  &          &           &          &  $(2)$   \\
 $(51^2; +-)$  &          &           &          &          \\ 
 $(43;++)$     &          &   $(1^2)$ &          &          \\
 $(43;-+)$     &          &           &  $(1^2)$ &          \\
 $(421;+++)$   &          &           &          &          \\
 $(421;--+)$   &          &           &          &          \\
 $(41^3;++)$   &          &           &          &          \\
 $(41^3;-+)$   &          &           &          &          \\
 $(3^21;++)$   &          &           &          &  $(1^2)$ \\
 $(32^2;++)$   &  $(1^3)$ &           &          &          \\     
 $(31^4;++)$   &          &           &          &          \\
\hline
\end{tabular}
\end{table}

The correspondence in the case of $S_1$ is as follows;
\par\medskip
$S_1;(32;+-) \lra (52;+-)$, \quad $S_1;(31^2;+-) \lra (51^2;+-)$,
\par
$S_1;(2^21;++) \lra (421;+++)$, \quad $S_1; (2^21;-+) \lra (421; --+)$,
\par
$S_1; (21^3:++) \lra (41^3;++)$, \quad $S_1; (21^3;-+) \lra (41^3;-+)$, 
\par
$S_1; (1^5;+) \lra (31^4; ++)$.
\par\medskip
Here 
\par\noindent
\begin{align*}
\Psi^{(0)}_N = 
\bigl\{ &(421; -++), (421; +-+), (3^21;+-), (32^2;+-), \\ 
   &(321^2;+++), (321^2;++-), (321^2;+-+), (321^2; +--), \\
   &(31^4;+-), (2^31;++), (2^31;-+), (2^21^3;++), (2^21^3; -+), \\
   & (21^5;++), (21^5;-+), (1^7;+) \bigr\}.
\end{align*} 

\par\bigskip\bigskip\noindent
$\bullet N = 2$. 

\par\medskip
\begin{table}[h]
\begin{center}
\begin{tabular}{|c|cc|}
\hline
  $(\la, \tau)$  &   $S_1; (\emptyset^+ ;+)$  &  $S_1; (\emptyset^- ; +)$ \\  
\hline 
 $((2)^+; +)$    &     (1)  &          \\
 $((2)^-; +)$    &          &    (1)   \\
\hline
\end{tabular}
\end{center}
\end{table}

\par\noindent
$\Psi^{(0)}_N = \{ (2)^+;-), ((2)^-;-), (1^2; +)\}$. 
\par\bigskip\medskip\noindent
$\bullet N = 4$. 

\par\medskip
\begin{table}[h]
\begin{center}
\begin{tabular}{|c|cc|ccc|}
\hline
  $(\la, \tau)$  &   $S_2; (\emptyset^+ ;+)$ & $S_2; (\emptyset^-;+)$  &  $S_1;((2)^+;-)$   
                     &  $S_1; ((2)^-;-)$  &  $S_1; (1^2;+)$  \\  
\hline 
 $((4)^+; +)$    &     $(2)$   &          &          &         &      \\
 $((4)^+; -)$    &             &          &   $(1)$  &         &       \\
 $((4)^-; +)$    &             &   $(2)$  &          &         &       \\
 $((4)^-;-)$     &             &          &          &   $(1)$ &        \\
 $(31; ++)$      &             &          &          &         &   $(1)$  \\
 $((2^2)^+; +)$   &    $(1^2)$ &          &           &        &      \\
 $((2^2)^-,+)$    &            &  $(1^2)$ &           &        &      \\
\hline
\end{tabular}
\end{center}
\end{table}

\par\noindent
$\Psi^{(0)}_N = \bigl\{ (31;+-), ((2^2)^+,-), ((2^2)^-;-),  
                 (21^2;++), (21^2;-+), (1^4;+) \bigr\}$.

\par\bigskip\bigskip\noindent
$\bullet N = 6$. 

\par\medskip
\begin{table}[h]
\begin{center}
\begin{tabular}{|c|cc|ccc|}
\hline
  $(\la, \tau)$  &   $S_3; (\emptyset^+ ;+)$ & $S_3; (\emptyset^-;+)$  &  $S_2;((2)^+;-)$   
                     &  $S_2; ((2)^-;-)$  &  $S_2; (1^2;+)$  \\  
\hline 
 $((6)^+; +)$    &     $(3)$   &          &          &         &      \\
 $((6)^+; -)$    &             &          &   $(2)$  &         &       \\
 $((6)^-; +)$    &             &   $(3)$  &          &         &       \\
 $((6)^-;-)$     &             &          &          &   $(2)$ &        \\
 $(51; ++)$      &             &          &          &         &   $(2)$  \\
 $(51; +-)$      &             &          &          &         &       \\ 
 $((42)^+; ++)$   &    $(21)$  &          &          &         &     \\
 $((42)^+; -+)$   &            &          &  $(1^2)$ &         &      \\
 $((42)^+; --)$   &            &          &          &         &      \\
 $((42)^-; ++)$   &            &   $(21)$ &          &         &     \\ 
 $((42)^-; -+)$   &            &          &          &  $(1^2)$ &    \\
 $((42)^-; --)$   &            &          &          &          &    \\
 $(41^2; ++)$     &            &          &          &          &      \\
 $(41^2; -+)$     &            &          &          &          &      \\
 $(3^2; +)$       &            &          &          &          &  $(1^2)$   \\
 $(31^3; ++)$     &            &          &          &          &     \\
 $((2^3)^+;+)$    &    $(1^3)$ &          &          &          &     \\  
 $((2^3)^-;+)$    &            &   $(1^3)$&          &          &      \\
\hline
\end{tabular}
\end{center}
\end{table}

The correspondence in the case of $S_1$ is as follows;

\par\medskip
$(31;+-) \lra (51;+-)$, \qquad $((2^2)^+; -) \lra ((42)^+;--)$, 
\par
$((2^2)^-;-) \lra ((42)^-;--)$,  \qquad $(21^2;++) \lra (41^2;++)$,  
\par
$(21^2; -+) \lra (41^2; -+)$, \qquad $(1^4;+) \lra (31^3);++)$.

\par\medskip
Here 
\begin{align*}
\Psi^{(0)}_N &= \{ ((42)^+;+-), ((42)^-;+-), (321; +++), (321;++-),  \\
             &\phantom{***}
                 (321;+-+), (321; +--), (31^3;+-), ((2^3)^+;-), ((2^3)^-;-), \\
            &\phantom{***}
                (2^21^2;++), (2^21^2;-+), (21^4; ++), (21^4:-+), (1^6;+) \}. \\
\end{align*}
\par\bigskip

\par\vspace{1cm}

\noindent
T. Shoji \\
School of Mathematical Sciences, Tongji University \\ 
1239 Siping Road, Shanghai 200092, P. R. China  \\
E-mail: \verb|shoji@tongji.edu.cn|

\par\vspace{1cm}
\noindent
G. Yang \\
Department of Mathematics, 
Harbin Engineering University \\
145 Nantong Street, Harbin 150001, P.R. China \\
E-mail: \verb|yanggao_670206@163.com|

\end{document}